\numberwithin{equation}{section}
\theoremstyle{definition}
\newtheorem{thm}{Theorem}[section]
\newtheorem{definition}[thm]{Definition}
\newtheorem{ex}[thm]{Example}
\newtheorem{lem}[thm]{Lemma}
\newtheorem{cor}[thm]{Corollary}
\newtheorem{prop}[thm]{Proposition}
\newtheorem{rem}[thm]{Remark}
\newtheorem{cons}[thm]{Construction}
\newtheorem{claim}{Claim}
\newcommand{\cone}{\mathring{c}}
\title{Classification of locally standard $T$-pseudomanifolds over topological stratified pseudomanifolds}
\date{}
\begin{document}

\author{
  Yuya Koike\thanks{Department of Pure and Applied Mathematics, Graduate School of Information Science and Technology, The University of Osaka, Suita, Osaka 565-0871, Japan. Email: \texttt{u175137d@ecs.osaka-u.ac.jp}} 
  \and 
  Shintar\^o Kuroki\thanks{Department of Applied Mathematics, Faculty of Science, Okayama University of Science, 1-1 Ridai-Cho Kita-Ku Okayama-shi, Okayama 700-0005, Japan. Email: \texttt{kuroki@ous.ac.jp}}
}

\maketitle

\begin{abstract}
We introduce the notion of a {\it locally standard $T$-pseudomanifold}, a class that generalizes both complete toric varieties and locally standard $T$-manifolds.
The main goal of this paper is to show that locally standard $T$-pseudomanifolds over topological stratified pseudomanifolds satisfying certain conditions are completely classified, up to (weakly) equivariant homeomorphism, by their {\it characteristic data}.
This result extends the classification of quasitoric manifolds by Davis–Januszkiewicz.
\end{abstract}

\tableofcontents

\section{Introduction}

The interplay between algebraic geometry and combinatorics is most elegantly manifested in the theory of toric varieties.
A {\it toric variety} is a normal (complete) algebraic variety equipped with an action of an algebraic torus $(\mathbb{C}^{\ast})^{n}$ having a dense orbit, and it is well-known that such varieties are classified by combinatorial objects, called {\it fans}.
In the realm of topology, the analogous correspondence was proved by Davis and Januszkiewicz \cite{DJ91}, who introduced {\it quasitoric manifolds}, i.e., smooth $2n$-dimensional manifolds with a locally standard $T^{n}$-action whose orbit space is a simple convex polytope.
Their seminal work showed that the equivariant homeomorphism types of quasitoric manifolds are classified by {\it characteristic pairs}, consisting of the orbit polytope and a characteristic function.

Since then, {\it toric topology} has expanded in several directions to encompass broader classes of spaces and orbit geometries.
For example, Masuda and Panov \cite{MP06} generalized the classification to locally standard torus manifolds over manifolds with corners.
Conversely, from the perspective of algebraic geometry, even singular objects like projective toric varieties can be classified up to $T^n$-equivariant homeomorphism using characteristic pairs on convex polytopes, see \cite{Jor98}.

However, a fundamental gap remains in the unified treatment of these objects.
While simple polytopes and manifolds with corners appear in the setting of manifolds with locally standard $T^{n}$-actions, general convex polytopes that arise from singular toric varieties do not necessarily possess the structure of a manifold with corners.
These orbit spaces are unified as the structure of a topological stratified pseudomanifold.

\subsection{Locally standard $T$-pseudomanifolds and the main theorem}

In this paper, we introduce and classify the notion of {\it locally standard $T$-pseudomanifolds}.
This new class of spaces provides a common framework that simultaneously generalizes complete toric varieties and locally standard $T$-manifolds.
By extending the {\it locally standard} condition to the setting of topological stratified pseudomanifolds, we capture a vast range of spaces with singularities that are not necessarily topological manifolds. 

The main contribution of this work is a complete classification of these spaces.
Specifically, for a locally standard $T$-pseudomanifold $X$ over a topological stratified pseudomanifold $Q$, we define the {\it characteristic data $(Q,\lambda, c)$}.
Here, $\lambda$ is a {\it characteristic functor} that assigns isotropy data to strata of all dimensions, generalizing the classical characteristic function, and $c\in H^{2}(Q;\mathbb{Z}^{m})$ is a cohomology class representing the Chern class of the $T$-principal bundle over the top strata. 
More precisely, the following theorem is the main theorem of this paper:

\begin{thm}[classification theorem (Theorem~\ref{classification theorem})]\label{classification theorem intro}
Let $X$, $X'$ be locally standard $T$-pseudomanifolds, and $Q$, $Q'$ be their orbit spaces, respectively. Assume that the set of top strata of $Q$ (resp. $Q'$) is homotopy equivalent to $Q$ (resp. $Q'$) itself. Then, the following two statements are equivalent:
\begin{enumerate} \item The characteristic data $(Q,\lambda,c)$ of $X$ and the characteristic data $(Q',\lambda',c')$ of $X'$ are {\it (weakly) isomorphic}; \item $X$ and $X'$ are (weakly) $T$-equivariantly homeomorphic.
\end{enumerate}
\end{thm}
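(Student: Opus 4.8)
The plan is to establish the two implications separately; $(2)\Rightarrow(1)$ is routine bookkeeping, while $(1)\Rightarrow(2)$ carries the real content and will be proved by constructing a canonical model and reducing every locally standard $T$-pseudomanifold over a given orbit space to it. For $(2)\Rightarrow(1)$: a (weakly) $T$-equivariant homeomorphism $f\colon X\to X'$, equivariant with respect to an automorphism $\theta\colon T\to T$, carries a $d$-dimensional orbit to a $d$-dimensional orbit, hence descends to a homeomorphism $\bar f\colon Q\to Q'$ that preserves the filtration by orbit dimension; in particular $\bar f$ is a stratified homeomorphism sending each stratum onto a stratum. Since $T_{f(x)}=\theta(T_x)$, the pair $(\bar f,\theta)$ intertwines $\lambda$ and $\lambda'$ (i.e. $\lambda'(\bar f(\sigma))=\theta(\lambda(\sigma))$), so the characteristic functors are (weakly) isomorphic. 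Restricting $f$ to the free locus gives a $\theta$-equivariant isomorphism of the $T$-principal bundles over the top strata, whence $\bar f^{*}c'=\theta_{*}c$ in $H^{2}$; by the uniqueness (coming from the homotopy hypothesis) of the extension of these bundles over all of $Q$, the Chern-class components match, and the characteristic data are (weakly) isomorphic.

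For $(1)\Rightarrow(2)$ I would first build, for any characteristic datum $(Q,\lambda,c)$, a canonical realization. Using that the set $Q^{\circ}$ of top strata is homotopy equivalent to $Q$, choose the $T$-principal bundle $\rho\colon P\to Q$ with $c_{1}(\rho)=c$ (unique up to isomorphism by Section~\ref{section 4}), and set
\[
X(Q,\lambda,c):=P/\!\approx, \qquad p\approx p' \iff \rho(p)=\rho(p')\in\sigma \text{ and } p'\in\lambda(\sigma)\cdot p,
\]
where $\sigma$ denotes the stratum of $Q$ containing $\rho(p)$. Well-definedness rests on the functoriality of $\lambda$ (compatibility of the subtori $\lambda(\sigma)$ along incidences of strata), and the residual $T$-action on $P$ descends because each $\lambda(\sigma)$ is a subgroup, so $\rho$ induces a $T$-space over $Q$. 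One then checks, chart by chart, that $X(Q,\lambda,c)$ is a topological stratified pseudomanifold carrying a locally standard $T$-action with orbit space $Q$, characteristic functor $\lambda$, and Chern class $c$: near a point of a stratum $\sigma$ one combines the local triviality of $P$ with a local cone structure $Q\cong\mathbb{R}^{k}\times\cone L$ of a stratified pseudomanifold to produce exactly the local model required by the definition of local standardness, while over $Q^{\circ}$ one has $X(Q,\lambda,c)|_{Q^{\circ}}=P|_{Q^{\circ}}$, which pins down the Chern class.

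It then suffices to show that an arbitrary locally standard $T$-pseudomanifold $X$ with data $(Q,\lambda,c)$ is $T$-equivariantly homeomorphic to $X(Q,\lambda,c)$; pre-composing with $\theta$ reduces the weak statement to the strong one, and composing the resulting identifications for $X$ and $X'$ then gives $X\cong_{T}X'$. Over $Q^{\circ}$ the maps $X|_{Q^{\circ}}\to Q^{\circ}$ and $P|_{Q^{\circ}}\to Q^{\circ}$ are $T$-principal bundles with the same classifying class, so fix a bundle isomorphism $\phi^{\circ}$ between them; the task is to extend $\phi^{\circ}$ to a $T$-equivariant homeomorphism over all of $Q$. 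I would do this by induction up a filtration of $Q$ by unions of strata (equivalently, by depth): at each step, local standardness identifies $X$ and $X(Q,\lambda,c)$, over a distinguished neighborhood of a point of the stratum being adjoined, with the same local model, and the map already defined on the dense part of that neighborhood must be corrected so as to extend equivariantly across the singular part. The overlaps of these local extensions are controlled by the transition data of the bundle of local models; the resulting gluing obstruction lies in a cohomology group of $Q$ with coefficients in a sheaf assembled from the connected abelian tori $\lambda(\sigma)$, and it vanishes because the only datum beyond $\lambda$ is the class $c$, which $\phi^{\circ}$ was chosen to match. Patching the local extensions via a partition of unity on the (metrizable, hence paracompact) space $Q$ yields the desired global equivariant homeomorphism.

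\textbf{The main obstacle} is precisely this inductive extension of $\phi^{\circ}$ across the singular strata. In the manifold-with-corners situation treated by Davis--Januszkiewicz and Masuda--Panov the links are spheres and every local model is a product of a Euclidean half-space with a torus quotient, so the extension is essentially formal; here the links are arbitrary compact stratified pseudomanifolds, the local models carry genuine cone singularities, and the whole argument must be carried out inside the stratified category, with the homotopy-equivalence hypothesis invoked to guarantee that the discrepancy between $X$ and $X(Q,\lambda,c)$ is governed by a class on $Q^{\circ}$ alone. Verifying that the obstruction cocycles one writes down in this generality actually vanish --- rather than merely lie in a group one hopes is trivial --- is the crux of the proof.
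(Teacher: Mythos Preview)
Your overall architecture matches the paper: $(2)\Rightarrow(1)$ is bookkeeping (this is Proposition~\ref{prop in 4}), and $(1)\Rightarrow(2)$ goes via the canonical model $X(Q,\lambda,c)=P_c/{\sim}$ together with the reduction ``every $X$ with data $(Q,\lambda,c)$ is $T$-equivariantly homeomorphic to $X(Q,\lambda,c)$'' (this is Lemma~\ref{main thm}). Where you diverge is precisely at the step you flag as the main obstacle, and the paper's solution is considerably simpler than the obstruction-theoretic induction you sketch.

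The paper does \emph{not} extend $\phi^\circ$ stratum-by-stratum and does \emph{not} encounter a cocycle to kill. Instead (Lemma~\ref{lem b}) it exploits two elementary facts: (i) the top strata are \emph{dense}, and (ii) continuous extensions from a dense subset into a Hausdorff target are \emph{unique} when they exist. Concretely, for any point $p$ in the singular set one takes a contractible small neighbourhood $U_p$; since $T_p$ is the maximal isotropy occurring over $U_p$, the quotients $\tau^{-1}(U_p)/T_p$ and $\pi^{-1}(U_p)/T_p$ are \emph{trivial} $T/T_p$-bundles over $U_p$. The map $h/T_p$ over the dense part $U_p\setminus Q_{l+n-1}$ is then just multiplication by a continuous function $\sigma\colon U_p\setminus Q_{l+n-1}\to T/T_p$ into a compact group, which extends continuously to $\overline\sigma$ on all of $U_p$ (Lemma~\ref{cont extend}); this gives $\widetilde{h_p}$. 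One then lifts $\widetilde{h_p}$ back to $\widehat{h_p}\colon\tau^{-1}(U_p)\to\pi^{-1}(U_p)$ fibrewise, using that over any $p'\in U_p$ the group $T_{p'}$ acts trivially on both fibres, so the quotient map by $T_p\supset T_{p'}$ is an isomorphism there. The crucial point is that these local extensions $\widehat{h_p}$ \emph{automatically} agree on overlaps, because on the dense free part they all equal $h$, and two continuous maps into a Hausdorff space agreeing on a dense set are equal (Lemma~\ref{dense coincide}). So there is no gluing cocycle and no partition of unity; the global map $\widehat h$ is simply $\bigcup_p\widehat{h_p}$.

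In short, your worry that ``verifying that the obstruction cocycles actually vanish is the crux'' dissolves once you quotient by the local isotropy and use density/uniqueness rather than induction on depth. Your inductive scheme might be made to work, but it is both harder to execute and unnecessary here.
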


Our main theorem asserts that under a natural homotopy condition on the orbit space, the (weakly) equivariant homeomorphism type of a locally standard $T$-pseudomanifold is uniquely determined by its characteristic data.
This result not only unifies the classification of quasitoric manifolds, locally standard torus manifolds, and toric varieties but also extends the reach of toric topology to more general spaces with singularities.

\subsection{Structure of the paper}
The structure of this paper is as follows.
In Section~\ref{Notation}, we introduce the notation and assumptions used throughout the paper.
In particular, we define a {\it filtration by orbit dimension} on spaces with a torus action,
and impose three conditions on this filtration.
In Section~\ref{section T-pseudomanifold}, we introduce the {\it locally standard $T$-pseudomanifold} and show that it is invariant under weakly $T$-equivariant homeomorphisms (see Proposition~\ref{inv of T-pseudomanifold}).
In Section~\ref{section 4}, we show that the orbit space of a locally standard $T$-pseudomanifold
admits a {\it topological stratified pseudomanifold} structure.
It also prepares for the definition of an invariant called the {\it characteristic data}.

In Section~\ref{section 5}, we introduce the notion of a {\it characteristic functor}, which generalizes the {\it characteristic function} introduced by Davis and Januszkiewicz.
Furthermore, by abstracting the characteristic data of locally standard $T$-pseudomanifolds described in Section~\ref{section 4}, we define characteristic data on topological stratified pseudomanifolds.
In Section~\ref{section weak isomorphism}, we introduce the notion of (weak) isomorphism between them.
We also prove that the characteristic data are invariants under (weakly) equivariant homeomorphisms
of locally standard $T$-pseudomanifolds (see Proposition~\ref{prop in 4}).
In Section~\ref{easy examples}, we present basic examples.

In Section~\ref{section canonical model}, we construct a {\it canonical model} from given characteristic data and describe its basic properties.
The construction of the canonical model is very natural as a topological model of spaces with a group action (see also \cite{DJ91}, \cite{MP06}, \cite{BH99}, \cite{SS21}, \cite{Yos11}, \cite{KK25}, among others).
However, it is a nontrivial problem to show that the canonical model admits the structure of a locally standard $T$-pseudomanifold; this will be established in Section~\ref{section 12}.
In Section~\ref{section 9}, we prove that if two characteristic data are (weakly) isomorphic, then the corresponding canonical models are also (weakly) equivariantly homeomorphic (see Theorem~\ref{thm of equivariant homeo of canonical model}).

In Section~\ref{section: model space}, we introduce the notion of a {\it model space}, which is needed for the proof of the main theorem.
The construction of this space is inspired by \cite{Ayz18}.
We then study its basic properties, and in particular, show that it is equivariantly homeomorphic to the canonical model (see Lemma~\ref{lem a}).
In Section~\ref{section 11}, we state and prove the classification theorem, which is the main result of this paper.
The proof proceeds via the model space:
we first show that any locally standard $T$-pseudomanifold is equivariantly homeomorphic to the corresponding model space (see Lemma~\ref{lem b}),
and then combine this with Lemma~\ref{lem a}.
This implies that locally standard $T$-pseudomanifolds are classified by canonical models (see Lemma~\ref{main thm}).
Furthermore, by the uniqueness of canonical models (see Theorem~\ref{thm of equivariant homeo of canonical model}),
it follows that they correspond to characteristic data.

In Section~\ref{section 12}, we show that the canonical model constructed from characteristic data admits the structure of a locally standard $T$-pseudomanifold (see Theorem~\ref{canonical model is a T-pseudomanifold}).
Combining this property with the invariance of locally standard $T$-pseudomanifolds under weakly equivariant homeomorphisms (see Proposition~\ref{inv of T-pseudomanifold}),
it follows that any space whose equivariant homeomorphism type is realized by a canonical model admits the structure of a locally standard $T$-pseudomanifold.
Using this fact, we conclude the following results.

\begin{thm}
\label{class of T-spaces}
The class of locally standard $T$-pseudomanifolds includes the following:

\begin{itemize}
    \item complete toric varieties (see Section~\ref{sec: applications});
    \begin{itemize}
        \item in particular, projective toric varieties;
    \end{itemize}
    \item compact locally standard $T$-manifolds (see Section~\ref{sec: applications});
    \begin{itemize}
        \item in particular, locally standard torus manifolds;
        \item more specifically, quasitoric manifolds;
    \end{itemize}
    \item quasitoric orbifolds (see Subsection~\ref{subsec: characteristic function on simple polytope});
    \begin{itemize}
        \item in particular, quasitoric manifolds;
    \end{itemize}
    \item moment-angle manifolds (see Remark~\ref{rem: moment-angle manifold});
    \item locally $k$-standard $T$-manifolds (see Remark~\ref{rem: locally k-standard T-manifold}).
\end{itemize}
\end{thm}

This theorem demonstrates that the introduction of locally standard $T$-pseudomanifolds enables a unified treatment of both complete toric varieties and compact locally standard $T$-manifolds.

\begin{rem}
In the proof of this theorem, one must verify that the orbit spaces of these spaces admit the structure of a topological stratified pseudomanifold.
For instance, although it may be a routine exercise for experts to show that a convex polytope has such a structure, we were unable to find a reference containing a complete proof. Therefore, we include a proof in this paper (see Proposition~\ref{polytope is pseudomanifold}).
\end{rem}

In Sections~\ref{section: unit sphere} and~\ref{sec: applications}, we show that the {\it unit sphere with the standard torus action}, {\it complete toric varieties}, and {\it compact locally standard $T$-manifolds} admit the structure of a locally standard $T$-pseudomanifold.
Finally, in Section~\ref{last ex}, we provide an example that is neither a complete toric variety
nor a locally standard $T$-manifold.
This example is the Thom space of a complex line bundle over a Hirzebruch surface,
which has a pyramid as its orbit space.

In Appendix~\ref{app: topology}, we prepare a topological lemma needed for the proof of the main theorem.
In Appendix~\ref{BB}, we review basic notions of {\it topological stratified pseudomanifolds}.
In Appendix~\ref{sec: polytope}, we provide a detailed proof that convex polytopes admit such a structure.
We also explain how to construct a {\it characteristic functor}
from a {\it characteristic function} on a simple convex polytope.

\vskip\baselineskip

Regarding Theorem~\ref{class of T-spaces}, the correspondence between the classes of spaces with a $T$-action and their orbit spaces is summarized in Table~\ref{space and orbit space}.

\begin{table}[htbp]
  \centering
\begin{tabular}{|c|c|}
  \hline
  Topological space with $T$-action  & $T$-orbit space \\
  \hline
  Complete toric variety & Closed ball \\
  Projective toric variety & Convex polytope \\
  Locally standard $T$-manifold & Manifold with corners \\
    Quasitoric orbifold & Simple polytope \\
  Quasitoric manifold & Simple polytope \\
 Moment-angle manifold & Simple polytope \\
 Locally $k$-standard $T$-manifold & Simple polytope \\
 Locally standard $T$-pseudomanifold & Topological stratified pseudomanifold \\
  \hline
\end{tabular}
  \caption{The classes of spaces with a $T$-action and their orbit spaces}
  \label{space and orbit space}
\end{table}

\subsection{Acknowledgements}  

The second author was supported by JSPS KAKENHI Grant Number 21K03262. This work was supported by the Research Institute for Mathematical Sciences, an International Joint Usage/Research Center located at Kyoto University.

\section{Notation and assumption}\label{Notation}

Let $X$ be a second-countable, compact Hausdorff space. Note that $X$ is not necessarily connected.
Recall that a {\it manifold stratified space} is defined as follows:
\begin{definition}[manifold stratified space {\cite[Chapter 2]{Fri20}}]\label{def of manifold stratified space}
A {\it manifold stratified space} is a stratified space equipped with a filtration
\[
\mathfrak{X}: X=X_m \supsetneq X_{m-1} \supset \cdots \supset X_{l}\supsetneq X_{l-1} = \emptyset, \quad (m \ge l \ge 0)
\]
such that $X_i \setminus X_{i-1}$ (for $m \ge i \ge l$) is an $i$-dimensional topological manifold (possibly empty).
\end{definition}
When $X_i \setminus X_{i-1} \neq \emptyset$, each connected component of $X_i \setminus X_{i-1}$ is called a {\it stratum} of dimension $i$.
In particular, a stratum of $X_m \setminus X_{m-1}$ is called a {\it top stratum}.
The space $X_i$ is called the $i$-{\it skeleton}.
The index $l$ indicates the lowest dimension of the non-empty skeletons.
We call $(m-l)$ the {\it length} of $\mathfrak{X}$ and write $m=l+n$ with $n \ge 0$ (i.e., $n$ denotes the length).
If $X_i \setminus X_{i-1}=\emptyset$, we omit $X_i$ from the filtration. In this abbreviated notation, the indices of the skeletons correspond to their dimensions.
A manifold stratified space $X$ equipped with a specified filtration $\mathfrak{X}$ is also denoted by $(X, \mathfrak{X})$.

Throughout this paper, we use the following three notations:
\begin{itemize}
\item the circle group $U(1):=\{z\in\mathbb{C}\ |\ |z|=1\}$ that acts on $\mathbb{C}$ by scalar multiplication;
\item $\mathbb{C}^{\times}:=\mathbb{C}\setminus \{0\}$; 
\item $T^{m}\cong U(1)^{m}$ is an $m$-dimensional torus. We often denote it by $T$ when the dimension of $T$ is clear from the context.
\end{itemize}

Suppose $T^m$ acts continuously on $X$.
This action induces a {\it filtration by orbit dimension}, defined as follows.
Given integers $l$, $n$ with $l\geq 0$ and $0\leq n\leq m$, we define the following subsets of $X$: for an integer $i$ ($0\le i\le n$),
\begin{align}\label{dimension set}
X_{l+2i+1+(m-n)}=X_{l+2i+(m-n)}:=\{x\in X\ |\ \dim T(x)\le i+(m-n)\},
\end{align}
where $T(x)$ is the $T^m$-orbit of $x\in X$.
The $T^m$-action on $X$ induces the following filtration: 
\begin{align}\label{filtration by orbit dimension}
\mathfrak{X}: X=X_{l+m+n}\supset X_{l+2(n-1)+(m-n)}\supset \cdots \supset X_{l+2i+(m-n)}\supset \cdots \supset X_{l+(m-n)}\supset \emptyset.
\end{align}
We call this filtration $\mathfrak{X}$ {\it the filtration by orbit dimension}.
\begin{rem}\label{assumption of subtorus}
In this paper, we assume that the isotropy subgroup $T_{x}$ is a subtorus (i.e., a connected closed subgroup of $T^{m}$) for all $x\in X$.
\end{rem}

For each $0 \le i \le n$, since $X_{l+2i+(m-n)}$ is $T$-invariant, the orbit projection $\pi: X \to Q:=X/T$ induces the following filtration $\mathfrak{X}/T$ of the orbit space:
\begin{align}\label{orbit filtration}
\mathfrak{X}/T: Q= Q_{l+n} \supset Q_{l+n-1} \supset \cdots \supset Q_{l+i}
\supset \cdots \supset Q_{l} \supset \emptyset,
\end{align}
where $Q_{l+i} := X_{l+2i+1+(m-n)}/T=X_{l+2i+(m-n)}/T $ for $0 \leq i \leq n$.
We call $(X/T, \mathfrak{X}/T)$ the {\it filtered orbit space}, often denoted by $(Q, \mathfrak{Q})$. The given integers $l$ and $n$ determine the lowest index of skeletons of $Q$ and the length of $\mathfrak{Q}$, respectively.

In this paper, we assume the following three conditions:
\begin{enumerate}[label=(Cond.\arabic*), ref=Cond.\arabic*]
\item \label{cond-1}
$(X, \mathfrak{X})$ is a manifold stratified space (see Definition \ref{def of manifold stratified space});
\item \label{cond-2}
$X_{l-2+(m-n)}=X_{l-1+(m-n)}:=\{x\in X \mid  \dim T(x)\le (m-n)-1 \} = \emptyset$, i.e., $i=-1$;
\item \label{cond-3}
we assume $X_{l+m+n} \supsetneq X_{l+m+n-2}$ (i.e., $X$ has free orbits, hence the action is effective).
\end{enumerate}

\begin{rem}
The assumption that $X$ is second-countable is needed for the proof of Lemma \ref{lem b}, which is a technical lemma required to prove Theorem \ref{classification theorem intro}.
\end{rem}

\section{Locally standard $T$-pseudomanifold}\label{section T-pseudomanifold}
In this section, we  first give the definition of a {\it locally standard $T$-pseudomanifold}, which is the main object of study in this paper.
Subsequently, we show that the property of being a locally standard $T$-pseudomanifold is preserved under weakly $T$-equivariant homeomorphisms (see Proposition \ref{inv of T-pseudomanifold}).

Before defining locally standard $T$-pseudomanifolds, we introduce the concept of an {\it open cone}, which plays a crucial role in the local structure around singular points.

In this paper, for $Y\subset X$, the notation $X/Y$ denotes the topological space obtained by collapsing $Y$ to a point in $X$, where the topology on $X/Y$ is defined by the quotient topology induced by the natural projection $X\to X/Y$.

\begin{definition}[open cone {{\cite[Definition 2.1.1]{Max19}}}]\label{def of open cone}
Let $L$ be a compact Hausdorff space. An {\it open cone on $L$} is defined by
\[
\mathring{c}(L):=L \times [0, 1) / (L \times \{0\}).
\] 
Namely, $\mathring{c}(L)$ is the quotient space of $L\times [0,1)$ obtained by collapsing the subspace $L\times \{0\}$ to a single point. We assume that $\mathring{c}(\emptyset)$ is a single point. We call the point $[v,0]\in \cone(L)$ a {\it cone vertex}, where $v$ is any element in $L$.
\end{definition}

\subsection{Definition of locally standard $T$-pseudomanifold}
Let $X$ be a second-countable, compact Hausdorff space with a continuous $T^{m}$-action, and $\mathfrak{X}$ denote the filtration by orbit dimension (see \eqref{filtration by orbit dimension}).
We define a locally standard $T$-pseudomanifold as follows.

\begin{definition}\label{def of T-pseudomanifold}
An {\it $(l+m+n)$-dimensional locally standard $T$-pseudomanifold} $(X, \mathfrak{X})$ is defined by induction on $(l+n)$ if it satisfies the following:
\begin{itemize}
\item for $n = 0$ and $l = 0$, $X$ is a disjoint union of $m$-dimensional tori equipped with the multiplicative $T^{m}$-action (note that the compactness of $X$ implies that this is a finite disjoint union of $T^{m}$);

\item for $n\ge 0$ and $l \ge 0$, with $l+n\neq 0$, the following two conditions hold:
\begin{enumerate}
    \item The top strata $X_{l+m+n} \setminus X_{l+m+n-2}$ is dense in $X$;

\item 
 For any $x \in X_{l+2i+(m-n)} \setminus X_{l+2(i-1)+(m-n)}$, there exists a triple $(U_x, L_x, \varphi_x)$ such that
\begin{enumerate}
  \item $U_x \subset X$ is a $T$-invariant open neighborhood of $x$;

  \item $L_x$ is a ($2n-2i-1$)-dimensional compact locally standard $T_x$-pseudomanifold, possibly empty, equipped with an action of $T_x \cong T^{n-i}$, where $T_x$ is the isotropy subgroup of $x$ ($L_x$ is called a {\it link} of $x$);

 \item  $\varphi_x : U_x \to O^l \times \big( \Omega \times U(1)^{m-n} \big) \times \mathring{c}({L}_x)$
is a weakly equivariant homeomorphism,
where $O^l \subset \mathbb{R}^l$ is a contractible open subset and $\Omega \subset (\mathbb{C}^{\times})^i$ is a $U(1)^i$-invariant open subset.
The torus
$T^m$ acts on $ O^l \times \big( \Omega \times U(1)^{m-n} \big) \times \mathring{c}({L}_x)$
via an isomorphism
\[ T^m \cong T^m/{T_x} \times T_x \cong U(1)^{i+(m-n)} \times T^{n-i}, \]
where $U(1)^{i+(m-n)}$ acts on $\Omega \times U(1)^{m-n}$
by the standard multiplication (i.e., the free action), and the $T_{x}$-action on $\mathring{c}(L_{x})=L_{x}\times [0,1)/L_{x}\times \{0\}$ is induced by the $T_{x}$-action on the $L_{x}$-factor described in (b) and is trivial on the $[0,1)$-factor.
\end{enumerate}
\end{enumerate}
Here, by the definition of an open cone, $\cone(L_x)$ has a natural filtration by
\begin{align}\label{通常のcone filtration}
(\mathring{c}(L_{x}))_{j+1}:=\mathring{c}((L_{x})_{j}), \quad (-1\le j\le 2n-2i-1).
\end{align}
\end{itemize}
\end{definition}

\begin{rem}\label{restriction to free part}
The weakly equivariant homeomorphism $\varphi_x$ preserves orbit dimensions.
Hence, we have the restriction as a weakly equivariant homeomorphism
\[
\varphi_x |_{U_x \cap X_{l+2i+(m-n)} \setminus X_{l+2(i-1)+(m-n)}}:
U_x \cap X_{l+2i+(m-n)} \setminus X_{l+2(i-1)+(m-n)} \to O^l \times \big( \Omega \times U(1)^{m-n} \big) \times \mathring{c}(\emptyset).
\]
\end{rem}

\begin{rem}
The notion of a locally standard $T$-pseudomanifold $(X, \mathfrak{X})$ is a special case of the $G$-pseudomanifold discussed in \cite{Pop00}, where $G$ is a compact Lie group.
Then, by \cite[Theorem 2.10]{Pop00}, $(X, \mathfrak{X})$ is a {\it topological stratified pseudomanifold} (see Appendix \ref{BB} and also \cite{Fri20} for details).
\end{rem}

\begin{rem}\label{rem: locally k-standard T-manifold}
  The notion of a locally standard $T$-pseudomanifold is a generalization of that of a {\it locally $k$-standard $T$-manifold} introduced in \cite{SS21}.
In fact, in the above definition, if $X$ is a smooth manifold with $l=0$ and the link of each point is a sphere, then $X$ is a locally $k$-standard $T$-manifold.
\end{rem}

\subsection{Invariance of locally standard $T$-pseudomanifold under weakly equivariant homeomorphism}
We show that the property of being a locally standard $T$-pseudomanifold is preserved under weakly $T$-equivariant homeomorphisms. That is, if a space is weakly $T$-equivariantly homeomorphic to a locally standard $T$-pseudomanifold, then it is also a locally standard $T$-pseudomanifold.

\begin{prop}\label{inv of T-pseudomanifold}
Let $X$ and $X'$ be second-countable, compact Hausdorff spaces with a continuous $T^{m}$-action. Let $\mathfrak{X}$ and $\mathfrak{X}'$ be the filtrations by orbit dimension \eqref{filtration by orbit dimension}, respectively. Suppose $(X, \mathfrak{X})$ is an $(l+m+n)$-dimensional locally standard $T$-pseudomanifold. If $X$ and $X'$ are weakly $T^m$-equivariantly homeomorphic, then $(X', \mathfrak{X}')$ is also a locally standard $T$-pseudomanifold.
\end{prop}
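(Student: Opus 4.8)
The plan is to prove this by induction on $l+n$, mirroring the inductive structure of Definition~\ref{def of T-pseudomanifold}. Fix a weakly $T^m$-equivariant homeomorphism $f\colon X\to X'$, covered by an automorphism $\rho\colon T^m\to T^m$ in the sense that $f(t\cdot x)=\rho(t)\cdot f(x)$. The first observation I would record is that $f$ preserves the filtration by orbit dimension: for $x\in X$ one has $T'(f(x))=\rho(T_x)$, so $\dim T'(f(x))=\dim T_x$, and since $\rho$ is an automorphism it sends subtori to subtori, which keeps us inside the standing assumption of Remark~\ref{assumption of subtorus}. Consequently $f(X_j)=X'_j$ for every $j$, so $f$ is a filtered homeomorphism $(X,\mathfrak{X})\to(X',\mathfrak{X}')$; in particular the two filtrations have the same invariants $l$, $n$, and $m$, and condition~\eqref{cond-1}, being about the topology of $X_j\setminus X_{j-1}$, transfers automatically. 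The base case $n=l=0$ is then immediate: $X$ is a finite disjoint union of copies of $T^m$ with the multiplicative action, and $f$ identifies $X'$ with the same union, with the $T^m$-action twisted by $\rho$; but the multiplicative action is invariant under precomposition by an automorphism (each copy of $T^m$ is still a single free orbit), so $X'$ is again such a disjoint union.

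For the inductive step, assume $l+n\neq 0$ and that the statement holds for all smaller values of $l+n$. Density of the top stratum is a purely topological property of the filtered space, hence transported by $f$; this gives condition~(1). For condition~(2), take $x'\in X'_{l+2i+(m-n)}\setminus X'_{l+2(i-1)+(m-n)}$ and set $x:=f^{-1}(x')$, which lies in the corresponding stratum of $X$. Let $(U_x,L_x,\varphi_x)$ be the chart guaranteed by Definition~\ref{def of T-pseudomanifold}. I would set $U_{x'}:=f(U_x)$, which is a $T'$-invariant open neighbourhood of $x'$ because $f$ is an equivariant homeomorphism, and then take $\varphi_{x'}:=\varphi_x\circ (f|_{U_x})^{-1}\colon U_{x'}\to O^l\times(\Omega\times U(1)^{m-n})\times\mathring{c}(L_x)$. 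The composite of two weakly equivariant homeomorphisms is again weakly equivariant (the automorphisms compose), so $\varphi_{x'}$ has the required form, with the \emph{same} model space on the target — in particular the same contractible open $O^l\subset\mathbb{R}^l$, the same $U(1)^i$-invariant open $\Omega\subset(\mathbb{C}^\times)^i$, and the same link $L_x$. The one point needing care is matching isotropy groups: $T'_{x'}=\rho(T_x)$, and the model-space action of $T^m$ on the target factors through $T^m/T_x\times T_x$; pulling back along $\rho$ converts this into a factorisation through $T^m/T'_{x'}\times T'_{x'}$ of exactly the type demanded in (2c), so the structural requirements on how $U(1)^{i+(m-n)}$ acts freely on $\Omega\times U(1)^{m-n}$ and on how $T'_{x'}$ acts on the cone are preserved verbatim (the cone action is still induced from the $L_x$-factor and trivial on $[0,1)$).

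Finally one must justify that $L_x$ qualifies as a link for $x'$: it is already, by hypothesis, a $(2n-2i-1)$-dimensional compact locally standard $T_x$-pseudomanifold, and we need it to be a locally standard $T'_{x'}$-pseudomanifold. Here $T'_{x'}=\rho(T_x)\cong T^{n-i}$, and the action of $T'_{x'}$ on $L_x$ appearing in the chart $\varphi_{x'}$ is precisely the original $T_x$-action transported through the isomorphism $\rho|_{T_x}\colon T_x\xrightarrow{\sim} T'_{x'}$ — that is, $L_x$ with its old action is weakly $T$-equivariantly homeomorphic (via the identity, twisted by $\rho|_{T_x}$) to $L_x$ with the new action. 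Since $L_x$ has strictly smaller filtration length ($2n-2i-1$ corresponds to parameters summing to less than $l+n$), the induction hypothesis applies and yields that $L_x$ is a locally standard $T'_{x'}$-pseudomanifold. This completes the inductive step. The main obstacle, and the only place where genuine bookkeeping is required, is the careful tracking of how the covering automorphism $\rho$ reshuffles the isotropy subgroups and the splitting $T^m\cong U(1)^{i+(m-n)}\times T^{n-i}$ at each stage — everything else is a formal transport of structure along $f$; the second-countability and compactness hypotheses play no role here and are only needed later.
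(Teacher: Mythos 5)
Your proof is correct and follows the same strategy as the paper's: transport the triple $(U_x,L_x,\varphi_x)$ along $f$ to get $(f(U_x),L_x,\varphi_x\circ f^{-1})$ and observe that the covering automorphism $\rho$ carries $T_x$ isomorphically onto $T'_{f(x)}$. The one place where you are more careful than the paper is the status of the link: the paper simply asserts that ``we may regard $L_x$ as a locally standard $T_{f(x)}$-pseudomanifold,'' leaving implicit the justification, whereas you make this precise by observing that $L_x$ with its twisted $T'_{x'}$-action is weakly equivariantly homeomorphic (via the identity, covered by $\rho|_{T_x}$) to $L_x$ with its original $T_x$-action, and then closing the argument by an explicit induction on $l+n$ (noting that the link's corresponding parameter $n-i-1$ is strictly smaller whenever $l+n\neq 0$). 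This explicit induction is arguably the cleaner way to phrase what the paper does tacitly, and your observation that the compactness and second-countability hypotheses play no role in this particular proof is also accurate.
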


\begin{proof}
Let $f:X \to X'$ be a weakly $T$-equivariant homeomorphism. Since $f$ preserves orbits of dimension $(i+(m-n))$, we have
\[
f(X_{l+2i+(m-n)})=X'_{l+2i+(m-n)}=\{ x \in X' \mid \dim T(x) \le i+(m-n) \}, \quad (0 \leq i \leq n).
\]
If $n=0$ and $l=0$, then $X\cong T^{m}\sqcup \cdots \sqcup T^{m}$, and hence $X' \cong T^m \sqcup \cdots \sqcup T^m$ as well.
Therefore, we may assume that $n\ge 0$ and $l \ge 0$ with $l+n \neq 0$. 
We will verify Definition \ref{def of T-pseudomanifold}-1 and Definition \ref{def of T-pseudomanifold}-2.

Suppose $X'_{l+2i+(m-n)} \setminus X'_{l+2(i-1)+(m-n)} \neq \emptyset$. The weakly $T$-equivariant homeomorphism $f$ maps connected components of $X_{l+2i+(m-n)} \setminus X_{l+2(i-1)+(m-n)}$ to connected components of $X'_{l+2i+(m-n)} \setminus X'_{l+2(i-1)+(m-n)}$. Therefore, each connected component of $X'_{l+2i+(m-n)} \setminus X'_{l+2(i-1)+(m-n)}$ can be written as $f(S)$, where $S \subset X_{l+2i+(m-n)} \setminus X_{l+2(i-1)+(m-n)}$ is a stratum. 

Since $X$ is a locally standard $T$-pseudomanifold, by Definition \ref{def of T-pseudomanifold}-1, $X_{l+m+n} \setminus X_{l+m+n-2}$ is dense. Because $f$ is a homeomorphism, it is in particular surjective. Therefore,
\[
f(X_{l+m+n} \setminus X_{l+m+n-2})=f(X_{l+m+n}) \setminus f(X_{l+m+n-2}) = X'_{l+m+n} \setminus X'_{l+m+n-2}
\]
is also dense. This shows that $(X', \mathfrak{X}')$ satisfies Definition \ref{def of T-pseudomanifold}-1.

We next show that $(X', \mathfrak{X}')$ satisfies Definition \ref{def of T-pseudomanifold}-2. In what follows, we use the notation from Definition \ref{def of T-pseudomanifold}. Take any point $f(x) \in X'_{l+2i+(m-n)} \setminus X'_{l+2(i-1)+(m-n)}$ (for some $x \in X_{l+2i+(m-n)} \setminus X_{l+2(i-1)+(m-n)}$).
Since $X$ is a locally standard $T$-pseudomanifold, we can take $(U_x, L_x, \varphi_x)$ satisfying Definition \ref{def of T-pseudomanifold}-2. Define
\[
(U_{f(x)}, L_{f(x)}, \varphi_{f(x)}):=(f(U_x), L_x, \varphi_x \circ f^{-1}).
\]
Since $f$ is a weakly $T$-equivariant homeomorphism, $U_{f(x)}$ is a $T$-invariant open neighborhood of $f(x)$.
The weak $T$-equivariance of $f$ and commutativity of $T$ imply that $T_x$ coincides with $T_{f(x)}$, up to an automorphism of $T$.
Therefore, we may regard $L_{x}$ as a locally standard $T_{f(x)}$-pseudomanifold.
Then, the composition
\[
\varphi_x \circ f^{-1} : U_{f(x)} \to O^l \times \big( \Omega \times U(1)^{m-n} \big) \times \mathring{c}({L}_x)
\]
is weakly equivariant homeomorphism, where the $T^m$-action on  $O^l \times \big( \Omega \times U(1)^{m-n} \big) \times \mathring{c}({L}_x)$ via an isomorphism
\[
T^m \cong T^m/{T_x} \times T_x \cong T^m/{T_{f(x)}} \times T_{f(x)} \cong U(1)^{i+(m-n)} \times T^{n-i}.
\]
This shows that $(X', \mathfrak{X}')$ satisfies Definition~\ref{def of T-pseudomanifold}-2.
\end{proof}

\section{Orbit space of locally standard $T$-pseudomanifold}\label{section 4}
Let $(X,\mathfrak{X})$ be an $(l+m+n)$-dimensional locally standard $T$-pseudomanifold, and let $\pi:X \to Q$ denote the orbit projection.
Then the filtered orbit space $(Q, \mathfrak{Q})$ (see \eqref{orbit filtration}) is a topological stratified pseudomanifold (see Proposition \ref{structure of orbit space} and Definition \ref{def of pseudomanifold} for the precise definition of a topological stratified pseudomanifold).
Furthermore, we show that the orbit projection allows us to define the {\it characteristic data} $(Q, \lambda, c)$ of $X$, which consists of the triple:
\begin{itemize}
\item
the topological stratified pseudomanifold $Q = X/T$ such that the top strata $Q\setminus Q_{l+n-1}$ is homotopy equivalent to $Q$ (see Proposition \ref{structure of orbit space} and \eqref{condition of top strata});

\item
a correspondence $\lambda$ assigning to each stratum its isotropy group, which is a subtorus (see Lemma \ref{lem of trivial bundle});

\item
a {\it Chern class} $c \in H^2(Q; \mathbb{Z}^m)$ of the locally standard $T^m$-pseudomanifold $X$ (see Definition \ref{def of Chern class}).
\end{itemize}

We first show that the orbit space of a locally standard $T$-pseudomanifold is a topological stratified pseudomanifold.

\begin{prop}\label{structure of orbit space}
Let $(X, \mathfrak{X})$ be a locally standard $T$-pseudomanifold. Then its filtered orbit space $(Q, \mathfrak{Q})$ is a compact, Hausdorff and second-countable topological stratified pseudomanifold.
\end{prop}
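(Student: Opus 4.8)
The strategy is to prove the claim by induction on the length $n$ (equivalently on $l+n$), mirroring the inductive structure of Definition~\ref{def of T-pseudomanifold}, and to establish the three structural properties in turn: (i) $(Q,\mathfrak{Q})$ is a manifold stratified space (each $Q_{l+i}\setminus Q_{l+i-1}$ is an $(l+i)$-manifold); (ii) the topological conditions — compact, Hausdorff, second-countable — are inherited from $X$; and (iii) the two axioms of a topological stratified pseudomanifold from Definition~\ref{def of pseudomanifold}, namely that the top stratum is dense and that each point has a distinguished neighbourhood of the cone form $\mathbb{R}^{l+i}\times\mathring{c}(L)$ for a compact filtered space $L$ of the correct dimension. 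The base case $n=0$, $l=0$ is immediate: $X$ is a finite disjoint union of copies of $T^m$ with the free multiplicative action, so $Q$ is a finite set of points, trivially a $0$-dimensional topological stratified pseudomanifold.

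For the inductive step, the key input is Definition~\ref{def of T-pseudomanifold}-2: every $x\in X_{l+2i+(m-n)}\setminus X_{l+2(i-1)+(m-n)}$ has a $T$-invariant neighbourhood $U_x$ with a weakly equivariant homeomorphism $\varphi_x\colon U_x\to O^l\times(\Omega\times U(1)^{m-n})\times\mathring{c}(L_x)$. Passing to orbit spaces, since $U(1)^{i+(m-n)}$ acts freely on $\Omega\times U(1)^{m-n}$ and trivially on the cone coordinate, $\varphi_x$ descends to a homeomorphism of filtered spaces
\[
\bar\varphi_x\colon U_x/T \;\xrightarrow{\ \cong\ }\; O^l \times \bigl((\Omega\times U(1)^{m-n})/U(1)^{i+(m-n)}\bigr) \times \bigl(\mathring{c}(L_x)/T_x\bigr).
\]
The middle factor $(\Omega\times U(1)^{m-n})/U(1)^{i+(m-n)}$ is an open subset of $(\mathbb{C}^{\times})^i\times U(1)^{m-n}/U(1)^{i}\times U(1)^{m-n} \cong \mathbb{R}^i_{>0}\times(\text{torus})/(\text{torus})$; more carefully, $U(1)^i$ acting on $\Omega\subset(\mathbb{C}^{\times})^i$ has quotient an open subset of $\mathbb{R}^i$ (radial coordinates), and $U(1)^{m-n}$ acting on itself has quotient a point, so this factor is homeomorphic to an open subset $O'\subset\mathbb{R}^i$; combined with the contractible $O^l\subset\mathbb{R}^l$ one gets an open subset of $\mathbb{R}^{l+i}$. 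For the cone factor, the induction hypothesis applies: $L_x$ is a compact locally standard $T_x$-pseudomanifold of dimension $2n-2i-1$, hence $L_x/T_x$ is a compact topological stratified pseudomanifold of dimension $n-i-1$, and therefore $\mathring{c}(L_x/T_x)=\mathring{c}(L_x)/T_x$ (the quotient commutes with coning since the $T_x$-action is trivial on $[0,1)$) has the structure of a topological stratified pseudomanifold of dimension $n-i$ near its cone vertex. Splicing, each point of $Q$ has a distinguished neighbourhood homeomorphic to $\mathbb{R}^{l+i}\times\mathring{c}(L')$ with $L'=L_x/T_x$ compact of dimension $n-i-1$ — exactly the local model required. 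Density of the top stratum in $Q$ follows from density of $X_{l+m+n}\setminus X_{l+m+n-2}$ in $X$ (Definition~\ref{def of T-pseudomanifold}-1) together with continuity and surjectivity of $\pi$, as in the proof of Proposition~\ref{inv of T-pseudomanifold}. The manifold stratified structure on each piece is read off from the local models: $Q_{l+i}\setminus Q_{l+i-1}$ corresponds under $\bar\varphi_x$ to $O^l\times O'\times\{\text{cone vertex}\}$, an $(l+i)$-manifold.

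The topological properties are handled separately and easily: $Q=X/T$ is a continuous image of the compact $X$, hence compact; Hausdorffness of $Q$ follows because $T$ is compact and acts on the Hausdorff space $X$ (orbits are closed and the quotient of a compact Hausdorff space by a compact group action is Hausdorff); second-countability of $Q$ follows from that of $X$ since $\pi$ is an open map (quotient by a compact group action is open) and the images of a countable base form a countable base.

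\textbf{Main obstacle.} The delicate point is the bookkeeping of dimensions and filtration indices when passing $\varphi_x$ to the quotient: one must check that the filtration $\mathfrak{Q}$ defined globally in \eqref{orbit filtration} matches, under $\bar\varphi_x$, the product filtration coming from the natural cone filtration \eqref{通常のcone filtration} on $\mathring{c}(L_x)/T_x$ (shifted appropriately by the $l+i$ from the Euclidean factors), and that the dimension count $2n-2i-1 \mapsto n-i-1$ under $L_x\rightsquigarrow L_x/T_x$ is consistent with the claim that $Q$ has the expected dimension $l+n$ and cone links of dimension $n-i-1$. This is the place where one genuinely uses that the $T_x$-action on the link $L_x$ is itself locally standard of the correct type (so that the codimensions halve correctly), and where Condition~(\ref{cond-2}) guarantees no orbit-dimension jump is skipped. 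Verifying that the local homeomorphisms $\bar\varphi_x$ are \emph{filtered} homeomorphisms — not merely homeomorphisms — in a way compatible across overlaps is the technical heart of the argument; everything else is routine point-set topology or a direct appeal to the induction hypothesis and to \cite[Theorem 2.10]{Pop00}.
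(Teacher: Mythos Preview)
Your proposal is correct, and it takes a genuinely different route from the paper. The paper's proof is very short: after checking the point-set properties (compactness, Hausdorffness, second-countability) exactly as you do, it disposes of the pseudomanifold structure in one sentence by invoking \cite[Theorem~3.4]{Pop00}, which asserts in general that the orbit space of a $G$-pseudomanifold (for $G$ a compact Lie group) is a topological stratified pseudomanifold. Since a locally standard $T$-pseudomanifold is, by the remark following Definition~\ref{def of T-pseudomanifold}, a special case of a $G$-pseudomanifold, the result follows immediately.

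What you do instead is give a direct, self-contained inductive proof that unpacks exactly what Popper's theorem says in this particular setting: you descend the local chart $\varphi_x$ to the quotient, identify the Euclidean factor as an open subset of $\mathbb{R}^{l+i}$ via radial coordinates on $(\mathbb{C}^\times)^i$, commute the cone with the quotient, and apply the induction hypothesis to the link $L_x$. This is more work, but it has the virtue of being self-contained and of making explicit the link $L_{\pi(x)}=L_x/T_x$ in the orbit space --- information the paper later needs anyway (e.g.\ in Section~\ref{section 12}). One small remark: your closing reference to \cite[Theorem~2.10]{Pop00} is slightly off-target --- that result concerns $(X,\mathfrak{X})$ itself being a topological stratified pseudomanifold, not its orbit space; the relevant citation for the shortcut is Theorem~3.4 of the same paper. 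Since you have already given a complete direct argument, you do not actually need either citation.
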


\begin{proof}
We first verify that $Q=X/T$ is a compact, Hausdorff and second-countable space.  
Since $X$ is compact and Hausdorff, and the action of the compact group $T$ on $X$ is continuous, the orbit space $Q$ is also compact and Hausdorff.
For second-countability, note that the orbit projection $\pi: X \to Q$ is continuous, open and surjective, and $X$ is second-countable.
Therefore, $Q$ is also second-countable.

We now verify that $(Q, \mathfrak{Q})$ is a topological stratified pseudomanifold.
By \cite[Theorem 3.4]{Pop00}, the orbit space of a $G$-pseudomanifold is a topological stratified pseudomanifold (where $G$ is a compact Lie group).
Since a locally standard $T$-pseudomanifold is a special case of a $G$-pseudomanifold with $G = T$, it follows that $(Q, \mathfrak{Q})$ is a topological stratified pseudomanifold.
\end{proof}

The following lemma shows that the isotropy subgroup $T_x$ of a point $x \in X$ depends only on the stratum containing $\pi(x)$.

\begin{lem}\label{lem of trivial bundle}
Let $X$ be a locally standard $T$-pseudomanifold, and $\pi: X \to Q$ be the orbit projection.
For each stratum $S \subset Q_{l+i} \setminus Q_{l+i-1}$ ($0 \leq i \leq n$), all points in $\pi^{-1}(S)$ have the same isotropy subgroup.
\end{lem}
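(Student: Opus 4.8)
The plan is to argue locally using the weakly equivariant charts provided by Definition \ref{def of T-pseudomanifold}-2, and then to use the connectedness of the stratum $S$ to propagate the conclusion. First I would fix $x \in \pi^{-1}(S)$ with $S \subset Q_{l+i} \setminus Q_{l+i-1}$, which means $\dim T(x) = i + (m-n)$, i.e. $\dim T_x = n - i$. By Remark \ref{assumption of subtorus} the isotropy group $T_x$ is a subtorus, hence is determined by its dimension together with its position as a subtorus; the content of the lemma is that this subtorus does not vary as the basepoint moves within $\pi^{-1}(S)$. The key observation is that in the chart $\varphi_x : U_x \to O^l \times (\Omega \times U(1)^{m-n}) \times \mathring{c}(L_x)$, the isotropy group is \emph{constant} on the preimage (under $\pi$) of the image of the stratum of $x$: indeed, by Remark \ref{restriction to free part} the chart carries $U_x \cap (X_{l+2i+(m-n)} \setminus X_{l+2(i-1)+(m-n)})$ to $O^l \times (\Omega \times U(1)^{m-n}) \times \{\text{cone vertex}\}$, and on this set the $T^m$-action factors as a free $U(1)^{i+(m-n)}$-action on the $\Omega \times U(1)^{m-n}$ factor times the (necessarily trivial-on-a-point, hence fixed) $T_x$-action on the cone vertex, so every point there has isotropy exactly $T_x$. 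Since a weakly equivariant homeomorphism changes isotropy groups only by an automorphism of $T^m$ and indeed (as in the proof of Proposition \ref{inv of T-pseudomanifold}) by an automorphism fixing $T_x$ pointwise because the action on the relevant factor is standard, the isotropy subgroup is literally constant on $U_x \cap \pi^{-1}(S)$, equal to $T_x$.

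Next I would globalize. Let $S_X$ denote the stratum of $X$ containing $x$, i.e. the connected component of $X_{l+2i+(m-n)} \setminus X_{l+2(i-1)+(m-n)}$ containing $x$; since $\pi$ is an open continuous surjection mapping this set onto $Q_{l+i}\setminus Q_{l+i-1}$ and carrying connected components to connected components (as used in the proof of Proposition \ref{inv of T-pseudomanifold}), we have $\pi(S_X) = S$ and $\pi^{-1}(S) \cap (X_{l+2i+(m-n)} \setminus X_{l+2(i-1)+(m-n)}) = S_X$; moreover $\pi^{-1}(S) = S_X$ since $\pi^{-1}(S) \subset X_{l+2i+(m-n)} \setminus X_{l+2(i-1)+(m-n)}$ by the definition of the orbit filtration. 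Define $A = \{\, y \in S_X \mid T_y = T_x \,\}$. By the local analysis of the previous paragraph, for each $y \in S_X$ the chart $U_y$ shows that $T_{y'} = T_y$ for all $y' \in U_y \cap S_X$; hence both $A$ and $S_X \setminus A$ are open in $S_X$. Since $x \in A$ and $S_X$ is connected, $A = S_X$, which is exactly the assertion that all points of $\pi^{-1}(S)$ have the same isotropy subgroup.

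The main obstacle I anticipate is the bookkeeping around ``weakly equivariant'' versus ``equivariant'': a priori $\varphi_x$ only conjugates the action by an automorphism $\psi$ of $T^m$, so strictly speaking it identifies $T_y$ for $y \in U_x$ with $\psi^{-1}$ of the model isotropy group, not with a fixed subtorus on the nose. The resolution is that the model isotropy group along the top-stratum locus of the model is the subtorus $\{1\}^{i+(m-n)} \times T^{n-i} \subset U(1)^{i+(m-n)} \times T^{n-i} = T^m$, a single fixed subtorus independent of which model point one looks at; pulling back through the fixed homeomorphism $\varphi_x$ and the fixed automorphism then yields a single fixed subtorus of $T^m$ valid throughout $U_x \cap S_X$, namely $T_x$ itself (evaluating at $y = x$ identifies that subtorus with $T_x$). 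One should also note the degenerate cases: when $i = n$ (so $T_x$ is trivial, $L_x = \mathring{c}(\emptyset)$ a point and the chart is onto an open subset of a torus times Euclidean space) the statement is the effectiveness/freeness built into condition \eqref{cond-3}, and when $n = l = 0$ it is immediate. With these cases dispatched and the automorphism subtlety handled, the connectedness argument closes the proof.
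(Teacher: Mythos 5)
Your proof is correct and follows essentially the same route as the paper's: local constancy of the isotropy group within the weakly equivariant chart (using Remark~\ref{restriction to free part} to land on the cone-vertex level, where the model action has constant isotropy), followed by a connectedness argument on $\pi^{-1}(S)$ to globalize. The one point where you diverge slightly in exposition is how you close the local step: you compute the isotropy group in the model directly (as the fixed subtorus $\{1\}\times T^{n-i}$ pulled back through $\varphi_x$ and the automorphism $\psi$, pinned down by evaluating at $y=x$), whereas the paper argues that $T_x$ fixes all of $U_x\cap\pi^{-1}(S)$, deduces $T_x\subset T_{x'}$, and concludes by the dimension count from Definition~\ref{def of T-pseudomanifold}-2~(b); both are valid, and the paper's version sidesteps some of the bookkeeping you do around $\psi$, although the observation that $\psi$ must stabilize $T_x$ (because $T_x$ is the isotropy of $x$ on both sides) is still implicitly needed and it is good that you make it explicit. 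Your remark that $\pi^{-1}(S)$ is a single stratum $S_X$ and your clopen decomposition are clean and match the paper's covering argument on $\pi^{-1}(S)$; the degenerate cases $i=n$ and $l=n=0$ you flag are handled uniformly by the same argument and do not actually require separate treatment.
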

\begin{proof}
By \eqref{orbit filtration}, we have $\pi^{-1}({S}) \subset X_{l+2i+(m-n)}\setminus X_{l+2(i-1)+(m-n)}$.
For any $x \in \pi^{-1}(S)$, by Definition \ref{def of T-pseudomanifold}-2 (b), we have $T_x \cong T^{n-i}$.
Take a $T$-invariant open neighborhood $U_x$ of $x$ that is connected.
Then we have $U_x \cap \pi^{-1}(S) = U_x \cap X_{l+2i+(m-n)} \setminus X_{l+2(i-1)+(m-n)}$.
By Remark \ref{restriction to free part}, we have
\[
U_x \cap X_{l+2i+(m-n)} \setminus X_{l+2(i-1)+(m-n)} \cong O^l \times \left( \Omega \times U(1)^{m-n} \right) \times \cone(\emptyset).
\]
By Definition \ref{def of T-pseudomanifold}-2~(c), $T/T_x$ acts freely on $O^l \times \left( \Omega \times U(1)^{m-n} \right) \times \cone(\emptyset)$, which implies that $T_x$ fixes $U_x \cap \pi^{-1}(S)$.
For any $x' \in \pi^{-1}(S)$, consider a $T$-invariant open neighborhood $U_{x'}$ of $x'$ such that $U_{x'} \subset U_x$.
Since $T_x$ fixes $U_{x'}$, we have $T_x \subset T_{x'}$.
By Definition \ref{def of T-pseudomanifold}-2~(b), $T_{x'} \cong T^{n-i}$.
Since both $T_x$ and $T_{x'}$ are subtori of $T$ with the same dimension, we conclude that $T_{x'}=T_x$.
Therefore, for any $x, y \in \pi^{-1}(S)$, if $U_x \cap U_y \neq \emptyset$, then $T_x=T_y$.
Since $\pi^{-1}(S)$ is connected, by considering a covering by $T$-invariant open neighborhoods, we conclude that all points in $\pi^{-1}(S)$ have the same isotropy subgroup.
\end{proof}

\begin{rem}\label{T_S}
By this lemma, for any $x \in X$, the isotropy group $T_x$, which is a subtorus, depends only on the stratum $S$ containing $\pi(x)$. Therefore, $T_x$ is often denoted by $T_{S}$ or $T_p$ for $p \in S$, depending on the context.
\end{rem}

We next describe {\it the Chern class of the locally standard $T$-pseudomanifold $X$}.
We prepare the following lemma.
\begin{lem}\label{lem: top strata is paracompact}
The set of top strata $Q\setminus Q_{l+n-1}$ is second-countable, paracompact and Hausdorff.
\end{lem}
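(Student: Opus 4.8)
The plan is to show the three properties—second-countability, paracompactness, and Hausdorffness—for $Q \setminus Q_{l+n-1}$ by inheriting them as a subspace of $Q$. First I would recall that by Proposition~\ref{structure of orbit space}, the filtered orbit space $(Q,\mathfrak{Q})$ is compact, Hausdorff, and second-countable. Since $Q_{l+n-1}$ is closed in $Q$ (it is a skeleton in the filtration $\mathfrak{Q}$, hence by construction a closed subset), the complement $Q \setminus Q_{l+n-1}$ is an open subset of $Q$. Both second-countability and the Hausdorff property pass to arbitrary subspaces, so $Q \setminus Q_{l+n-1}$ is automatically second-countable and Hausdorff.

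For paracompactness, the key observation is that a locally compact, Hausdorff, second-countable space is $\sigma$-compact and therefore paracompact (indeed, it is metrizable, being regular and second-countable by Urysohn's metrization theorem, and every metrizable space is paracompact). So it suffices to check that $Q \setminus Q_{l+n-1}$ is locally compact. This follows because $Q$ is compact Hausdorff, hence locally compact Hausdorff, and an open subset of a locally compact Hausdorff space is again locally compact Hausdorff. Thus $Q \setminus Q_{l+n-1}$, being open in $Q$, is locally compact Hausdorff, and combined with second-countability it is paracompact.

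The only point requiring a moment's care—and what I would regard as the main (minor) obstacle—is confirming that $Q_{l+n-1}$ is genuinely closed in $Q$, so that the complement is open; this is where local compactness of the top strata comes from. This is immediate from the definition of the filtration by orbit dimension: each $Q_{l+i} = X_{l+2i+(m-n)}/T$ is the image under the open continuous projection $\pi$ of the $T$-invariant closed set $X_{l+2i+(m-n)} = \{x \in X \mid \dim T(x) \le i+(m-n)\}$, and the set of points with orbit dimension at most a given bound is closed in $X$ (upper semicontinuity of orbit dimension for a compact Lie group action). Hence $Q_{l+n-1}$ is closed, $Q \setminus Q_{l+n-1}$ is open in $Q$, and the three desired properties follow as described above.
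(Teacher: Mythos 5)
Your proof is correct, but it takes a more elementary route than the paper. The paper establishes local compactness and Hausdorffness of $Q\setminus Q_{l+n-1}$ by citing the general structure theory of CS sets in Friedman (Lemmas 2.3.13 and 2.3.15), which apply to any topological stratified pseudomanifold regardless of compactness. You instead use the compactness of $Q$ (from Proposition~\ref{structure of orbit space}) together with the elementary fact that an open subspace of a compact Hausdorff space is locally compact Hausdorff. Both arguments then close the same way: second-countable $+$ locally compact Hausdorff $\Rightarrow$ paracompact. Your version is self-contained and avoids external citations, at the cost of being tied to compactness of $Q$; the paper's version is less work to state but leans on references. For the closedness of $Q_{l+n-1}$, your invocation of semicontinuity of orbit dimension is overkill and slightly misstated (the orbit-dimension function $x\mapsto\dim T(x)$ is \emph{lower} semicontinuous, so sublevel sets $\{x\mid\dim T(x)\le k\}$ are closed; ``upper'' is a slip, though the conclusion you draw is right). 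The cleaner route to closedness is simply that $(Q,\mathfrak{Q})$ is by Proposition~\ref{structure of orbit space} a topological stratified pseudomanifold, whose skeleta are closed by definition.
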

\begin{proof}
By \cite[Lemma 2.3.13 and Lemma 2.3.15]{Fri20}, $Q\setminus Q_{l+n-1}$ is locally compact and Hausdorff. Moreover, since $Q$ is second-countable (see Proposition~\ref{structure of orbit space}), its subspace $Q\setminus Q_{l+n-1}$ is also second-countable.
Since every second-countable locally compact Hausdorff space is paracompact, it follows that $Q\setminus Q_{l+n-1}$ is second-countable, paracompact and Hausdorff.
\end{proof}

By Definition~\ref{def of T-pseudomanifold}-2~(b) and Remark~\ref{assumption of subtorus}, for any $x \in \pi^{-1}(Q\setminus Q_{l+n-1})$, the isotropy group $T_x = \{ 1 \}$ is the identity group.
Therefore, the map
\[
\pi|_{\pi^{-1}(Q\setminus Q_{l+n-1})} : \pi^{-1}(Q\setminus Q_{l+n-1}) \to Q\setminus Q_{l+n-1}
\]
is a $T^m$-principal bundle over $Q\setminus Q_{l+n-1}$.
The isomorphism classes of $T^m$-principal bundles over $Q\setminus Q_{l+n-1}$ are classified by the homotopy set $[Q\setminus Q_{l+n-1}, BT^m]$ (see \cite[Theorem 3.5.1]{RS17}), where $BT^m \cong (\mathbb{C}P^{\infty})^{m}$ is the classifying space of $T^m$.
We have
\[
[Q\setminus Q_{l+n-1}, BT^m] = [Q\setminus Q_{l+n-1}, K(\mathbb{Z}, 2)^m],
\]
where $K(\mathbb{Z}, 2)$ is the Eilenberg-MacLane space. By \cite[p.32]{Mor75}, since $\mathbb{Z}$ is an abelian group, we obtain
\[
[Q\setminus Q_{l+n-1}, K(\mathbb{Z}, 2)^m] \cong \check{H}^2(Q\setminus Q_{l+n-1}; \mathbb{Z}^m),
\]
where $\check{H}^2(Q\setminus Q_{l+n-1};\mathbb{Z}^m)$ denotes the \v{C}ech cohomology. Since $Q\setminus Q_{l+n-1}$ is paracompact Hausdorff (see Lemma~\ref{lem: top strata is paracompact}), by \cite[Chap.~6]{Spa66} we have
\[
\check{H}^2(Q\setminus Q_{l+n-1}; \mathbb{Z}^m) \cong H^2(Q\setminus Q_{l+n-1};\mathbb{Z}^m),
\]
where $H^2(Q\setminus Q_{l+n-1};\mathbb{Z}^m)$ denotes the singular cohomology.
Therefore, the isomorphism classes of $T^m$-principal bundles over $Q\setminus Q_{l+n-1}$ are classified by a cohomology class in $H^2(Q\setminus Q_{l+n-1};\mathbb{Z}^m)$.
We call this cohomology class {\it the Chern class of the principal bundle $\pi^{-1}(Q\setminus Q_{l+n-1})$}.

In this paper, we assume the following condition:
\begin{align}\label{condition of top strata}
\text{the top strata }Q\setminus Q_{l+n-1} \text{ is homotopy equivalent to } Q \text{ itself}.
\end{align}
\begin{rem}
This setting covers important examples of topological stratified pseudomanifolds, such as convex polytopes and the orbit space of a locally standard $T$-manifold.
\end{rem}
Under this assumption, the inclusion
\[
i : Q\setminus Q_{l+n-1} \to Q
\]
is a homotopy equivalence. This implies that the induced map
\begin{align}\label{iso induced by homotopy equivalence}
i^{\ast}: H^2(Q; \mathbb{Z}^m) \to H^2(Q\setminus Q_{l+n-1}; \mathbb{Z}^m)
\end{align}
is an isomorphism.
We are now ready to define the Chern class of the locally standard $T$-pseudomanifold $X$.

\begin{definition}\label{def of Chern class}
Let $X$ be a locally standard $T$-pseudomanifold, and $\pi:X \to Q$ be the orbit projection. Assume that the top strata $Q \setminus Q_{l+n-1}$ is homotopy equivalent to $Q$ itself.
The cohomology class $c \in H^2(Q; \mathbb{Z}^m)$, whose pullback to $H^2(Q\setminus Q_{l+n-1}; \mathbb{Z}^m)$ is the Chern class of the principal bundle $\pi^{-1}(Q\setminus Q_{l+n-1})$, is called the {\it Chern class of the locally standard $T$-pseudomanifold $X$}.
\end{definition}

\begin{rem}\label{trivial cohomology}
Note that the Chern class of a locally standard $T$-pseudomanifold can be defined if the map in \eqref{iso induced by homotopy equivalence} is an isomorphism.
In particular, if $H^2(Q; \mathbb{Z}^m)=H^2(Q\setminus Q_{l+n-1}; \mathbb{Z}^m)=0$, then the Chern class is trivial.
\end{rem}

\section{Characteristic functor}\label{section 5}
Let $X$ be a locally standard $T$-pseudomanifold, and let $\pi : X \to Q$ be the orbit projection.

By Lemma \ref{lem of trivial bundle}, we obtain a correspondence $\lambda$ that assigns to each stratum its isotropy group, which is a subtorus.
We explain that $\lambda$ defines a functor, called a {\it characteristic functor of $X$}. In order to define a characteristic functor of $X$, we begin by defining a category of strata of $Q$ (where an $i$-dimensional stratum is a connected component of $Q_i \setminus Q_{i-1}$) as follows.

\begin{definition}[poset category]\label{def of poset category}
Let $Q$ be a topological stratified pseudomanifold (see Definition~\ref{def of pseudomanifold}).
The set of strata of $Q$ forms a poset under the inclusion $S_1 \subset \overline{S_2}$. We denote the poset category of this poset by $\mathcal{S}(Q)$, i.e., an object is a stratum, and there is a morphism $S_{1}\to S_{2}$ if and only if $S_{1} \subset \overline{S}_{2}$.
\end{definition}

\begin{ex}\label{ex2}
The face category of the triangle $P$, which is shown in the figure below, consists of the three vertices $v_1,v_2,v_3$, the three edges $e_1, e_2, e_3$, and the $2$-dimensional face $P$ itself. The morphisms are given by inclusions among these faces. For instance, $v_1 \to e_2$ and $e_1 \to P$. The right diagram shows the poset category $\mathcal{S}(P)$ of $P$, where the notations $\mathring{e}_{i}$ for $i=1,2,3$ and $\mathring{P}$ denote their relative interiors.

\begin{minipage}{0.5\textwidth}
\centering
\begin{tikzpicture}[scale=0.75]
 % グレーの三角形（塗りつぶしと辺）
    \fill[gray!20] (0,0) -- (6,0) -- (3,4) -- cycle; % グレーの三角形
    \draw[thick] (0,0) -- (6,0) -- (3,4) -- cycle; % 三角形の辺

    % 三角形の頂点に黒点を描画
    \fill (0,0) circle (2pt);
    \fill (6,0) circle (2pt);
    \fill (3,4) circle (2pt);
    \node at (0,-0.5){${v}_2$};
    \node at (6,-0.5) {${v}_3$};
   \node at (3,-0.5) {$\mathring{e}_1$};
    \node at (3,4.3) {${v}_1$};
    \node at (5.5, 2.1) {$\mathring{e}_2$};
    \node at (0.5, 2.1){$\mathring{e}_3$};
    \node at (3, 1.5) {$\mathring{P}$};
\end{tikzpicture}
\end{minipage}
\begin{minipage}{0.5\textwidth}
\centering
\[
\begin{tikzcd}[ampersand replacement=\&]
\& \mathring{P} \& \\
\mathring{e}_3 \ar{ur} \& \mathring{e}_2 \ar{u} \& \mathring{e}_1 \ar{ul} \\
{v}_1 \ar{u} \ar{ur} \& {v}_2 \ar{lu} \ar{ur} \& {v}_3 \ar{ul} \ar{u}
\end{tikzcd}
\]
\end{minipage}
\end{ex}

Let $\mathcal{T}$ be the category of torus subgroups of $T^m$, whose morphism $T_1 \to T_2$ is defined by the inclusion $T_1 \subset T_2$, where a torus subgroup of $T^m$ (also called a subtorus) means a connected closed subgroup of $T^m$. 

We prove the following proposition to define the characteristic functor of $X$.
Here we use the notation introduced in Remark \ref{T_S}.

\begin{prop}\label{prop for functor}
Let $X$ be a locally standard $T$-pseudomanifold, and let $\pi : X \to Q$ be the orbit projection.
For two strata $S_1, S_2 \in \mathcal{S}(Q)$, if $S_1 \subset \overline{S_2}$, then $T_{S_1} \supset T_{S_2}$.
\end{prop}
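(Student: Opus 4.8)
The plan is to argue locally, using the cone-chart description of points near a stratum and the fact (Lemma~\ref{lem of trivial bundle}) that the isotropy subgroup is constant along $\pi^{-1}(S)$ for each stratum $S$. Write $i_1 = \dim S_1$ and $i_2 = \dim S_2$ in the convention where an $i$-dimensional stratum sits in $Q_{l+i}\setminus Q_{l+i-1}$; since $S_1 \subset \overline{S_2}$ and $S_1 \neq S_2$ forces $i_1 < i_2$, we have $T_{S_1}\cong T^{n-i_1}$ and $T_{S_2}\cong T^{n-i_2}$ by Definition~\ref{def of T-pseudomanifold}-2~(b), so at least the dimension inequality $\dim T_{S_1} \ge \dim T_{S_2}$ holds automatically; the content is the \emph{inclusion} $T_{S_2}\subset T_{S_1}$. (If $S_1 = S_2$ the claim is trivial, so assume $S_1 \subsetneq \overline{S_2}$.)

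Pick a point $p \in S_1$ and a lift $x \in \pi^{-1}(p)$, so $T_x = T_{S_1}$, and take the chart $\varphi_x : U_x \xrightarrow{\sim} O^l \times (\Omega \times U(1)^{m-n}) \times \mathring{c}(L_x)$ from Definition~\ref{def of T-pseudomanifold}-2~(c), weakly equivariant for $T^m \cong U(1)^{i_1+(m-n)} \times T_{S_1}$. Because $S_1 \subset \overline{S_2}$, the open set $U_x$ meets $\pi^{-1}(S_2)$; choose $y \in U_x \cap \pi^{-1}(S_2)$, so $T_y = T_{S_2}$. The strategy is now to read off $T_y$ from the model: under $\varphi_x$, the $T^m$-action on the first two factors $O^l \times (\Omega \times U(1)^{m-n})$ is through the \emph{free} $U(1)^{i_1+(m-n)}$-action, while on $\mathring{c}(L_x)$ the action is through $T_{S_1}$ via its action on the link $L_x$ (trivial on the $[0,1)$-coordinate). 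Hence for any point $z$ of the model, its isotropy group (for the $T^m$-action pulled back through $\varphi_x$) is contained in $T_{S_1}$: the $U(1)^{i_1+(m-n)}$-factor acts freely on the first two coordinates, so no nonzero element of it can fix $z$, forcing the isotropy group of $z$ to lie in the complementary factor $T_{S_1}$. Since weak equivariance of $\varphi_x$ means the isotropy subgroups of $x$-side points and model-side points agree up to the ambient automorphism of $T^m$ used to define the model action, and that automorphism is exactly the isomorphism $T^m \cong T^m/T_{S_1}\times T_{S_1}$ (which restricts to an isomorphism on the $T_{S_1}$-summand), we conclude $T_y \subset T_{S_1}$, i.e. $T_{S_2}\subset T_{S_1}$, which is the assertion.

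The one point requiring a little care — and the step I expect to be the main obstacle — is justifying that $U_x$ genuinely meets $\pi^{-1}(S_2)$ and, more importantly, that the weak equivariance of $\varphi_x$ transports isotropy groups correctly: a weakly equivariant homeomorphism intertwines the two actions only up to an automorphism $\rho$ of $T^m$, so a priori it tells us $T_y = \rho^{-1}(T_{\varphi_x(y)}^{\mathrm{model}})$, and one must check that $\rho$ is compatible with the decomposition $T^m \cong U(1)^{i_1+(m-n)}\times T_{S_1}$ built into the model. This is exactly what Definition~\ref{def of T-pseudomanifold}-2~(c) asserts: the model action is defined \emph{via} the isomorphism $T^m\cong T^m/T_x \times T_x$, and $\varphi_x$ is weakly equivariant with respect to \emph{that} action, so $\rho$ is (or can be taken to be) the identity modulo this fixed identification; the same reasoning already appears in the proof of Lemma~\ref{lem of trivial bundle} when deducing $T_x \subset T_{x'}$ from the free action of $T/T_x$ on $O^l\times(\Omega\times U(1)^{m-n})\times\mathring{c}(\emptyset)$, and I would invoke it in the same spirit here. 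With that in hand the inclusion $T_{S_2}\subset T_{S_1}$ follows, completing the proof.
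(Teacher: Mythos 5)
Your proof is correct in substance but takes a genuinely different and considerably longer route than the paper's. The paper's proof is a three-line global continuity argument: $T_{S_2}$ fixes $\pi^{-1}(S_2)$ pointwise (Lemma~\ref{lem of trivial bundle}), and since the fixed-point set of $T_{S_2}$ is closed and the orbit projection is open (so $\pi^{-1}(\overline{S_2}) = \overline{\pi^{-1}(S_2)}$), $T_{S_2}$ also fixes $\pi^{-1}(\overline{S_2}) \supset \pi^{-1}(S_1)$, whence $T_{S_2} \subset T_{S_1}$. This needs no reference to the cone charts at all. Your argument instead zooms into a chart $\varphi_x$ at a lift $x$ of a point of $S_1$ and reads off the containment from the structure of the model $O^l \times (\Omega\times U(1)^{m-n}) \times \mathring{c}(L_x)$; this is essentially a rerun of the chart analysis in the proof of Lemma~\ref{lem of trivial bundle}, and while it works, it needs the following detail made precise, which you only gesture at. Weak equivariance gives $T_y = \psi^{-1}(T^{\mathrm{model}}_{\varphi_x(y)})$ for some automorphism $\psi$ of $T^m$, and you correctly observe that every model isotropy group lies in the $T_x$-factor of the splitting $T^m \cong T^m/T_x \times T_x$; but to conclude $T_y \subset T_x$ you still need $\psi(T_x) = T_x$. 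That does hold — apply the same identity at $z = x$: since $T^{\mathrm{model}}_{\varphi_x(x)} \subset T_x$ and both $\psi(T_x)$ and $T_x$ are subtori of the same dimension $n-i_1$, they must coincide — but you should say so rather than folding it into the phrase "that automorphism is exactly the isomorphism $T^m \cong T^m/T_{S_1}\times T_{S_1}$," which conflates an automorphism of $T^m$ with an isomorphism onto a product and doesn't literally parse. In short: your local approach is valid (with this one loose end tightened), but the paper's global continuity argument is cleaner, shorter, and avoids any dependence on the fine structure of the local model.
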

\begin{proof}
Since $S_1 \subset \overline{S_2}$, we have $\pi^{-1}(S_1) \subset \pi^{-1}(\overline{S_2})$.
Since $T_{S_2}$ fixes $\pi^{-1}(\overline{S_2})$, $\pi^{-1}(S_1)$ is also fixed by $T_{S_2}$.
This means $T_{S_2} \subset T_{S_1}$.
\end{proof}

By this proposition, the following definition is well-defined.

\begin{definition}[characteristic functor of a locally standard $T$-pseudomanifold]\label{def of char functor}
We define {\it characteristic functor of $X$} by
\[
\begin{array}{rccc}\lambda:&\mathcal{S}(Q)^{\mathrm{op}}&\to&\mathcal{T}\\
  & \rotatebox{90}{$\in$}&               & \rotatebox{90}{$\in$} \\
&S &\mapsto&T_{S}\end{array}
\]
Here, $\mathcal{S}(Q)^{\mathrm{op}}$ denotes the opposite category of $\mathcal{S}(Q)$.
\end{definition}

\begin{rem}
This definition generalizes the characteristic function introduced in \cite{DJ91}.
In Construction \ref{from a characteristic function}, we show that a characteristic function induces a characteristic functor.
\end{rem}

\begin{prop}\label{associated dimension}
Let $\lambda : \mathcal{S}(Q)^{\mathrm{op}} \to \mathcal{T}$
be the characteristic functor of $X$.
For $0 \le i \le n$, the following condition is satisfied:
\begin{itemize}
\item
for every $(l+n-i)$-dimensional stratum $S$ (i.e., $\mathrm{codim} S=i$),
$\lambda(S) \subset T^m$ is an $i$-dimensional subtorus.
\end{itemize}
\end{prop}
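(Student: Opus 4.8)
The plan is to read off the dimension of $\lambda(S)$ directly from the local model guaranteed by Definition~\ref{def of T-pseudomanifold}-2. Let $S$ be an $(l+n-i)$-dimensional stratum of $Q$, so $S$ is a connected component of $Q_{l+n-i}\setminus Q_{l+n-i-1}$. By the indexing convention in \eqref{orbit filtration}, namely $Q_{l+j}=X_{l+2j+(m-n)}/T$, we have $\pi^{-1}(S)\subset X_{l+2(n-i)+(m-n)}\setminus X_{l+2(n-i-1)+(m-n)}$. Writing $j:=n-i$, a point $x\in\pi^{-1}(S)$ lies in $X_{l+2j+(m-n)}\setminus X_{l+2(j-1)+(m-n)}$, so Definition~\ref{def of T-pseudomanifold}-2(b) applies with this $j$ in place of the ``$i$'' appearing there, giving $T_x\cong T^{n-j}=T^{i}$. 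By Lemma~\ref{lem of trivial bundle} (and Remark~\ref{T_S}) this isotropy subgroup depends only on $S$, so $\lambda(S)=T_S=T_x$, which is therefore an $i$-dimensional subtorus of $T^m$ (it is a subtorus by Remark~\ref{assumption of subtorus}).

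The only genuine point to be careful about is the bookkeeping of indices: the symbol ``$i$'' plays two different roles, once as the codimension of the stratum $S$ in $Q$ (the parameter in the statement) and once as the orbit-dimension index in \eqref{dimension set} and Definition~\ref{def of T-pseudomanifold}. I would therefore rename the latter to $j$ throughout the argument, establish $j=n-i$ from $\dim S=l+(n-i)$ together with the correspondence $\dim T(x)=j+(m-n)$ coming from \eqref{dimension set} and \eqref{orbit filtration}, and then quote Definition~\ref{def of T-pseudomanifold}-2(b) to get $\dim T_x=n-j=i$. The fact that $T_x$ is connected (a subtorus) is part of the standing assumption in Remark~\ref{assumption of subtorus}, so no extra work is needed there.

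I do not anticipate any serious obstacle; the content is essentially that the local model in the definition is dimension-preserving, so the codimension of a stratum in $Q$ equals the dimension of its isotropy torus. The mildly delicate part is simply to make the translation between the filtration indices of $X$ and those of $Q$ transparent, since the two filtrations are reindexed relative to each other in \eqref{orbit filtration}.
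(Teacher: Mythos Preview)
Your proposal is correct and follows essentially the same approach as the paper: both arguments locate $\pi^{-1}(S)$ in the correct layer of the filtration by orbit dimension and read off $\dim T_x=i$ from there. The only cosmetic difference is that you quote Definition~\ref{def of T-pseudomanifold}-2(b) for $T_x\cong T^{n-j}$, whereas the paper computes $\dim T_x=m-\dim T(x)$ directly from \eqref{dimension set} and the identity $T(x)\cong T^m/T_x$; your renaming $j=n-i$ to disambiguate the two uses of ``$i$'' is a helpful clarification.
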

\begin{proof}
Let $S \subset Q_{l+n-i} \setminus Q_{l+n-i-1}$ be an $(l+n-i)$-dimensional stratum.
The preimage $\pi^{-1}(S)$ is a subset of $X_{l+2(n-i)+(m-n)} \setminus X_{l+2(n-i)+(m-n)-1}$.
By the definition of $X_{l+2(n-i)+(m-n)}$ and $X_{l+2(n-i)+(m-n)-1}$, we have
\[
X_{l+2(n-i)+(m-n)} \setminus X_{l+2(n-i)+(m-n)-1}
=\{ x \in X \mid \dim T(x) = (n-i)+(m-n) \}
\]
Because of $T(x) \cong T^{m}/T_x$, the dimension of $T_x=T_{S}=\lambda(S)$ is $m-\left( (n-i)+(m-n) \right)=i$.
\end{proof}

Motivated from the above properties, we next introduce the notion of characteristic data.

\begin{definition}[characteristic data]\label{def of char data}
A {\it characteristic data} is a triple $(Q, \lambda, c)$ satisfying the following conditions:
\begin{itemize}
\item
$Q=(Q, \mathfrak{Q})$ is an $(l+n)$-dimensional, second-countable, compact Hausdorff topological stratified pseudomanifold whose top strata $Q\setminus Q_{l+n-1}$ is homotopy equivalent to $Q$;

\item 
$\lambda : \mathcal{S}(Q)^{\mathrm{op}} \to \mathcal{T}$ is a functor satisfying the condition of Proposition \ref{associated dimension}:
for each $(l+n-i)$-dimensional stratum $S$, we have $\lambda(S) \subset T^m$ is an $i$-dimensional subtorus, where $m\ge n$;

\item
$c \in H^2(Q; \mathbb{Z}^m)$ is a cohomology class.
\end{itemize}
\end{definition}

\begin{rem}\label{not orbit space}
Note that in the above definition, $Q$ is not necessarily the orbit space of a locally standard $T$-pseudomanifold.
\end{rem}

\section{Weak isomorphism between characteristic datum}\label{section weak isomorphism}
Let $(Q, \lambda, c)$ and $(Q' , \lambda', c')$ be characteristic datum.
We define a {\it weak isomorphism between characteristic datum}.
We begin by recalling the notions of {\it stratified map} and {\it stratified homeomorphism} (see \cite[Chapter 2.9]{Fri20} for details).

\begin{definition}[stratified map]\label{def of stratified map}
Let $Q$, $Q'$ be manifold stratified spaces, and $f : Q \to Q'$ be a continuous map. We say that $f$ is a {\it stratified map} if for each stratum $S \subset Q$, there exists a unique stratum $S' \subset Q'$ such that $f(S) \subset S'$.
\end{definition}

\begin{definition}[stratified homeomorphism]\label{def of stratified homeo}
Let $Q$ and $Q'$ be manifold stratified spaces. A stratified map $f:Q\to Q'$ is said to be a {\it stratified homeomorphism} if it satisfies the following conditions:
\begin{enumerate}
\item $f$ is a homeomorphism;
\item $f^{-1}:Q'\to Q$ is also a stratified map (and a homeomorphism).
\end{enumerate}
\end{definition}

\begin{rem}
In \cite{Fri20}, a stratified homeomorphism is defined as a map between {\it filtered spaces} (see \cite[Chapter 2.2]{Fri20}) that satisfies the additional condition:
\begin{itemize}
\item for each stratum $S \subset Q$, we have $\mathrm{codim} S = \mathrm{codim} f(S)$.
\end{itemize}
However, in the setting of manifold stratified spaces, each stratum $S$ is a topological manifold (see Definition \ref{def of manifold stratified space}). Since $f$ is a homeomorphism, codimensions are preserved. Therefore, this condition is automatically satisfied.

\end{rem}

To define a {\it weak isomorphism between characteristic datum}, we first prove the following proposition. 

\begin{prop}\label{prop: stratified homeo induces a poset iso}
A stratified homeomorphism $f: Q \to Q'$ induces a poset isomorphism
\[
\mathcal{S}(f): \mathcal{S}(Q)^{\mathrm{op}} \to \mathcal{S}(Q')^{\mathrm{op}}.
\]
\end{prop}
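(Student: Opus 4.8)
The plan is to build the poset isomorphism $\mathcal{S}(f)$ directly from $f$ and then verify the two properties of a poset isomorphism: that it is an order-preserving bijection whose inverse is also order-preserving. Since a stratified homeomorphism sends each stratum $S\subset Q$ into a unique stratum $S'\subset Q'$, and since $f$ is a homeomorphism with $f^{-1}$ also stratified, I first observe that $f$ actually sends strata \emph{onto} strata. Indeed, $f(S)\subset S'$ for some stratum $S'$; applying the stratified map $f^{-1}$ to $S'$ gives $f^{-1}(S')\subset S''$ for some stratum $S''$ of $Q$, and since $S\subset f^{-1}(S')$ we get $S\subset S''$, forcing $S''=S$ because distinct strata are disjoint. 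Hence $f^{-1}(S')\subset S$, i.e. $S'\subset f(S)$, so $f(S)=S'$. Therefore $S\mapsto f(S)$ is a well-defined map on objects, and the same argument applied to $f^{-1}$ shows it is a bijection with inverse $S'\mapsto f^{-1}(S')$.

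Next I would check that this object-level bijection is compatible with the morphisms, i.e. it extends to a functor of the poset categories. A morphism $S_1\to S_2$ in $\mathcal{S}(Q)$ means $S_1\subset\overline{S_2}$. Since $f$ is a homeomorphism, it commutes with closures: $f(\overline{S_2})=\overline{f(S_2)}$. Thus $f(S_1)\subset f(\overline{S_2})=\overline{f(S_2)}$, which is exactly the morphism $f(S_1)\to f(S_2)$ in $\mathcal{S}(Q')$. Running the identical argument for the homeomorphism $f^{-1}$ shows that $S_1\subset\overline{S_2}$ if and only if $f(S_1)\subset\overline{f(S_2)}$, so $\mathcal{S}(f)$ and its inverse are both order-preserving; hence $\mathcal{S}(f)$ is a poset isomorphism. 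Finally, since $\mathcal{S}(f)$ is built by the same formula on objects, passing to opposite categories is immediate: a morphism $S_2\to S_1$ in $\mathcal{S}(Q)^{\mathrm{op}}$ is the reversed morphism $S_1\to S_2$ in $\mathcal{S}(Q)$, and the above shows $\mathcal{S}(f)$ respects this, giving the claimed isomorphism $\mathcal{S}(Q)^{\mathrm{op}}\to\mathcal{S}(Q')^{\mathrm{op}}$.

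There is essentially no deep obstacle here; the content is bookkeeping about strata, closures, and the definition of a stratified homeomorphism. The one point that requires a little care—and which I expect to be the subtlest step—is the argument that a stratified homeomorphism sends strata \emph{onto} (not merely \emph{into}) strata: this is where both halves of the definition of stratified homeomorphism (that $f$ is a stratified map \emph{and} that $f^{-1}$ is a stratified map) get used, together with the disjointness of distinct strata. Once that is established, the preservation of the order relation is a one-line consequence of $f\overline{S}=\overline{f(S)}$ for a homeomorphism. I would also remark, if needed for later use, that since each stratum is a topological manifold and $f$ is a homeomorphism, $\dim S=\dim f(S)$, so $\mathcal{S}(f)$ preserves the dimension (equivalently codimension) grading on the poset of strata, though this is not strictly required for the statement as phrased.
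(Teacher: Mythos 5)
Your proof is correct and follows essentially the same approach as the paper's. The one place you go further is in carefully justifying that a stratified homeomorphism sends strata \emph{onto} (not merely into) strata --- a step the paper's proof simply asserts ("since $f$ is a stratified homeomorphism, $f(S)$ is a stratum of $Q'$") without spelling out that this uses both that $f$ and $f^{-1}$ are stratified maps together with the disjointness of distinct strata; your elaboration fills a small gap that a careful reader of the paper might otherwise have to reconstruct.
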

\begin{proof}
Let $S \subset Q$ be a stratum. Since $f$ is a stratified homeomorphism, $f(S) (=:S')$ is a stratum of $Q'$.
By the definition of stratified homeomorphism, the inverse $f^{-1}$ is also a stratified homeomorphism. Therefore, the map 
\[
\begin{array}{rccc}
\mathcal{S}(f) :&\mathcal{S}(Q)^{\mathrm{op}} &\to &\mathcal{S}(Q')^{\mathrm{op}} \\
 & \rotatebox{90}{$\in$}&               & \rotatebox{90}{$\in$} \\
&S &\mapsto&S'. \end{array}
\]
is a bijection. 
It remains to show that $\mathcal{S}(f)$ is a poset isomorphism (see also Definition~\ref{def of poset category}); that is, for strata
$S, R \in \mathcal{S}(Q)$, we have 
\begin{align*}
S \subset \overline{R}
\iff
S' \subset \overline{R'},
\end{align*}
where $S'=f(S)$ and $R'=f(R)$.
Suppose that $S \subset \overline{R}$.
Since $f$ is continuous, it follows that
\[
S'=f(S) \subset f(\overline{R})\subset \overline{f(R)}=\overline{R'}.
\]
The converse follows in the same way, using the continuity of $f^{-1}$.
\end{proof}

This proposition leads us to define the following notion.

\begin{definition}\label{def of isomorphism of char pairs}
Let $f: Q \to Q'$ be a stratified homeomorphism.
We say that $f$ is a {\it weak isomorphism between characteristic datum} $(Q, \lambda, c)$ and $(Q', \lambda', c')$ if $c=f^{\ast}(c')$, where $f^{\ast} : H^2(Q'; \mathbb{Z}^m) \to H^2(Q; \mathbb{Z}^m)$ is the induced isomorphisim from $f$, and there exists an automorphism $\psi: T^m \to T^m$ such that the following diagram commutes (i.e., for each stratum $S \in \mathcal{S}(Q)^{\mathrm{op}}$, $\Psi \circ \lambda(S)= \lambda' \circ \mathcal{S}(f)(S)$):
\begin{align}\label{comm diagram: weak isomorphism}
\begin{tikzcd}[ampersand replacement=\&]
\mathcal{S}(Q)^{\mathrm{op}} \ar{rr}{\mathcal{S}(f)}[']{\cong}  \ar{d}[']{\lambda} \&
   \&   
      \mathcal{S}(Q')^{\mathrm{op}} \ar{d}{\lambda'} \\
    \mathcal{T}  \ar{rr}{\Psi}[']{\cong}
  \& \&
    \mathcal{T}
\end{tikzcd}
\end{align}
where $\Psi$ is the isomorphism functor determined by $\Psi(T_S)=\psi(T_S)$ for $T_S \in  \mathcal{T}$.
If there exists such a weak isomorphism between $(Q, \lambda, c)$ and $(Q',\lambda', c')$, then we say that $(Q, \lambda, c)$ is {\it weakly isomorphic} to $(Q',\lambda', c')$.
When the automorphism $\psi$ is the identity, we simply say {\it isomorphism} instead of weak isomorphism, and {\it isomorphic} instead of weakly isomorphic.
\end{definition}

Note that if $f: (Q,\lambda,c) \to (Q', \lambda',c')$ is a weak isomorphism, then its inverse $f^{-1}:(Q', \lambda',c') \to (Q,\lambda,c)$ is also a weak isomorphism.

\subsection{Uniqueness of the characteristic data}
Let $X$ and $X'$ be locally standard $T$-pseudomanifolds, and let $\mathfrak{X}$ and $\mathfrak{X'}$ denote their respective filtrations by orbit dimension.
In this subsection, we show that if $X$ and $X'$ are (weakly) equivariantly homeomorphic, then their characteristic datum $(X/T, \lambda, c)$ and $(X'/T, \lambda', c')$ are (weakly) isomorphic (see Proposition \ref{prop in 4}).
To prove Proposition \ref{prop in 4}, we first prepare two lemmas.

\begin{lem}\label{equivariant stratum}
Let $f: X\to X'$ be a (weakly) $T$-equivariant homeomorphism between locally standard $T$-pseudomanifolds.
Then, for any stratum $S$ of $X_{l+2i+(m-n)}\setminus X_{l+2(i-1)+(m-n)}$, the image $S':=f(S)$ is a stratum of $X'_{l+2i+(m-n)}\setminus X'_{l+2(i-1)+(m-n)}$.
Moreover, $S$ and $S'$ are (weakly) $T$-equivariantly homeomorphic.
\end{lem}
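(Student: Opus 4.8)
The plan is to establish the two assertions separately. First, I would show that $f$ maps $X_{l+2i+(m-n)}$ onto $X'_{l+2i+(m-n)}$ for every $i$. Since $f$ is a weak $T$-equivariant homeomorphism, there is an automorphism $\psi$ of $T^m$ such that $f(t\cdot x)=\psi(t)\cdot f(x)$; because $\psi$ is a group automorphism, it carries the orbit $T(x)$ homeomorphically onto $T(f(x))$, so $\dim T(x)=\dim T(f(x))$. By the definition \eqref{dimension set} of the skeleton $X_{l+2i+(m-n)}$ purely in terms of orbit dimension, this immediately gives $f(X_{l+2i+(m-n)})=X'_{l+2i+(m-n)}$ and hence $f(X_{l+2i+(m-n)}\setminus X_{l+2(i-1)+(m-n)})=X'_{l+2i+(m-n)}\setminus X'_{l+2(i-1)+(m-n)}$.

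Next I would promote this to strata. Because $f$ restricts to a homeomorphism between the two open subsets $X_{l+2i+(m-n)}\setminus X_{l+2(i-1)+(m-n)}$ and $X'_{l+2i+(m-n)}\setminus X'_{l+2(i-1)+(m-n)}$, and a homeomorphism takes connected components to connected components, the image of a stratum $S$ (a connected component of the source) is a connected component $S'$ of the target, i.e.\ a stratum of $X'_{l+2i+(m-n)}\setminus X'_{l+2(i-1)+(m-n)}$. This uses only that these subsets are exactly the open skeleton differences, which is condition \eqref{cond-1} guaranteeing $(X,\mathfrak{X})$ and $(X',\mathfrak{X}')$ are manifold stratified spaces, so the strata are well-defined.

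For the final claim, $S$ and $S'=f(S)$ are $T$-invariant (each is a union of orbits, since orbit dimension is constant on a stratum and $f$ intertwines the actions via $\psi$), so the restriction $f|_S:S\to S'$ is well-defined, a homeomorphism, and satisfies $f(t\cdot x)=\psi(t)\cdot f(x)$ for all $x\in S$; thus it is a weak $T$-equivariant homeomorphism, which reduces to an ordinary $T$-equivariant homeomorphism when $\psi=\mathrm{id}$. The only mild subtlety — and the step I expect to need the most care — is verifying that the strata are genuinely $T$-invariant: a priori a connected component of a $T$-invariant set need not be $T$-invariant, but here one argues that $T$ is connected, so $t\cdot S$ is connected and meets $S$ (it has the same dimension and shares orbits), forcing $t\cdot S=S$; alternatively, one invokes that the $T$-action permutes the components of each skeleton difference and fixes each one because $T$ is connected. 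Everything else is routine bookkeeping with the orbit-dimension filtration.
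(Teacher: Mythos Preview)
Your proposal is correct and follows essentially the same approach as the paper: both argue that a (weakly) equivariant homeomorphism preserves orbit dimension, hence preserves each skeleton difference, and then use that homeomorphisms preserve connected components. Your treatment is in fact more careful than the paper's on the point that strata are $T$-invariant (the paper simply asserts that $T$ acts on $S$), and your argument via the connectedness of $T$ is exactly the right justification.
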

\begin{proof}
Recall \eqref{dimension set}
\[
X_{l+2i+(m-n)}=\{x\in X\ |\ \dim T(x)\le i+(m-n)\}.
\]
Each stratum (i.e., a connected component) $S \subset X_{l+2i+(m-n)}\setminus X_{l+2(i-1)+(m-n)}$ consists of orbits of the same dimension, and $T$ acts on $S$ such that the isotropy group is a subtorus (see Remark~\ref{assumption of subtorus}).
A (weakly) $T$-equivariant homeomorphism preserves orbits of each dimension.
Therefore, the image satisfies $f(S) \subset X'_{l+2i+(m-n)}\setminus X'_{l+2(i-1)+(m-n)}$.
Since the homeomorphism $f$ preserves connected components, it follows that $S':=f(S)$ is a stratum of $X'_{l+2i+(m-n)}\setminus X'_{l+2(i-1)+(m-n)}$. 
Since $f$ is a (weakly) $T$-equivariant homeomorphism, $S$ is (weakly) $T$-equivariant homeomorphic to $S'$.
\end{proof}

\begin{lem}\label{iso of filtered orbit spaces}
If two
locally standard $T$-pseudomanifolds $X$ and $X'$ are weakly $T$-equivariantly homeomorphic, then the filtered orbit spaces $(X/T, \mathfrak{X}/T)$ and $(X'/T , \mathfrak{X}'/T)$ are stratified homeomorphic.\end{lem}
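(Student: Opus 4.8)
The plan is to construct a stratified homeomorphism $\bar f : X/T \to X'/T$ directly from the given weakly $T$-equivariant homeomorphism $f : X \to X'$, and then verify that it sends strata to strata in both directions. First I would observe that since $f$ is weakly $T$-equivariant with respect to some automorphism $\psi : T^m \to T^m$, it carries $T$-orbits to $T$-orbits, so it descends to a well-defined map $\bar f : X/T \to X'/T$ satisfying $\bar f \circ \pi = \pi' \circ f$. Because $\pi$ and $\pi'$ are continuous open surjections and $f$ is a homeomorphism, a routine diagram chase shows that $\bar f$ is a homeomorphism, with $\overline{f^{-1}}$ as its inverse. (One could alternatively invoke Lemma~\ref{equivariant stratum} here, but for the orbit-space statement the elementary argument is cleanest.)

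Next I would check that $\bar f$ is a stratified map compatible with the filtrations $\mathfrak{X}/T$ and $\mathfrak{X}'/T$. The key point is that $f$ preserves orbit dimension: since $\psi$ is an automorphism of $T^m$ and $f(T(x)) = T'(f(x))$ as sets with $T_{f(x)} = \psi(T_x)$ (using commutativity of $T$, as in the proof of Proposition~\ref{inv of T-pseudomanifold}), we get $\dim T(x) = \dim T'(f(x))$ for all $x$. Hence $f(X_{l+2i+(m-n)}) = X'_{l+2i+(m-n)}$ for each $0 \le i \le n$ by the definition \eqref{dimension set}, and passing to quotients gives $\bar f(Q_{l+i}) = Q'_{l+i}$ for all $i$, i.e.\ $\bar f$ respects the filtrations \eqref{orbit filtration} skeleton by skeleton. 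It follows that $\bar f$ restricts to a homeomorphism $Q_{l+i}\setminus Q_{l+i-1} \to Q'_{l+i}\setminus Q'_{l+i-1}$, and since a homeomorphism carries connected components to connected components, it maps each $(l+i)$-dimensional stratum of $Q$ onto an $(l+i)$-dimensional stratum of $Q'$. This is exactly the content of Lemma~\ref{equivariant stratum} after applying $\pi$, $\pi'$: strata of $Q$ are images under $\pi$ of strata of $X$, and $\bar f(\pi(S)) = \pi'(f(S))$. Thus $\bar f$ is a stratified map, and by symmetry (applying the same argument to $f^{-1}$, which is weakly $T$-equivariant via $\psi^{-1}$) so is $\bar f^{-1}$, so $\bar f$ is a stratified homeomorphism.

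The only mild subtlety — and the step I would be most careful about — is confirming that the filtration by orbit dimension on $X'$ induced by its own $T$-action really does coincide with the filtration obtained by transporting $\mathfrak{X}$ across $f$; this is where the effectiveness/subtorus assumptions (Remark~\ref{assumption of subtorus}, condition \eqref{cond-3}) and the fact that $\psi$ is a genuine automorphism (not merely a continuous homomorphism) are used, to ensure dimensions of orbits and isotropy groups are preserved rather than merely bounded. Once that identification is in place, everything else is the formal quotient argument above. I do not anticipate a serious obstacle: the statement is essentially the orbit-space shadow of Lemma~\ref{equivariant stratum}, packaged with the elementary fact that a homeomorphism between filtered spaces that preserves each skeleton is a stratified homeomorphism when the strata are the connected components of the skeleton differences (Definition~\ref{def of manifold stratified space}).
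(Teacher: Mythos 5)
Your proof is correct and follows essentially the same route as the paper: construct the induced map $\widehat{f}$ on orbit spaces, show it preserves the filtrations by orbit dimension, and conclude that strata map onto strata (with a symmetric argument for $\widehat{f}^{-1}$). The only difference is that the paper simply invokes Lemma~\ref{equivariant stratum} for the stratum-to-stratum correspondence and dimension preservation, whereas you partially re-derive its content inline; both are fine, and your added remark about why $\psi$ being a genuine automorphism matters is a useful clarification but not a new idea.
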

\begin{proof}
A weakly $T$-equivariant homeomorphism $f: X \to X'$ induces a homeomorphism
\[
\widehat{f}: X/T \to X'/T.
\]
Then, we have the following commutative diagram.
\begin{align*}
\begin{tikzcd}[ampersand replacement=\&]
X \rar{f}[']{\cong} \dar{/T} \&
  X' \dar{/T}
\\
X/T \rar{\widehat{f}}[']{\cong} \&
  X'/T
\end{tikzcd}
\end{align*}
Each stratum of $X/T$ is of the form $S/T$, where $S$ is a stratum of $X$. 
By Lemma \ref{equivariant stratum}, $S':=f(S)$ is a stratum of $X'$. 
Then, by commutativity of above diagram, we have
\begin{align}\label{stratum of the form S/T}
\widehat{f}(S/T)=S'/T.
\end{align}
Therefore, both $\widehat{f}$ and $\widehat{f}^{-1}$ are stratified maps.
Furthermore, by the latter part of Lemma~\ref{equivariant stratum}, we have $\dim S/T =\dim S'/T$. Hence, $\widehat{f}$ is a stratified homeomorphism.
\end{proof}

Using Lemma \ref{equivariant stratum} and \ref{iso of filtered orbit spaces}, we have the following proposition.

\begin{prop}\label{prop in 4}
Let $X$ and $X'$ be locally standard $T$-pseudomanifolds whose orbit spaces $Q$ and $Q'$ have the set of top strata that is homotopy equivalent to the entire space.
If $X$ and $X'$ are (weakly) $T$-equivariantly homeomorphic, then their characteristic datum $(Q, \lambda, c)$ and $(Q', \lambda', c')$ are (weakly) isomorphic.
\end{prop}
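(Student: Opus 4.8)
The plan is to upgrade the weakly $T$-equivariant homeomorphism $f\colon X\to X'$ to a weak isomorphism of characteristic datum by checking the three pieces of the datum in turn: the filtered orbit space, the characteristic functor, and the Chern class. First I would invoke Lemma~\ref{iso of filtered orbit spaces} to obtain a stratified homeomorphism $\widehat{f}\colon Q\to Q'$, which by Proposition~\ref{prop: stratified homeo induces a poset iso} induces a poset isomorphism $\mathcal{S}(\widehat{f})\colon\mathcal{S}(Q)^{\mathrm{op}}\to\mathcal{S}(Q')^{\mathrm{op}}$. This takes care of the underlying space and the data of strata; it remains to match $\lambda$ with $\lambda'$ along $\mathcal{S}(\widehat{f})$, and $c$ with $c'$ along $\widehat{f}^{\,\ast}$.

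Next I would handle the characteristic functor. A weakly $T$-equivariant homeomorphism comes with an automorphism $\psi\colon T^m\to T^m$ satisfying $f(t\cdot x)=\psi(t)\cdot f(x)$ (in the strictly equivariant case, $\psi=\mathrm{id}$). For a stratum $S$ of $Q$ with $S'=\mathcal{S}(\widehat{f})(S)$, pick $x\in\pi^{-1}(S)$; then $f(x)\in\pi'^{-1}(S')$ by the commuting square relating $\pi,\pi',f,\widehat{f}$ together with \eqref{stratum of the form S/T}. The identity $f(t\cdot x)=\psi(t)\cdot f(x)$ shows $t\in T_x$ if and only if $\psi(t)\in T_{f(x)}$, i.e.\ $\psi(T_S)=T_{S'}$, which is exactly $\Psi\circ\lambda(S)=\lambda'\circ\mathcal{S}(\widehat{f})(S)$ for the functor $\Psi$ induced by $\psi$. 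This verifies the commutativity of diagram~\eqref{comm diagram: weak isomorphism}; in the strictly equivariant case the same argument with $\psi=\mathrm{id}$ gives $\lambda=\lambda'\circ\mathcal{S}(\widehat{f})$, so the resulting isomorphism is honest, not merely weak.

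The last and most delicate point is the Chern class, i.e.\ showing $c=\widehat{f}^{\,\ast}(c')$ in $H^2(Q;\mathbb{Z}^m)$. Under the assumption \eqref{condition of top strata} the restriction maps $i^{\ast}\colon H^2(Q;\mathbb{Z}^m)\to H^2(Q\setminus Q_{l+n-1};\mathbb{Z}^m)$ and its primed analogue are isomorphisms and $\widehat{f}$ restricts to a homeomorphism of top strata, so it suffices to compare the Chern classes of the $T^m$-principal bundles $\pi^{-1}(Q\setminus Q_{l+n-1})\to Q\setminus Q_{l+n-1}$ and $\pi'^{-1}(Q'\setminus Q'_{l+n-1})\to Q'\setminus Q'_{l+n-1}$. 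The restriction of $f$ to free parts is a bundle map covering $\widehat{f}|$, but only $\psi$-equivariant; one must observe that precomposing the classifying map of the first bundle with the relevant effect of $\psi$ on $BT^m$ adjusts the class by the automorphism $\psi_\ast$ of $H^2(-;\mathbb{Z}^m)$, so that in the strictly equivariant case $\widehat{f}|^{\ast}$ carries the Chern class of the second bundle to that of the first exactly, and in the weakly equivariant case up to the action of $\psi_\ast$. Pulling back along $(i^{\ast})^{-1}$ then yields $c=\widehat{f}^{\,\ast}(c')$ (resp.\ up to $\psi_\ast$), completing the proof.

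I expect the bundle-theoretic bookkeeping in the last paragraph to be the main obstacle: one has to be careful that "weakly equivariant" bundle map does not literally identify the two classifying maps but only up to the action of $\psi$ on $BT^m$, and that the definition of weak isomorphism in Definition~\ref{def of isomorphism of char pairs} is stated with $c=f^{\ast}(c')$ rather than $c=\psi_\ast f^{\ast}(c')$ — so one should either absorb $\psi_\ast$ into the comparison or check that the convention in that definition already accounts for it. Everything else (strata, poset isomorphism, matching of isotropy subtori) is a direct consequence of the lemmas already proved.
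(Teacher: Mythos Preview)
Your approach is essentially the same as the paper's: invoke Lemma~\ref{iso of filtered orbit spaces} and Proposition~\ref{prop: stratified homeo induces a poset iso} for the orbit-space piece, verify $\psi(T_S)=T_{S'}$ via the $\psi$-equivariance identity (the paper routes this through Lemma~\ref{equivariant stratum} but arrives at the same conclusion), and compare Chern classes via the induced bundle map on free orbits. The paper simply asserts $\widehat{f}^{\,\ast}(c')=c$ without discussing the $\psi_\ast$-twist you flag, so your caution in the last paragraph actually goes slightly beyond what the paper makes explicit.
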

\begin{proof}
Let $f:X \to X'$ be a ($\psi$-weakly) $T$-equivariant homeomorphism.
By Lemma \ref{iso of filtered orbit spaces}, $f$ induces a stratified homeomorphism $\widehat{f} : Q \to Q'$.
Note that by Proposition~\ref{prop: stratified homeo induces a poset iso}, $\widehat{f}$ induces a poset isomorphism $\mathcal{S}(\widehat{f}): \mathcal{S}(Q)^{\mathrm{op}} \to \mathcal{S}(Q')^{\mathrm{op}}$.
Let $c \in H^2(Q;\mathbb{Z}^m)$ and $c' \in H^2(Q';\mathbb{Z}^m)$ be the Chern classes. 
Since $f$ is a ($\psi$-weakly) $T$-equivariant homeomorphism, $f$ induces a bundle isomorphism between free orbits of $X$ and $X'$. Therefore, we obtain
\[
\widehat{f}^{\ast}(c')=c,
\]
where $\widehat{f}^{\ast} : H^2(Q';\mathbb{Z}^m) \to H^2(Q;\mathbb{Z}^m)$ is the isomorphism induced by $f$.
We now verify the commutativity of the diagram \eqref{comm diagram: weak isomorphism}. For each stratum $S/T \in \mathcal{S}(Q)^{\mathrm{op}}$, where $S \subset X$ is a stratum, we have $\mathcal{S}(\widehat{f})(S/T)=f(S)/T \in \mathcal{S}(Q')^{\mathrm{op}}$ (see \eqref{stratum of the form S/T}). By Lemma \ref{equivariant stratum}, $S$ and $f(S)$ are ($\psi$-weakly) $T$-equivariantly homeomorphic.
Moreover, $S$ and $f(S)$ consist of $T$-orbits of the same dimension.
Since $T$ is commutative, they have the same isotropy subgroups (up to the automorphism $\psi$).
It follows that
\[
\psi(\lambda(S/T))=\lambda'(f(S)/T),
\]
where $\lambda$ and $\lambda'$ are the characteristic functors of $X$ and $X'$, respectively (see Definition \ref{def of char functor}).
Therefore, the diagram \eqref{comm diagram: weak isomorphism} commutes, and the proposition follows.
\end{proof}

\section{Example}\label{easy examples}

We give three examples of locally standard $T$-pseudomanifolds and their characteristic datum. 

\begin{ex}\label{ex1}
Let $S^{3}$ be the unit sphere in $\mathbb{C}^{2}$.
We consider the Thom space $X$ of the complex line bundle $S^3 \times_{S^1} \mathbb{C}$ over $\mathbb{C}P^1  \left( =S^3/S^1 \right)$, i.e.,
$X=X_4:=(S^3 \times_{S^1} D^2)/(S^3 \times_{S^1} S^1)$,
where $D^2$ and $S^1$ are the unit disk and the unit circle in $\mathbb{C}$, respectively.
Here,
the $S^1$-action on $S^3 \times \mathbb{C}$ is defined as follows:
for $\left( (x,y), z \right) \in S^3 \times \mathbb{C}$ and $t \in S^1$, 
\[
t \cdot \left( (x,y), z \right) := \left( (tx,ty), t^k z \right),
\]
where $S^3 \subset \mathbb{C}^2$ and $k \in \mathbb{Z}$.
We define a $T^2$-action on the Thom space $(S^3 \times_{S^1} D^2)/(S^3 \times_{S^1} S^1)$ by
\[
(t_1, t_2) \cdot [(x,y),z] = [(x,t_1 y), t_2 z]
\]
for $(t_1, t_2) \in T^2$ and
$[(x,y),z] \in (S^3 \times_{S^1} D^2)/(S^3 \times_{S^1} S^1)$.
The fixed points are $[(1,0),0], [(0,1),0]$, and $[(x,y),w]$ where $w \in S^1 = \partial D^2$. We denote the set of these fixed points by $X_0$. The set $X_2$ consists of the closures of the following three strata (i.e., the connected components of $X_2 \setminus X_0$):
\begin{align*}
S_1=\{  [(x,y),0] \mid (x,y) \in S^3 \} \setminus X_0, \,
S_2=\{ [(0,1),z] \mid z \in D^2 \} \setminus X_0 ,\,
S_3= \{ [(1,0),z] \mid z \in D^2 \} \setminus X_0.
\end{align*}

\[
\begin{tikzpicture}
 % グレーの三角形（塗りつぶしと辺）
    \fill[gray!20] (0,0) -- (6,0) -- (3,4) -- cycle; % グレーの三角形
    \draw[thick] (0,0) -- (6,0) -- (3,4) -- cycle; % 三角形の辺

%S^2の塗りつぶし
\fill[gray!20] (6,0) arc[start angle=0, end angle=360, x radius=3cm, y radius=0.2cm];
\fill[gray!20][rotate around={-53.13:(6,0)}] (6,0) arc[start angle=0, end angle=360, x radius=2.5cm, y radius=0.2cm];
\fill[gray!20][rotate around={53.13:(3,4)}] (3,4) arc[start angle=0, end angle=360, x radius=2.5cm, y radius=0.2cm];

%S^2たち
\draw (6,0) arc[start angle=0, end angle=360, x radius=3cm, y radius=0.2cm];
\draw[rotate around={-53.13:(6,0)}] (6,0) arc[start angle=0, end angle=360, x radius=2.5cm, y radius=0.2cm];
\draw[rotate around={53.13:(3,4)}] (3,4) arc[start angle=0, end angle=360, x radius=2.5cm, y radius=0.2cm];

%S^2の直径
\draw[dashed](3.05,0) arc[start angle=0, end angle=360, x radius=0.1cm, y radius=0.2cm];
\draw(2.95, 0.2) arc[start angle=90, end angle=270, x radius=0.1cm, y radius=0.2cm];

\draw[dashed][rotate around={-53.13:(4.5,2)}] (4.5,2) arc[start angle=0, end angle=360, x radius=0.1cm, y radius=0.2cm];
\draw[rotate around={-53.13:(4.6,2.2)}] (4.6,2.2) arc[start angle=90, end angle=270, x radius=0.1cm, y radius=0.2cm];

\draw[dashed][rotate around={53.13:(1.4,2.2)}] (1.4,2.2) arc[start angle=90, end angle=450, x radius=0.1cm, y radius=0.2cm];
\draw[rotate around={53.13:(1.4,2.2)}] (1.4,2.2) arc[start angle=90, end angle=270, x radius=0.1cm, y radius=0.2cm];

    % 三角形の頂点に黒点を描画
    \fill (0,0) circle (2pt);
    \fill (6,0) circle (2pt);
    \fill (3,4) circle (2pt);

    % 黒点の下に 'x' と書く
    \node at (0,-0.5) {$[ (1,0),0 ]$};
    \node at (6,-0.5) {$[(0,1),0]$};
   \node at (3,-0.7) {$S_1$};
    \node at (3,4.3) {$[(x,y),w]$};
    \node at (5.5, 2.1) {$S_2 $};
    \node at (0.5, 2.1) {$S_3 $};
    \node at (3, 1.5) {$X_4 $};
\end{tikzpicture}
\]
The space $X$ is a $4$-dimensional locally standard $T$-pseudomanifold equipped with the following filtration:
\[
\mathfrak{X} : X = X_4 \supsetneq X_2 \supset X_0 \supsetneq \emptyset.
\]
Namely, this is the case when $m=2$, $n=2$ and $l=0$.

We next verify the isotropy subgroup corresponding to each stratum, which is a subtorus. Each fixed point corresponds to $T^2$, and $X_4$ corresponds to the identity subgroup $\{ 1\} \subset T^2$. The other strata correspond to the following subgroups:
\begin{align*}
S_1=\{  [(x,y),0] \mid (x,y) \in S^3 \} 
&\longleftrightarrow 
T\braket{0,1}=\{ (1, t) \in T^2 \mid t \in S^1 \}, 
\\
S_2= \{ [(0,1),z] \mid z \in D^2 \}
&\longleftrightarrow  
T\braket{1,k}=\{ (t,t^k) \in T^2 \mid t \in S^1 \} ,      
   \\
S_3= \{ [(1,0),z] \mid z \in D^2 \}
&\longleftrightarrow
T\braket{1,0}=\{ (t,1) \in T^2 \mid t \in S^1 \} ,
\end{align*}
where the symbol $T\braket{a,b}$ represents the circle subgroup defined by the weight $(a,b) \in \mathfrak{t}_{\mathbb{Z}} \cong \mathbb{Z}^2$.
Because $(1,0),\,(0,1),\, (1,k)$ form a basis of weight lattice $\mathfrak{t}_{\mathbb{Z}}$,
we can define the characteristic functor 
$\lambda : \mathcal{S}(P)^{\mathrm{op}} \to \mathcal{T}$
on the triangle $P$ as shown in the following figure:

\[
\begin{tikzpicture}[scale=0.75]
\begin{scope}
 % グレーの三角形（塗りつぶしと辺）
    \fill[gray!20] (0,0) -- (6,0) -- (3,4) -- cycle; % グレーの三角形
    \draw[thick] (0,0) -- (6,0) -- (3,4) -- cycle; % 三角形の辺

    % 三角形の頂点に黒点を描画
    \fill (0,0) circle (2pt);
    \fill (6,0) circle (2pt);
    \fill (3,4) circle (2pt);
    \node at (0,-0.5){$v_2$};
    \node at (6,-0.5) {$v_3$};
   \node at (3,-0.5) {$\mathring{e}_1$};
    \node at (3,4.3) {$v_1$};
    \node at (5.5, 2.1) {$\mathring{e}_2$};
    \node at (0.5, 2.1){$\mathring{e}_3$};
    \node at (3, 1.5) {$\mathring{P}$};
\end{scope}

\begin{scope}[xshift=10cm]
 % グレーの三角形（塗りつぶしと辺）
    \fill[gray!20] (0,0) -- (6,0) -- (3,4) -- cycle; % グレーの三角形
    \draw[thick] (0,0) -- (6,0) -- (3,4) -- cycle; % 三角形の辺

    % 三角形の頂点に黒点を描画
    \fill (0,0) circle (2pt);
    \fill (6,0) circle (2pt);
    \fill (3,4) circle (2pt);

    % 黒点の下に 'x' と書く
    \node at (0,-0.5) {$T^2$};
    \node at (6,-0.5) {$T^2$};
   \node at (3,-0.7) {$\lambda(\mathring{e}_1)=T\braket{0,1}$};
    \node at (3,4.3) {$T^2$};
    \node at (6.5, 2.1) {$\lambda(\mathring{e}_2)=T\braket{1,k} $};
    \node at (-0.5, 2.1) {$\lambda(\mathring{e}_3)=T\braket{1,0} $};
    \node at (3, 1.5) {$\{ 1 \}$};
\end{scope}
\end{tikzpicture}
\]

We next compute the link of each point.
The following links are easily computed:
\begin{itemize}
\item For $x \in X_4 \setminus X_2$, we put $L_x= \emptyset$;

\item For $x_1 \in S_1\setminus X_0, x_2 \in S_2\setminus X_0$ and $x_3 \in S_3\setminus X_0$, we put
$L_{x_1}=L_{x_2}=L_{x_3}=S^1$;
\end{itemize}
It remains to compute the link of each fixed point.
In this example, the link of each fixed point can be computed from the link of each vertex in the orbit space (in this case, the edges $E_1, E_2, E_3$ in the figure below) and the {\it hyper characteristic function} on it.
See the Remark~\ref{rem: hyper ch function} for the hyper characteristic function.
In this example, for instance, the hyper characteristic function on $E_1$ is the characteristic functor defined by $(1,0)$ and $(1,k)$ (see the figure below).
\[
\begin{tikzpicture}[scale=0.75]
\begin{scope}
    % 三角形の頂点
    \coordinate (v1) at (3,4);
    \coordinate (v2) at (0,0);
    \coordinate (v3) at (6,0);

    % 頂点をカットする割合（0.3 に設定：ほどよく頂点寄り）
    \def\cut{0.3}

    % 各辺上のカット点を計算
    \path (v2) -- (v3) coordinate[pos=\cut](a1);
    \path (v3) -- (v2) coordinate[pos=\cut](a2);
    \path (v3) -- (v1) coordinate[pos=\cut](b1);
    \path (v1) -- (v3) coordinate[pos=\cut](b2);
    \path (v1) -- (v2) coordinate[pos=\cut](c1);
    \path (v2) -- (v1) coordinate[pos=\cut](c2);

    % 元の三角形（破線）
    \draw[dashed] (v1)--(v2)--(v3)--cycle;

    % 六角形の辺
    \draw[thick, red] (a2)--(b1);
    \draw[thick, red] (b2)--(c1);
    \draw[thick, red] (a1)--(c2);

    % 元の頂点を表示
    \fill (v1) circle (2pt);
    \fill (v2) circle (2pt);
    \fill (v3) circle (2pt);

    \fill[red] (a1) circle (2pt);
    \fill[red] (a2) circle (2pt);
    \fill[red] (b1) circle (2pt);
    \fill[red] (b2) circle (2pt);
    \fill[red] (c1) circle (2pt);
    \fill[red] (c2) circle (2pt);

    % ラベル
    \node at (0,-0.5){$v_2$};
    \node at (6,-0.5) {$v_3$};
    \node at (3,4.3) {$v_1$};

    \node[below] at (a1) {$T\braket{0,1}$};
    \node[below] at (a2) {$T\braket{0,1}$};
    \node[right] at (b1) {$T\braket{1,k}$};
    \node[right] at (b2) {$T\braket{1,k}$};
    \node[left] at (c1) {$T\braket{1,0}$};
    \node[left] at (c2) {$T\braket{1,0}$};

    \node[xshift=0pt, yshift=16pt] at (a1) {$\{1\}$};
    \node[xshift=2pt, yshift=16pt] at (a2) {$\{1\}$};
    \node[xshift=0pt, yshift=-40pt] at (3,4.3) {$\{1\}$};

    \node[red] [xshift=-3pt, yshift=-16pt] at (b1) {$E_3$};
    \node[red] [xshift=4pt, yshift=-16pt] at (c2) {$E_2$};
    \node[red] [xshift=0pt, yshift=-26pt] at (3,4.3) {$E_1$};
\end{scope}
\end{tikzpicture}
\]

Using \cite[Proposition 2.3]{SS16}, the link of each fixed point is computed as follows:
\begin{itemize}

\item $L_{[ (1,0),0 ]}=L_{[(0,1),0]}=S^3$;

\item
For $k=0$, $L_{[(x,y),w]}=\mathbb{C}P^1 \times S^1$; \\ For $k=\pm1$, $L_{[(x,y),w]}=S^3$;
\\
For $k\neq 0, \pm1$, $L_{[(x,y),w]}=S^3 / (\mathbb{Z}^2/ \mathbb{Z}\braket{\bm{e}_1, \bm{e}_1+ k \bm{e}_2} )$.
\end{itemize}

\end{ex}

\begin{rem}\label{rem: hyper ch function}
In {{\cite[Section 2]{SS16}}}, a {\it hyper characteristic function} is studied (this is a special case of the {\it unimodular labeling} in \cite{KK25}).
This is a function that assigns a weight lattice in $\mathbb{Z}^{n+1}$ to each facet of an $n$-simplex.
Sarkar and Suh classify the spaces constructed from such hyper characteristic functions (see {{\cite[Proposition 2.3]{SS16}}}).
In Section \ref{section 12}, we show that a {\it characteristic functor} can be used to construct a link.
In this example, the characteristic functor used to construct a link may be regarded as  a hyper characteristic function.
Therefore, in this case, the link can be computed using {{\cite[Proposition 2.3]{SS16}}}.
However, note that this tool for computing a link only applies when $m=n$ and the orbit space is stratified homeomorphic to an $n$-simplex.
In contrast, by using the results in \cite{Wie22} or \cite{KK25}, we can also compute the link for $m>n$.
\end{rem}

\begin{ex}
Let $m \ge n \ge 1$. We next consider the Thom space $X$ of the trivial $\mathbb{R}$-vector bundle $T^{m+n-1} \times \mathbb{R}$ over $T^{m+n-1}$, i.e., $X=X_{m+n}=(T^{m+n-1} \times [-1,1])/(T^{m+n-1} \times \{ -1,1 \})$.
We define a $T^{m}$-action on $(T^{m+n-1} \times [-1,1])/(T^{m+n-1} \times \{ -1,1 \})$ by 
\[
(t_1, \ldots , t_m) \cdot [s_1, \ldots , s_m, s_{m+1}, \ldots ,s_{m+n-1} , r]=[t_1 s_1, \ldots , t_m s_m, s_{m+1}, \ldots ,s_{m+n-1} , r]
\]
for $(t_1, \ldots , t_m) \in T^m$ and $[s_1, \ldots , s_m, s_{m+1}, \ldots ,s_{m+n-1}, r] \in (T^{m+n-1} \times [-1,1])/(T^{m+n-1} \times \{ -1,1 \})$.
Consider the filtration $\mathfrak{X}$ by the dimension of the orbits. Since every orbit of dimension less than $m$ is a fixed point of the form $[s_1, \ldots , s_{m+n-1} , v]$, where $v \in \{ -1,1\}$, we have
\[
X_{m+n-1}= \cdots =X_0=\{ [s_1, \ldots , s_{m+n-1} , v] \}.
\] 
Hence, the filtration (see the left figure below) is given by
\[
\mathfrak{X} : X_{m+n} \supsetneq X_0 \supsetneq \emptyset.
\]
It is easy to check that there are exactly two isotropy groups, i.e., the isotropy group of $X_{0}$ is $T^{m}$ and that of $X_{m+n}\setminus X_{0}$ is $\{1\}$.
The filtered orbit space of $X$ (see the right figure below) is given by
\[
\mathfrak{X}/T : (T^{n-1} \times [-1,1])/(T^{n-1} \times \{ -1,1 \}) \supsetneq \{ pt \} \supsetneq \emptyset.
\]
Note that the lowest dimension of strata is $l=0$ and the length of the filtration is $n$ (i.e., $\mathfrak{X}/T$ has $n$-dimension).

\[
\begin{tikzpicture}[scale=0.75]
\begin{scope}
\fill[gray!20] (0,0) arc[start angle=90, end angle=450, x radius=3cm, y radius=3cm];
\fill[white] (0,0) arc[start angle=90, end angle=450, x radius=2cm, y radius=2cm];
    
    % 曲線を描く
    \draw (0,0) arc[start angle=90, end angle=450, x radius=3cm, y radius=3cm];
    \draw (0,0) arc[start angle=90, end angle=450, x radius=2cm, y radius=2cm];

    \draw[dashed] (0,-4) arc[start angle=90, end angle=450, x radius=0.25cm, y radius=1cm];
    \draw (0,-4) arc[start angle=90, end angle=270, x radius=0.25cm, y radius=1cm];

    \fill (0,0) circle (2pt);

     \draw (0,-4.5) .. controls (-0.1,-5)  .. (0,-5.5);
     \draw (0,-4.75) .. controls (0.08,-5)  .. (0,-5.25);
     \draw (0,-5.75) .. controls (1,-6)  .. (2,-6);

   \node at (2.3,-6) [right] {$T^{m+n-1}$};
   \node at (2,0.3) {$[s_1, \ldots , s_{m+n-1}, v]$\quad (the fixed point)};

\node at (5,-3) {$\overset{/T^m}{\longrightarrow}$};
\end{scope}

\begin{scope}[xshift=10cm]
\fill[gray!20] (0,0) arc[start angle=90, end angle=450, x radius=3cm, y radius=3cm];
\fill[white] (0,0) arc[start angle=90, end angle=450, x radius=2cm, y radius=2cm];
    
    % 曲線を描く
    \draw (0,0) arc[start angle=90, end angle=450, x radius=3cm, y radius=3cm];
    \draw (0,0) arc[start angle=90, end angle=450, x radius=2cm, y radius=2cm];

    \draw[dashed] (0,-4) arc[start angle=90, end angle=450, x radius=0.25cm, y radius=1cm];
    \draw (0,-4) arc[start angle=90, end angle=270, x radius=0.25cm, y radius=1cm];

    \fill (0,0) circle (2pt);

     \draw (0,-4.5) .. controls (-0.1,-5)  .. (0,-5.5);
     \draw (0,-4.75) .. controls (0.08,-5)  .. (0,-5.25);
     \draw (0,-5.75) .. controls (1,-6)  .. (2,-6);

   \node at (2.3,-6) [right] {$T^{n-1}$};
   \node at (0,0.3) {$\{ pt \}$};

\node at (-0.3,-0.1) [below right] {$\begin{array}{c}
T^n\\
\rotatebox{90}{=} \\
T_{\{ pt \}}
\end{array}$};
\node at (1,-5) [above] {$\{ 1 \}$};
\end{scope}

\end{tikzpicture}
\]
If $X$ is a locally standard $T$-pseudomanifold over $(X/T, \mathfrak{X}/T)$, then by Proposition \ref{associated dimension}, the dimension of the torus subgroup corresponding to $\{ pt \}$ (note that $\mathrm{codim}(pt) = n$) must be $n$.
This only happens for $m=n$.
Then we may take the links as follows:
\begin{itemize}
\item
For $x \in X_{2n} \setminus X_0$, we put $L_x = \emptyset$;

\item
$L_{[s_1, \ldots , s_{2n-1}, v]}=T^{n-1} \sqcup T^{n-1}$.
\end{itemize}
\end{ex}

\begin{rem}
This example can be generalized as follows: consider the product $S^1 \times M^{2n}$, where $M^{2n}$ is a locally standard $T^n$-manifold (see Definition~\ref{def of locally standard T-manifold}). Then, collapsing a fiber $\{ pt \} \times M^{2n}$ to a point yields a space that generalizes the example.
\end{rem}

\begin{rem}
This example shows that the set of top strata is not homotopy equivalent to the entire space (see also \eqref{condition of top strata}).
\end{rem}

\begin{ex}
We consider $S^3 = \{ (z,w) \in \mathbb{C}^2  \mid  |z|^2 +|w|^2=1 \}$ equipped with the $T^1$-action defined by
\[
t \cdot (z, w) := (z, tw)
\]
for $t \in T^1$ and $(z, w) \in S^3$.
The fixed point set is 
\[
\{ (z, w) \in S^3 \mid |z|=1, w=0 \}
=\{ (z,0) \in S^3  \mid  |z|=1 \} \cong S^1.
\]
Then $S^3$ is a $3$-dimensional locally standard $T$-pseudomanifold equipped with the following filtration:
\[
\mathfrak{X} : S^3 \supsetneq S^1 \supsetneq \emptyset.
\]
Namely, this is the case when $m=1$, $n=1$ and $l=1$.
We may take the links as follows:
\begin{itemize}
\item
For $x \in S^3 \setminus S^1$, we put $L_x=\emptyset$;

\item
For $x \in S^1$, we have $L_x \cong S^1$.
\end{itemize}

\begin{figure}[htbp]
\[
\begin{tikzpicture}
\begin{scope}

  % 半径
  \def\r{2}

  % 残りの部分（0–30, 120–210, 330–360） → 中央に
  \draw[thick] (0:\r) arc[start angle=0, end angle=360, radius=\r];

%楕円部分
 \begin{scope}
  \draw[fill=gray!30] (0, 0) ellipse [x radius=2, y radius=0.3];

  % 右半分（0〜180度）→ 破線
  \draw[dashed] (0,-0.3) arc[start angle=-90, end angle=90, x radius=0.1, y radius=0.3];

  % 左半分（180〜360度）→ 実線
  \draw (0,0.3) arc[start angle=90, end angle=270, x radius=0.1, y radius=0.3];

     \draw (1.8,0) .. controls (2,-0.1)  .. (2.2,-0.1);
  \end{scope}
\begin{scope}[rotate=15]%15度回転
    % 楕円部分
    \draw[fill=gray!30, opacity=0.5] (0, 0) ellipse [x radius=2, y radius=0.3];

    % 右半分（0〜180度）→ 破線
    \draw[dashed] (0,-0.3) arc[start angle=-90, end angle=90, x radius=0.1, y radius=0.3];

    % 左半分（180〜360度）→ 実線
    \draw (0,0.3) arc[start angle=90, end angle=270, x radius=0.1, y radius=0.3];
  \end{scope}
 \begin{scope}[rotate=60]%60度回転
    % 楕円部分
    \draw[fill=gray!30, opacity=0.5] (0, 0) ellipse [x radius=2, y radius=0.3];

    % 右半分（0〜180度）→ 破線
    \draw[dashed] (0,-0.3) arc[start angle=-90, end angle=90, x radius=0.1, y radius=0.3];

    % 左半分（180〜360度）→ 実線
    \draw (0,0.3) arc[start angle=90, end angle=270, x radius=0.1, y radius=0.3];
\end{scope}

    % ドット
\begin{scope}[rotate=105]
    \fill (1.25,0) circle (0.8pt);
\end{scope}
\begin{scope}[rotate=120]
    \fill (1.25,0) circle (0.8pt);
\end{scope}
\begin{scope}[rotate=135]
    \fill (1.25,0) circle (0.8pt);
\end{scope}

\begin{scope}[rotate=285]
    \fill (1.25,0) circle (0.8pt);
\end{scope}
\begin{scope}[rotate=300]
    \fill (1.25,0) circle (0.8pt);
\end{scope}
\begin{scope}[rotate=315]
    \fill (1.25,0) circle (0.8pt);
\end{scope}

\begin{scope}
    \fill (2,0) circle (2pt);
    \fill (-2,0) circle (2pt);
\end{scope}
\begin{scope}[rotate=15]
    \fill (2,0) circle (2pt);
    \fill (-2,0) circle (2pt);
\end{scope}
\begin{scope}[rotate=60]
    \fill (2,0) circle (2pt);
    \fill (-2,0) circle (2pt);
\end{scope}
\node at (2.2,-0.1) [right] {$S^2$};

\node at (4,0) {$\overset{/T}{\longrightarrow}$};
\end{scope}

%下にあるのは右側の円盤
\begin{scope}[xshift=8cm, yshift=0cm]
\fill[gray!20] (2,0) arc[start angle=0, end angle=360, x radius=2cm, y radius=2cm];
   \draw[thick] (2,0) arc[start angle=0, end angle=360, x radius=2cm, y radius=2cm];

\node at (2,0) [right] {$T^1$};
\node at (0.3, -0.3) {$\{ 1 \}$};

\begin{scope}
\draw[thick, gray!50] (-2,0) -- (2,0); % 水平方向の直径
\end{scope}
\begin{scope}[rotate=15]
\draw[thick, gray!50] (-2,0) -- (2,0); % 水平方向の直径
\end{scope}
\begin{scope}[rotate=60]
\draw[thick, gray!50] (-2,0) -- (2,0); % 水平方向の直径
\end{scope}

    % ドット
\begin{scope}[rotate=105]
    \fill (1.25,0) circle (0.8pt);
\end{scope}
\begin{scope}[rotate=120]
    \fill (1.25,0) circle (0.8pt);
\end{scope}
\begin{scope}[rotate=135]
    \fill (1.25,0) circle (0.8pt);
\end{scope}

\begin{scope}[rotate=285]
    \fill (1.25,0) circle (0.8pt);
\end{scope}
\begin{scope}[rotate=300]
    \fill (1.25,0) circle (0.8pt);
\end{scope}
\begin{scope}[rotate=315]
    \fill (1.25,0) circle (0.8pt);
\end{scope}
\begin{scope}
    \fill (2,0) circle (2pt);
    \fill (-2,0) circle (2pt);
\end{scope}
\begin{scope}[rotate=15]
    \fill (2,0) circle (2pt);
    \fill (-2,0) circle (2pt);
\end{scope}
\begin{scope}[rotate=60]
    \fill (2,0) circle (2pt);
    \fill (-2,0) circle (2pt);
\end{scope}
\end{scope}

\end{tikzpicture}
\]
\vspace{1pt}
The boundary $S^1$ of the disk represents $|z|=1$; this corresponds to the fixed points.
The interior points of the disk (where $w\neq 0$) correspond to the free $S^1$-orbits.
\end{figure}
We now verify the isotropy subgroup corresponding to each stratum.
The top stratum $S^3 \setminus S^1$ corresponds to the identity subgroup $\{1 \} \subset T^1$, and $S^1$ corresponds to $T^1$.
Thus, we obtain the characteristic functor on $S^3/T \cong D^2$ as shown in the right-hand side of the figure above.
\end{ex}

\section{Canonical model}\label{section canonical model}
In this section, we construct a topological space $X(Q, \lambda, c)$ from a characteristic data $(Q, \lambda, c)$. In Theorem \ref{canonical model is a T-pseudomanifold}, we will prove that $X(Q, \lambda, c)$ is a locally standard $T$-pseudomanifold. Therefore, we may call $X(Q, \lambda, c)$ a {\it canonical model} of a locally standard $T$-pseudomanifold.
If $Q$ is a simple polytope (note a simple polytope is face acyclic, $c$ is trivial) and $\lambda$ is induced from facets and satisfies the {\it unimodularity condition} (called a {\it characteristic function}), $X(Q, \lambda, 0)$ is the canonical model of a quasitoric manifold over $Q$ in {{\cite[p.423, 1.5. The basic construction]{DJ91}}} or {{\cite[Construction 7.3.5]{BP12}}}.

The following definition is based on \cite{KK25}.

\begin{definition}\label{def of canonical model}
Let $(Q, \lambda, c)$ be a characteristic data (see Definition \ref{def of char data}).
The space $X(Q, \lambda, c)$ is defined as the quotient topological space
\[X(Q, \lambda, c):={P_c}/{\sim}, \]
where $\xi: P_c \to Q$ is a $T^m$-principal bundle over $Q$ whose Chern class is $c \in H^2(Q; \mathbb{Z}^m)$, and the equivalence relation $\sim$ is defined as follows.
Two points $x$ and $y$ in $P_c$ are equivalent (denoted by $x\sim y$ or $x\sim_{\lambda} y$ if we emphasize the characteristic functor $\lambda$) if they satisfy $\xi(x)=\xi(y)=:p$ and $x$ and $y$ lie in the same $\lambda(S)$-orbit when $p \in {S}$ for some stratum $S$.
Moreover, $X(Q, \lambda, c)$ has the canonical $T^m$-action induced by the action on $P_c$.
\end{definition}

\begin{rem}\label{T_S T_p}
Let $\pi : X(Q, \lambda, c) \to Q$ be the orbit projection.
For a point $x \in X(Q, \lambda, c)$, suppose that $\pi(x)=: p \in S$ for some stratum $S \subset Q$. Then, the isotropy group $T_x$ of $x$, which is a subtorus, is $\lambda(S)$.
In view of the analogy with Remark \ref{T_S}, we may also denote $T_x$ by $T_p$ or $T_S$.
\end{rem}

\begin{rem}\label{face acyclic case}
When the second cohomology of $Q$ vanishes, every $T^m$-principal bundle over $Q$ is trivial.
Therefore, we have
\[
X(Q, \lambda, c)
=
X(Q, \lambda, 0)
=
Q \times T^m / {\sim}
\]
where $(p,t) \sim (q,s)$ if $p=q$ and $t^{-1}s \in \lambda(S)$ whenever $p \in S$ for some stratum $S$.
\end{rem}

\begin{rem}
This construction is very natural as a topological model for spaces with a group action.
For instance, when $c=0$, it corresponds to the topological model $D(Y, \varphi)$ for spaces associated with {\it simple complexes of groups} in geometric group theory (see \cite{BH99} and \cite{DLS19}).
Furthermore, this idea underlies the topological models of quasitoric manifolds introduced in \cite{DJ91}, locally standard torus manifolds in \cite{MP06}, moment-angle manifolds, and locally $k$-standard manifolds in \cite{SS21}.
For the case $c \neq 0$, this construction has also been employed in \cite{Yos11} and \cite{KK25}.
However, it is not immediate that these topological models actually preserve the structures of the corresponding original spaces.
In this paper, Section~\ref{section 12} shows that the canonical model admits the structure of a locally standard $T$-pseudomanifold (see Theorem~\ref{canonical model is a T-pseudomanifold}).
\end{rem}

For the filtration
\[
\mathfrak{Q}: Q=Q_{l+n} \supset Q_{l+n-1} \supset \cdots \supset Q_{l} \supsetneq \emptyset,
\]
we can restrict the poset category $\mathcal{S}(Q)$ to the subcategory $\mathcal{S}(Q_{l+i})$ consisting of the strata $S$ of $Q$ such that $S \subset Q_{l+i}$ ($0 \leq i \leq n$), i.e., connected components of $Q_{l+i} \setminus Q_{l+i-1}$. 
The characteristic functor $\lambda: \mathcal{S}(Q)^{\mathrm{op}} \to \mathcal{T}$  restricted to $\mathcal{S}(Q_{l+i})$, denoted by
\[
\lambda_{l+i} := \lambda |_{\mathcal{S}(Q_{l+i})^{\mathrm{op}}} : \mathcal{S}(Q_{l+i})^{\mathrm{op}}  \to \mathcal{T}.
\]
Thus, we can also define the topological space with $T$-action
\[
X(Q_{l+i}, \lambda_{l+i}, c):= (P_c)|_{Q_{l+i}}/{\sim_{\lambda_{l+i}}}:=(P_c)|_{Q_{l+i}}/{\sim},
\]
where $(P_c)|_{Q_{l+i}}$ denotes the restriction of the bundle $P_c$ to $Q_{l+i}$.
It is easy to check that the inclusion $(P_c)|_{Q_{l+i}} \hookrightarrow P_c$ induces a $T$-embedding $X(Q_{l+i}, \lambda_{l+i}, c) \subset X(Q, \lambda, c)$. Therefore, $X(Q, \lambda, c)$ has the following filtration:
\begin{align}\label{filtration of canonical model}
\mathfrak{X} : X(Q, \lambda, c) 
 \supset 
X(Q_{l+n-1}, \lambda_{l+n-1}, c) 
\supset \cdots \supset
X(Q_{l+i}, \lambda_{l+i}, c) 
\supset \cdots \supset
X(Q_l, \lambda_l, c) 
\supsetneq \emptyset ,
\end{align}
where the dimension of $X(Q_{l+i}, \lambda_{l+i}, c)$ is $(l+2i+(m-n))$ for $0 \leq i \leq n$.

\subsection{Basic properties of canonical model}
Let $(Q, \lambda, c)$ be a characteristic data.
In this subsection, using the facts that will be proved in Section \ref{section: model space}, we show that the canonical model is a compact, Hausdorff, and second-countable space.
Let $\pi : X(Q, \lambda, c) \to Q$ be the orbit projection.
\begin{lem}\label{compactness of canonical model}
The canonical model $X(Q, \lambda, c)$ is compact.
\end{lem}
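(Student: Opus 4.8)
The goal is to show that $X(Q,\lambda,c)=P_c/\!\sim$ is compact. The natural strategy is to exhibit $X(Q,\lambda,c)$ as a continuous image of a compact space. The obvious candidate is $P_c$ itself, but $P_c$ is a $T^m$-principal bundle over the compact space $Q$, so $P_c$ is compact already (the total space of a principal bundle with compact structure group over a compact base is compact: it is closed in... well, one must be a little careful — a general principal bundle need not have compact total space unless the fiber is compact, but here the fiber is $T^m$, which \emph{is} compact, so $P_c \to Q$ is a proper map and $P_c$ is compact). Then the quotient map $q : P_c \to P_c/\!\sim = X(Q,\lambda,c)$ is continuous and surjective by definition of the quotient topology, and the continuous image of a compact space is compact. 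Hence $X(Q,\lambda,c)$ is compact.

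First I would record that $P_c$ is compact: since $Q$ is compact (part of the definition of characteristic data, Definition~\ref{def of char data}) and $\xi : P_c \to Q$ is a $T^m$-principal bundle with $T^m$ compact, $\xi$ is a proper surjection, so $P_c$ is compact. (Concretely, cover $Q$ by finitely many trivializing open sets $U_1,\dots,U_k$ whose closures $\overline{U_j}$ are compact and still trivializing — this uses that $Q$ is compact Hausdorff, hence normal — so $\xi^{-1}(\overline{U_j}) \cong \overline{U_j}\times T^m$ is compact, and $P_c = \bigcup_j \xi^{-1}(\overline{U_j})$ is a finite union of compact sets.) Second, the canonical projection $q: P_c \to X(Q,\lambda,c)$ is continuous (quotient maps are continuous) and surjective. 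Third, apply the elementary fact that a continuous image of a compact space is compact, to conclude $X(Q,\lambda,c)$ is compact.

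I do not expect a serious obstacle here; the only point requiring mild care is the compactness of the total space $P_c$, where one should invoke compactness of the fiber $T^m$ (rather than appeal to a nonexistent general statement), and the fact that finitely many trivializing neighborhoods with compact closures suffice, which follows from compactness and normality of $Q$. Everything else is the standard ``continuous image of compact is compact'' argument applied to the quotient map. I would keep the proof to two or three sentences.
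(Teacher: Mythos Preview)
Your proposal is correct and matches the paper's own proof essentially verbatim: the paper observes that $P_c$ is compact because $Q$ is compact and $T^m$-principal bundles over compact bases are compact, then notes that the quotient map $P_c \to X(Q,\lambda,c)$ is continuous and surjective, so the image is compact. Your additional care about why the total space of a principal $T^m$-bundle over a compact base is compact (compact fiber, finite trivializing cover) is more detailed than the paper but fills in exactly the step the paper asserts in one line.
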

\begin{proof}
Since $Q$ is compact, $T$-principal bundles over $Q$ are also compact.
The quotient map $\rho: P_c \to X(Q, \lambda, c)$ is continuous and surjective.
Since $P_c$ is compact, the image of the quotient map $X(Q, \lambda, c)$ is also compact.
\end{proof}

\begin{lem}\label{Hausdorff of canonical model}
The canonical model $X(Q, \lambda, c)$ is Hausdorff.
\end{lem}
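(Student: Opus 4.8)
The goal is to show that $X(Q,\lambda,c) = P_c/{\sim}$ is Hausdorff. The plan is to exhibit the quotient as (locally) a quotient by a compact group action, for which Hausdorffness is standard, and then glue. More precisely, the equivalence relation $\sim$ is ``fiberwise'', so I would first reduce to a local statement: over a trivializing open set $V \subset Q$ for the bundle $\xi$, we have $(P_c)|_V \cong V \times T^m$, and the relation identifies $(p,t) \sim (p,s)$ whenever $t^{-1}s \in \lambda(S)$ for the stratum $S$ containing $p$. The first step is therefore to understand $X(Q,\lambda,c)$ as a quotient and show that a compact-Hausdorff space with a continuous surjection whose graph is closed has Hausdorff quotient — equivalently, I would aim to prove that the equivalence relation $R = \{(x,y) \in P_c \times P_c : x \sim y\}$ is a closed subset of $P_c \times P_c$. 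Since $P_c$ is compact Hausdorff (it is a $T^m$-bundle over the compact Hausdorff space $Q$) and the quotient map $\rho$ is open (quotients by group-type fiberwise relations are open; this should be checked directly using that each $\lambda(S)$ is a closed subgroup), closedness of $R$ together with openness of $\rho$ yields that $X(Q,\lambda,c)$ is Hausdorff by the standard criterion.

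The heart of the argument is thus: \emph{$R$ is closed in $P_c \times P_c$}. Take a sequence (or net) $(x_k, y_k) \in R$ converging to $(x,y)$. Then $\xi(x_k) = \xi(y_k) =: p_k \to p = \xi(x) = \xi(y)$, and working in a trivialization near $p$ we may write $x_k = (p_k, t_k)$, $y_k = (p_k, s_k)$ with $t_k^{-1} s_k \in \lambda(S_k)$, where $S_k$ is the stratum containing $p_k$. The subtlety is that $S_k$ need not be the stratum $S$ containing $p$: as $p_k \to p$, the strata $S_k$ can only be strata whose closure contains $S$, i.e. (in the notation of the excerpt) $S \subset \overline{S_k}$, which by Proposition~\ref{prop for functor} forces $\lambda(S_k) \subset \lambda(S)$. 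Hence $t_k^{-1} s_k \in \lambda(S_k) \subset \lambda(S)$ for all $k$, and since $\lambda(S)$ is a closed subgroup of $T^m$, passing to the limit gives $t^{-1}s = \lim t_k^{-1}s_k \in \lambda(S)$, so $(x,y) \in R$. This is where the poset-monotonicity of the characteristic functor (Proposition~\ref{prop for functor}) is used essentially, and it is the step I expect to be the main obstacle — one must be careful that "the strata meeting every neighborhood of $p$" are exactly those $S_k$ with $S \subset \overline{S_k}$, which follows from the definition of a stratified space (the strata form a locally finite partition and $p \in \overline{S_k}$ iff every neighborhood of $p$ meets $S_k$), but spelling this out cleanly requires invoking the manifold-stratified-space structure from Section~\ref{section 4}.

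Two technical points need attention. First, since $X$ (hence $P_c$) is only assumed second-countable and not metrizable a priori, I would phrase the closedness argument with nets rather than sequences, or else note that $P_c$, being a compact Hausdorff second-countable space, is metrizable (by Urysohn), so sequences suffice — the latter is cleaner and I would use it. Second, to invoke the standard criterion "$\rho$ open and $R$ closed $\Rightarrow$ quotient Hausdorff", one verifies: given $\rho(x) \neq \rho(y)$ in $X(Q,\lambda,c)$, i.e. $(x,y) \notin R$, closedness of $R$ gives a basic open neighborhood $A \times B$ of $(x,y)$ disjoint from $R$; then $\rho(A)$ and $\rho(B)$ are open (by openness of $\rho$), contain $\rho(x)$ and $\rho(y)$ respectively, and are disjoint (if $\rho(a) = \rho(b)$ with $a \in A$, $b \in B$ then $(a,b) \in R \cap (A\times B) = \emptyset$). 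This last part is routine; I would state it briefly and spend the bulk of the proof on the closedness of $R$ and the openness of $\rho$, citing Proposition~\ref{prop for functor} for the key inclusion $\lambda(S_k) \subset \lambda(S)$. Openness of $\rho$ I would prove by observing that for open $W \subset P_c$, $\rho^{-1}(\rho(W)) = \bigcup_{S} (\text{saturation of } W \text{ over } S)$, and each saturation is open because translating by the compact (hence "open-map-inducing") group $\lambda(S)$ fiberwise preserves openness; alternatively, $\rho^{-1}(\rho(W)) = \bigcup_{t \in T^m} \{(p, t'): (p, (t')t^{-1} \cdot(\text{something})) ...\}$ — cleaner is to note $\rho^{-1}(\rho(W))$ is a union over $S$ of $\lambda(S)$-saturations of $W \cap (P_c)|_{\mathring S \text{-part}}$ glued appropriately, each open.
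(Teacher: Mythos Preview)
Your closedness-of-$R$ argument is sound and is the heart of a valid direct proof: once $p_k$ lies in a small neighborhood of $p$ (which exists by the stratified-pseudomanifold structure of $Q$; this is Lemma~\ref{lem of small open nbh} and Remark~\ref{rem: small nbh} in the paper), the stratum $S_k$ of $p_k$ satisfies $S\subset\overline{S_k}$, and then functoriality of $\lambda$ gives $\lambda(S_k)\subset\lambda(S)$, so the limit $t^{-1}s$ lands in the closed subgroup $\lambda(S)$. (Minor point: cite the functor property from Definition~\ref{def of char data}, not Proposition~\ref{prop for functor}, since here $\lambda$ is abstract characteristic data.)

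The genuine gap is your claim that $\rho$ is open. It is not. Take $Q=[0,1]$ with $Q_0=\{0,1\}$, $m=n=1$, $\lambda(\{0\})=\lambda(\{1\})=T^1$, $\lambda((0,1))=\{1\}$, and $P_c=[0,1]\times T^1$ trivial. For $W=[0,\tfrac12)\times\{e^{i\theta}:|\theta|<\epsilon\}$ one computes $\rho^{-1}(\rho(W))=W\cup(\{0\}\times T^1)$, which is not open at $(0,i)$: any neighborhood of $(0,i)$ contains points $(p,i)$ with $p>0$, and these are not in $\rho^{-1}(\rho(W))$. So the ``open-quotient-map'' criterion does not apply here, and your hand-wave about fiberwise group saturations fails precisely because the acting subgroup jumps as you cross strata.

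The fix is easy and keeps your strategy intact: since $P_c$ is compact Hausdorff and $R$ is closed, the quotient $P_c/R$ is automatically Hausdorff (closed $R$ makes each class closed and makes $\rho$ a \emph{closed} map via $\rho^{-1}(\rho(C))=\pi_2((C\times X)\cap R)$, then use normality). So drop openness and invoke this standard compact-Hausdorff fact instead.

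For comparison, the paper takes an entirely different route: it postpones the proof, constructs in Section~\ref{section: model space} an auxiliary \emph{model space} $Y(Q,\lambda,c)=\bigsqcup_{p\in Q}T/T_p$ with an explicit basis topology, proves $Y$ is Hausdorff by bare hands (Lemma~\ref{lem of Y is Hausdorff}), and then shows $X(Q,\lambda,c)\cong Y(Q,\lambda,c)$ (Lemma~\ref{lem a}). Your direct argument, once repaired, is shorter and more self-contained; the paper's approach has the advantage that $Y$ is reused as a tool in the proof of the key Lemma~\ref{lem b}.
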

\begin{proof}
The canonical model $X(Q, \lambda, c)$ is equivariantly homeomorphic to the {\it model space} $Y(Q, \lambda, c)$ which will be defined later (see Construction \ref{def of model space}). Since the model space $Y(Q, \lambda, c)$ is Hausdorff (see Lemma \ref{lem of Y is Hausdorff}), the canonical model $X(Q, \lambda, c)$ is also Hausdorff.
\end{proof}

To show that the canonical model $X(Q, \lambda, c)$ is second-countable, we use the following fact.

\begin{lem}[{{\cite[Section 31-Exercises 7 (d)]{Mun14}}}]\label{lem of Munkres 31.7}
Let $X$ and $Y$ be topological spaces, and let $p: X \to Y$ be a closed continuous surjective map such that $p^{-1}(y)$ is compact for each $y \in Y$. If $X$ is second-countable space, then $Y$ is also second-countable space.
\end{lem}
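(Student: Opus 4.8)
The plan is to manufacture a countable basis for $Y$ out of a countable basis for $X$, exploiting compactness of the fibres together with the hypothesis that $p$ is closed. The device I would use is the operation sending an open set $U \subseteq X$ to
\[
U_{*} := Y \setminus p(X \setminus U) = \{\, y \in Y \mid p^{-1}(y) \subseteq U \,\}.
\]
Since $p$ is closed, $p(X \setminus U)$ is closed in $Y$, so $U_{*}$ is open in $Y$; and directly from the definition one sees $p^{-1}(U_{*}) \subseteq U$, so $U_{*}$ is the largest open subset of $Y$ whose preimage is contained in $U$.

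First I would fix a countable basis $\mathcal{B}$ of $X$ and pass to the collection $\mathcal{B}'$ of all finite unions of members of $\mathcal{B}$, which is again countable. Then I would claim that $\mathcal{C} := \{\, V_{*} \mid V \in \mathcal{B}' \,\}$ is a basis for $Y$. To check this, take $y \in Y$ and an open set $W$ with $y \in W$; then $p^{-1}(W)$ is open and contains the compact fibre $p^{-1}(y)$, so finitely many members of $\mathcal{B}$ contained in $p^{-1}(W)$ cover $p^{-1}(y)$, and their union $V \in \mathcal{B}'$ satisfies $p^{-1}(y) \subseteq V \subseteq p^{-1}(W)$. The first inclusion gives $y \in V_{*}$; for the second, if $z \in V_{*}$ then (using surjectivity of $p$, so that $p^{-1}(z) \neq \emptyset$) any $x \in p^{-1}(z)$ lies in $V \subseteq p^{-1}(W)$, whence $z = p(x) \in W$, i.e. $V_{*} \subseteq W$. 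Thus every open $W \subseteq Y$ is a union of members of the countable family $\mathcal{C}$, so $Y$ is second-countable.

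The only steps carrying actual content are the two basic properties of the $(\,\cdot\,)_{*}$ construction: that it preserves openness — this is exactly where closedness of $p$ is used — and that $V \subseteq p^{-1}(W)$ forces $V_{*} \subseteq W$ — which is where surjectivity (nonemptiness of fibres) enters. Compactness of the fibres is invoked precisely once, in the reduction of an arbitrary cover of $p^{-1}(y)$ by basis elements to a finite subcover, and it is the prior passage to \emph{finite unions} of basis elements that guarantees $\mathcal{C}$ is a genuine basis rather than merely a subbasis. I do not anticipate any real obstacle here; this is the classical argument for the cited Munkres exercise, and the only care needed is to track where each of the three hypotheses — closed, surjective, compact fibres — is used.
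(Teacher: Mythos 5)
Your proof is correct, and it is the standard argument for this fact (closed surjective maps with compact fibres, i.e.\ perfect maps, preserve second countability). The paper itself does not prove this lemma---it simply cites it as Exercise~7(d) of Section~31 in Munkres---so there is no in-paper argument to compare against. Your write-up correctly identifies the key device $U_* = Y \setminus p(X\setminus U)$, correctly isolates where each of the three hypotheses is used (closedness for openness of $U_*$, compactness of fibres for passing to a finite subcover, and surjectivity for the inclusion $V_* \subseteq W$), and correctly passes to finite unions of basis elements so that the resulting family is a basis and not merely a subbasis. This is exactly the intended solution to the cited exercise.
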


We conclude this subsection by proving that the canonical model is a second-countable space.

\begin{lem}\label{2nd of canonical model}
The canonical model $X(Q, \lambda, c)$ is a second-countable space.
\end{lem}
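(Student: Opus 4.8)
The plan is to apply Lemma \ref{lem of Munkres 31.7} to the quotient map $\rho : P_c \to X(Q,\lambda,c)$. The source space $P_c$ is a $T^m$-principal bundle over the second-countable space $Q$, hence itself second-countable (it is covered by countably many trivializing opens $U_\alpha \times T^m$, each second-countable, since $Q$ being second-countable is in particular Lindelöf and we may pass to a countable subcover). So the hypothesis on the domain is in hand. The map $\rho$ is continuous and surjective by construction. It remains to check the two remaining hypotheses of Lemma \ref{lem of Munkres 31.7}: that $\rho$ is a \emph{closed} map, and that each fiber $\rho^{-1}(x)$ is compact.

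For the fiber condition: a fiber $\rho^{-1}(x)$ for $x \in X(Q,\lambda,c)$ consists, by Definition \ref{def of canonical model}, of a single $\lambda(S)$-orbit inside the $T^m$-torus $\xi^{-1}(p)$, where $p = \xi$-image of that fiber and $S$ is the stratum containing $p$. Since $\lambda(S)$ is a subtorus of $T^m$ (Definition \ref{def of char data}) and $\xi^{-1}(p) \cong T^m$ is compact, the orbit $\lambda(S)\cdot(\text{point})$ is compact, being the continuous image of the compact group $\lambda(S)$. So every fiber of $\rho$ is compact.

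For the closedness of $\rho$: here I would invoke the already-established compactness and Hausdorffness of $X(Q,\lambda,c)$ (Lemmas \ref{compactness of canonical model} and \ref{Hausdorff of canonical model}). A continuous map from a compact space to a Hausdorff space is closed: if $A \subset P_c$ is closed, then $A$ is compact (closed subset of the compact space $P_c$), so $\rho(A)$ is compact in $X(Q,\lambda,c)$, and a compact subset of a Hausdorff space is closed. Thus $\rho$ is a closed map. With all hypotheses of Lemma \ref{lem of Munkres 31.7} verified — $P_c$ second-countable, $\rho$ closed, continuous, surjective, with compact fibers — we conclude that $X(Q,\lambda,c)$ is second-countable.

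The only genuinely delicate point is the circular-looking dependency: Lemma \ref{Hausdorff of canonical model} is proved via the model space $Y(Q,\lambda,c)$, whose construction and Hausdorffness (Construction \ref{def of model space}, Lemma \ref{lem of Y is Hausdorff}) live in Section \ref{section: model space}; so strictly this proof also rests on results "proved later," exactly as flagged in the subsection preamble. No new obstacle arises beyond making sure that chain of forward references is consistent. An alternative, self-contained route that avoids closedness of $\rho$ altogether would be to cover $Q$ by countably many opens over which $P_c$ is trivial and argue second-countability of $X(Q,\lambda,c)$ stratum-by-stratum using the local models of Definition \ref{def of T-pseudomanifold}-2; but the Munkres lemma route above is shorter and is presumably the intended one.
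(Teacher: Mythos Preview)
Your proof is correct and follows essentially the same approach as the paper: both apply Lemma~\ref{lem of Munkres 31.7} to the quotient map $\rho: P_c \to X(Q,\lambda,c)$, verifying that $P_c$ is second-countable (as a torus bundle over the second-countable $Q$), that $\rho$ is closed (via the compact-to-Hausdorff argument using Lemmas~\ref{compactness of canonical model} and~\ref{Hausdorff of canonical model}), and that the fibers $\rho^{-1}(x)\cong\lambda(S)$ are compact tori. Your observation about the forward dependency on Section~\ref{section: model space} for Hausdorffness is accurate and matches the paper's own acknowledgment of this in the subsection preamble.
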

\begin{proof}
Since $P_c$ is compact and $X(Q, \lambda, c)$ is Hausdorff,
the quotient map $\rho : P_c \to X(Q, \lambda, c)$ is a closed map. 
For any $x \in X(Q, \lambda, c)$, the fiber $\rho^{-1}(x)$ is homeomorphic to $\lambda(S)$, where $\pi(x) \in S \subset Q$.  
Since $\lambda(S)$ is a compact torus, each fiber $\rho^{-1}(x)$ is compact.  
By assumption, $Q$ is second-countable (see Definition~\ref{def of char data}), and hence $P_c$ is also second-countable.  
Therefore, by Lemma~\ref{lem of Munkres 31.7}, the quotient space $X(Q, \lambda, c)$ is second-countable.
\end{proof}

\section{Uniqueness of the canonical model $X(Q, \lambda, c)$}\label{section 9}

In this section, we show that the canonical models of two (weakly) isomorphic characteristic datum are (weakly) $T$-equivariantly homeomorphic.
Namely, we will prove the following theorem.

\begin{thm} \label{thm of equivariant homeo of canonical model}
Let $f: (Q, \lambda, c) \to (Q', \lambda', c')$ be a (weakly) isomorphism between characteristic datum. Then $f$ lifts a (weakly) $T$-equivariant homeomorphism $\widetilde{f}: X(Q, \lambda, c) \to X(Q', \lambda', c')$.
\end{thm}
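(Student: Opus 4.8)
The plan is to build $\widetilde{f}$ in two stages: first lift the stratified homeomorphism $f \colon Q \to Q'$ to an isomorphism of the underlying $T^m$-principal bundles $P_c \to P_{c'}$, and then check that this bundle isomorphism descends through the two quotient relations $\sim_\lambda$ and $\sim_{\lambda'}$ to give the desired homeomorphism of canonical models. Since $f$ is a weak isomorphism, we are also given an automorphism $\psi \colon T^m \to T^m$ with $\Psi \circ \lambda = \lambda' \circ \mathcal{S}(f)$, and the lift $\widetilde{f}$ will be $\psi$-weakly equivariant (when $\psi = \mathrm{id}$ it is genuinely equivariant).

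\emph{Step 1: lifting to the principal bundles.} Let $\xi \colon P_c \to Q$ and $\xi' \colon P_{c'} \to Q'$ be the chosen $T^m$-principal bundles with Chern classes $c$ and $c'$. Pulling back $P_{c'}$ along $f$ gives a $T^m$-principal bundle $f^{\ast}P_{c'}$ over $Q$ whose classifying map is $f$ followed by that of $P_{c'}$; its Chern class is $f^{\ast}(c') = c$ by the defining hypothesis of a (weak) isomorphism. Hence $f^{\ast}P_{c'} \cong P_c$ as $T^m$-principal bundles over $Q$. Composing this isomorphism with the canonical bundle map $f^{\ast}P_{c'} \to P_{c'}$ covering $f$ yields a homeomorphism $F \colon P_c \to P_{c'}$ with $\xi' \circ F = f \circ \xi$ that is equivariant (or $\psi$-weakly equivariant: precompose the $T^m$-action on $P_{c'}$ with $\psi$, which again changes the bundle only by an automorphism and does not affect its isomorphism class, so the existence of an equivariant — up to $\psi$ — lift still holds). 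When $c = 0$ and $Q$ has vanishing $H^2$, this is just $F = f \times \psi$ on $Q \times T^m$, as in Remark~\ref{face acyclic case}.

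\emph{Step 2: descending to the quotients.} Recall that $x \sim_\lambda y$ in $P_c$ iff $\xi(x) = \xi(y) =: p$ lies in a stratum $S$ and $x, y$ lie in the same $\lambda(S)$-orbit; similarly for $\sim_{\lambda'}$ in $P_{c'}$ using the strata of $Q'$ and $\lambda'$. We must show $x \sim_\lambda y \iff F(x) \sim_{\lambda'} F(y)$. First, $\xi'(F(x)) = \xi'(F(y))$ iff $f(\xi(x)) = f(\xi(y))$ iff $\xi(x) = \xi(y)$ since $f$ is a homeomorphism; call this common value $p$, and let $S \ni p$ be its stratum, so that $f(S) = S'$ is the stratum of $Q'$ containing $f(p)$ because $f$ is a stratified homeomorphism (and $\mathcal{S}(f)(S) = S'$). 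Next, $F$ intertwines the $T^m$-action with its $\psi$-twist, so $F$ sends the $\lambda(S)$-orbit through $x$ to the $\psi(\lambda(S))$-orbit through $F(x)$; and $\psi(\lambda(S)) = \Psi(\lambda(S)) = \lambda'(\mathcal{S}(f)(S)) = \lambda'(S')$ by the commuting diagram \eqref{comm diagram: weak isomorphism}. Therefore $x, y$ lie in a common $\lambda(S)$-orbit iff $F(x), F(y)$ lie in a common $\lambda'(S')$-orbit, which is exactly $F(x) \sim_{\lambda'} F(y)$. Consequently $F$ induces a bijection $\widetilde{f} \colon X(Q, \lambda, c) \to X(Q', \lambda', c')$, and since $F$ is a homeomorphism and both quotient maps $\rho, \rho'$ are continuous open (hence quotient) surjections, $\widetilde{f}$ is a homeomorphism; it is $\psi$-weakly $T$-equivariant because $F$ is and the $T^m$-actions on the canonical models are induced from those on $P_c, P_{c'}$.

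\emph{Main obstacle.} The routine parts are the quotient-space bookkeeping in Step~2; the one genuinely delicate point is Step~1 — producing the bundle isomorphism $F$ with the correct equivariance while simultaneously keeping track of the automorphism $\psi$. Concretely, one must be careful that "pull back by $f$, then twist the structure group by $\psi$" really realizes the identification $f^{\ast}(c') = c$ at the level of bundles and not merely of cohomology classes, and that the resulting $F$ is $\psi$-weakly equivariant rather than equivariant for some uncontrolled automorphism. This is handled by invoking the classification of $T^m$-principal bundles by $[Q, BT^m] \cong H^2(Q;\mathbb{Z}^m)$ used earlier in Section~\ref{section 4}: the isomorphism class of a principal bundle depends only on the classifying-map homotopy class, so once the Chern classes match, an isomorphism exists, and the $\psi$-twist is installed by post-composing the classifying map with $B\psi$, which shifts the Chern class by $\psi_{\ast}$ — a book-keeping point that must be stated precisely but presents no real difficulty.
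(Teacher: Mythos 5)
Your proposal follows essentially the same route as the paper's proof: identify $P_c$ with $f^{\ast}P_{c'}$ using $c = f^{\ast}(c')$, twist by $\psi$ to make the lifted bundle map $\psi$-weakly equivariant (the paper writes this as $\widehat{f} = \beta \circ \alpha$, with $\alpha$ the bundle isomorphism and $\beta$ the fiberwise twist), and then descend through the quotient relations $\sim_\lambda$ and $\sim_{\lambda'}$ using $\psi(\lambda(S)) = \lambda'(\mathcal{S}(f)(S))$. Your Step~2 folds well-definedness of $\widetilde{f}$ and of its inverse into a single if-and-only-if; the paper carries out the two directions separately, but this is a cosmetic difference, not a mathematically distinct argument.

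One internal inconsistency you should resolve. In Step~1 you assert that precomposing the $T^m$-action on $P_{c'}$ with $\psi$ ``changes the bundle only by an automorphism and does not affect its isomorphism class,'' but a few lines later, in your ``Main obstacle'' paragraph, you correctly observe that post-composing the classifying map with $B\psi$ shifts the Chern class by $\psi_{\ast}$. These two statements are in tension, and the second one is the accurate one: twisting a $T^m$-principal bundle through an automorphism of $T^m$ does generally change its isomorphism class, since it acts on $H^2(\cdot;\mathbb{Z}^m)$ via $\psi_{\ast} \in GL_m(\mathbb{Z})$. Consequently this is not just ``a book-keeping point that presents no real difficulty'': if $\psi_{\ast}$ does not fix $c$, a $\psi$-weakly equivariant bundle map $P_c \to P_{c'}$ covering $f$ need not exist under the stated hypothesis $c = f^{\ast}(c')$ alone, so one needs either to verify $\psi_{\ast}(c) = c$ in the relevant setting or to build the twist explicitly from local trivializations and check that the patches are compatible. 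To be fair, the paper's construction of $\beta$ (``using $\psi$ on each fiber'' of the possibly non-trivial bundle $P_{c'}$) glosses over the same point, so this is a shared soft spot rather than an error specific to your write-up — but your proof should state it carefully rather than first denying that there is any shift and then admitting that there is.
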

\begin{proof}
Let $ f:(Q, \lambda, c) \to (Q', \lambda', c')$ be a (weak) isomorphism between characteristic datum, and let $\psi: T^m \to T^m$ be the automorphism associated with $f$ (see Definition~\ref{def of isomorphism of char pairs}).
Let $\xi: P_c \to Q$ and $\xi': P_{c'} \to Q'$ be $T^m$-principal bundles whose Chern classes are $c$ and $c'$, respectively. 
Consider the pull-back of $\xi': P_{c'} \to Q'$ along the homeomorphism $f: Q \to Q'$:
\[
\begin{tikzcd}[ampersand replacement=\&]
f^* P_{c'} \arrow[r] \arrow[d] \& P_{c'} \arrow[d, "\xi'"] \\
Q \arrow[r,"f"] \& Q'
\end{tikzcd}
\]
The pull-back bundle $f^* P_{c'} \to Q$ is a $T^m$-principal bundle whose 
Chern class is $f^*(c')$. By the definition of a (weak) isomorphism, 
we have $f^*(c') = c$, so that $P_c \cong f^* P_{c'}$. 
It follows that the homeomorphism $f: Q \to Q'$ induces a $T^m$-bundle isomorphism
\[
\alpha: P_c \to P_{c'},
\]
which is, in particular, a $T^m$-equivariant homeomorphism.
By using the automorphism $\psi:T^m \to \psi(T^m) \cong T^m$ on each fiber of $P_{c'}$, we obtain a $\psi$-weakly equivariant homeomorphism
\[
\beta: P_{c'} \to P_{c'}.
\]
We then define a $\psi$-weakly equivariant homeomorphism (note that if $\psi = \mathrm{id}$, this is an equivariant homeomorphism)
\[
\widehat{f} := \beta \circ \alpha: P_c \to P_{c'}.
\]
We represent a point $x \in P_c$ (resp.\ $x' \in P_{c'}$) as $(p,t)$ (resp.\ $(p',t')$), where $p = \xi(x)$, $t \in \xi^{-1}(p) \cong T$, and $p' = \xi'(x')$, $t' \in \xi'^{-1}(p') \cong T$.  
When we write $\widehat{f}(p,t) = (p',t')$, note that $p' = f \circ \xi(p,t) = f(p)$.
Let
\[
\begin{array}{rccc}
q_{X(Q, \lambda, c)}: & P_c & \to & X(Q, \lambda, c) \\ 
& (p, t) & \mapsto & [p, t]_{\lambda}
\end{array}
\quad \text{and} \quad
\begin{array}{rccc}
q_{X(Q', \lambda', c')}: & P_{c'} & \to & X(Q', \lambda', c') \\ 
& (p', t') & \mapsto & [p', t']_{\lambda'}
\end{array}
\]
be the quotient maps defining the respective canonical models.
We then define a map
\[
\begin{array}{rccc}
\widetilde{f}: & X(Q, \lambda, c) &\to& X(Q', \lambda', c') \\ 
& [p, t]_{\lambda} & \mapsto & [\widehat{f}(p,t)]_{\lambda'}.
\end{array}
\]
In the following four steps, we show that this map is well-defined and is a $\psi$-weakly equivariant homeomorphism.

\paragraph{Step 1: $\widetilde{f}$ is well-defined.}
Suppose that $[p,t]_{\lambda} = [q,s]_{\lambda}$.  
By the definition of the equivalence relation $\sim_{\lambda}$ on $P_c$, we have
\[
[p,t]_{\lambda} = [q,s]_{\lambda} \iff 
\begin{cases}
  p = q, \\[1mm]
  (p,t) \text{ and } (q,s) \text{ lie in the same } \lambda(S)\text{-orbit for some stratum } S \ni p.
\end{cases}
\]
Then there exists $u \in \lambda(S)$ satisfying
\[
(q,s) = u \cdot (p,t).
\]
Since $\widehat{f}$ is a $\psi$-weakly equivariant homeomorphism, it follows that
\[
\widehat{f}(q,s) = \widehat{f}(u \cdot (p,t)) = \psi(u) \cdot \widehat{f}(p,t).
\]
Thus $\widehat{f}(q,s)=:(f(q), s')$ and $\widehat{f}(p,t)=:(f(p), t')$ lie in the same $\psi(\lambda(S))$-orbit.
By the definition of the weak isomorphism $f$, we have $\psi(\lambda(S)) = \lambda'(f(S))$.  
The equivalence relation $\sim_{\lambda'}$ on $P_{c'}$ is given by
\begin{align*}
&[f(p), t']_{\lambda'} = [f(q), s']_{\lambda'} \\ &\iff
\begin{cases}
f(p) = f(q), \\[1mm]
(f(p), t') \text{ and } (f(q), s') \text{ lie in the same } \lambda'(S')\text{-orbit for some stratum } S' \ni f(p).
\end{cases}
\end{align*}
Hence, we obtain
\[
[\widehat{f}(p,t)]_{\lambda'} = [\widehat{f}(q,s)]_{\lambda'},
\]
which shows that $\widetilde{f}$ is well-defined.

\paragraph{Step 2: $\widetilde{f}$ is bijective.}
We define the inverse map
\[
\begin{array}{rccc}
& \widetilde{f}^{-1}: X(Q', \lambda', c') &\to& X(Q, \lambda, c) \\
& \rotatebox{90}{$\in$} & & \rotatebox{90}{$\in$} \\
& [p', t']_{\lambda'} &\mapsto& [\widehat{f}^{-1}(p',t')]_{\lambda}.
\end{array}
\]
Since $\widehat{f}^{-1}: P_{c'} \to P_c$ is a $\psi^{-1}$-weakly equivariant homeomorphism, an analogous argument as in Step~1 shows that $\widetilde{f}^{-1}$ is well-defined.  
Therefore, $\widetilde{f}$ is bijective.

\paragraph{Step 3: $\widetilde{f}$ is a homeomorphism.}
By the definition of $\widetilde{f}$, the following diagram commutes:
\[
\begin{tikzcd}[ampersand replacement=\&]
P_c \arrow[r, "\widehat{f}"', "\cong"] \arrow[d, "q_{X(Q, \lambda, c)}"] \&
  P_{c'} \arrow[d, "q_{X(Q', \lambda', c')}"] \\
X(Q, \lambda, c) \arrow[r, "\widetilde{f}"] \&
  X(Q', \lambda', c').
\end{tikzcd}
\]
Since $\widehat{f}$ is a homeomorphism and $\widetilde{f}$ is the induced map between the quotient spaces, it follows from the commutativity of the diagram that $\widetilde{f}$ is also a homeomorphism.

\paragraph{Step 4: $\widetilde{f}$ is $\psi$-weakly equivariant.}
It follows from the commutative diagram above that for any $s \in T^m$, we have
\begin{align*}
\widetilde{f}(s \cdot [p,t]_{\lambda})
&= [\widehat{f}(s \cdot (p,t))]_{\lambda'} \\
&= [\psi(s) \cdot \widehat{f}(p,t)]_{\lambda'} \\
&= \psi(s) \cdot [\widehat{f}(p,t)]_{\lambda'} \\
&= \psi(s) \cdot \widetilde{f}([p,t]_{\lambda}).
\end{align*}
Therefore, $\widetilde{f}$ is $\psi$-weakly equivariant.
\end{proof}

\section{Model space $Y(Q, \lambda, c)$}\label{section: model space}
Let $(Q, \lambda, c)$ be a characteristic data (see Definition~\ref{def of char data}).
To prove the classification theorem (Theorem \ref{classification theorem intro}), we construct a {\it model space $Y(Q, \lambda, c)$} that is equivariantly homeomorphic to the canonical model $X(Q, \lambda, c)$ (see Lemma \ref{lem a}).
The construction of the model space follows ideas from {{\cite[p.9, Construction 3.5]{Ayz18}}}.
The name {\it model space} follows the terminology used by Ayzenberg in {\cite{Ayz18}}.

\subsection{Small open neighborhood}
In this subsection, to simplify the arguments, we introduce a {\it small open neighborhood} in a topological stratified pseudomanifold.

\begin{definition}[small open neighborhood]\label{def of small open nbh}
Let $Q$ be a topological stratified pseudomanifold and $S \subset Q$ be a stratum. For $p \in {S} \subset Q$, an open neighborhood $U_p$ of $p$ in $Q$ is said to be {\it small} if for any $p' \in U_p$, the stratum $S'$ such that $\overline{S'} \ni p'$ satisfies that ${S} \subset \overline{S'}$, where $\overline{S'}$ denotes the closure of $S'$.
\end{definition}

\begin{ex}
In the three figures below, the left open neighborhood of $x$ is small, while the others are not small. However, the right one is not a small open neighborhood of $x$, but it is a small open neighborhood of $v$.

\begin{tikzpicture}[scale=0.6]
    % グレーの三角形（塗りつぶしと辺）
    \fill[gray!20] (0,0) -- (6,0) -- (4,2.5) -- cycle; % グレーの三角形
    \draw[thick] (0,0) -- (6,0) -- (4,2.5) -- cycle; % 三角形の辺

    % 三角形の頂点に黒点を描画
    \fill (0,0) circle (2pt);
    \fill (6,0) circle (2pt);
    \fill (4,2.5) circle (2pt);

    % 半径0.75の円の上半分を薄い赤で塗りつぶす
    \fill[red!20] (2.75,0) arc[start angle=0,end angle=180,radius=0.75] -- cycle;

    % 半径0.75の円の上半分を点線で描く
    \draw[dashed] (2.75,0) arc[start angle=0,end angle=180,radius=0.75];

    % 近傍の白丸
    \draw (2.75,0) circle (2pt);
    \draw (1.25,0) circle (2pt);

    % 点 (2,0) に黒点
    \fill (2,0) circle (2pt);

    % 黒点の下に 'x' と書く
    \node at (0,-0.3) {$v$};
    \node at (2,-0.3) {$x$};
   \node at (4.5,-0.3) {$S$};
    \node at (3,-1) {small};
    \node[red] at (2.85, 0.75) {$U_x$};

\begin{scope}[xshift=7.5cm]
% グレーの三角形（塗りつぶしと辺）
    \fill[gray!20] (0,0) -- (6,0) -- (4,2.5) -- cycle; % グレーの三角形
    \draw[thick] (0,0) -- (6,0) -- (4,2.5) -- cycle; % 三角形の辺

    % 三角形の頂点に黒点を描画
    \fill (0,0) circle (2pt);
    \fill (6,0) circle (2pt);
    \fill (4,2.5) circle (2pt);

    % 半径0.75の円の上半分を薄い赤で塗りつぶす
    \fill[red!20] (2.01, 1.257) -- (0.87, 0.544) -- (3.25, 0) arc[start angle=0,end angle=89.55,radius=1.25] -- cycle;
    \fill[red!20] (3.25, 0) -- (0.75, 0) -- (0.75, 0) arc[start angle=180,end angle=154.43,radius=1.25] -- cycle;
    % 半径0.75の円の上半分を点線で描く
    \draw[dashed] (3.25,0) arc[start angle=0,end angle=89.55,radius=1.25];
    \draw[dashed] (0.75,0) arc[start angle=180,end angle=154.43,radius=1.25];

    % 近傍の白丸
    \draw (3.25,0) circle (2pt);
    \draw (0.75,0) circle (2pt);
    \draw (2.0098, 1.25) circle (2pt);
    \draw (0.874, 0.543) circle (2pt);

    % 点 (2,0) に黒点
    \fill (2,0) circle (2pt);

    % 黒点の下に 'x' と書く
 \node at (0,-0.3) {$v$};
    \node at (2,-0.3) {$x$};
  \node at (4.5,-0.3) {$S$};
    \node at (3,-1) {NOT small};
\end{scope}

\begin{scope}[xshift=15cm]
% グレーの三角形（塗りつぶしと辺）
    \fill[gray!20] (0,0) -- (6,0) -- (4,2.5) -- cycle; % グレーの三角形
    \draw[thick] (0,0) -- (6,0) -- (4,2.5) -- cycle; % 三角形の辺

    % 三角形の頂点に黒点を描画

    \fill (6,0) circle (2pt);
    \fill (4,2.5) circle (2pt);

    % 半径0.75の円の上半分を薄い赤で塗りつぶす
    \fill[red!20] (0,0)  -- (2.25, 0) arc[start angle=0,end angle=32,radius=2.25] -- cycle;

    % 半径0.75の円の上半分を点線で描く
    \draw[dashed] (2.25,0) arc[start angle=0,end angle=32,radius=2.25];

    % 近傍の白丸
    \draw (2.25,0) circle (2pt);
    \draw (1.904,1.197) circle (2pt);

    \fill (0,0) circle (2pt);

    % 点 (2,0) に黒点
    \fill (2,0) circle (2pt);

    % 黒点の下に 'x' と書く
 \node at (0,-0.3) {$v$};
    \node at (2,-0.3) {$x$};
  \node at (4.5,-0.3) {$S$};
    \node at (3,-1) {NOT small};

\end{scope}
\end{tikzpicture}
\end{ex}

\begin{rem}\label{rem: small nbh}
By Lemma \ref{lem of small open nbh} and Remark \ref{rem of small open nbh}, if the topological stratified pseudomanifold is compact, then for each point there exists a contractible, small open neighborhood.
\end{rem}

\subsection{Construction and remark}

We now construct a {\it model space $Y(Q, \lambda, c)$}. 

\begin{cons}[model space]\label{def of model space}
Let $(Q, \lambda, c)$ be a characteristic data.
We construct a {\it model space $Y(Q, \lambda, c)$} as follows.
As an abstract set, we put
\[
Y(Q, \lambda, c)=\bigsqcup_{p \in Q}T/T_p,
\]
where $T_p=\lambda(S)\, (\cong T^{n-i})$, where $S$ is a stratum in $Q$ which contains $p$.
Define a topology on $Y(Q, \lambda, c)$ as follows. For any $y \in Y(Q, \lambda, c)=\bigsqcup_{p \in Q}T/T_p$, $y$ can be denoted by $(p, [t_p])$ for some $p \in Q$ and $[t_p] \in T/T_p$. Let $U_p$ denote a contractible, small open neighborhood of $p$ in $Q$ (see Definition \ref{def of small open nbh} and Remark \ref{rem: small nbh}). We also take an open neighborhood $V$ of $[t_p]$ in $T/T_p$. Then for each $p' \in U_p$, we define the natural projection
\[
q_{p'}:T/T_{p'} \to T/T_p.
\]
This projection is well-defined since $U_p$ is small so that $T_p \supset T_{p'}$. We define the subset of $Y(Q, \lambda, c)$ of the form
\[
\bigsqcup_{p' \in U_p}q_{p'}^{-1}(V)
\]
as a basis for the topology around $y=(p,[t_p])$.
Moreover, we define a $T$-action on $Y(Q, \lambda, c)$ as follows:
\begin{align}\label{T-action on model space}
\begin{array}{rccc}&T \times Y(Q, \lambda, c)&\to&Y(Q, \lambda, c)\\
  & \rotatebox{90}{$\in$}&               & \rotatebox{90}{$\in$} \\
&\left( s,(p,[t_p])\right) &\mapsto&\left( p,[st_p] \right)
\end{array}
\end{align}
We can easily show that its orbit space is $Q$. 
\end{cons}

We begin by proving the following lemma.

\begin{lem}
Let $(Q, \lambda, c)$ be a characteristic data.
The $T$-action \eqref{T-action on model space} on the model space $Y(Q, \lambda, c)$ is continuous.
\end{lem}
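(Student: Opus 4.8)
The plan is to verify continuity of the action map $\mu : T \times Y(Q,\lambda,c) \to Y(Q,\lambda,c)$ at an arbitrary point $(s_0, y_0)$ where $y_0 = (p, [t_p])$, using the explicit basis for the topology of $Y(Q,\lambda,c)$ given in Construction~\ref{def of model space}. A basic open set around the image point $\mu(s_0,y_0) = (p,[s_0 t_p])$ has the form $W := \bigsqcup_{p' \in U_p} q_{p'}^{-1}(V)$, where $U_p$ is a contractible small open neighborhood of $p$ in $Q$ and $V \subset T/T_p$ is an open neighborhood of $[s_0 t_p]$. First I would produce a candidate open neighborhood of $(s_0, y_0)$ of the form $N \times \bigl(\bigsqcup_{p' \in U_p} q_{p'}^{-1}(V')\bigr)$, where $N \subset T$ is an open neighborhood of $s_0$ and $V' \subset T/T_p$ is an open neighborhood of $[t_p]$, chosen so that $N \cdot V' \subset V$ (more precisely, so that the image of $N \times V'$ under the multiplication $T \times T/T_p \to T/T_p$ lands in $V$). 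Such $N$ and $V'$ exist because the $T$-action on the quotient group $T/T_p$ is continuous, being induced from the continuous multiplication on $T$.

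Next I would check that $\mu$ maps this neighborhood into $W$. Take $(s, y)$ in it, with $y = (p', [t_{p'}])$ for some $p' \in U_p$ and $[t_{p'}] \in q_{p'}^{-1}(V') \subset T/T_{p'}$; note $p'$ automatically lies in $U_p$, so we stay over the same small neighborhood. Then $\mu(s,y) = (p', [s t_{p'}])$, and the key computation is that the natural projection $q_{p'} : T/T_{p'} \to T/T_p$ is $T$-equivariant (it is a group homomorphism after choosing basepoints, hence commutes with left translation), so $q_{p'}([s t_{p'}]) = s \cdot q_{p'}([t_{p'}]) \in s \cdot V' \subset N \cdot V' \subset V$. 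Therefore $\mu(s,y) \in q_{p'}^{-1}(V) \subset W$, which is what we want. Since $(s_0, y_0)$ and the basic neighborhood $W$ of its image were arbitrary, this establishes continuity of $\mu$.

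The main obstacle, and the point that requires genuine care rather than routine bookkeeping, is the interplay between the \emph{two different indexing sets} in the basis description: the neighborhoods $U_p$ in $Q$ and the quotient maps $q_{p'}$, which depend on $p'$ through the isotropy group $T_{p'}$. One must make sure that a single choice of small $U_p$ works simultaneously for the target neighborhood $W$ and for the source neighborhood, and that the equivariance of $q_{p'}$ (which relies on the inclusions $T_{p'} \subset T_p$ guaranteed by smallness of $U_p$, via Proposition~\ref{prop for functor}) is exactly what transports the condition ``$[t_{p'}] \in q_{p'}^{-1}(V')$'' to ``$[s t_{p'}] \in q_{p'}^{-1}(V)$''. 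Once one keeps the smallness of $U_p$ and the $T$-equivariance of the projections $q_{p'}$ in mind, the verification is a short diagram-free chase through the definitions; the continuity of multiplication on $T/T_p$ supplies the only genuinely analytic ingredient.
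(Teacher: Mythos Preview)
Your proof is correct and in fact establishes more than the paper's own argument. The paper only verifies that for each fixed $s \in T$ the translation map $\varphi_s : (p,[t_p]) \mapsto (p,[st_p])$ is continuous, by checking that $\varphi_s^{-1}\bigl(\bigsqcup_{p'\in U_p} q_{p'}^{-1}(V)\bigr) = \bigsqcup_{p'\in U_p} q_{p'}^{-1}(s^{-1}V)$ via the commutative square relating $q_{p'}$ with left translation by $s$. That gives only separate continuity. You instead prove joint continuity of $T \times Y \to Y$ directly: you use continuity of the $T$-action on $T/T_p$ to find $N\ni s_0$ and $V'\ni [t_p]$ with $N\cdot V' \subset V$, and then the same $T$-equivariance of $q_{p'}$ that the paper uses lets you push the condition through to arbitrary fibers over $U_p$. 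Both arguments hinge on the equivariance of the projections $q_{p'}$ and the fact that the same small neighborhood $U_p$ can be used on source and target; your route is the one that actually matches the standard meaning of ``continuous action'', at essentially no extra cost.
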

\begin{proof}
Let $p \in Q$ and $V$ be an open subset in $T/T_p$.
We define $\varphi_s:Y(Q, \lambda, c) \to Y(Q, \lambda, c)$ as the bijective map given by $(p,[t_p]) \mapsto (p,[st_p])$, where $s\in T$.
For each open basis $\bigsqcup_{p' \in U_p}q_{p'}^{-1}(V)$ in $Y(Q, \lambda, c)$, we claim that there is an open subset $W$ in $T/T_p$ such that 
\[
\varphi_s^{-1}  \left( \bigsqcup_{p' \in U_p}q_{p'}^{-1}(V) \right)  =\bigsqcup_{p' \in U_p}q_{p'}^{-1}(W).
\]

Consider the following commutative diagram:
\begin{align*}
\begin{tikzcd}[ampersand replacement=\&]
T/T_{p'} \ar{r}{q_{p'}} \ar{d}{s} \& T/T_{p} \ar{d}{s}\\
  T/T_{p'} \ar{r}{q_{p'}} \& T/T_{p},
\end{tikzcd}
\end{align*}
where $s[t_p]=[s t_p]$ for $[t_p] \in T/T_p$. By the commutativity, we have
 \[\bigsqcup_{p' \in U_p}q_{p'}^{-1}\left(s^{-1} (V) \right) = \bigsqcup_{p' \in U_p} {s}^{-1} \left( q_{p'}^{-1}(V) \right).\]
Since $s^{-1}(V)$ is an open subset in $T/T_p$, we may put $W=s^{-1}(V)$. 
Therefore, we have
\begin{align*}
\bigsqcup_{p' \in U_p}q_{p'}^{-1}\left(W \right)
=
\bigsqcup_{p' \in U_p} {s}^{-1} \left( q_{p'}^{-1}(V) \right)
=
\varphi_{s^{-1}}  \left( \bigsqcup_{p' \in U_p}q_{p'}^{-1}(V) \right) 
=
\varphi_s^{-1}  \left( \bigsqcup_{p' \in U_p}q_{p'}^{-1}(V) \right).
\end{align*}
This shows that $\varphi_{s}$ is continuous. This establishes the statement.
\end{proof}

\begin{rem}\label{rem of trivial}
If $p \in Q_{l+n} \setminus Q_{l+n-1}$, we can take an open neighborhood $U_p \subset Q_{l+n} \setminus Q_{l+n-1}$. Then for any $p' \in U_p$, we have $T/T_{p'}=T$. Hence, the projection $q_{p'} : T/T_{p'} \to T/T_p$ is the identity map on $T$. This implies that the subspace
\[
\bigsqcup_{p \in Q_{l+n} \setminus Q_{l+n-1}}T
\]
of the model space $Y(Q, \lambda, c)$ is a $T$-principal bundle over $Q_{l+n} \setminus Q_{l+n-1}$.
By assumption, the top strata $Q_{l+n} \setminus Q_{l+n-1}$ is homotopy equivalent to $Q$ itself.
Therefore, the inclusion map $i: Q_{l+n} \setminus Q_{l+n-1} \to Q$ induces an isomorphism
\[
i^{\ast}: H^2(Q; \mathbb{Z}^m) \to H^2(Q_{l+n} \setminus Q_{l+n-1}; \mathbb{Z}^m).
\]
The $T$-principal bundle over $Q_{l+n} \setminus Q_{l+n-1}$ is the restriction of the $T$-principal bundle over $Q$.
Since the $T$-principal bundle over $Q$ is classified by the Chern class $c \in H^2(Q; \mathbb{Z}^m)$, the $T$-principal bundle over $Q_{l+n} \setminus Q_{l+n-1}$ is also classified by $c$.
\end{rem}

In Remark~\ref{rem of trivial}, we observed that the subspace
$
\bigsqcup_{p \in Q_{l+n} \setminus Q_{l+n-1}} T
$
of the model space $Y(Q, \lambda, c)$ is the $T$-principal bundle over $Q_{l+n} \setminus Q_{l+n-1}$ whose Chern class is $c \in H^2(Q; \mathbb{Z}^m)$.  
Let 
$
\xi \colon P_c \to Q
$
denote the $T$-principal bundle whose Chern class is $c$.
We represent a point $x \in P_c$ as $(p,t)$, where $p = \xi(x)$ and $t \in \xi^{-1}(p)$.
The $T$-action \eqref{T-action on model space} on $Y(Q, \lambda, c)$ is induced from the $T$-action on $P_c$ as follows:
For each $p \in Q$, consider the two projections
\[
\xi: P_c \to Q, \quad \tau : Y(Q, \lambda, c) \to Q.
\]
The fiber $\xi^{-1}(p)$ is $T$-equivariantly homeomorphic to $T$, and 
\[
\tau^{-1}(p) = \bigsqcup_{p \in \{ p \}} T/T_p
\]
is $T$-equivariantly homeomorphic to $T/T_p$.
We define a map
\[
\rho_p: \xi^{-1}(p) \to \tau^{-1}(p)
\]
such that the following diagram commutes:
\begin{align}\label{comm: P_c to Y}
\begin{tikzcd}[ampersand replacement=\&]
\xi^{-1}(p) \ar{r}{\rho_p} \ar{d}{\theta_p}[']{\cong} \& \tau^{-1}(p) \ar{d}{\psi_p}[']{\cong} \\
T \rar{\pi_p} \& T/T_p
\end{tikzcd}
\end{align}
where $\theta_p$ and $\psi_p$ are $T$-equivariant homeomorphisms, and $\pi_p$ is the natural projection. Since $\pi_p$ is also $T$-equivariant, it follows that $\rho_p$ is also a $T$-equivariant map.
Then we define
\begin{align}\label{map: P_c to Y}
\begin{array}{rccc}
q_{Y(Q, \lambda, c)}: & P_c & \to & Y(Q, \lambda, c) \\
& \rotatebox{90}{$\in$} & & \rotatebox{90}{$\in$} \\
& (p, t) & \mapsto & (p, \rho_{p}(t))
\end{array}
\end{align}
Identifying $\xi^{-1}(p)$ with $T$ and $\tau^{-1}(p)$ with $T/T_p$, and writing $\rho_p(t) = [t_p]$ for $t \in T$, this map becomes
\[
\begin{array}{rccc}
q_{Y(Q, \lambda, c)}: & P_c & \to & Y(Q, \lambda, c) \\
& \rotatebox{90}{$\in$} & & \rotatebox{90}{$\in$} \\
& (p, t) & \mapsto & (p, [t_p])
\end{array}
\]
For $s \in T$, the action \eqref{T-action on model space} defined in Construction~\ref{def of model space}
\[
s \cdot (p, [t_p]) = (p, [s t_p])
\]
is given by
\begin{align}\label{action: P_c to Y}
s \cdot (p, \rho_p(t)) = (p, \rho_p(s t)).
\end{align}

\subsection{Basic properties}

We next describe the basic properties of the model space, beginning with the Hausdorff property.

\begin{lem}\label{lem of Y is Hausdorff}
The model space $Y(Q, \lambda, c)$ is Hausdorff.
\end{lem}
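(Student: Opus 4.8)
The plan is to prove the Hausdorff property of $Y(Q,\lambda,c)$ directly from the definition of its topology, distinguishing points according to whether their images under the orbit projection $\tau\colon Y(Q,\lambda,c)\to Q$ coincide. First I would take two distinct points $y_1=(p_1,[t_{p_1}])$ and $y_2=(p_2,[t_{p_2}])$ of $Y(Q,\lambda,c)$ and split into two cases.

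\medskip

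\emph{Case 1: $p_1\neq p_2$.} Since $Q$ is Hausdorff (it is a topological stratified pseudomanifold, hence in particular Hausdorff; see Definition~\ref{def of char data}), choose disjoint open sets $W_1\ni p_1$ and $W_2\ni p_2$ in $Q$. Shrinking if necessary, I may assume $W_i\subset U_{p_i}$ for the chosen contractible small open neighborhoods. Taking $V_i=T/T_{p_i}$ (the whole fiber), the basic open sets $\bigsqcup_{p'\in W_i}q_{p'}^{-1}(V_i)=\tau^{-1}(W_i)$ are open neighborhoods of $y_i$ in $Y(Q,\lambda,c)$, and they are disjoint because $\tau$ separates them: $\tau\bigl(\tau^{-1}(W_1)\bigr)\cap\tau\bigl(\tau^{-1}(W_2)\bigr)\subset W_1\cap W_2=\emptyset$.

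\medskip

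\emph{Case 2: $p_1=p_2=:p$ but $[t_{p_1}]\neq[t_{p_2}]$ in $T/T_p$.} Here the two points lie in the same fiber $\tau^{-1}(p)\cong T/T_p$, which is a compact Hausdorff space (a quotient of the torus $T$ by the closed subgroup $T_p=\lambda(S)$). Choose disjoint open sets $V_1\ni[t_{p_1}]$ and $V_2\ni[t_{p_2}]$ in $T/T_p$. Fix a single contractible small open neighborhood $U_p$ of $p$ in $Q$, and form the basic open sets $N_i:=\bigsqcup_{p'\in U_p}q_{p'}^{-1}(V_i)$, $i=1,2$, which are neighborhoods of $y_i$. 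I claim $N_1\cap N_2=\emptyset$: a point of $N_1\cap N_2$ would be of the form $(p',[t_{p'}])$ with $p'\in U_p$ and $q_{p'}([t_{p'}])\in V_1\cap V_2=\emptyset$, a contradiction. (Smallness of $U_p$ is what makes each projection $q_{p'}\colon T/T_{p'}\to T/T_p$ well-defined, and this is the only place it is used here.)

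\medskip

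The main (and only mild) obstacle is bookkeeping: one must be careful that the basic open sets written down are genuinely of the prescribed form $\bigsqcup_{p'\in U_p}q_{p'}^{-1}(V)$ with $U_p$ a \emph{small} contractible neighborhood, and that a single such $U_p$ can be used for both $N_1$ and $N_2$ in Case~2 (it can, since the choice of $U_p$ depends only on $p$, not on the fiber coordinate). No deep input is needed beyond the Hausdorffness of $Q$, the Hausdorffness of the compact group quotient $T/T_p$, and the compatibility of the projections $q_{p'}$ guaranteed by smallness; the existence of contractible small open neighborhoods is Remark~\ref{rem: small nbh}.
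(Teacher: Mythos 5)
Your proof is correct and follows essentially the same two-case argument as the paper's proof (separating by whether $\tau(y_1)=\tau(y_2)$, then using Hausdorffness of $Q$ in the first case and of $T/T_p$ in the second); the only cosmetic difference is that you explicitly shrink $W_i$ into small neighborhoods to exhibit $\tau^{-1}(W_i)$ as a basic open set, whereas the paper treats $\tau^{-1}(U_i)$ as open directly.
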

\begin{proof}
Let $\tau : Y(Q, \lambda, c) \to Q$ be the orbit projection. 
Consider two points
$(p_1, [t_{p_1}]), (p_2, [t_{p_2}]) \in Y(Q, \lambda, c)$ with $(p_1, [t_{p_1}]) \neq (p_2, [t_{p_2}])$.
In the case where
$p_1 \neq p_2$, since $Q$ is Hausdorff, there exist open sets $U_1$ and $U_2$ such that $p_1 \in U_1$, $p_2 \in U_2$ and $U_1 \cap U_2 = \emptyset$. 
The preimages 
$\tau^{-1}(U_{1})=\bigsqcup_{p \in U_{1}} T/T_p$ and $\tau^{-1}(U_{2})=\bigsqcup_{p \in U_{2}} T/T_p$
are open subsets in $Y(Q, \lambda, c)$. Since $U_{1}\cap U_{2}=\emptyset$, we have
\[
\bigsqcup_{p \in U_{1}} (T/T_p) \cap \bigsqcup_{p \in U_{2}} (T/T_p) =\emptyset.
\]
In the case where $p_1 = p_2=p$, we have $[t_{p_1}] \neq [t_{p_2}] \in T/T_p$. Since $T/T_p$ is Hausdorff, there exist open subsets $V_1$ and $V_2$ in $T/T_p$ such that $[t_{p_1}] \in V_1$, $[t_{p_2}] \in V_2$ and $V_1 \cap V_2 = \emptyset$. Then, the sets
$\bigsqcup_{p' \in U_p} q_{p'}^{-1}(V_1)$ and $\bigsqcup_{p' \in U_p} q_{p'}^{-1}(V_2)$ are open subsets in $Y(Q, \lambda, c)$. Since $V_1 \cap V_2 = \emptyset$, 
\[
q_{p'}^{-1}(V_1) \cap q_{p'}^{-1}(V_2) = q_{p'}^{-1}(V_1 \cap V_2) = q_{p'}^{-1}(\emptyset) = \emptyset.
\]
Thus,
we have 
\[
\bigsqcup_{p' \in U_p} q_{p'}^{-1}(V_1) \cap \bigsqcup_{p' \in U_p} q_{p'}^{-1}(V_2) = \emptyset.
\]
\end{proof}

We conclude this section by proving the following lemma.

\begin{lem}\label{lem a}
For a characteristic data $(Q, \lambda, c)$, the model space $Y(Q, \lambda, c)$ is equivariantly homeomorphic to the canonical model $X(Q, \lambda, c)$.
\end{lem}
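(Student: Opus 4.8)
The plan is to construct an explicit $T$-equivariant homeomorphism $X(Q,\lambda,c)\to Y(Q,\lambda,c)$ using the map $q_{Y(Q,\lambda,c)}\colon P_c\to Y(Q,\lambda,c)$ defined in \eqref{map: P_c to Y}, and to show that it factors through the quotient $q_{X(Q,\lambda,c)}\colon P_c\to X(Q,\lambda,c)$ as a homeomorphism. First I would verify that $q_{Y(Q,\lambda,c)}$ is constant on the $\sim_\lambda$-equivalence classes: if $(p,t)\sim_\lambda(p,s)$, i.e.\ $p$ lies in a stratum $S$ and $s=u\cdot t$ for some $u\in\lambda(S)=T_p$, then $\rho_p(s)=\rho_p(ut)$ equals $\rho_p(t)$ in $\tau^{-1}(p)\cong T/T_p$ because passing to $T/T_p$ kills $T_p=\lambda(S)$ (using the commutative square \eqref{comm: P_c to Y} and $\pi_p(ut)=\pi_p(t)$). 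This yields a well-defined continuous map $\bar f\colon X(Q,\lambda,c)\to Y(Q,\lambda,c)$ with $\bar f\circ q_{X(Q,\lambda,c)}=q_{Y(Q,\lambda,c)}$.

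Next I would check $\bar f$ is a $T$-equivariant bijection. Equivariance is immediate from \eqref{action: P_c to Y} and the definition of the $T$-action on $X(Q,\lambda,c)$ (both are induced from the action on $P_c$). For bijectivity, note that both $X(Q,\lambda,c)$ and $Y(Q,\lambda,c)$ fiber over $Q$ with the obvious projections, $\bar f$ covers $\mathrm{id}_Q$, and over each point $p$ the map $\bar f$ restricts to $\xi^{-1}(p)/T_p\to\tau^{-1}(p)$, which under the identifications $\xi^{-1}(p)\cong T$, $\tau^{-1}(p)\cong T/T_p$ is just the natural bijection $T/T_p\to T/T_p$; hence $\bar f$ is a fiberwise bijection, so bijective.

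Finally, the homeomorphism claim. Since $X(Q,\lambda,c)$ is compact (Lemma~\ref{compactness of canonical model}) and $Y(Q,\lambda,c)$ is Hausdorff (Lemma~\ref{lem of Y is Hausdorff}), it suffices to show $\bar f$ is continuous — then a continuous bijection from a compact space to a Hausdorff space is automatically a homeomorphism. Continuity of $\bar f$ follows from continuity of $q_{Y(Q,\lambda,c)}$ together with the fact that $q_{X(Q,\lambda,c)}$ is a quotient map: a subset $W\subset Y(Q,\lambda,c)$ is open iff $q_{Y(Q,\lambda,c)}^{-1}(W)=q_{X(Q,\lambda,c)}^{-1}(\bar f^{-1}(W))$ is open in $P_c$, which gives openness of $\bar f^{-1}(W)$ in the quotient topology. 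The step I expect to require the most care is the continuity of $q_{Y(Q,\lambda,c)}\colon P_c\to Y(Q,\lambda,c)$ itself: one must check that the preimage of a basic open set $\bigsqcup_{p'\in U_p}q_{p'}^{-1}(V)$ is open in $P_c$, which amounts to a local-triviality argument for $\xi\colon P_c\to Q$ over the small neighborhood $U_p$ together with compatibility of the trivializations $\theta_{p'}$ of \eqref{comm: P_c to Y} — the smallness of $U_p$ (so that $T_{p'}\subset T_p$ for all $p'\in U_p$, making every $q_{p'}$ defined) is exactly what makes this go through.
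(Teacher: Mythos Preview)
Your proposal is correct and follows essentially the same route as the paper's proof: both factor $q_{Y(Q,\lambda,c)}$ through the quotient map $q_{X(Q,\lambda,c)}$, verify well-definedness and equivariance directly, argue bijectivity fiberwise over $Q$, and conclude homeomorphism from the compact-to-Hausdorff lemma once continuity of $q_{Y(Q,\lambda,c)}$ is established. You have also correctly isolated the one nontrivial point---the continuity of $q_{Y(Q,\lambda,c)}$ via the preimage of a basic open set $\bigsqcup_{p'\in U_p}q_{p'}^{-1}(V)$ over a contractible small $U_p$---which is exactly where the paper spends its effort in Step~4.
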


\begin{proof}
Let $\xi: P_c \to Q$ be the principal $T$-bundle whose Chern class is $c$.
We represent a point $x \in P_c$ as $(p,t)$, where $p = \xi(x)$ and $t \in \xi^{-1}(p)$.
Let 
\[
\begin{array}{rccc}
q_{X(Q, \lambda, c)}: & P_c & \to & X(Q, \lambda, c) \\
& \rotatebox{90}{$\in$} & & \rotatebox{90}{$\in$} \\
& (p, t) & \mapsto & [p, t]
\end{array}
\]
be the quotient map induced by the equivalence relation defining $X(Q, \lambda, c)$.
Let
\[
q_{Y(Q, \lambda, c)}: P_c \to Y(Q, \lambda, c)
\]
be the map defined in \eqref{map: P_c to Y}.
We now define a map
\[
\begin{array}{rccc}
f: & X(Q, \lambda, c) & \to & Y(Q, \lambda, c) \\
& \rotatebox{90}{$\in$} & & \rotatebox{90}{$\in$} \\
& [p, t] & \mapsto & (p, [t]).
\end{array}
\]
In the following four steps, we show that this map is well-defined and is a $T$-equivariant homeomorphism.

\paragraph{Step 1: $f$ is well-defined.}
By the definition of the canonical model $X(Q, \lambda, c)$, for $(p,t), (q,s) \in~P_c$,
\[
[p,t]=[q,s] \iff 
\begin{cases}
  p = q, \\
  \text{$(p,t)$ and $(q,s)$ lie in the same } \lambda(S) = T_p \text{-orbit for some stratum } S \ni p.
\end{cases}
\]
Assuming $p = q$, we have $t, s \in \xi^{-1}(p)$, hence $\theta_p(t), \theta_p(s) \in T$ (see the commutative diagram \eqref{comm: P_c to Y}).
The second condition then implies that $\theta_p(t)^{-1} \theta_p(s) \in T_p$.
On the other hand, we have
\[
(p,[t])= (p, \rho_p(t)) = (q, \rho_q(s)) =(q,[s]) \iff p = q \text{ and } \rho_p(t) = \rho_q(s).
\]
Assuming $p = q$, we have $\rho_p(t) = \rho_p(s)$.
Since $\psi_p$ in \eqref{comm: P_c to Y} is a $T$-equivariant homeomorphism, the condition $\rho_p(t) = \rho_p(s)$ is equivalent to
\[
\psi_p \circ \rho_p(t) = \psi_p \circ \rho_p(s) \in T/T_p.
\]
By the commutativity of \eqref{comm: P_c to Y}, we have $\psi_p \circ \rho_p = \pi_p \circ \theta_p$, where $\pi_p: T \to T/T_p$ is the natural projection.
Thus, the above condition $\rho_p(t)=\rho_p(s)$ is equivalent to $\pi_p \circ \theta_p(t)=\pi_p \circ \theta_p(s)$, which holds if and only if $\theta_p(t)^{-1} \theta_p(s) \in T_p$.
Hence, if $[p,t]=[q,s]$, then $(p, \rho_p(t))=(q, \rho_q(s))$.
Therefore, the map $f$ is well-defined.

\paragraph{Step 2: $f$ is bijective.}
We define a map
\[
\begin{array}{rccc}
f^{-1}: & Y(Q, \lambda, c) & \to & X(Q, \lambda, c) \\
& \rotatebox{90}{$\in$} & & \rotatebox{90}{$\in$} \\
& (p, \rho_p(t)) & \mapsto & [p, t]
\end{array}
\]
By the argument in Step 1, this map is well-defined and is the inverse of $f$.  
Therefore, $f$ is bijective.

\paragraph{Step 3: $f$ is $T$-equivariant.}
By the definition of the $T$-action on the canonical model $X(Q, \lambda, c)$ and on the model space $Y(Q, \lambda, c)$ (see \eqref{action: P_c to Y}), and by the definition of $f$, for any $s \in T$, we have
\begin{align*}
f(s \cdot [p,t]) &= f([p, s t]) \\
&= (p, \rho_p(s t)) \\
&= s \cdot (p, \rho_p(t)) \\
&= s \cdot f([p,t]).
\end{align*} 
Thus, $f$ is $T$-equivariant.

\paragraph{Step 4: $f$ is a homeomorphism.}
We first prove that $f$ is continuous.  
Let $\bigsqcup_{p' \in U_p} q_{p'}^{-1}(V)$ be a basic open subset in $Y(Q, \lambda, c)$, where $V \subset T/T_p$ is open and $U_p \subset Q$ is a contractible, small open neighborhood of $p$.
By the commutativity of the following diagram:
\[
\begin{tikzcd}[ampersand replacement=\&]
\pi_{p}^{-1}(V) \arrow[r, phantom, "\subset"] \& T \arrow[twoheadrightarrow]{rr}{\pi_p} \arrow[twoheadrightarrow]{rd}{\pi_{p'}} \& \& T/T_p \arrow[r, phantom, "\supset"] \& V \\
\& \& T/T_{p'} \arrow[twoheadrightarrow]{ru}{q_{p'}} \& \&
\end{tikzcd}
\]
the preimage under $q_{Y(Q, \lambda, c)}$ is given by
\[
q_{Y(Q, \lambda, c)}^{-1}\left( \bigsqcup_{p' \in U_p} q_{p'}^{-1}(V) \right) = \{ (p', t) \in P_c \mid p' \in U_p,\ t \in \pi_p^{-1}(V) \}.
\]
Since $U_p$ is contractible open and $\pi_p^{-1}(V) \subset T$ is open, this is an open subset of $P_c$ homeomorphic to $U_p \times \pi_p^{-1}(V)$ (so $q_{Y(Q, \lambda, c)}$ is continuous).  
We now consider the commutative diagram:
\[
\begin{tikzcd}[ampersand replacement=\&]
\& P_c \ar{ld}[swap]{q_{X(Q, \lambda, c)}} \ar{rd}{q_{Y(Q, \lambda, c)}} \& \\
X(Q, \lambda, c) \ar{rr}{f} \& \& Y(Q, \lambda, c)
\end{tikzcd}
\]
It follows that
\[
q_{X(Q, \lambda, c)}^{-1} \left( f^{-1} \left( \bigsqcup_{p' \in U_p} q_{p'}^{-1}(V) \right) \right)
= q_{Y(Q, \lambda, c)}^{-1} \left( \bigsqcup_{p' \in U_p} q_{p'}^{-1}(V) \right),
\]
which is open in $P_c$.  
Because $X(Q, \lambda, c)$ is equipped with the quotient topology, this implies that $f^{-1} \left( \bigsqcup_{p' \in U_p} q_{p'}^{-1}(V) \right)$ is open in $X(Q, \lambda, c)$.
Therefore, $f$ is continuous.
Since $X(Q, \lambda, c)$ is compact (by Lemma~\ref{compactness of canonical model}) and $Y(Q, \lambda, c)$ is Hausdorff (by Lemma~\ref{lem of Y is Hausdorff}), the continuous bijection $f$ is a homeomorphism.
\end{proof}

\begin{rem}\label{canonical model is Hausdorff}
By this lemma, since the model space $Y(Q, \lambda, c)$ is a Hausdorff space, it follows that the canonical model $X(Q, \lambda, c)$ is also a Hausdorff space (see Lemma \ref{Hausdorff of canonical model}).
\end{rem}

\section{Classification}\label{section 11}
In this section, we prove the classification theorem which is the main theorem of this paper:

\begin{thm}[classification theorem]\label{classification theorem}
Let $X$, $X'$ be locally standard $T$-pseudomanifolds, and $Q$, $Q'$ be their orbit spaces, respectively. Assume that the top strata $Q_{l+n}\setminus Q_{l+n-1}$ (resp. $Q'_{l+n}\setminus Q'_{l+n-1}$) of $Q$ (resp. $Q'$) is homotopy equivalent to $Q$ (resp. $Q'$) itself. Then, the following two statements are equivalent:
\begin{enumerate} \item The characteristic data $(Q,\lambda,c)$ of $X$ and the characteristic data $(Q',\lambda',c')$ of $X'$ are (weakly) isomorphic; \item $X$ and $X'$ are (weakly) $T$-equivariantly homeomorphic.
\end{enumerate}
\end{thm}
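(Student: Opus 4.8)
The implication (2) $\Rightarrow$ (1) has already been established: it is exactly Proposition \ref{prop in 4}, whose hypothesis on the top strata is the one assumed here. So the whole content is (1) $\Rightarrow$ (2), and the plan is to factor the desired homeomorphism through the canonical models. By Theorem \ref{thm of equivariant homeo of canonical model}, a (weak) isomorphism $(Q,\lambda,c)\cong(Q',\lambda',c')$ lifts to a (weakly) $T$-equivariant homeomorphism $X(Q,\lambda,c)\cong X(Q',\lambda',c')$ carrying the automorphism $\psi$. Hence it suffices to prove the following \emph{realization} statement: every locally standard $T$-pseudomanifold $X$ satisfying \eqref{condition of top strata} is $T$-equivariantly homeomorphic to the canonical model $X(Q,\lambda,c)$ built from its own characteristic data. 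Granting this, $X\cong_T X(Q,\lambda,c)\cong X(Q',\lambda',c')\cong_T X'$, where the two outer arrows are honest equivariant homeomorphisms and only the middle one carries $\psi$, so $X$ and $X'$ are (weakly) $T$-equivariantly homeomorphic. By Lemma \ref{lem a} one may replace the canonical model by the model space, so the task reduces to constructing a $T$-equivariant homeomorphism $\Phi\colon X\to Y(Q,\lambda,c)$ covering $\mathrm{id}_Q$.

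I would prove the realization statement by induction on the complexity $l+n$ of the filtration, using that the links $L_x$ in Definition \ref{def of T-pseudomanifold}-2 are locally standard pseudomanifolds of strictly smaller complexity. The base case $l+n=0$ is immediate: both spaces are finite disjoint unions of copies of $T^m$ with the translation action. For the step, build $\Phi$ first on the free part: the orbit map $\pi^{-1}(Q\setminus Q_{l+n-1})\to Q\setminus Q_{l+n-1}$ is a $T^m$-principal bundle whose Chern class is $i^\ast c$, and by Remark \ref{rem of trivial} the corresponding subspace $\bigsqcup_{p\in Q\setminus Q_{l+n-1}}T$ of $Y(Q,\lambda,c)$ is a $T^m$-principal bundle with the same Chern class; fix a $T^m$-bundle isomorphism $\Phi_0$ between them. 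Since the free part is dense in $X$ (Definition \ref{def of T-pseudomanifold}-1) and $Y(Q,\lambda,c)$ is Hausdorff (Lemma \ref{lem of Y is Hausdorff}), any continuous extension of $\Phi_0$ to $X$ is \emph{unique}, so it is enough to extend $\Phi_0$ locally and then glue. Cover $X$ by the free part together with finitely many $T$-invariant charts $U_{x_1},\dots,U_{x_N}$ as in Definition \ref{def of T-pseudomanifold}-2 (compactness of $X$). On a chart $U_x\cong O^l\times(\Omega\times U(1)^{m-n})\times\cone(L_x)$, the corresponding open set in $Y(Q,\lambda,c)$ has, by Construction \ref{def of model space} and the fact that $\lambda$ is the characteristic functor of $X$, the same form $O^l\times(\Omega\times U(1)^{m-n})\times\cone(L'_x)$, where $L'_x$ is the model space of the characteristic data induced by $\lambda$ (and the trivial restriction of $c$) on the link of $\pi(x)$; since $L_x$ is a locally standard $T_x$-pseudomanifold realizing that same data, the inductive hypothesis yields a $T_x$-equivariant homeomorphism $L_x\cong L'_x$. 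Coning this radially in the $[0,1)$-direction and using $\Phi_0$ on the bundle factor extends $\Phi_0$ over $U_x$.

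The \textbf{main obstacle} is precisely this chart-wise extension: one must arrange that the $T_x$-equivariant homeomorphism of links furnished by the inductive hypothesis is compatible with $\Phi_0$ off the cone locus of $U_x$ (so that its radial cone really restricts to $\Phi_0$ on the dense free part of $U_x$), and, before that, one must upgrade the local cone charts to a $T$-invariant cone-bundle neighborhood of each singular stratum. This is the genuine technical heart of the argument; it is here that the local standardness of $X$, the model-space topology built from small open neighborhoods (Definition \ref{def of small open nbh}, Remark \ref{rem: small nbh}), and the second-countability of $X$ enter — the last in order to organize the charts and the cone-bundle data into a locally finite system, which is the content of the technical Lemma \ref{lem b}. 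Once the chart-wise extensions exist, uniqueness of continuous extensions into the Hausdorff space $Y(Q,\lambda,c)$ forces them to agree on overlaps, so they assemble into a continuous $T$-equivariant map $\Phi\colon X\to Y(Q,\lambda,c)$. Injectivity is read off the charts and the bundle, and surjectivity follows either from the same charts or, as in the proof of Lemma \ref{lem a}, from compactness of $X$ and Hausdorffness of $Y(Q,\lambda,c)$. Finally a continuous equivariant bijection from a compact space to a Hausdorff space is a homeomorphism, so $X\cong_T Y(Q,\lambda,c)\cong_T X(Q,\lambda,c)$. This closes the induction and completes the proof of (1) $\Rightarrow$ (2).
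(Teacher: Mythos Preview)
Your overall architecture matches the paper exactly: $(2)\Rightarrow(1)$ is Proposition~\ref{prop in 4}, and $(1)\Rightarrow(2)$ is obtained by combining Theorem~\ref{thm of equivariant homeo of canonical model} with the realization statement $X\cong_T X(Q,\lambda,c)$ (this is Lemma~\ref{main thm} in the paper, proved via Lemma~\ref{lem a} and Lemma~\ref{lem b}). So the reduction you describe is correct.

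Where you diverge is in how to prove the realization step, and here your sketch has the gap you yourself flag. The paper's proof of Lemma~\ref{lem b} does \emph{not} use induction on $l+n$ and never invokes the cone charts of Definition~\ref{def of T-pseudomanifold}-2. Instead it uses the following trick. For each $p\in Q$ take a small contractible neighborhood $U_p$; since $T_{p'}\subset T_p$ for all $p'\in U_p$, the quotient torus $T/T_p$ acts \emph{freely} on both $\pi^{-1}(U_p)/T_p$ and $\tau^{-1}(U_p)/T_p$, so each is a trivial $T/T_p$-bundle over the contractible base $U_p$. The bundle isomorphism $h/T_p$ on the free part is therefore encoded by a continuous map $\sigma\colon U_p\setminus Q_{l+n-1}\to T/T_p$, and this extends to $\overline{\sigma}\colon U_p\to T/T_p$ by the density Lemma~\ref{cont extend} (this is where second-countability enters, via first-countability of the base --- not via locally finite chart organization as you suggest). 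The resulting map on the $T_p$-quotient lifts back to $\tau^{-1}(U_p)\to\pi^{-1}(U_p)$ because over singular points the $T_p$-projection is the identity. Gluing then uses exactly the density/uniqueness argument you mention.

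This completely sidesteps the obstacle you identify: there is no need for the link homeomorphism furnished by any inductive hypothesis to be radially compatible with $\Phi_0$, because the link never appears. Your inductive approach would require $\Phi_0$ restricted to a cone chart to already be of the form (link-homeomorphism)$\times\mathrm{id}_{(0,1)}$ up to adjustment, and there is no a priori reason for that; filling this in would amount to reproving the quotient-by-$T_p$ trick locally. Also note that Lemma~\ref{lem b} \emph{is} the realization statement $Y(Q,\lambda,c)\cong X$, not an auxiliary lemma about covers.
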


\subsection{Key lemma}
In this subsection, we prove the following key lemma toward the classification theorem.

\begin{lem}\label{main thm}
Let $X$ be a locally standard $T$-pseudomanifold. Assume that the top strata $Q_{l+n}\setminus Q_{l+n-1}$ of the orbit space $Q=X/T$ is homotopy equivalent to $Q$ itself. Then, $X$ is $T$-equivariantly homeomorphic to the canonical model $X(Q,\lambda,c)$, where $(Q,\lambda, c)$ is the characteristic data of $X$.
\end{lem}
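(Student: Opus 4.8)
The plan is to produce a $T$-equivariant homeomorphism between $X$ and the canonical model $X(Q,\lambda,c)$; by Lemma~\ref{lem a} this is equivalent to producing one between $X$ and the model space $Y(Q,\lambda,c)$, which is slightly more convenient to work with. Write $\pi\colon X\to Q$ for the orbit projection, $\tau\colon Y(Q,\lambda,c)\to Q$ for the projection of the model space, $Q^{\circ}:=Q_{l+n}\setminus Q_{l+n-1}$ for the top strata and $X^{\circ}:=\pi^{-1}(Q^{\circ})=X_{l+m+n}\setminus X_{l+m+n-2}$ for the free part. The argument starts on the free part. By Definition~\ref{def of T-pseudomanifold}-2~(b) and Remark~\ref{assumption of subtorus}, $\pi|_{X^{\circ}}\colon X^{\circ}\to Q^{\circ}$ is a $T$-principal bundle, and by Definition~\ref{def of Chern class} its Chern class is the restriction $i^{\ast}c$ along $i\colon Q^{\circ}\hookrightarrow Q$; on the other side, by Remark~\ref{rem of trivial} the subspace $\tau^{-1}(Q^{\circ})=\bigsqcup_{p\in Q^{\circ}}T$ of $Y(Q,\lambda,c)$ is the $T$-principal bundle over $Q^{\circ}$ with the same Chern class. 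Hence there is an isomorphism of $T$-principal bundles $F^{\circ}\colon X^{\circ}\to\tau^{-1}(Q^{\circ})$ covering $\mathrm{id}_{Q^{\circ}}$, in particular a $T$-equivariant homeomorphism with $\tau\circ F^{\circ}=\pi|_{X^{\circ}}$. Since $X^{\circ}$ is dense in $X$ (Definition~\ref{def of T-pseudomanifold}-1), $\tau^{-1}(Q^{\circ})$ is dense in $Y(Q,\lambda,c)$ (the top strata is dense in $Q$), and $Y(Q,\lambda,c)$ is Hausdorff (Lemma~\ref{lem of Y is Hausdorff}), any continuous extension of $F^{\circ}$ to all of $X$ is unique, and it is automatically $T$-equivariant and fibrewise over $Q$, i.e.\ $\tau\circ F=\pi$.

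The core of the proof is to show that $F^{\circ}$ does extend to a continuous $F\colon X\to Y(Q,\lambda,c)$, which I would do by induction on $l+n$ using the local charts of Definition~\ref{def of T-pseudomanifold}-2. When $l+n=0$ there is nothing to prove, since then $X=Y(Q,\lambda,c)$ is a finite disjoint union of copies of $T^{m}$. For the inductive step, fix $x\in X_{l+2i+(m-n)}\setminus X_{l+2(i-1)+(m-n)}$ and a chart $\varphi_{x}\colon U_{x}\to O^{l}\times(\Omega\times U(1)^{m-n})\times\mathring{c}(L_{x})$, with $U_{x}$ shrunk so that $\pi(U_{x})$ is a small, contractible open neighbourhood of $\pi(x)$ (Remark~\ref{rem: small nbh}) and $\Omega/U(1)^{i}$ is contractible; then $P_{c}$, hence the part of $Y(Q,\lambda,c)$ lying over $\pi(U_{x})$, is trivial over $\pi(U_{x})$. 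A first observation is that comparing the two $T$-principal bundles over the free part of $\pi(U_{x})$ — namely $F^{\circ}$ restricted there versus the model-space side, which is a trivial bundle — forces the free-part Chern class of the link $L_{x}$ to vanish. In particular the canonical model of $L_{x}$, a compact locally standard $T_{x}$-pseudomanifold of strictly smaller length, is well defined with trivial Chern-class input, so the inductive hypothesis applies to $L_{x}$. Using the resulting $T_{x}$-equivariant homeomorphism from $L_{x}$ onto its canonical model, passing to open cones, and multiplying by the free factors $O^{l}\times(\Omega\times U(1)^{m-n})$ — for which $\Omega\to\Omega/U(1)^{i}$ has an evident section — one obtains, compatibly with $\varphi_{x}$, a $T$-equivariant homeomorphism $F_{x}\colon U_{x}\to\tau^{-1}(\pi(U_{x}))$ that agrees with $F^{\circ}$ on the dense free part $U_{x}\cap X^{\circ}$.

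It then remains to glue the local homeomorphisms $F_{x}$ into one continuous map. On an overlap $U_{x}\cap U_{y}$ the maps $F_{x}$ and $F_{y}$ agree on the dense subset $U_{x}\cap U_{y}\cap X^{\circ}$ (where both equal $F^{\circ}$) and the target is Hausdorff, so they agree everywhere; thus the $F_{x}$ patch to a well-defined extension $F\colon X\to Y(Q,\lambda,c)$, and the point-set bookkeeping needed to see that this $F$ is genuinely continuous is carried out with the topological lemmas of Appendix~\ref{app: topology}, where second countability of $X$ is used, as in Lemma~\ref{lem b}. Once $F$ is in hand it is $T$-equivariant and satisfies $\tau\circ F=\pi$, so on each orbit it restricts to a $T$-equivariant map $T/T_{x}\to T/T_{\pi(x)}$ between $T$-spaces with the \emph{same} isotropy $T_{x}=T_{\pi(x)}$ (Remark~\ref{T_S}, Remark~\ref{T_S T_p}), hence a bijection; together with $\tau\circ F=\pi$ this makes $F$ injective. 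Its image is closed ($X$ compact, $Y(Q,\lambda,c)$ Hausdorff) and contains the dense set $\tau^{-1}(Q^{\circ})$, hence is all of $Y(Q,\lambda,c)$. So $F$ is a continuous $T$-equivariant bijection from a compact space to a Hausdorff space, therefore a $T$-equivariant homeomorphism; composing with the inverse of the homeomorphism of Lemma~\ref{lem a} yields $X\cong_{T}X(Q,\lambda,c)$.

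I expect the main obstacle to be the inductive extension step of the second paragraph, and within it two points are delicate. First, one must be careful that the hypothesis~\eqref{condition of top strata} is compatible with the induction: an open cone $\mathring{c}(L_{x})$ occurring in a chart need not have its top strata homotopy equivalent to the whole cone, so the inductive hypothesis cannot be applied to $L_{x}$ verbatim, and the statement being inducted must be phrased so as to require only that the link's free-part Chern class be realised over the link's orbit space — a condition one must verify really does hold, the key input being that the existence of the global free-part isomorphism $F^{\circ}$ forces each link's free-part bundle to be locally trivial, hence of vanishing Chern class. Second, while the gluing of the $F_{x}$ is morally forced by density of $X^{\circ}$, upgrading ``continuous on each chart, consistent on overlaps'' to ``continuous on $X$'' near the deepest strata is exactly where the point-set hypotheses (compactness and second countability of $X$) are needed, and is the least automatic part of the argument.
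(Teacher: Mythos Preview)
Your framing matches the paper's: build a $T$-bundle isomorphism on the free part, extend it over the singular strata, glue by density and Hausdorffness, then upgrade the resulting continuous equivariant bijection to a homeomorphism via compact-to-Hausdorff. Your final paragraph (fibrewise bijectivity from equal isotropy, closed dense image) is essentially the paper's endgame in Lemma~\ref{lem b}. The divergence is in the extension step, where your induction-on-links has a real gap and the paper takes a direct, induction-free route.

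The gap is the sentence claiming that the local map $F_{x}$ you build ``agrees with $F^{\circ}$ on the dense free part $U_{x}\cap X^{\circ}$''. Your inductive hypothesis, applied to $L_{x}$, produces \emph{some} $T_{x}$-equivariant homeomorphism from $L_{x}$ to its canonical model; coning and multiplying by the free factors then gives \emph{some} $F_{x}$. But $F^{\circ}|_{U_{x}\cap X^{\circ}}$ is an arbitrary bundle isomorphism over $\pi(U_{x})\cap Q^{\circ}$ and need not respect the product decomposition furnished by $\varphi_{x}$, so there is no well-defined ``restriction of $F^{\circ}$ to the link'' to feed into the induction. What remains is the gauge discrepancy $\pi(U_{x})\cap Q^{\circ}\to T$ between your $F_{x}$ and $F^{\circ}$, and extending \emph{that} across the singular set is precisely the problem you have not solved. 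The paper handles it in one stroke without induction: for each small contractible $U_{p}\subset Q$ the isotropy $T_{p}$ is maximal among all $T_{p'}$ with $p'\in U_{p}$, so after quotienting by $T_{p}$ both $\pi^{-1}(U_{p})/T_{p}$ and $\tau^{-1}(U_{p})/T_{p}$ carry \emph{free} $T/T_{p}$-actions and are therefore trivial principal bundles over $U_{p}$; the descended free-part isomorphism is then encoded by a continuous map $\sigma\colon U_{p}\setminus Q_{l+n-1}\to T/T_{p}$ into a compact torus, which extends to $\overline{\sigma}$ on all of $U_{p}$ by Lemma~\ref{cont extend}. Since $T_{p'}$ acts trivially on the fibre over each $p'$, the extended bundle map $\widetilde{h_{p}}$ lifts tautologically to $\widehat{h_{p}}$ on $\pi^{-1}(U_{p})$, and these glue exactly as you describe---no links, no cones, no induction, and the hypothesis~\eqref{condition of top strata} is invoked only once, to match Chern classes over $Q^{\circ}$.
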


To prove Lemma \ref{main thm}, we establish the following lemma.
The proof of the following lemma is inspired by the argument in \cite[Proposition 3.7]{Ayz18}.

\begin{lem}\label{lem b}
Let $X$ be a locally standard $T$-pseudomanifold. Assume that the top strata $Q_{l+n}\setminus Q_{l+n-1}$ of the orbit space $Q=X/T$ is homotopy equivalent to $Q$ itself.
Then, the model space $Y(Q, \lambda, c)$ is $T$-equivariantly homeomorphic to $X$, where $(Q,\lambda, c)$ is the characteristic data of $X$.
\end{lem}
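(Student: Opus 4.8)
The goal is to produce a $T$-equivariant homeomorphism $g\colon Y(Q,\lambda,c)\to X$. The plan is to build $g$ by gluing together the local models supplied by Definition~\ref{def of T-pseudomanifold}-2, using the universal property of the quotient topology on $Y(Q,\lambda,c)$ (which, by Construction~\ref{def of model space}, is built fibrewise out of the $T$-principal bundle $P_c$ over $Q$). First I would reduce to the case where $X$ is connected and $l+n\neq 0$, since otherwise both sides are disjoint unions of tori and the statement is trivial. Recall from Remark~\ref{rem of trivial} that the restriction of $Y(Q,\lambda,c)$ over the top stratum $Q\setminus Q_{l+n-1}$ is exactly the $T$-principal bundle $P_c|_{Q\setminus Q_{l+n-1}}$, and that (by the homotopy-equivalence assumption and Definition~\ref{def of Chern class}) this is the bundle classifying the Chern class $c$ of $X$. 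On the $X$ side, $\pi^{-1}(Q\setminus Q_{l+n-1})\to Q\setminus Q_{l+n-1}$ is a $T$-principal bundle whose Chern class is, by construction, $c$; hence there is a $T$-equivariant homeomorphism $g_0$ from this bundle to $Y(Q,\lambda,c)|_{Q\setminus Q_{l+n-1}}$ covering $\mathrm{id}_{Q\setminus Q_{l+n-1}}$.

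The core of the argument is to extend $g_0$ over the singular strata. I would do this by descending induction on the skeleton index, i.e.\ extend over $Q_{l+n-1}\setminus Q_{l+n-2}$, then $Q_{l+n-2}\setminus Q_{l+n-3}$, and so on. The key local input is that for $x\in X$ with $\pi(x)=p$ in a codimension-$i$ stratum $S$, Definition~\ref{def of T-pseudomanifold}-2(c) gives a weakly equivariant chart $\varphi_x\colon U_x\to O^l\times(\Omega\times U(1)^{m-n})\times \mathring{c}(L_x)$, on which $T^m\cong U(1)^{i+(m-n)}\times T_x$ acts with the first factor acting freely on $\Omega\times U(1)^{m-n}$. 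Crucially, on this model the orbit projection collapses precisely the $T_x=\lambda(S)$-factor over each point of the base, which is exactly the local description of $Y(Q,\lambda,c)$: over the small open neighbourhood $\pi(U_x)\subset Q$ we have, by Construction~\ref{def of model space}, $\tau^{-1}(\pi(U_x))=\bigsqcup_{p'\in\pi(U_x)}T/T_{p'}$. Thus $\varphi_x$ itself identifies $U_x$ with the corresponding open set of $Y(Q,\lambda,c)$ once we push forward the $T$-bundle structure; the point is to check that, after possibly adjusting by a $T$-equivariant bundle automorphism over the smaller base $\pi(U_x)$, the local identifications agree with $g_0$ on the overlap with the free part. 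This is where paracompactness and second-countability of $Q\setminus Q_{l+n-1}$ (Lemma~\ref{lem: top strata is paracompact}) and of $Q$ enter: one covers each stratum by such small charts and patches the resulting local equivariant homeomorphisms with an equivariant partition-of-unity / bundle-gluing argument, noting that the transition data lives in the gauge group of a $T$-bundle and hence that obstructions to gluing are controlled by $H^1$ with coefficients in $T$, which vanishes in the relevant relative sense because the inclusion of the free part is a homotopy equivalence. The inductive hypothesis guarantees compatibility on the already-constructed part, and because each $\varphi_x$ is a homeomorphism onto its image and the $\mathring{c}(L_x)$-factor radially retracts, one can interpolate.

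Finally I would assemble $g$ from the compatible family $\{g\text{ on }\pi(U_x)\}$ using the fact that $Y(Q,\lambda,c)$ carries the weak (quotient-from-$P_c$) topology described in Construction~\ref{def of model space}, so a map is continuous iff it is continuous on each such basic open set; since $g$ is a continuous equivariant bijection and $X$ is compact Hausdorff (Definition~\ref{def of T-pseudomanifold}) while $Y(Q,\lambda,c)$ is Hausdorff (Lemma~\ref{lem of Y is Hausdorff}), $g$ is automatically a homeomorphism. The main obstacle I anticipate is the inductive gluing step: making precise that the local charts $\varphi_x$ can be modified by fibrewise $T$-automorphisms so as to be mutually compatible and compatible with $g_0$, i.e.\ controlling the difference of two local trivialisations of the same $T$-bundle over the small neighbourhoods and over the cone directions $\mathring{c}(L_x)$. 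This requires a careful relative triviality statement — that any $T$-bundle isomorphism over $\pi(U_x)\cap(Q\setminus Q_{l+n-1})$ extends over $\pi(U_x)$ — which follows because $\pi(U_x)$ is small and contractible so both bundles there are trivial, but bookkeeping the compatibility across all of $Q$ simultaneously (rather than one chart at a time) is the delicate part and is presumably where the topological lemmas collected in the appendices, together with the second-countability hypothesis flagged in the remark after the conditions, are used.
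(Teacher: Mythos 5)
Your opening move — identifying the free-orbit part of $Y(Q,\lambda,c)$ with the $T$-principal bundle $P_c|_{Q\setminus Q_{l+n-1}}$ and matching it to $\pi^{-1}(Q\setminus Q_{l+n-1})\subset X$ via the common Chern class — is exactly the paper's starting point. But the extension step, which you yourself flag as ``the delicate part,'' is where the proposal has a genuine gap, and the route you sketch (local charts $\varphi_x$, partition of unity, $H^1$-obstructions in the gauge group, ``interpolate'' along the cone direction) is not the mechanism that makes the argument work. In particular, the appeal to $H^1$-vanishing for patching misdiagnoses the problem: the difficulty is not a cocycle obstruction to gluing local choices, and over the singular strata the relevant objects are no longer $T$-bundles, so a $T$-valued gauge/partition-of-unity argument does not directly apply.

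The paper's key idea, which is missing from your proposal, is that the extension is \emph{unique and therefore automatic}. Fix $p\in Q_{l+n-1}$ and a small contractible neighborhood $U_p$. Since $U_p$ is small, $T_p\supset T_{p'}$ for every $p'\in U_p$, so the $T/T_p$-actions on both $\tau^{-1}(U_p)/T_p$ and $\pi^{-1}(U_p)/T_p$ are \emph{free}; since $U_p$ is contractible, both are trivial $T/T_p$-bundles over $U_p$. Over the dense open subset $U_p\setminus Q_{l+n-1}$ the already-constructed bundle isomorphism $h$ descends, and in trivializing coordinates it is given by a continuous map $\sigma\colon U_p\setminus Q_{l+n-1}\to T/T_p$. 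Because $T/T_p$ is compact Hausdorff and second-countable and $U_p\setminus Q_{l+n-1}$ is dense in $U_p$, Lemmas~\ref{cont extend} and~\ref{dense coincide} give a \emph{unique} continuous extension $\overline\sigma\colon U_p\to T/T_p$, hence a unique extension $\widetilde{h_p}$ of $h/T_p$ over $U_p$. The equivariant map $\widehat{h_p}\colon\tau^{-1}(U_p)\to\pi^{-1}(U_p)$ is then recovered fiberwise: over a singular point $p'$ the isotropy $T_{p'}$ acts trivially on both $\tau^{-1}(p')$ and $\pi^{-1}(p')$, so the orbit maps to the $T_{p'}$-quotient are identities and $\widetilde{h_{p'}}$ already \emph{is} the desired map on that fiber. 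Finally — and this is what eliminates your gluing worries — any two such local extensions $\widehat{h_p}$, $\widehat{h_q}$ agree with $h$ on the dense free part of $U_p\cap U_q$, hence agree everywhere by Lemma~\ref{dense coincide}. No partition of unity, no descending induction on skeleta, and no cohomological obstruction analysis is needed; uniqueness forces global compatibility. Also note a small misdirection in your last step: for $g\colon Y\to X$ a continuous equivariant bijection, the compact-Hausdorff criterion requires $Y$ compact and $X$ Hausdorff (the paper obtains compactness of $Y$ from the already-proven Lemma~\ref{lem a}), not $X$ compact and $Y$ Hausdorff as you wrote.
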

%%%%%%%%%%%%%%%%%%%%%%%%%%%%%%%%%%%%%%%%%%%%%%%%%%%%%%%%%%%%%%%%%%%%%%%%%%%%%%%%%%%%%%%%%%%%%%%%%%%%%%%%%%%%%%%%%%%%%%%%%%%%%%%%%%%%%%%%%%%%%%%%%%%%%%%%%%%%%%%%%%%%%%%%%%%%%%%%%%%%%%%%%%%%%%%%%%%%%%%%%%%%%%%%%%%%%%%%%%%%%%%%%%%%%%%%%%%%%%%%%%%%%%%%%%%%%%%%%%%%%%%%%%%%%%%%%%%%%%%%%%%%%%%%%%%%%%%%%%%%%%%%%%%%%%%%%%%%%%%%%%%%%%%%%%%%%%%%
\begin{proof}
Let $\pi: X \to Q$ and $\tau: Y(Q, \lambda, c) \to Q$ be orbit projections.
For any subset $A \subset Q$, we use the notation $\mathring{A}:= A \setminus Q_{l+n-1}$.
Since $\tau^{-1}(\mathring{Q}) \subset Y(Q, \lambda, c)$ and $\pi^{-1}(\mathring{Q}) \subset X$ are $T$-principal bundles over $\mathring{Q}$, there exist homotopic continuous maps $f, g: \mathring{Q} \to BT$ such that
\[
\tau^{-1}(\mathring{Q}) \cong f^{\ast}ET, \quad \pi^{-1}(\mathring{Q}) \cong g^{\ast} ET,
\]
where $BT$ is the classifying space and $ET$ is the universal bundle.
We then choose bundle isomorphisms
\[
\alpha: \tau^{-1}(\mathring{Q}) \to f^{\ast}ET, \quad \beta: \pi^{-1}(\mathring{Q}) \to g^{\ast} ET.
\]
Since $\iota: \mathring{Q} \hookrightarrow Q$ is a homotopy equivalence, any map from $\mathring{Q}$ to $BT$ extends to $Q$ uniquely up to homotopy. Thus, there exist homotopic maps $F, G: Q \to BT$ such that
\[
f = F \circ \iota, \quad g = G \circ \iota,
\]
and the following diagrams commute:
\[
\begin{tikzcd}[ampersand replacement=\&]
\tau^{-1}(\mathring{Q})   \ar{r}{\alpha}[']{\cong}        
\&
f^{\ast}ET              \ar[r, hook, "\subset"]                 \ar{d}
\&
F^{\ast}ET              \ar{r}{\overline{F}}       \ar{d}
\&
ET   \ar{d}
\\
   \&
\mathring{Q}     \ar[r, hook, "\subset"]
\&
Q              \ar{r}{F}
\&
BT
\\
\end{tikzcd}
\]
%%%%%%%%%%%%%%%%%%%% commutative diagrams %%%%%%%%%%%%%%%%%%%%%%%%%%%%%%%%
\[
\begin{tikzcd}[ampersand replacement=\&]
\pi^{-1}(\mathring{Q})   \ar{r}{\beta}[']{\cong}        
\&
g^{\ast}ET              \ar[r, hook, "\subset"]                    \ar{d}
\&
G^{\ast}ET              \ar{r}{\overline{G}}       \ar{d}
\&
ET   \ar{d}
\\
   \&
\mathring{Q}     \ar[r, hook, "\subset"]
\&
Q              \ar{r}{G}
\&
BT
\end{tikzcd}
\]
where $\overline{F}$ and $\overline{G}$ are the natural projections of the pull-back bundles.
In what follows, we use the following notation:
\[
P_{\tau}:=F^{\ast}ET,
\quad
P_{\tau}|_{\mathring{Q}}:=f^{\ast}ET,
\quad
P_{\pi}:=G^{\ast}ET,
\quad
P_{\pi}|_{\mathring{Q}}:=g^{\ast}ET.
\]
Since the Chern classes of $P_{\tau}$ and $P_{\pi}$ are both equal to $c \in H^2(Q; \mathbb{Z}^m)$, there exists a bundle isomorphism
\[
H: P_{\tau} \to P_{\pi}.
\]
We then define a bundle isomorphism
\[
h: \tau^{-1}(\mathring{Q}) \to \pi^{-1}(\mathring{Q})
\]
such that the following diagram commutes:
\[
\begin{tikzcd}[ampersand replacement=\&]
P_{\tau}              \rar{H}[']{\cong}   
\& P_{\pi} 
\\
P_{\tau}|_{\mathring{Q}}                \rar{H|_{\mathring{Q}}}[']{\cong}  
\ar[u, hook, "{\rotatebox{90}{$\subset$}}"]
\& P_{\pi}|_{\mathring{Q}}               \ar[u, hook, "{\rotatebox{90}{$\subset$}}"]
\\
\tau^{-1}(\mathring{Q})           \rar[red]{h}[']{\cong}      \ar{u}{\alpha}[']{\cong} \& \pi^{-1}(\mathring{Q})                     \ar{u}{\beta}[']{\cong} 
\end{tikzcd}
\]
We claim that $h$ extends to a $T$-equivariant homeomorphism 
\[
\widehat{h} : Y(Q, \lambda, c)=\tau^{-1}(Q) \to \pi^{-1}(Q)=X.
\]
We show this in the following 3 steps.

\begin{enumerate}[label=\underline{{Step} \arabic*.}, leftmargin=*]
    \item 
For a point $p \in Q_{l+n-1}$, let $ U_p $ be a contractible, small open neighborhood of $ p $ in $Q$.
By choosing $U_p$ sufficiently small, we may assume that local triviality holds.
By restricting the bundles to $U_p$ and taking the quotients by $T_p$, we obtain the following commutative diagram:
\[
\begin{tikzcd}[ampersand replacement=\&]
P_{\tau}|_{U_p}/T_p                   \rar{H|_{U_p}/T_p}
\&
P_{\pi}|_{U_p}/T_p
\\
\tau^{-1}(\mathring{U_p})/T_p           \rar{h|_{\mathring{U_p}}/T_p}      \uar{\alpha'_{p}}
\&
\pi^{-1}(\mathring{U_p})/T_p                       \uar{\beta'_{p}}
\end{tikzcd}
\]
where $\alpha'_{p}$ and $\beta'_{p}$ are the injective bundle morphisms obtained by restricting $\alpha$ and $\beta$, taking the quotient by $T_p$, and composing with the respective inclusions.
The goal of Step 1 is to show that $h|_{\mathring{U}_p}/T_p$ extends to a $T/T_p$-bundle isomorphism
\[
\widetilde{h_p}: \tau^{-1}(U_p)/T_p \to \pi^{-1}(U_p)/T_p.
\]
Since $U_p$ is contractible, the bundles
\[
P_{\tau}|_{U_p}/T_p,
\quad
P_{\pi}|_{U_p}/T_p,
\quad
\tau^{-1}(U_p)/T_p,
\quad
\pi^{-1}(U_p)/T_p
\]
are $T/T_p$-trivial bundles.
Considering $P_{\tau}|_{U_p}/T_p$ and $\tau^{-1}(U_p)/T_p$, we have:
\[
\begin{tikzcd}[ampersand replacement=\&]
\tau^{-1}(\mathring{U_p})/T_p     \ar[r, hook, "\subset"]    \ar{d}
\&
\tau^{-1}(U_p)/T_p      \ar{r}{\alpha'_{p}}[']{\cong}            \ar{d}
\&
P_{\tau}|_{U_p}/T_p            \ar[r, phantom, ":="]
\&
 F^{\ast} \circ j^{\ast} E T/T_{p}
\\  %%%%%%%%%%%%%%%%%%%%%%%%%%%%%%%%%%%%%
\mathring{U_p}           \ar[r, hook, "\iota"]
\&
U_p                   \rar{F}
\&
BT           \rar{j}
\&
BT/T_{p}
\end{tikzcd}
\]
where $j$ is the natural quotient.
Thus, the following diagram commutes:
\[
\begin{tikzcd}[ampersand replacement=\&]
\tau^{-1}(\mathring{U_p})/T_p                       \ar[r, hook, "\subset"]      \ar{d}{\alpha'_{p}|_{\mathring{U_p}}}[']{\cong}
\&
\tau^{-1}(U_p)/T_p                                     \ar{d}{\alpha'_{p}}[']{\cong}
\\
P_{\tau}|_{\mathring{U_p}}/T_p                   \ar[r, hook, "\subset"]
\&
P_{\tau}|_{U_p}/T_p
\end{tikzcd}
\]
where $P_{\tau}|_{\mathring{U_p}}/T_p:=\iota^{\ast} \circ P_{\tau}|_{U_p}/T_p =\iota^{\ast} \circ F^{\ast} \circ j^{\ast} E T/T_{p}.$
We now consider the following diagram:
\[
\begin{tikzcd}[ampersand replacement=\&]
\tau^{-1}(\mathring{U_p})/T_p                 \ar[rrr, hook, "\subset"]      \ar{dd}{\alpha'_{p}|_{\mathring{U_p}}}[']{\cong}    \ar{rd}
\&
\&
\&
\tau^{-1}(U_p)/T_p             \ar{dd}{\alpha'_{p}}[']{\cong}      \ar{ld}
\\
\&
\mathring{U_p}          \ar[r, hook, "\subset"]
\&
U_p
\&
\\
P_{\tau}|_{\mathring{U_p}}/T_p           \ar[rrr, hook, "\subset"]      \ar{ru}
\&
\&
\&
P_{\tau}|_{U_p}/T_p              \ar{lu}
\end{tikzcd}
\]
The upper and lower quadrilaterals in this diagram are commutative.
Furthermore, since all of them are pull-back bundles, the left and right triangles also commute by the classification of principal bundles.
Simmilarly, we have the following commutative diagram:
\[
\begin{tikzcd}[ampersand replacement=\&]
\pi^{-1}(\mathring{U_p})/T_p                 \ar[rrr, hook, "\subset"]      \ar{dd}{\beta'_{p}|_{\mathring{U_p}}}[']{\cong}    \ar{rd}
\&
\&
\&
\pi^{-1}(U_p)/T_p             \ar{dd}{\beta'_{p}}[']{\cong}      \ar{ld}
\\
\&
\mathring{U_p}          \ar[r, hook, "\subset"]
\&
U_p
\&
\\
P_{\pi}|_{\mathring{U_p}}/T_p           \ar[rrr, hook, "\subset"]      \ar{ru}
\&
\&
\&
P_{\pi}|_{U_p}/T_p              \ar{lu}
\end{tikzcd}
\]
Therefore, we obtain the following diagram (the definition of $\widetilde{h}_p$ will be given immediately below):
\[%%%%%%%% 最後の可換図式 %%%%%%%%%%%%%%%%%%%%
\begin{tikzcd}[ampersand replacement=\&]
P_{\tau}|_{\mathring{U_p}}/T_p
\ar[rrr, hook, "\subset"]      \ar{ddd}{H|_{\mathring{U_p}}/T_p}[']{\cong}    
\&
\&
\&
P_{\tau}|_{U_p}/T_p             \ar{ddd}{H|_{U_p}/T_p}[']{\cong}      
\\  %%%%%%%%%%%%%%%%%%%%%%%%%%%%%%%%%%%%%%%%%%%%%
\&
\tau^{-1}(\mathring{U_p})/T_p          \ar[r, hook, "\subset"]   \dar{\cong}[']{h|_{\mathring{U_p}}/T_p}
        \ar{lu}{{\alpha'_{p}|_{\mathring{U_p}}}}[']{\cong}   
\&
\tau^{-1}(U_p)/T_p     \ar{ru}{\alpha'_{p}}[']{\cong}   
   \dar[red]{\cong}[']{\widetilde{h_p}}
\&
\\   %%%%%%%%%%%%%%%%%%%%%%%%%%%%%
\&
\pi^{-1}(\mathring{U_p})/T_p          \ar[r, hook, "\subset"]
               \ar{ld}{\cong}[']{{\beta'_{p}|_{\mathring{U_p}}}}
\&
\pi^{-1}(U_p)/T_p         \ar{rd}{\beta'_{p}}[']{\cong}
\&
\\   %%%%%%%%%%%%%%%%%%%%%%%%%%%%%
P_{\pi}|_{\mathring{U_p}}/T_p           \ar[rrr, hook, "\subset"]      
\&
\&
\&
P_{\pi}|_{U_p}/T_p              
\end{tikzcd}
\]
In this diagram, the commutativity of the left quadrilaterals follows from the definition of $h|_{\mathring{U_p}}/T_p$,
and the commutativity of the upper and lower quadrilaterals follows from the two preceding commutative diagrams.
We then define a bundle isomorphism $\widetilde{h_p}: \tau^{-1}(U_p)/T_p \to \pi^{-1}(U_p)/T_p$ by
\[
\widetilde{h_p}:=(\beta'_{p})^{-1} \circ H|_{U_p}/T_p \circ \alpha'_{p}.
\]
With this definition, all quadrilaterals of the diagram commute.
By the commutativity of the diagram, $\widetilde{h_p}$ is an extension of $h|_{\mathring{U_p}}/T_p$

%%%%%%%%%%%%%%%%%%%%%%%%%%%%%%%%%%%%%%%%%%
%%%以下修正%%%
%%%%%%%%%%%%%%%%%%%%%%%%%%%%%%%%%%%%%%%%%%%%%%%%%

    \item
We next construct a continuous map 
\[
\widehat{h_p} : \tau^{-1}(U_p) \to \pi^{-1}(U_p),
\]
which extends the restricted map 
\[
h |_{\tau^{-1}(\mathring{U_p})}: \tau^{-1}(\mathring{U_p}) \to \pi^{-1}(\mathring{U_p})
\]
of $h$.
For each $p' \in U_p \cap Q_{l+n-1}$, since $T_{p'}$ acts on $\tau^{-1}(p')$ and $\pi^{-1}(p')$ trivially, the orbit projections
\[
\phi' : \pi^{-1}(p') \to \pi^{-1}(p')/T_{p'}, \quad 
\psi' : \tau^{-1}(p') \to \tau^{-1}(p')/T_{p'}
\]
are the identity maps.
Therefore, there exists a $T/T_{p'}$-equivariant homeomorphism $f_{p'}$ such that the following diagram commutes:
\begin{align}\label{diagram 5.1}
\begin{tikzcd}[ampersand replacement=\&]
T/T_{p'} \rar[equal] \& \tau^{-1}(p') \arrow{rrr}{f_{p'}}  \dar[equal]{\psi'} \&   \& \&
\pi^{-1}(p') \dar[equal]{\phi'}   \&        \\
 T/T_{p'} \rar[equal]  \&  \tau^{-1}(p') / T_{p'} \ar{rrr}{\widetilde{h_{p'}}|_{\tau^{-1}(p') / T_{p'}}}[']{\cong} \& \& \&  \pi^{-1}(p') / T_{p'}  
\end{tikzcd}
\end{align}
Here, $\widetilde{h_{p'}}|_{\tau^{-1}(p') / T_{p'}}$ denotes the restriction of the $T/T_{p'}$-equivariant homeomorphism $\widetilde{h_{p'}} : \tau^{-1}(U_{p'})/T_{p'} \to \pi^{-1}(U_{p'})/T_{p'}$, given in Step 1, to the fiber $\tau^{-1}(p') / T_{p'}$, where $U_{p'} \subset U_p$ is a small neighborhood of $p'$.
Then, we define $\widehat{h_p}: \tau^{-1}(U_p) \to \pi^{-1}(U_p)$ by
\[
\widehat{h_p}(p', t_{p'}) :=
\begin{cases}
f_{p'}(p', t_{p'}) & \text{if } (p', t_{p'}) \in \bigsqcup_{p' \in U_p \cap Q_{l+n-1}} T/T_{p'}, \\
h(p', t_{p'}) & \text{if } (p', t_{p'}) \in \bigsqcup_{p' \in \mathring{U_p}} T/T_{p'}.
\end{cases}
\]
This is an extended map of 
\[
h|_{\tau^{-1}(\mathring{U_p})}: \tau^{-1}(\mathring{U_p}) \to \pi^{-1}(\mathring{U_p}).
\]
We next show that the map $\widehat{h_{p}}$ is continuous.
\begin{itemize}
\item
Since $\widehat{h_{p}}:\tau^{-1}(U_{p})\to \pi^{-1}(U_{p})$ is the extension of 
\[
h|_{\tau^{-1}(\mathring{U_p})}: \tau^{-1}(\mathring{U_p}) \to \pi^{-1}(\mathring{U_p}),
\]
on $\mathring{U_p}$,
\[
\widehat{h_{p}}|_{\tau^{-1}(\mathring{U_p})}:\tau^{-1}(\mathring{U_p}) \to \pi^{-1}(\mathring{U_p})
\]
descends to
\[
\widetilde{h_{p}}|_{\tau^{-1}(\mathring{U_p})/T_p}=h/T_{p}:\tau^{-1}(\mathring{U_p})/T_{p}\to \pi^{-1}(\mathring{U_p})/T_{p}
\]
on $\mathring{U_p}$.

\item
On the other hand,
on $U_{p}\cap Q_{l+n-1}$,
by the commutativity of the diagram (\ref{diagram 5.1}) as above,
\[
\widehat{h_{p}}|_{\tau^{-1}(U_{p}\cap Q_{l+n-1})} : \tau^{-1}(U_{p}\cap Q_{l+n-1}) \to \pi^{-1}(U_{p}\cap Q_{l+n-1})
\]
also descends to 
\[
\widetilde{h_{p}}|_{\tau^{-1}(U_{p}\cap Q_{l+n-1})/T_{p}}:\tau^{-1}(U_{p}\cap Q_{l+n-1})/T_{p}\to \pi^{-1}(U_{p}\cap Q_{l+n-1})/T_{p}
\]
on $U_{p}\cap Q_{l+n-1}$.
\end{itemize}
Therefore, we have the following commutative diagram: 
\begin{align}
\begin{tikzcd}[ampersand replacement=\&]
\tau^{-1}(U_p) \arrow{r}{\widehat{h_p}} \arrow{d}{\psi} \&  \pi^{-1}(U_p) \arrow{d}{\phi}     \\
 \tau^{-1}(U_p) / T_p \ar{r}{\widetilde{h_p}}[']{\cong} \&  \pi^{-1}(U_p) / T_p  
\end{tikzcd}
\label{diagram 5.2}
\end{align}
where $\psi$ and $\phi$ are the orbit projections. 
Let $A \subset \pi^{-1}(U_p)$ be an open set.
By the commutative diagram, we have 
\[
\widehat{h_p}^{-1}(A) = \psi^{-1} \left( \widetilde{h_p}^{-1} \left( \phi(A) \right) \right).
\]
Since $\phi$ is an open map, and $\widetilde{h_p}$ and $\psi$ are continuous, it follows that $\widehat{h_p}^{-1}(A)$ is an open set. Therefore, $\widehat{h_p}$ is a continuous map.

\item
We now construct the unique extension
\[
\widehat{h}: Y(Q, \lambda, c) \to X
\]
of $h:\tau^{-1}(\mathring{Q}) \to \pi^{-1}(\mathring{Q})$ by gluing together the family of maps $\{ \widehat{h_p} \}_{p \in Q}$, where each $\widehat{h_{p}}:\tau^{-1}(U_{p})\to \pi^{-1}(U_{p})$ is a continuous map constructed in Step~2.
For each $p,\,q \in Q$, take a contractible, small open neighborhood $U_{p},\,U_{q}$, respectively.
Then, by Step 2, we have the equivariant homeomorphisms
\[
\widehat{h_p}: \tau^{-1}(U_p) \to \pi^{-1}(U_p)
\]
and
\[
\widehat{h_q}: \tau^{-1}(U_q) \to \pi^{-1}(U_q).
\]
Suppose
\[
U_{p} \cap U_{q} \neq \emptyset.
\]
Then we claim that 
\[
\widehat{h_{p}}|_{\tau^{-1}(U_p \cap U_q)}=\widehat{h_{q}}|_{\tau^{-1}(U_p \cap U_q)}.
\]
On the subset $\tau^{-1}(U_p \cap U_q \setminus Q_{l+n-1}) \subset \tau^{-1}(U_p \cap U_q)$, since $\widehat{h_p}$ and $\widehat{h_q}$ are both extensions of the restriction $h|_{\tau^{-1}(U_p \cap U_q \setminus Q_{l+n-1})}$ of $h$, we have
\[
\widehat{h_{p}}|_{\tau^{-1}(U_p \cap U_q \setminus Q_{l+n-1})} =h|_{\tau^{-1}(U_p \cap U_q \setminus Q_{l+n-1})}=\widehat{h_{q}}|_{\tau^{-1}(U_p \cap U_q \setminus Q_{l+n-1})}.
\]
Since $\tau$ is an open map and $U_{p} \cap U_{q} \setminus Q_{l+n-1}$ is dense in $U_{p} \cap U_{q}$, it follows that
\[
\tau^{-1}(U_{p} \cap U_{q} \setminus Q_{l+n-1}) \subset \tau^{-1}(U_{p} \cap U_{q})
\]
is a dense subset (see \cite{LC25}). 
By Lemma \ref{dense coincide}, we have
\[
\widehat{h_{p}}|_{\tau^{-1}(U_p \cap U_q)}=\widehat{h_{q}}|_{\tau^{-1}(U_p \cap U_q)}.
\]
Therefore, we can glue the continuous maps $\widehat{h_p}$ and $\widehat{h_q}$ along $U_p \cap U_q$. Thus we can define a continuous map 
\[
\widehat{h_p} \cup \widehat{h_q} =: \widehat{h_{pq}} : \tau^{-1}(U_p \cup U_q) \to \pi^{-1}(U_p \cup U_q),
\]
where $\widehat{h_p} \cup \widehat{h_q}$ is the map obtained by gluing $\widehat{h_p}$ and $\widehat{h_q}$ together.
Let $\{ U_p \}_{p \in Q}$ be an open cover of $Q$ by small neighborhoods around each point.
By gluing the maps $\{ \widehat{h_p}\}_{p \in Q}$, we obtain a continuous map
\[
\bigcup_{p \in Q} \widehat{h_p}=:\widehat{h} : Y(Q, \lambda, c)=\tau^{-1}(Q) \to \pi^{-1}(Q)=X.
\]
\end{enumerate}

We next show that $\widehat{h}$ is a bijection and $T$-equivariant.
To prove this, it suffices to show that for every $p \in Q$, the restriction of $\widehat{h}$ to the fiber $\tau^{-1}(p)$, namely $\widehat{h}|_{\tau^{-1}(p)}: \tau^{-1}(p) \to \pi^{-1}(p)$, is a bijection and $T$-equivariant.
We consider the following commutative diagram obtained by restricting diagram (\ref{diagram 5.2}) to the fiber over $p$:
\begin{align}\label{diagram fiber over p}
\begin{tikzcd}[ampersand replacement=\&]
\tau^{-1}(p) \arrow{rrr}{\widehat{h}|_{\tau^{-1}(p)}=\widehat{h_p}|_{\tau^{-1}(p)} } \arrow{d}{\psi |_{\tau^{-1}(p)}} \&   \& \&
\pi^{-1}(p) \arrow{d}{\phi |_{\pi^{-1}(p)}}     \\
 \tau^{-1}(p) / T_p \ar{rrr}{\widetilde{h_p}|_{\tau^{-1}(p) / T_p}}[']{\cong} \& \& \&  \pi^{-1}(p) / T_p  
\end{tikzcd}
\end{align}
Since $T_p$ acts on $\tau^{-1}(p)$ and $\pi^{-1}(p)$ trivially, $\psi |_{\tau^{-1}(p)}$ and $\phi |_{\pi^{-1}(p)}$ are the identities.
Therefore, $\widehat{h}|_{\tau^{-1}(p)}$ is also a bijection.
Moreover, since the lower horizontal map $\widetilde{h_p}|_{\tau^{-1}(p) / T_p}$ is a $T/T_p$-bundle isomorphism, $\widehat{h}|_{\tau^{-1}(p)}$ is also a $T/T_p$-equivariant map.
To show that $\widehat{h}|_{\tau^{-1}(p)}$ is a $T$-equivariant map, we consider the following diagram:
\[
\begin{tikzcd}[ampersand replacement=\&]
%1行目
\tau^{-1}(p) \ar{rrrrr}{\widehat{h}|_{\tau^{-1}(p)}} \ar{ddd}{t} \ar{rd}{\psi |_{\tau^{-1}(p)}}   \& 
   \ar[phantom,"\textcircled{2}"]{drrr}   \&  \&
\&
\&
      \pi^{-1}(p) \ar{ddd}{t}  \ar{ld}[']{\phi |_{\pi^{-1}(p)}}  \& \\
%2行目
\ar[phantom,"\textcircled{1}"]{rd} \&
  \tau^{-1}(p)/T_p \ar{rrr}{\widetilde{h_p}|_{\tau^{-1}(p) / T_p}}[']{\cong} \ar{d}{t} \ar[phantom,"\textcircled{5}"]{rrrd}\&
\&
\&
    \pi^{-1}(p)/T_p \ar{d}{t} \&
     \ar[phantom,"\textcircled{3}"]{ld}  \& \\
%3行目
 \&
  \tau^{-1}(p)/T_p \ar{rrr}{\widetilde{h_p}|_{\tau^{-1}(p) / T_p}}[']{\cong} \&
\&
\&
    \pi^{-1}(p)/T_p \&
       \& \\
%4行目
\tau^{-1}(p) \ar{rrrrr}{\widehat{h}|_{\tau^{-1}(p)}} \ar{ru}[']{\psi |_{\tau^{-1}(p)}} \&
 \&
     \&
\&
\ar[phantom,"\textcircled{4}"]{ulll} \&
      \pi^{-1}(p) \ar{lu}{\phi |_{\pi^{-1}(p)}} \& \\
\end{tikzcd}
\]
Here, the map $t$ denotes the action of $t \in T$.
In this diagram, regions \textcircled{1} and \textcircled{3} are commutative since $\psi$ and $\phi$ are orbit projections.
Regions \textcircled{2} and \textcircled{4} are commutative by diagram (\ref{diagram fiber over p}). Region \textcircled{5} is commutative since $\widetilde{h_p}|_{\tau^{-1}(p) / T_p}$ is a $T/T_p$-equivariant homeomorphism.
To show that $\widehat{h}|_{\tau^{-1}(p)}$ is $T$-equivariant, it suffices to verify that the outer square in this diagram commutes; that is,
\[
\widehat{h}|_{\tau^{-1}(p)} \circ t =t \circ \widehat{h}|_{\tau^{-1}(p)}.
\]
This follows from the fact that regions \textcircled{1} through \textcircled{5} commute, and that
$\psi |_{\tau^{-1}(p)}$ and $\phi |_{\pi^{-1}(p)}$ are the identity maps (in particular, we can take the inverse map).

Therefore, the continuous map $\widehat{h}$ is an equivariant bijection. Thus, since $Y(Q, \lambda, c)$ is compact and $X$ is Hausdorff, it follows that $\widehat{h}:Y(Q, \lambda, c) \to X$ is a $T$-equivariant homeomorphism.
\end{proof}

Combining Lemma \ref{lem a} and Lemma \ref{lem b}, we obtain Lemma \ref{main thm}.

\begin{proof}[Proof of Lemma \ref{main thm}]
By Lemma \ref{lem a} and Lemma \ref{lem b}, we have that the equivariant homeomorphisms $X(Q, \lambda, c) \cong Y(Q, \lambda, c) \cong X$.
This shows Lemma \ref{main thm}.
\end{proof}

By combining Lemma \ref{main thm} with Proposition \ref{inv of T-pseudomanifold}, we obtain the following corollary to Lemma~\ref{main thm}.

\begin{cor}\label{canonical model-1}
Let $(Q, \lambda, c)$ be the characteristic data of a locally standard $T$-pseudomanifold $X$. Assume that the top strata $Q_{l+n}\setminus Q_{l+n-1}$ is homotopy equivalent to $Q=X/T$ itself. Then the canonical model $X(Q, \lambda, c)$ is also a locally standard $T$-pseudomanifold.
\end{cor}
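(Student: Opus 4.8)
The plan is to obtain the statement as a direct consequence of Lemma~\ref{main thm} and the invariance result Proposition~\ref{inv of T-pseudomanifold}. First I would apply Lemma~\ref{main thm}: since $X$ is a locally standard $T$-pseudomanifold whose orbit space $Q = X/T$ has top strata homotopy equivalent to $Q$ itself, there is a $T$-equivariant homeomorphism $X \cong X(Q,\lambda,c)$, where $(Q,\lambda,c)$ is the characteristic data of $X$. In particular, $X$ and $X(Q,\lambda,c)$ are weakly $T^m$-equivariantly homeomorphic, an equivariant homeomorphism being the special case of a weakly equivariant one for which the accompanying automorphism of $T^m$ is the identity.

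Next I would verify the standing hypotheses needed to invoke Proposition~\ref{inv of T-pseudomanifold} for $X(Q,\lambda,c)$. By Lemmas~\ref{compactness of canonical model}, \ref{Hausdorff of canonical model}, and \ref{2nd of canonical model}, the canonical model $X(Q,\lambda,c)$ is a second-countable, compact, Hausdorff space carrying a continuous $T^m$-action. It also remains to observe that the filtration on $X(Q,\lambda,c)$ relevant to Proposition~\ref{inv of T-pseudomanifold}, namely the filtration by orbit dimension, coincides with the filtration \eqref{filtration of canonical model} built into the construction of the canonical model. This is because a $T$-equivariant homeomorphism preserves the dimension of every orbit, hence carries the orbit-dimension filtration $\mathfrak{X}$ of $X$ to the orbit-dimension filtration of $X(Q,\lambda,c)$; since, under the same homeomorphism, $\mathfrak{X}$ is carried to \eqref{filtration of canonical model} by the construction of $X(Q_{l+i},\lambda_{l+i},c)$ and the embeddings $X(Q_{l+i},\lambda_{l+i},c) \subset X(Q,\lambda,c)$, the two filtrations on $X(Q,\lambda,c)$ agree.

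Finally, applying Proposition~\ref{inv of T-pseudomanifold} to the pair $X$ and $X' = X(Q,\lambda,c)$, which are weakly $T^m$-equivariantly homeomorphic and whose filtrations are the filtrations by orbit dimension, we conclude that $(X(Q,\lambda,c), \mathfrak{X})$ is a locally standard $T$-pseudomanifold. I do not anticipate a genuine difficulty here: the only point that takes a moment of care is the identification of the orbit-dimension filtration of $X(Q,\lambda,c)$ with the filtration \eqref{filtration of canonical model}, and this is immediate from the equivariance of the homeomorphism furnished by Lemma~\ref{main thm} together with the fact that $\dim X(Q_{l+i},\lambda_{l+i},c) = l+2i+(m-n)$, as recorded just below \eqref{filtration of canonical model}.
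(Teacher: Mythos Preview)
Your proposal is correct and follows the same approach as the paper: apply Lemma~\ref{main thm} to obtain a (weakly) $T$-equivariant homeomorphism $X \cong X(Q,\lambda,c)$, then invoke Proposition~\ref{inv of T-pseudomanifold}. The paper's proof is more terse and leaves implicit the verification of the standing hypotheses on $X(Q,\lambda,c)$ and the identification of filtrations that you spell out, but the argument is otherwise identical.
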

\begin{proof}
By Lemma~\ref{main thm}, we have an equivariant homeomorphism
\[
X \cong X(Q, \lambda, c).
\]
Since $X$ is a locally standard $T$-pseudomanifold, Proposition~\ref{inv of T-pseudomanifold} implies that $X(Q, \lambda, c)$ is also a locally standard $T$-pseudomanifold.
\end{proof}

\subsection{Proof of the classification theorem}
We now prove the classification theorem.

\begin{proof}[Proof of Theorem \ref{classification theorem}]
Assume that the characteristic datum $(Q,\lambda,c)$ of $X$ and $(Q',\lambda',c')$ of $X'$ are (weakly) isomorphic.
By Theorem \ref{thm of equivariant homeo of canonical model},
the canonical models are (weakly) $T$-equivariantly homeomorphic:
\begin{align}\label{eq homeo X(Q, lambda)}
X(Q, \lambda, c) \cong X(Q', \lambda', c').
\end{align}
Furthermore, by Lemma \ref{main thm}, we obtain $T$-equivariant homeomorphisms
\begin{align}\label{eq homeo T-space}
X(Q, \lambda, c) \cong X, \quad
X(Q', \lambda', c') \cong X'
\end{align}
Combining \eqref{eq homeo X(Q, lambda)} and \eqref{eq homeo T-space}, we conclude that $X$ and $X'$ are (weakly) $T$-equivariantly homeomorphic.

The converse follows from Proposition \ref{prop in 4}.
\end{proof}

\begin{rem}
Note that Proposition \ref{prop in 4} asserts that the characteristic data is invariant under (weakly) $T$-equivariant homeomorphisms of locally standard $T$-pseudomanifolds.
From this proposition and Theorem \ref{classification theorem}, we see that locally standard $T$-pseudomanifolds whose orbit spaces have the set of top strata that is homotopy equivalent to the entire space are completely classified, up to (weakly) $T$-equivariant homeomorphism, by their characteristic datum.
\end{rem}

\section{The canonical model $X(Q, \lambda, c)$ is a locally standard $T$-pseudomanifold}\label{section 12}

Let $(Q, \lambda, c)$ be a characteristic data.  
Recall from Remark~\ref{not orbit space} that $Q$ is not necessarily the orbit space of a locally standard $T$-pseudomanifold.
In this section, we show that $(Q, \lambda, c)$ gives rise to a locally standard $T$-pseudomanifold.
Namely, we prove the following theorem.
\begin{thm}\label{canonical model is a T-pseudomanifold}
Let $Q$ be a topological stratified pseudomanifold, $\lambda$ be its characteristic functor, and $c\in H^{2}(Q;\mathbb{Z}^{m})$. Then, the canonical model $X(Q,\lambda,c)$ is a locally standard $T$-pseudomanifold.
\end{thm}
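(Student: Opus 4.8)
The plan is to prove the theorem by induction on $d:=l+n=\dim Q$, establishing the slightly stronger statement: for every compact, second-countable, Hausdorff topological stratified pseudomanifold $Q$ of dimension $d$, every characteristic functor $\lambda\colon\mathcal{S}(Q)^{\mathrm{op}}\to\mathcal{T}$ on $Q$ satisfying the dimension condition of Proposition~\ref{associated dimension}, and every $c\in H^{2}(Q;\mathbb{Z}^{m})$ with $m\ge n$, the canonical model $X(Q,\lambda,c)$ equipped with the filtration \eqref{filtration of canonical model} is a locally standard $T^{m}$-pseudomanifold. (Condition \eqref{condition of top strata} is not needed here, and indeed cannot be assumed for the links arising below; but the construction $P_{c}/\!\sim$ is defined regardless.) For $d=0$ the space $Q$ is a finite set of points, $\lambda$ is forced to be trivial, $P_{c}=Q\times T^{m}$, and $X(Q,\lambda,c)$ is a finite disjoint union of copies of $T^{m}$, which is exactly the base case of Definition~\ref{def of T-pseudomanifold}.

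For the inductive step, first note that \eqref{filtration of canonical model} coincides with the filtration by orbit dimension: over a codimension-$(n-i)$ stratum $S$ of $Q$ (i.e.\ $\dim S=l+i$), Remark~\ref{T_S T_p} and the dimension condition give $T_{x}=\lambda(S)$ of dimension $n-i$, hence $\dim T(x)=i+(m-n)$, so $\pi^{-1}(Q_{l+i})=X_{l+2i+(m-n)}$. Condition Definition~\ref{def of T-pseudomanifold}-1: the top skeleton $X\setminus X_{l+m+n-2}$ is precisely the set $\pi^{-1}(Q\setminus Q_{l+n-1})$ of free orbits, and since $Q\setminus Q_{l+n-1}$ is dense in the topological stratified pseudomanifold $Q$ and $\pi$ is open, its preimage is dense in $X$. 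For Definition~\ref{def of T-pseudomanifold}-2, fix $x\in X$ with $p=\pi(x)$ in a stratum $S$ of codimension $n-i$, and choose a distinguished neighborhood $U_{p}\cong\mathbb{R}^{l+i}\times\mathring{c}(L^{Q}_{p})$ of $p$ in $Q$, where $L^{Q}_{p}$ is the link of $p$, a compact topological stratified pseudomanifold of dimension $n-i-1$; such a $U_{p}$ is automatically contractible and small in the sense of Definition~\ref{def of small open nbh}. Over $U_{p}$ the bundle $P_{c}$ is trivial, so $\pi^{-1}(U_{p})\cong(U_{p}\times T^{m})/\!\sim$. Each stratum $R$ of $L^{Q}_{p}$ corresponds to the stratum $S_{R}$ of $Q$ meeting $U_{p}$ in $\mathbb{R}^{l+i}\times\bigl(R\times(0,1)\bigr)$, and smallness forces $S\subseteq\overline{S_{R}}$, hence $\lambda(S_{R})\subseteq\lambda(S)=T_{x}$; setting $\lambda_{L}(R):=\lambda(S_{R})\subseteq T_{x}\cong T^{n-i}$ defines a characteristic functor $\lambda_{L}$ on $L^{Q}_{p}$ (a codimension-$j$ stratum $R$ of $L^{Q}_{p}$ gives $S_{R}$ of codimension $j$ in $Q$, so $\dim\lambda_{L}(R)=j$, as required). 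Because every identification of $Q$ with a distinguished neighborhood collapses only directions inside $T_{x}$ over $U_{p}$, choosing a splitting $T^{m}\cong T_{x}\times T^{i+(m-n)}$ yields
\[
\pi^{-1}(U_{p})\;\cong\;\bigl((U_{p}\times T_{x})/\!\sim\bigr)\times T^{i+(m-n)}\;\cong\;\mathbb{R}^{l+i}\times\mathring{c}\bigl(X(L^{Q}_{p},\lambda_{L},0)\bigr)\times T^{i+(m-n)},
\]
where the middle step uses that the canonical model over $\mathbb{R}^{l+i}\times\mathring{c}(L^{Q}_{p})$ splits off the Euclidean factor and that $X(\mathring{c}(L^{Q}_{p}),\hat\lambda,0)\cong\mathring{c}\bigl(X(L^{Q}_{p},\lambda_{L},0)\bigr)$, with $\hat\lambda$ the characteristic functor on $\mathring{c}(L^{Q}_{p})$ extending $\lambda_{L}$ by sending the cone vertex to all of $T_{x}$. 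By the inductive hypothesis applied to $(L^{Q}_{p},\lambda_{L},0)$ — legitimate since $\dim L^{Q}_{p}=n-i-1<d$ and the acting torus $T_{x}$ has dimension $n-i$, which dominates the length of the filtration of $L^{Q}_{p}$ — the space $L_{x}:=X(L^{Q}_{p},\lambda_{L},0)$ is a compact locally standard $T_{x}$-pseudomanifold of dimension $2n-2i-1$. Writing $\mathbb{R}^{l+i}=\mathbb{R}^{l}\times\mathbb{R}^{i}$, $T^{i+(m-n)}=U(1)^{i}\times U(1)^{m-n}$, and $\mathbb{R}^{i}\times U(1)^{i}\cong(\mathbb{C}^{\times})^{i}$, the above becomes a weakly $T^{m}$-equivariant homeomorphism
\[
\varphi_{x}\colon\pi^{-1}(U_{p})\;\xrightarrow{\ \cong\ }\;O^{l}\times\bigl(\Omega\times U(1)^{m-n}\bigr)\times\mathring{c}(L_{x}),\qquad O^{l}=\mathbb{R}^{l},\quad\Omega=(\mathbb{C}^{\times})^{i},
\]
with $T^{m}\cong U(1)^{i+(m-n)}\times T_{x}$ acting exactly as in Definition~\ref{def of T-pseudomanifold}-2~(c) (the ``weakly'' absorbing the choice of splitting), and the filtration of $\mathring{c}(L_{x})$ induced from that of the locally standard $T_{x}$-pseudomanifold $L_{x}$ coincides with \eqref{通常のcone filtration} and with the restriction of \eqref{filtration of canonical model} to $U_{p}$. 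This verifies Definition~\ref{def of T-pseudomanifold}-2 and closes the induction.

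The main work, deliberately suppressed above, is point-set-topological: one must check that the displayed bijections are genuinely homeomorphisms. Two facts carry the load. First, if $\sim_{B}$ is an equivalence relation on $B$ and $A$ is locally compact Hausdorff, then $A\times(B/\!\sim_{B})$ carries the quotient topology of $A\times B$; this justifies extracting the factors $\mathbb{R}^{l+i}$ and $T^{i+(m-n)}$ out of $(U_{p}\times T^{m})/\!\sim$. Second, the cone identity $X(\mathring{c}(L),\hat\lambda,0)\cong\mathring{c}\bigl(X(L,\lambda_{L},0)\bigr)$ must be proved directly — $\mathring{c}(L)$ is non-compact, so the ``continuous bijection out of a compact space'' shortcut is unavailable — by checking continuity of the evident map and of its inverse against the explicit bases of the cone topologies; alternatively one may run the entire argument through the model space $Y(Q,\lambda,c)$ of Section~\ref{section: model space}, whose topology is defined precisely by such local pieces, and invoke Lemma~\ref{lem a}. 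A lesser but genuine nuisance is the index bookkeeping for the link: one confirms that $(L^{Q}_{p},\lambda_{L},0)$ is a valid datum for the inductive hypothesis ($\dim L^{Q}_{p}=n-i-1<l+n$ and $\dim T_{x}=n-i$ dominates the filtration length) and that all dimension counts close up ($\dim L_{x}=2n-2i-1$, $\dim\Omega=2i$, $\dim\mathring{c}(L_{x})=2n-2i$, total $l+m+n$).
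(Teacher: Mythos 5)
Your proposal follows the paper's own proof almost step for step: induction on $\dim Q$, the base case of discrete $Q$ yielding a disjoint union of tori, identification of \eqref{filtration of canonical model} with the orbit-dimension filtration, density of the free part via openness of $\pi$, the induced characteristic functor $\lambda_L = \mu$ on the link $L^Q_p$ defined through the stratum correspondence (the paper's $\nu$ from Lemma~\ref{lem: functor nu}), the splitting $T^m \cong T_x \times T^{i+(m-n)}$ (the paper's Lemma~\ref{torus split}), and finally the application of the inductive hypothesis to $L_x := X(L^Q_p, \mu, 0)$. You also correctly isolate the load-bearing technical step — the cone compatibility $X(\mathring{c}(L),\hat\lambda,0)\cong\mathring{c}(X(L,\lambda_L,0))$, or in the paper's phrasing $U_{\pi(x)}\times\lambda(S)/{\sim}_\lambda\cong O_{\pi(x)}\times\mathring{c}(L_{\pi(x)}\times\lambda(S)/{\sim}_\mu)$ — and correctly note that the compact-to-Hausdorff shortcut is unavailable because the cone is non-compact. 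The paper's Lemma~\ref{auxiliary lemma} does exactly the direct verification you describe: it sets up the quotient maps $c_1\circ p_1$ and $p_2\circ c_2$, checks well-definedness of the induced bijection in both directions (the three Claims), and obtains continuity from the universal property of quotient maps. So the route you identify as ``check continuity against the cone topologies'' is the one the authors actually take, while the alternative you mention (routing through the model space $Y$) is not used by the paper for this theorem — $Y(Q,\lambda,c)$ is only employed to prove the classification theorem, where condition \eqref{condition of top strata} is in force. Since you explicitly defer this lemma rather than prove it, your argument is incomplete at exactly the point the paper concentrates its effort; an examiner would want Subsection~\ref{subsubsection of proof of auxiliary lemma} filled in.

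One smaller imprecision: you write $U_p\cong\mathbb{R}^{l+i}\times\mathring{c}(L^Q_p)$ and consequently take $O^l=\mathbb{R}^l$, $\Omega=(\mathbb{C}^\times)^i$ in the final chart. Definition~\ref{def of pseudomanifold}-3 only provides a \emph{contractible open subset} $O_p\subset\mathbb{R}^{l+i}$, which for $l+i\geq 3$ need not be homeomorphic to $\mathbb{R}^{l+i}$ (e.g.\ a Whitehead-type open set). The paper avoids this by keeping $O^l$ and $\Omega$ general in Lemma~\ref{universality of quotient} and in Definition~\ref{def of T-pseudomanifold}-2(c); your proof should do the same, since nothing in the argument actually needs $O_p=\mathbb{R}^{l+i}$.
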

\begin{rem}
In this section, we do not assume that the top strata $Q \setminus Q_{l+n-1}$ is homotopy equivalent to $Q$.
In the previous sections, for a locally standard $T$-pseudomanifold $X$, we define the Chern class $c \in H^2(X/T; \mathbb{Z}^m)$ as the Chern class of the free orbits.
The assumption of homotopy equivalence was required for this purpose (see Section~\ref{section 4} for details).
However, once we simply specify a cohomology class $c \in H^2(Q;\mathbb{Z}^m)$, we can construct a space from the principal bundle $P_c \to Q$ whose Chern class is $c$.
Moreover, this homotopy equivalence assumption is not necessary for defining the characteristic functor.
Therefore, in exactly the same manner as in Definition \ref{def of canonical model}, we can define a space equipped with a torus action.
\end{rem}

\begin{rem}\label{not assume homotopy} 
Since the definition of a weak isomorphism between characteristic datum (see Definition~\ref{def of isomorphism of char pairs})
is independent of the homotopy equivalence assumption,
the same definition applies in the absence of this assumption.
 Furthermore, the proof of Theorem~\ref{thm of equivariant homeo of canonical model} does not depend on the homotopy equivalence assumption. Therefore, Theorem~\ref{thm of equivariant homeo of canonical model} holds even without this assumption.
\end{rem}

In subsection \ref{prepare-3}, we will prove this theorem, i.e., $X(Q, \lambda, c)$ satisfies the conditions in Definition \ref{def of T-pseudomanifold}. In subsections \ref{prepare-1}, \ref{prepare-1-2}, \ref{prepare-2} and \ref{subsubsection of proof of auxiliary lemma}, we prepare to prove it.

\subsection{$(X(Q, \lambda, c), \mathfrak{X})$ is a manifold stratified space}\label{prepare-1}

Let us briefly review Section \ref{section canonical model}.
Given a characteristic data $(Q, \lambda, c)$ (note that we do not assume that $Q\setminus Q_{l+n-1}$ is homotopy equivalent to $Q$), the canonical model $X(Q, \lambda, c)$ is defined as the quotient space $P_c/{\sim}$, where $\xi: P_c \to Q$ is the $T^m$-principal bundle whose Chern class is $c$ (see Definition \ref{def of canonical model}).
Moreover, the canonical model $X(Q, \lambda, c)$ admits a filtration
\begin{align} \tag{8.1}
\mathfrak{X} : X(Q, \lambda, c) 
 \supset 
X(Q_{l+n-1}, \lambda_{l+n-1}, c) 
\supset \cdots \supset
X(Q_{l+i}, \lambda_{l+i}, c) 
\supset \cdots \supset
X(Q_l, \lambda_l, c) 
\supsetneq \emptyset ,
\end{align}
where $X(Q_{l+i}, \lambda_{l+i}, c) = (P_c)|_{Q_{l+i}}/{\sim}$.
Here, $(P_c)|_{Q_{l+i}}$ denotes the restriction of the bundle $P_c$ to the subspace $Q_{l+i} \subset Q$.

We first verify that $(X(Q, \lambda, c), \mathfrak{X})$ is a manifold stratified space (see Definition~\ref{def of manifold stratified space}).  
It is easy to see that $(X(Q, \lambda, c), \mathfrak{X})$ is a stratified space.  
Therefore, it suffices to prove the following lemma.

\begin{lem}\label{condition of pseudomanifold-1}
Each connected component of 
\[
X(Q_{l+i}, \lambda_{l+i}, c) \setminus X(Q_{l+i-1}, \lambda_{l+i-1}, c) = (P_c)|_{Q_{l+i} \setminus Q_{l+i-1}} / {\sim}
\]
is an $(l+2i+(m-n))$-dimensional manifold for $0 \leq i \leq n$.
\end{lem}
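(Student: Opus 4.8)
The plan is to reduce the statement to the elementary fact that quotienting a principal torus bundle by a subtorus yields again a locally trivial bundle over the same base.

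Fix $i$ with $0\le i\le n$. Since $Q$ is a manifold stratified space (Definition~\ref{def of manifold stratified space}), the space $Q_{l+i}\setminus Q_{l+i-1}$ is an $(l+i)$-dimensional topological manifold, and its connected components are exactly the $(l+i)$-dimensional strata of $Q$; in particular each such stratum $S$ is open and closed in $Q_{l+i}\setminus Q_{l+i-1}$. Restricting the bundle $\xi\colon P_c\to Q$ and using that $\xi$ is continuous and that the orbit map $q\colon (P_c)|_{Q_{l+i}\setminus Q_{l+i-1}}\to (P_c)|_{Q_{l+i}\setminus Q_{l+i-1}}/{\sim}$ is open and surjective, one sees that $(P_c)|_S/{\sim}=q\big(\xi^{-1}(S)\big)$ is open and closed in $(P_c)|_{Q_{l+i}\setminus Q_{l+i-1}}/{\sim}$ for every $(l+i)$-dimensional stratum $S$ (note $\xi^{-1}(S)$ is open and $\sim$-saturated). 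Hence it suffices to prove that, for each such $S$, the space $(P_c)|_S/{\sim}$ is connected and is an $(l+2i+(m-n))$-dimensional manifold; this simultaneously identifies the connected components of $(P_c)|_{Q_{l+i}\setminus Q_{l+i-1}}/{\sim}$.

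Next I would unwind the equivalence relation over a single stratum. Since $S$ is one stratum and $\lambda$ assigns to it the subtorus $T_S:=\lambda(S)$, two points of $(P_c)|_S$ are $\sim$-equivalent precisely when they lie over the same point of $S$ and in the same $T_S$-orbit; thus $(P_c)|_S/{\sim}=(P_c)|_S/T_S$. Choosing local trivializations $(P_c)|_S\cong U\times T^m$ over open sets $U\subseteq S$, the quotient becomes $U\times(T^m/T_S)$, so $(P_c)|_S/T_S\to S$ is a locally trivial bundle with fibre the torus $T^m/T_S$; in particular its total space is a topological manifold of dimension $\dim S+\dim(T^m/T_S)$. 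By Proposition~\ref{associated dimension}, an $(l+i)$-dimensional stratum has codimension $n-i$, so $T_S$ is $(n-i)$-dimensional and $\dim(T^m/T_S)=m-(n-i)$; therefore $\dim\big((P_c)|_S/T_S\big)=(l+i)+m-(n-i)=l+2i+(m-n)$, as claimed. Finally, $(P_c)|_S/T_S\to S$ is an open surjection with connected fibre $T^m/T_S$ over the connected base $S$, hence its total space is connected.

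The step I expect to require the most care is the topological bookkeeping in the first paragraph: matching the subspace topology on $X(Q_{l+i},\lambda_{l+i},c)\setminus X(Q_{l+i-1},\lambda_{l+i-1},c)$ coming from $X(Q,\lambda,c)$ with the quotient topology on $(P_c)|_{Q_{l+i}\setminus Q_{l+i-1}}/{\sim}$, and verifying that the decomposition into the pieces $(P_c)|_S/T_S$ is a genuine topological disjoint union. Both follow from openness of restriction of $\xi$ to saturated opens and of the orbit maps, but should be written out; once this is in place, the remainder is a direct application of principal bundle theory together with Proposition~\ref{associated dimension}.
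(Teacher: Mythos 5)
Your proposal is correct and follows essentially the same route as the paper's proof: identify $(P_c)|_S/{\sim}$ with $(P_c)|_S/\lambda(S)$ for each $(l+i)$-dimensional stratum $S$, observe that this is a $T^m/\lambda(S)$-bundle over the $(l+i)$-manifold $S$, and compute the dimension using that $\lambda(S)$ has dimension $n-i$. You additionally spell out the topological bookkeeping that the paper leaves implicit — that each piece $(P_c)|_S/{\sim}$ is open and closed in the quotient (via saturation of $\xi^{-1}(S)$ and openness of the orbit map) and that it is connected (open surjection with connected fiber over connected base) — which is exactly what is needed to justify the paper's closing assertion that every connected component of $(P_c)|_{Q_{l+i}\setminus Q_{l+i-1}}/{\sim}$ is of the form $(P_c)|_S/{\sim}$.
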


\begin{proof}
By the definition of the equivalence relation (see Definition \ref{def of canonical model}), for an $(l+i)$-dimensional stratum $S \subset Q$,
\begin{align*}
(P_c)|_{S}/ {\sim} 
&= (P_c)|_{S}/\lambda(S) .
\end{align*}
Note that $(P_c)|_{S}/\lambda(S)$ is a $T^m/\lambda(S)(\cong T^{m-(n-i)})$-principal bundle.
Since $S$ is an $(l+i)$-dimensional manifold, it follows that $(P_c)|_{S}/ {\sim} $ is an $(l+2i+(m-n))$-dimensional manifold. Since each connected component of $(P_c)|_{Q_{l+i} \setminus Q_{l+i-1}}/{\sim}$ is of the form $(P_c)|_{S}/ {\sim} $, we establish the statement.
\end{proof}

\subsection{$X(Q, \lambda, c)$ satisfies the condition Definition \ref{def of T-pseudomanifold}-1}\label{prepare-1-2}

In this subsection, we prove the following lemma.
\begin{lem}\label{condition of pseudomanifold-2}
The space
\[
X(Q, \lambda, c) \setminus X(Q_{l+n-1}, \lambda_{l+n-1}, c) = (P_c)|_{Q_{l+n} \setminus Q_{l+n-1}} / {\sim}
\]
is dense in $X(Q, \lambda, c)$.
\end{lem}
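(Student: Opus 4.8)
The plan is to reduce the density statement for the canonical model to the density of the top strata in the base $Q$ — which is built into the definition of a topological stratified pseudomanifold — and then push this density forward along the two quotient maps $\xi\colon P_c\to Q$ and $\rho\colon P_c\to X(Q,\lambda,c)$. Concretely, I would first record that, since $Q$ is a topological stratified pseudomanifold (see Definition~\ref{def of char data} and Definition~\ref{def of pseudomanifold}), its top strata $Q_{l+n}\setminus Q_{l+n-1}$ is dense in $Q$. Because $\xi\colon P_c\to Q$ is a locally trivial $T^m$-principal bundle it is an open continuous surjection, and the preimage of a dense set under an open continuous surjection is dense; hence $(P_c)|_{Q_{l+n}\setminus Q_{l+n-1}}=\xi^{-1}(Q_{l+n}\setminus Q_{l+n-1})$ is dense in $P_c$.

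Next I would invoke the quotient map $\rho\colon P_c\to X(Q,\lambda,c)$ defining the canonical model (see Definition~\ref{def of canonical model}). A continuous surjection carries dense subsets to dense subsets: for a dense $D\subset P_c$ we have $\overline{\rho(D)}\supseteq\rho(\overline D)=\rho(P_c)=X(Q,\lambda,c)$. Applying this to $D=(P_c)|_{Q_{l+n}\setminus Q_{l+n-1}}$ and observing that
\[
\rho\bigl((P_c)|_{Q_{l+n}\setminus Q_{l+n-1}}\bigr)=(P_c)|_{Q_{l+n}\setminus Q_{l+n-1}}/{\sim}=X(Q,\lambda,c)\setminus X(Q_{l+n-1},\lambda_{l+n-1},c),
\]
where the last equality uses $X(Q_{l+i},\lambda_{l+i},c)=\pi^{-1}(Q_{l+i})$ for the orbit projection $\pi\colon X(Q,\lambda,c)\to Q$, the asserted density follows at once. (Equivalently one can argue directly with $\pi$: orbit maps of topological group actions are open, so $\pi$ is an open continuous surjection and $X(Q,\lambda,c)\setminus X(Q_{l+n-1},\lambda_{l+n-1},c)=\pi^{-1}(Q_{l+n}\setminus Q_{l+n-1})$ is the preimage of the dense set $Q_{l+n}\setminus Q_{l+n-1}$.)

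I do not expect any substantial obstacle: the statement is a formal consequence of the density of top strata in a topological stratified pseudomanifold together with the elementary facts that preimages of dense sets under open continuous surjections, and images of dense sets under continuous surjections, remain dense. The only point that needs a little care is citing the correct portion of the definition of a topological stratified pseudomanifold (or the corresponding lemma in Appendix~\ref{BB}) to justify that $Q_{l+n}\setminus Q_{l+n-1}$ is dense in $Q$; everything after that is routine point-set topology applied to the bundle $\xi$ and the quotient $\rho$.
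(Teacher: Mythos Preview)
Your proof is correct and follows essentially the same route as the paper: both use the density of the top strata in $Q$ (from Definition~\ref{def of pseudomanifold}-2), lift it to $(P_c)|_{Q_{l+n}\setminus Q_{l+n-1}}$ being dense in $P_c$, and then push this forward along the continuous surjective quotient map $P_c\to X(Q,\lambda,c)$. Your write-up is actually slightly more detailed than the paper's, since you explicitly justify the intermediate density in $P_c$ via the openness of the bundle projection $\xi$, whereas the paper merely asserts that step.
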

\begin{proof}
Since $Q$ is a topological stratified pseudomanifold, $Q_{l+n} \setminus Q_{l+n-1}$ is dense in $Q$ (see Definition~\ref{def of pseudomanifold}-2). Consider the quotient map
\[
P_c \to P_c /{\sim}=X(Q, \lambda, c).
\]
This map is continuous and surjective. Since $(P_c)|_{Q_{l+n} \setminus Q_{l+n-1}}$ is dense in $P_c$, its image
\[
(P_c)|_{Q_{l+n} \setminus Q_{l+n-1}} / {\sim}
\]
is also dense in $X(Q, \lambda, c)$.
\end{proof}

\subsection{The link $L_x$ of $x$}\label{prepare-2}

We next prove that $X(Q, \lambda, c)$ satisfies the Definition \ref{def of T-pseudomanifold}-2 (c). This will be proved in Lemma~\ref{universality of quotient}.
To state Lemma~\ref{universality of quotient} precisely, we need to prepare a link $L_x$ in Construction \ref{cons of link}.
Assume that $l+n\neq 0$, as in Definition \ref{def of T-pseudomanifold}-2 (c).
Let $\pi: P_{c}/{\sim}\to Q$ be the orbit projection.
If we take $x\in (P_{c})|_{Q_{l+i}\setminus Q_{l+i-1}}/{\sim}$ for $0\le i\le n$, then there exists a stratum $S\subset Q_{l+i}\setminus Q_{l+i-1}$ such that $\pi(x)\in S$.

Since $Q$ is a topological stratified pseudomanifold, we can choose a triple $(U_{\pi(x)}, L_{\pi(x)}, \varphi_{\pi(x)})$ (see Definition \ref{def of pseudomanifold}-3), where $U_{\pi(x)}$ is a contractible small open neighborhood.

In order to construct a link $L_x$ of $x$, we use the following two lemmas.

\begin{lem}\label{correspondence strata}
For each $0 \leq j \leq n-i-1$, the homeomorphism in Definition \ref{def of pseudomanifold}-3 (c)
\begin{align*}
\varphi_{\pi(x)}|_{U_{\pi(x)} \cap Q_{l+i+j+1}} : U_{\pi(x)} \cap Q_{l+i+j+1} & \to
O_{\pi(x)} \times \cone((L_{\pi(x)})_j)
\end{align*}
induces a homeomorphism
\[
\varphi'_{j} : U_{\pi(x)} \cap (Q_{l+i+j+1} \setminus Q_{l+i+j}) \to O_{\pi(x)} \times  \left( (L_{\pi(x)})_j \setminus (L_{\pi(x)})_{j-1} \right) \times (0,1)
\]
such that
for any stratum ${R} \subset (L_{\pi(x)})_j \setminus (L_{\pi(x)})_{j-1}$, there exists a stratum $S' \subset Q_{l+i+j+1} \setminus Q_{l+i+j}$ satisfying
\begin{align}\label{unique face}
\varphi'_{j}(U_{\pi(x)} \cap {S'}) = O_{\pi(x)} \times {R} \times (0,1).
\end{align}
\end{lem}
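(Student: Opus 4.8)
The plan is to obtain $\varphi'_j$ by restricting the given stratified homeomorphism $\varphi_{\pi(x)}$ to successive skeleta, identifying the target as a product, and then reading off the strata correspondence from the fact that a stratified homeomorphism induces a bijection of strata (Proposition~\ref{prop: stratified homeo induces a poset iso}).

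Since $\varphi_{\pi(x)}\colon U_{\pi(x)}\to O_{\pi(x)}\times\mathring{c}(L_{\pi(x)})$ is a stratified homeomorphism of filtered spaces and $O_{\pi(x)}$ is an $(l+i)$-dimensional manifold (hence a single stratum), the $(l+i+k)$-skeleton of the target is $O_{\pi(x)}\times(\mathring{c}(L_{\pi(x)}))_k$; together with the cone filtration $(\mathring{c}(L_{\pi(x)}))_{j+1}=\mathring{c}((L_{\pi(x)})_j)$ (see Definition~\ref{def of pseudomanifold} and Appendix~\ref{BB}) this reproduces the map $\varphi_{\pi(x)}|_{U_{\pi(x)}\cap Q_{l+i+j+1}}$ of the statement and, one index lower, $\varphi_{\pi(x)}(U_{\pi(x)}\cap Q_{l+i+j})=O_{\pi(x)}\times\mathring{c}((L_{\pi(x)})_{j-1})$ (for $j=0$ this reads $U_{\pi(x)}\cap Q_{l+i}\cong O_{\pi(x)}\times\{\text{vertex}\}$, since $\mathring{c}(\emptyset)$ is a point). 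Taking the set-theoretic difference, $\varphi_{\pi(x)}$ restricts to a homeomorphism of $U_{\pi(x)}\cap(Q_{l+i+j+1}\setminus Q_{l+i+j})$ onto $O_{\pi(x)}\times\big(\mathring{c}((L_{\pi(x)})_j)\setminus\mathring{c}((L_{\pi(x)})_{j-1})\big)$.

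To finish constructing $\varphi'_j$ I would use the elementary fact that for a closed $B\subset A$ the map $A\times(0,1)\to\mathring{c}(A)\setminus\{\text{vertex}\}$, $(v,s)\mapsto[v,s]$, is a homeomorphism sending $B\times(0,1)$ onto $\mathring{c}(B)\setminus\{\text{vertex}\}$, so that $\mathring{c}(A)\setminus\mathring{c}(B)\cong(A\setminus B)\times(0,1)$; applying this with $A=(L_{\pi(x)})_j$ and $B=(L_{\pi(x)})_{j-1}$ (the case $j=0$ included) and composing with the previous restriction yields the homeomorphism $\varphi'_j$. For the strata: as $O_{\pi(x)}$ is connected (which we may assume, replacing it by a connected component if necessary), the strata of $O_{\pi(x)}\times\mathring{c}(L_{\pi(x)})$ contained in $Q_{l+i+j+1}\setminus Q_{l+i+j}$ are precisely the sets $O_{\pi(x)}\times R\times(0,1)$ with $R$ a stratum of $(L_{\pi(x)})_j\setminus(L_{\pi(x)})_{j-1}$; since $\varphi_{\pi(x)}^{-1}$ is also stratified, $\varphi_{\pi(x)}^{-1}\!\big(O_{\pi(x)}\times R\times(0,1)\big)$ is a single stratum of $U_{\pi(x)}$, i.e.\ a connected component $C$ of $U_{\pi(x)}\cap(Q_{l+i+j+1}\setminus Q_{l+i+j})$, which being connected lies in a unique stratum $S'$ of $Q$ with $S'\subset Q_{l+i+j+1}\setminus Q_{l+i+j}$.

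I expect the main point to be the last step: showing $\varphi'_j(U_{\pi(x)}\cap S')=O_{\pi(x)}\times R\times(0,1)$, i.e.\ that $U_{\pi(x)}\cap S'$ coincides with the single component $C$ rather than a strictly larger union of components. Here one uses that $U_{\pi(x)}$ is a \emph{small} neighborhood (Definition~\ref{def of small open nbh}), so that every stratum of $Q$ meeting $U_{\pi(x)}$ contains the stratum of $\pi(x)$ in its closure, together with the product--cone description of a distinguished neighborhood (Definition~\ref{def of pseudomanifold}-3~(c)), under which the trace on $U_{\pi(x)}$ of each such stratum is a union of pieces $O_{\pi(x)}\times R'\times(0,1)$. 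I would conclude by invoking the corresponding statement about distinguished neighborhoods from Appendix~\ref{BB}, or, if need be, by shrinking $U_{\pi(x)}$ so that each stratum of $Q$ meets it connectedly; in any event, passing to connected components when $U_{\pi(x)}\cap S'$ is disconnected still yields, for every link stratum $R$, a component carried by $\varphi'_j$ onto $O_{\pi(x)}\times R\times(0,1)$, which is all that is needed for the subsequent construction of the link $L_x$.
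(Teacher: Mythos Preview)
Your approach is essentially the paper's: restrict $\varphi_{\pi(x)}$ to successive skeleta, use that $\mathring{c}((L_{\pi(x)})_j)\setminus\mathring{c}((L_{\pi(x)})_{j-1})\cong\big((L_{\pi(x)})_j\setminus(L_{\pi(x)})_{j-1}\big)\times(0,1)$ (the paper records this as a global homeomorphism $\psi$ and then restricts), and read off the strata correspondence from the resulting homeomorphism $\varphi'_j$.

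The one place you diverge is in the last paragraph. The paper dispatches the strata statement in a single sentence: ``since the homeomorphism preserves connected components,'' for each link stratum $R$ there is a unique $S'$ with $\varphi'_j(U_{\pi(x)}\cap S')=O_{\pi(x)}\times R\times(0,1)$. It does \emph{not} pause over the possibility that $U_{\pi(x)}\cap S'$ might be disconnected; your concern there is more scrupulous than the paper, not less. In other words, you have not missed an argument the paper supplies --- you have identified a point the paper treats as self-evident. For the purposes of matching the paper's proof, you may simply assert the conclusion as the paper does; your observation that the weaker statement (a component of $U_{\pi(x)}\cap S'$ maps onto $O_{\pi(x)}\times R\times(0,1)$) already suffices to define $\nu$ and hence the characteristic functor $\mu$ on $L_{\pi(x)}$ is correct and is a reasonable safeguard.
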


\begin{proof}
For $0 \leq j \leq n-i-1$,
restricting the map
$
\varphi_{\pi(x)}|_{U_{\pi(x)} \cap Q_{l+i+j+1}} 
$
to the subspace $U_{\pi(x)} \cap (Q_{l+i+j+1} \setminus Q_{l+i+j})$, we obtain the homeomorphism
\begin{align*}
\varphi_{\pi(x)}|_{U_{\pi(x)} \cap (Q_{l+i+j+1} \setminus Q_{l+i+j})}:
U_{\pi(x)} \cap (Q_{l+i+j+1} \setminus Q_{l+i+j}) \to
O_{\pi(x)} \times \cone((L_{\pi(x)})_j) \setminus \cone((L_{\pi(x)})_{j-1}).
\end{align*}
From the definition of the open cone (see Definition \ref{def of open cone}), we also have a homeomorphism
\begin{align}\label{homeo by removing cone vertex}
\psi : O_{\pi(x)} \times \cone(L_{\pi(x)}) \setminus \{ \text{cone vertex} \} \to O_{\pi(x)} \times L_{\pi(x)} \times (0,1).
\end{align}
Since $(L_{\pi(x)})_j \supset (L_{\pi(x)})_{j-1}$ and their open cones have the common cone vertex, the space $\cone((L_{\pi(x)})_j) \setminus  \cone((L_{\pi(x)})_{j-1})$ is removing the cone vertex from $\cone \left( (L_{\pi(x)})_j \setminus (L_{\pi(x)})_{j-1} \right)$.
Therefore, restricting $\psi$ yields the homeomorphism
\begin{align*}
\psi|_{O_{\pi(x)} \times \cone((L_{\pi(x)})_j) 
\setminus \cone((L_{\pi(x)})_{j-1})} :\quad
& O_{\pi(x)} \times \cone((L_{\pi(x)})_j) 
  \setminus \cone((L_{\pi(x)})_{j-1}) \\
& \to O_{\pi(x)} \times \left( (L_{\pi(x)})_j 
  \setminus (L_{\pi(x)})_{j-1} \right) \times (0,1).
\end{align*}
By composing these homeomorphisms, we obtain the homeomorphism
\begin{align}\label{removing the cone vertex}
\varphi'_{j}:
U_{\pi(x)} \cap (Q_{l+i+j+1} \setminus Q_{l+i+j})
\xrightarrow{\ \varphi|_{U_{\pi(x)} \cap (Q_{l+i+j+1} \setminus Q_{l+i+j})} \ } 
O_{\pi(x)} \times \cone((L_{\pi(x)})_j) \setminus \cone((L_{\pi(x)})_{j-1}) & \notag \\
\xrightarrow{\ \psi|_{O_{\pi(x)} \times \cone((L_{\pi(x)})_j) \setminus \cone((L_{\pi(x)})_{j-1})} \ } 
O_{\pi(x)} \times \left( (L_{\pi(x)})_j \setminus (L_{\pi(x)})_{j-1} \right)  \times (0,1).&
\end{align}
Since the homeomorphism preserves connected components,
the homeomorphism
(\ref{removing the cone vertex}) shows that
for any stratum ${R} \subset (L_{\pi(x)})_j \setminus (L_{\pi(x)})_{j-1}$,
there exists a unique stratum $S' \subset Q_{l+i+j+1} \setminus Q_{l+i+j}$ such that
\[
\varphi'_{j}(U_{\pi(x)} \cap {S'}) = O_{\pi(x)} \times {R} \times (0,1).
\]
\end{proof}

The following figures illustrate the isomorphism (\ref{unique face}) via the local homeomorphism near a point in the case where $O_{\pi(x)}$ is a single point.
The left figure below shows the neighborhood $U_{\pi(x)}$ in $Q$, while the right figure below shows the corresponding cone structure.
\[
\begin{tikzpicture}

% --- 左側の図 ---
  \coordinate (A) at (0, 4.5);
  \coordinate (B) at (-2.4,1);
  \coordinate (C) at (0,0);
  \coordinate (D) at (3,1);
  \coordinate (AB) at (-0.96, 3.1);
  \coordinate (AC) at (0, 2.7);
  \coordinate (AD) at (1.2, 3.1);

  \fill[gray!20, opacity=0.6] (A) -- (AB)
    .. controls (-0.9, 2.9) and (-0.3, 2.8) .. (AC)
    -- cycle;
  \fill[gray!20, opacity=0.6] (A) -- (AC)
    .. controls (0.2, 2.6) and (1.0, 2.9) .. (AD)
    -- cycle;
  \fill[gray!20, opacity=0.6] (A) -- (AD)
    .. controls (1.0, 3.3) and (-0.8, 3.3) .. (AB)
    -- cycle;

  \draw[dashed]
    (AB) .. controls (-0.9, 2.9) and (-0.3, 2.8) .. (AC)
         .. controls (0.2, 2.6) and (1.0, 2.9) .. (AD)
         .. controls (1.0, 3.3) and (-0.8, 3.3) .. (AB);

  \draw[thick] (A) -- (B) -- (C) -- cycle;
  \draw[dashed, thick] (A) -- (C);
  \draw[thick] (A) -- (D);
  \draw[dashed, thick] (B) -- (D);
  \draw[thick] (C) -- (D);

  \node at (A) [above] {A};
  \node at (B) [left] {B};
  \node at (C) [below] {C};
  \node at (D) [right] {D};
\node at (AD) [above right] {$U_{\pi(x)}$};

% --- \cong 記号 ---
  \node at (2.5,3.2) {$\cong$}; % 図の真ん中あたりに配置

% --- 右側の図 ---
\begin{scope}[shift={(6.5,0)}]
  \coordinate (A2) at (0, 4.5);
  \coordinate (AB2) at (-0.96, 3.1);
  \coordinate (AC2) at (0, 2.7);
  \coordinate (AD2) at (1.2, 3.1);

  \fill[gray!30, opacity=0.6] (A2) -- (AB2) -- (AC2) -- cycle;
  \fill[gray!30, opacity=0.6] (A2) -- (AC2) -- (AD2) -- cycle;
  \fill[gray!30, opacity=0.6] (A2) -- (AD2) -- (AB2) -- cycle;

  \fill[red!40, opacity=0.6] (AB2) -- (AC2) -- (AD2) -- cycle;

  \draw[thick] (A2) -- (AB2);
  \draw[thick] (A2) -- (AC2);
  \draw[thick] (A2) -- (AD2);

  \draw[red, thick, dashed] (AB2) -- (AC2) -- (AD2) -- cycle;

  \filldraw[black] (A2) circle (2pt);
  \draw[red, thick] (AB2) circle (2pt);
  \draw[red, thick] (AC2) circle (2pt);
  \draw[red, thick] (AD2) circle (2pt);

\node[red] at (AD2) [below right] {$L_{\pi(x)}$};
\draw (A2) .. controls (0.4,4.75) .. ++(0.5,0.25) node[right] {$O_{\pi(x)}$};
\node at (AB2) [above left] {$O_{\pi(x)} \times \mathring{c}(L_{\pi(x)})$};
\end{scope}
\end{tikzpicture}
\]
\[
\begin{tikzpicture}

  % === 左側の図 ===
  \begin{scope}
    \coordinate (A) at (0, 4.5);
    \coordinate (C) at (0, 0);
    \coordinate (D) at (3, 1);

    \coordinate (AC) at (0, 2.7);
    \coordinate (AD) at (1.2, 3.1);

    % 面 ACD の描画（背景）
    \draw[thick, dashed] (A) -- (C) -- (D) -- cycle;

    % A の近傍の一部（AC–AD くさび型）
    \fill[gray!20, opacity=0.6]
      (A) -- (AC)
      .. controls (0.2, 2.6) and (1.0, 2.9) .. (AD)
      -- cycle;

    % 曲線境界
    \draw[dashed]
      (AC) .. controls (0.2, 2.6) and (1.0, 2.9) .. (AD);

    % 頂点マーク
    \draw[black] (AC) circle (2pt);
    \draw[black] (AD) circle (2pt);
    \draw[black] (A) circle (2pt);
\node at (AD) [above right] {$U_{\pi(x)} \cap S'$};
\node at (1.25,1.25) {$S'$};
  \end{scope}

  % === 中央の ≅ 記号 ===
  \node at (3.5,3.2) {$\cong$};
  \node at (8,3.2)  {$\cong O_{\pi(x)} \times {R} \times (0,1)$};

  % === 右側の図 ===
  \begin{scope}[shift={(4.5,0)}]
    \coordinate (A) at (0, 4.5);
    \coordinate (AC) at (0, 2.7);
    \coordinate (AD) at (1.2, 3.1);

    % 面
    \fill[gray!30, opacity=0.6]
      (A) -- (AC) -- (AD) -- cycle;

    % 辺
    \draw[thick, dashed] (A) -- (AC);
    \draw[thick, dashed] (A) -- (AD);
    \draw[red, thick, dashed] (AC) -- (AD);

    % 頂点マーク
    \draw[black] (A) circle (2pt);
    \draw[red, thick] (AC) circle (2pt);
    \draw[red, thick] (AD) circle (2pt);

    % ラベル
\node[red] at (AD) [below=6pt] {$R$};
  \end{scope}
\draw[<->, thick]
  (1.5, 1.25) -- ++(4, 1.35);
\end{tikzpicture}
\]

Lemma \ref{correspondence strata} establishes a correspondence between strata, which can be interpreted as a functor between poset categories.

\begin{lem}\label{lem: functor nu}
We denote by $\nu : R \mapsto S'$ the correspondence between ${R} \subset (L_{\pi(x)})_j \setminus (L_{\pi(x)})_{j-1}$ and $S' \subset Q_{l+i+j+1} \setminus Q_{l+i+j}$ determined by \eqref{unique face}.
This correspondence defines a functor
\[
\nu : \mathcal{S}(L_{\pi(x)}) \to \mathcal{S}(Q).
\]
\end{lem}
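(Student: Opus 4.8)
The plan is to check the two things that make $\nu$ a functor between the poset categories $\mathcal{S}(L_{\pi(x)})$ and $\mathcal{S}(Q)$ (see Definition~\ref{def of poset category}): that $\nu$ is a well-defined assignment on objects landing in $\mathcal{S}(Q)$, and that $\nu$ is order-preserving, i.e.\ sends a morphism $R_1\to R_2$ of $\mathcal{S}(L_{\pi(x)})$ (equivalently $R_1\subset\overline{R_2}$ inside $L_{\pi(x)}$) to a morphism $\nu(R_1)\to\nu(R_2)$ of $\mathcal{S}(Q)$ (equivalently $\nu(R_1)\subset\overline{\nu(R_2)}$ inside $Q$). Once order-preservation is known, functoriality in the strict sense (preservation of identities and of composition) is automatic, since a poset category has at most one morphism between any two objects. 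Well-definedness on objects is exactly Lemma~\ref{correspondence strata}: a stratum $R\subset(L_{\pi(x)})_j\setminus(L_{\pi(x)})_{j-1}$ determines a \emph{unique} stratum $S'\subset Q_{l+i+j+1}\setminus Q_{l+i+j}$ satisfying \eqref{unique face}, and such an $S'$ is a genuine stratum of $Q$, so $\nu(R)=S'\in\mathcal{S}(Q)$ makes sense.

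For order-preservation, fix strata $R_1\subset\overline{R_2}$ of $L_{\pi(x)}$, with $R_a$ of level $j_a$ and $S'_a:=\nu(R_a)$ for $a=1,2$ (so $j_1\le j_2$). I would read everything off the single homeomorphism $\varphi_{\pi(x)}\colon U_{\pi(x)}\to O_{\pi(x)}\times\cone(L_{\pi(x)})$. Unwinding the definition of $\varphi'_j$ — the composite of (a restriction of) $\varphi_{\pi(x)}$ with the cone-vertex-removing homeomorphism $\psi$ — equation \eqref{unique face} says precisely that $\varphi_{\pi(x)}$ carries $U_{\pi(x)}\cap S'_a$ onto the subset $O_{\pi(x)}\times\bigl(R_a\times(0,1)\bigr)$ of $O_{\pi(x)}\times\cone(L_{\pi(x)})$, where $R_a\times(0,1)\subset\cone(L_{\pi(x)})$ is the stratum of the cone associated to $R_a$. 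From $R_1\subset\overline{R_2}$ in $L_{\pi(x)}$ one gets, by the obvious computation in the cone (approximate a point of $R_1$ by points of $R_2$ while keeping the radial coordinate fixed), that $R_1\times(0,1)\subset\overline{R_2\times(0,1)}$ in $\cone(L_{\pi(x)})$, hence $O_{\pi(x)}\times\bigl(R_1\times(0,1)\bigr)\subset\overline{O_{\pi(x)}\times\bigl(R_2\times(0,1)\bigr)}$; pulling this back along $\varphi_{\pi(x)}$ and using that $U_{\pi(x)}$ is open in $Q$ yields $U_{\pi(x)}\cap S'_1\subset\overline{S'_2}$, the closure taken in $Q$. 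More conceptually, one may instead note that $\varphi_{\pi(x)}$ is a stratified homeomorphism, so by Proposition~\ref{prop: stratified homeo induces a poset iso} it induces a poset isomorphism $\mathcal{S}(U_{\pi(x)})\cong\mathcal{S}\bigl(O_{\pi(x)}\times\cone(L_{\pi(x)})\bigr)$; since $O_{\pi(x)}$ is connected this is $\mathcal{S}(\cone(L_{\pi(x)}))$, which contains $\mathcal{S}(L_{\pi(x)})$ as the full subposet of non-vertex strata via $R\mapsto R\times(0,1)$, and chasing the identifications shows that on strata meeting $U_{\pi(x)}$ the correspondence $\nu$ is exactly this inclusion followed by the restriction map $\mathcal{S}(U_{\pi(x)})\to\mathcal{S}(Q)$, $U_{\pi(x)}\cap S'\mapsto S'$.

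The step I expect to be the main obstacle is the local-to-global passage: the argument above only shows that the nonempty subset $U_{\pi(x)}\cap S'_1$ of the stratum $S'_1$ lies in $\overline{S'_2}$, and one must upgrade this to $S'_1\subset\overline{S'_2}$. This is where I would invoke the \emph{axiom of the frontier} for topological stratified pseudomanifolds (see Appendix~\ref{BB} and \cite{Fri20}): since $S'_1\cap\overline{S'_2}\neq\emptyset$ and $S'_1$ is a stratum, it follows that $S'_1\subset\overline{S'_2}$, that is, $\nu(R_1)\subset\overline{\nu(R_2)}$. (The case $R_1=R_2$ is trivial, and $R_1\subset\overline{R_2}$ with $R_1\neq R_2$ forces $j_1<j_2$, so in particular $S'_1\neq S'_2$.) Hence $\nu$ is order-preserving, and therefore a functor $\mathcal{S}(L_{\pi(x)})\to\mathcal{S}(Q)$.
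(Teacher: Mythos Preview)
Your proposal is correct and follows essentially the same approach as the paper: both arguments transfer the closure relation $R_1\subset\overline{R_2}$ through the local homeomorphism $\varphi_{\pi(x)}$ (the paper packages this as a composite $\Phi$ going the other direction) to obtain $U_{\pi(x)}\cap S'_1\subset\overline{S'_2}$, and then invoke the frontier condition (Proposition~\ref{frontier condition}) to upgrade this local inclusion to $S'_1\subset\overline{S'_2}$. Your additional conceptual remark via Proposition~\ref{prop: stratified homeo induces a poset iso} is a nice alternative viewpoint not present in the paper.
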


\begin{proof}
Let $R_1, R_2 \in \mathcal{S}(L_{\pi(x)})$, and
$S_{1}^{'} = \nu (R_1), S_{2}^{'} =\nu (R_2) \in \mathcal{S}(Q)$.
To prove that $\nu$ defines a functor, it suffices to verify that if
$R_1 \subset \overline{R_2}$, then $S_{1}^{'} \subset \overline{S_{2}^{'}}$.
Consider the homeomorphism in Definition \ref{def of pseudomanifold}-3 (c):
\[
\varphi_{\pi(x)}^{-1} : O_{\pi(x)} \times \cone(L_{\pi(x)})
\to
U_{\pi(x)}.
\]
By restricting the homeomorphism, we obtain the continuous map
\[
\varphi_{\pi(x)}^{-1}|_{O_{\pi(x)} \times \cone(L_{\pi(x)}) \setminus \{ \text{cone vertex} \}} :
O_{\pi(x)} \times \cone(L_{\pi(x)}) \setminus \{ \text{cone vertex} \}
\to
U_{\pi(x)}.
\]
We define the composition as follows:
\begin{align*}
\Phi: 
O_{\pi(x)} \times \cone(L_{\pi(x)}) \times (0,1)
&\xrightarrow{\ \psi^{-1} \ }
O_{\pi(x)} \times \cone(L_{\pi(x)}) \setminus \{ \text{cone vertex} \} \\
&\xrightarrow{\ \varphi_{\pi(x)}^{-1}|_{O_{\pi(x)} \times \cone(L_{\pi(x)}) \setminus \{ \text{cone vertex} \}} \ }
U_{\pi(x)},
\end{align*}
where $\psi$ is the homeomorphism in \eqref{homeo by removing cone vertex}. This composition $\Phi$ is continuous.
Furthermore, for each $j$,
by restricting 
\begin{itemize}
\item
$\psi^{-1}$
to
$(\psi|_{O_{\pi(x)} \times \cone((L_{\pi(x)})_j) \setminus \cone((L_{\pi(x)})_{j-1})})^{-1} = \psi^{-1}|_{O_{\pi(x) \times ( (L_{\pi(x)})_j \setminus (L_{\pi(x)})_{j-1} ) \times (0,1)}}$,

\item
$\varphi_{\pi(x)}^{-1}|_{O_{\pi(x)} \times \cone(L_{\pi(x)}) \setminus \{ \text{cone vertex} \}}$
to
\[
\varphi_{\pi(x)}^{-1}|_{O_{\pi(x)} \times \cone(L_{\pi(x)})_j \setminus \cone(L_{\pi(x)})_{j-1}}
=
(\varphi|_{U_{\pi(x)} \cap (Q_{l+i+j+1} \setminus Q_{l+i+j})})^{-1},
\]
\end{itemize}
we have the homeomorphism $(\varphi'_{j})^{-1}$ as the restriction of $\Phi$.
Therefore, we obtain
\[
\Phi \left( O_{\pi(x)} \times R_1 \times (0,1) \right)
=
U_{\pi(x)} \cap S_{1}^{'}.
\]
By the properties of closures and the continuity of $\Phi$, we obtain:
\begin{align*}
U_{\pi(x)} \cap S_{1}^{'}
&=
\Phi \left( O_{\pi(x)} \times R_1 \times (0,1) \right) \\
&\subset 
\Phi \left( O_{\pi(x)} \times \overline{R_2} \times (0,1) \right) \\
&\subset
\Phi \left( \overline{O_{\pi(x)} \times {R_2} \times (0,1)} \right) \\
&\subset
\overline{\Phi \left( O_{\pi(x)} \times {R_2} \times (0,1) \right)} \\
&=
\overline{U_{\pi(x)} \cap S_{2}^{'}}.
\end{align*}
Since the closure $\overline{U_{\pi(x)} \cap S_{2}^{'}}$ is taken in 
$U_{\pi(x)}$, we have
\[
\overline{U_{\pi(x)} \cap S_{2}^{'}}=U_{\pi(x)} \cap \overline{S_{2}^{'}}.
\]
Therefore,
\[
S_{1}^{'} \cap \overline{S_{2}^{'}} \neq \emptyset.
\]
Since $S_{1}^{'}$ and $S_{2}^{'}$ are strata of the topological stratified pseudomanifold $Q$, and such pseudomanifolds satisfy the frontier condition (see Proposition \ref{frontier condition}), we conclude that
$S_{1}^{'} \subset \overline{S_{2}^{'}}$.
\end{proof}

We construct a link $L_x$ of $x \in (P_c)|_{Q_{l+i} \setminus Q_{l+i-1}}/{\sim}$ as follows.

\begin{cons}[link $L_x$]\label{cons of link}
Recall that $U_{\pi(x)}$ is a contractible small open neighborhood of $\pi(x)$.
Then $S \subset Q_{l+i} \setminus Q_{l+i-1}$ is the minimal stratum intersecting $U_{\pi(x)}$.
That is, for any $(l+i+j+1)$-dimensional stratum $S'$ with $-1 \le j \le n - i - 1$ such that $U_{\pi(x)} \cap S' \neq \emptyset$, we have $\lambda(S') \subset \lambda(S)$. 
Thus, by applying the Lemma~\ref{lem: functor nu}, we can define a functor
$\mu$ by composing $\nu$ and $\lambda$:
\begin{align*}
\begin{tikzcd}[ampersand replacement=\&, row sep=0.25cm]
\mu= \lambda\circ \nu : \!\!\!\!\!\!\!\!\!\!\!\!\!\!\!\!\!\! \&  \mathcal{S}(L_{\pi(x)})^{\mathrm{op}}     \rar
      \& \mathcal{T}_{\lambda(S)}
\\[-10pt]
\&  \rotatebox{90}{$\in$}  \&  \rotatebox{90}{$\in$}    \\[-10pt]
\&  R         \rar[mapsto]
    \& \lambda(S') ,
\end{tikzcd}
\end{align*}
where $S'=\nu(R)$, and $\mathcal{T}_{\lambda(S)}$ denotes the category of torus subgroups of $\lambda(S) \cong T^{n-i}$.
Since $\lambda(S') \cong T^{n-(i+j+1)}=T^{(n-i-1)-j}$ and $\lambda$ is a characteristic functor, it follows that $\mu$ is also a characteristic functor on $L_{\pi(x)}$.
Note that, since the small open neighborhood $U_{\pi(x)}$ is contractible, the principal bundle over it is trivial.
Viewing $L_{\pi(x)}$ as embedded in $U_{\pi(x)}$ (see Definition~\ref{def of pseudomanifold}-3), the principal bundle over $L_{\pi(x)}$ may be regarded as induced from that over $U_{\pi(x)}$.
Therefore, we define a $\lambda(S) \,(=T_x \cong T^{n-i})$-space (see Definition \ref{def of canonical model}) as follows:
\begin{align}\label{def of L_x}
L_x := X(L_{\pi(x)}, \mu, 0) = L_{\pi(x)} \times \lambda(S)/{\sim}_{\mu},
\end{align}
where ${\sim}_{\mu}$ is the equivalence relation on $L_{\pi(x)} \times \lambda(S)$ defined by $\mu$.
\end{cons}

The following lemma is obtained by the classical Lie theory.

\begin{lem}\label{torus split}
Let $Q$ be a second-countable, compact, Hausdorff topological stratified pseudomanifold and suppose that $\lambda$ is a characteristic functor on $Q$.
For any stratum $S \subset Q$, 
there exists a torus $T_S'$  such that $T_S' \cong T^m/\lambda(S)$ and $T^m \cong \lambda(S) \times T_S'$.
\end{lem}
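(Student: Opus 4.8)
The plan is to observe that this is a purely Lie-theoretic statement about tori: the hypotheses on $Q$ and $\lambda$ serve only to guarantee, via Remark~\ref{assumption of subtorus} together with the fact that a characteristic functor takes values in subtori, that $H:=\lambda(S)$ is a subtorus (a closed connected subgroup) of $T^m$. Thus it suffices to prove the following: every subtorus $H\subset T^m$ is a direct factor, i.e.\ there is a subtorus $H'\subset T^m$ with $T^m\cong H\times H'$ and $H'\cong T^m/H$. First I would translate the problem into the language of lattices. Write $\mathfrak{t}=\mathbb{R}^m$ for the Lie algebra of $T^m$ and $\mathfrak{t}_{\mathbb{Z}}=\mathbb{Z}^m\subset\mathfrak{t}$ for the integral lattice, so that $T^m=\mathfrak{t}/\mathfrak{t}_{\mathbb{Z}}$. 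As is standard, $H$ corresponds to a rational subspace $\mathfrak{h}\subset\mathfrak{t}$ (i.e.\ $\mathfrak{h}$ is spanned by vectors of $\mathfrak{t}_{\mathbb{Z}}$), and $H=\mathfrak{h}/L$ with $L:=\mathfrak{h}\cap\mathfrak{t}_{\mathbb{Z}}$ a sublattice of rank $\dim H$.

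The key point is that $L$ is \emph{saturated} in $\mathfrak{t}_{\mathbb{Z}}$: if $v\in\mathfrak{t}_{\mathbb{Z}}$ and $kv\in L$ for some nonzero integer $k$, then $kv\in\mathfrak{h}$, hence $v\in\mathfrak{h}$ since $\mathfrak{h}$ is a linear subspace, so $v\in\mathfrak{h}\cap\mathfrak{t}_{\mathbb{Z}}=L$. Consequently $\mathfrak{t}_{\mathbb{Z}}/L$ is torsion-free, and, being finitely generated, it is free of rank $m-\dim H$. Therefore the short exact sequence of abelian groups
\[
0 \longrightarrow L \longrightarrow \mathfrak{t}_{\mathbb{Z}} \longrightarrow \mathfrak{t}_{\mathbb{Z}}/L \longrightarrow 0
\]
splits; fixing a splitting produces a sublattice $L'\subset\mathfrak{t}_{\mathbb{Z}}$ with $\mathfrak{t}_{\mathbb{Z}}=L\oplus L'$.

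Now I would set $\mathfrak{h}':=L'\otimes_{\mathbb{Z}}\mathbb{R}\subset\mathfrak{t}$ and define $T_S':=\mathfrak{h}'/L'$, the subtorus of $T^m$ cut out by $L'$. Since $\mathfrak{t}_{\mathbb{Z}}=L\oplus L'$ spans $\mathfrak{t}$ and the ranks add up to $m$, we get $\mathfrak{t}=\mathfrak{h}\oplus\mathfrak{h}'$. The addition map $H\times T_S'\to T^m$, $(\bar a,\bar b)\mapsto\overline{a+b}$, is then a homomorphism of tori whose differential $\mathfrak{h}\oplus\mathfrak{h}'\to\mathfrak{t}$ is an isomorphism and whose kernel is trivial (an element $(\bar a,\bar b)$ of the kernel satisfies $a+b\in\mathfrak{t}_{\mathbb{Z}}=L\oplus L'$ with $a\in\mathfrak{h}$, $b\in\mathfrak{h}'$, forcing $a\in L$ and $b\in L'$); hence it is an isomorphism, giving $T^m\cong\lambda(S)\times T_S'$. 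Finally, the projection $\mathfrak{t}\to\mathfrak{t}/(\mathfrak{h}+\mathfrak{t}_{\mathbb{Z}})$ restricts on $\mathfrak{h}'$ to a surjection (as $\mathfrak{t}=\mathfrak{h}+\mathfrak{h}'$) whose kernel is $\mathfrak{h}'\cap(\mathfrak{h}+\mathfrak{t}_{\mathbb{Z}})=L'$, so that
\[
T_S' = \mathfrak{h}'/L' \;\cong\; \mathfrak{t}/(\mathfrak{h}+\mathfrak{t}_{\mathbb{Z}}) \;=\; T^m/\lambda(S).
\]

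There is no serious obstacle here. The only step demanding a little care is the observation that the integral lattice of a subtorus is saturated — equivalently, that $T^m/H$ is again a torus, which is precisely what makes the lattice extension problem projective — together with the bookkeeping that identifies $T_S'$ with $T^m/\lambda(S)$ on the nose rather than merely with some abstract complementary torus. Alternatively, one could invoke the structure theory of compact connected abelian Lie groups directly, but the lattice argument above has the advantage of being self-contained.
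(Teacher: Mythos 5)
Your proof is correct, and it is actually \emph{more} careful than the paper's. The paper works at the Lie-algebra level: it picks an arbitrary vector-space complement $\mathfrak{s}'$ to $\mathfrak{s}=\operatorname{Lie}(\lambda(S))$ in $\mathfrak{t}$, declares $T_S'$ to be ``the Lie group corresponding to $\mathfrak{s}'$,'' and asserts that exponentiating gives $T^m\cong\lambda(S)\times T_S'$. As written, that sketch elides two nontrivial points that your lattice argument handles explicitly. First, an arbitrary linear complement $\mathfrak{s}'$ need not be rational, in which case $\exp(\mathfrak{s}')$ is a dense (non-closed) subgroup rather than a subtorus. Second, even a rational complement $\mathfrak{s}'$ only guarantees that $L\oplus L'$ (with $L'=\mathfrak{s}'\cap\mathfrak{t}_{\mathbb{Z}}$) is a finite-index sublattice of $\mathfrak{t}_{\mathbb{Z}}$; the addition map $\lambda(S)\times T_S'\to T^m$ is then only a finite cover, not an isomorphism, unless one actually arranges $\mathfrak{t}_{\mathbb{Z}}=L\oplus L'$. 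Your observation that $L=\mathfrak{h}\cap\mathfrak{t}_{\mathbb{Z}}$ is saturated (so that $\mathfrak{t}_{\mathbb{Z}}/L$ is free and the lattice sequence splits) is precisely what produces such an $L'$, and your verification that the addition map has trivial kernel and that $T_S'\cong T^m/\lambda(S)$ on the nose completes the argument cleanly. So the two proofs share the same underlying idea — complement $\lambda(S)$ inside $T^m$ — but your lattice-level version supplies the integrality bookkeeping that the published sketch takes for granted.
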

\begin{proof}
We use the basic theory of Lie groups (see {\cite[Chapter 3]{War83}}).
Let $\mathfrak{t} \cong \mathbb{R}^m$ be the Lie algebra of $T^m$. Since $\lambda(S) \subset T^m$ is a torus subgroup, its Lie algebra $\mathfrak{s}$ is a subspace of $\mathfrak{t}$. Then, there exists a vector subspace $\mathfrak{s}'$ isomorphic to $\mathfrak{t}/\mathfrak{s}$ such that $\mathfrak{t} = \mathfrak{s} \oplus \mathfrak{s}'$. The Lie group $T_S'$ corresponding to $\mathfrak{s}'$ is isomorphic to $T^m/\lambda(S)$.
Taking exponential on $\mathfrak{t}$, we have $T^m \cong \lambda(S) \times T_S'$.
\end{proof}

We next introduce an auxiliary lemma that will be used in the proof of Lemma \ref{universality of quotient}.

\begin{lem}\label{auxiliary lemma}
Let $(Q, \lambda, c)$ be a characteristic data, and let $x \in X(Q, \lambda, c)$.
Suppose that $S \subset Q$ is a stratum containing $\pi(x)$, where $\pi : X(Q, \lambda, c) \to Q$ denotes the orbit projection.
Then there exists a $T$-equivariant homeomorphism:
\[
U_{\pi(x)} \times  \lambda(S) / {\sim}_\lambda 
\cong
O_{\pi(x)} \times \cone(L_{\pi(x)} \times \lambda(S)/{\sim}_{\mu}),
\]
where $U_{\pi(x)}$ is a contractible small open neighborhood of $\pi(x)$ and $O_{\pi(x)} \subset \mathbb{R}^l$ is a contractible open subset.
\end{lem}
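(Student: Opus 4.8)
The plan is to transport the question to an open cone by means of the chart $\varphi_{\pi(x)} \colon U_{\pi(x)} \to O_{\pi(x)} \times \cone(L_{\pi(x)})$ supplied by the topological stratified pseudomanifold structure of $Q$, and then to construct the homeomorphism over that cone by hand. First I would record that, since $U_{\pi(x)}$ is a small neighbourhood, $S$ is the minimal stratum meeting $U_{\pi(x)}$, so $\lambda(S') \subset \lambda(S)$ for every stratum $S'$ with $S' \cap U_{\pi(x)} \neq \emptyset$ (the observation in Construction \ref{cons of link}). Consequently the relation $\sim_{\lambda}$ restricts to a well-defined equivalence relation on the trivial bundle $U_{\pi(x)} \times \lambda(S)$, and $\lambda(S)$ acts on the quotient through the second factor; throughout, $\lambda(S) = T_x$ is the torus whose action is meant by ``$T$-equivariant'' in the statement.

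Next I would push the relation forward along $\varphi_{\pi(x)} \times \mathrm{id}_{\lambda(S)}$. By Lemma \ref{correspondence strata} and Lemma \ref{lem: functor nu}, $\varphi_{\pi(x)}$ is a stratified homeomorphism which carries $U_{\pi(x)} \cap S'$ onto the stratum $O_{\pi(x)} \times (\cone(R) \setminus \{ v\})$ whenever $S' = \nu(R)$, and carries $U_{\pi(x)} \cap S$ onto $O_{\pi(x)} \times \{ v\}$, where $v$ denotes the cone vertex. Combined with the identity $\mu = \lambda \circ \nu$, this shows that the pushforward of $\sim_{\lambda}$ is the relation $\sim'$ on $O_{\pi(x)} \times \cone(L_{\pi(x)}) \times \lambda(S)$ under which $((o,[\ell,r]),t)$ and $((o,[\ell,r]),t')$ are identified exactly when $r = 0$, or when $r > 0$ and $t^{-1}t' \in \mu(R_\ell)$ for the stratum $R_\ell \ni \ell$. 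The factor $O_{\pi(x)}$ enters $\sim'$ trivially, so — using that $O_{\pi(x)}$ is a locally compact Hausdorff space, hence $O_{\pi(x)} \times (-)$ preserves quotient maps (the Whitehead-type fact about products with locally compact spaces) — I obtain a $\lambda(S)$-equivariant homeomorphism $\bigl(O_{\pi(x)} \times \cone(L_{\pi(x)}) \times \lambda(S)\bigr)/\sim' \;\cong\; O_{\pi(x)} \times \bigl((\cone(L_{\pi(x)}) \times \lambda(S))/\sim''\bigr)$, where $\sim''$ is the induced relation on $\cone(L_{\pi(x)}) \times \lambda(S)$.

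It then remains to identify $(\cone(L_{\pi(x)}) \times \lambda(S))/\sim''$ with $\cone(L_x) = \cone\bigl(L_{\pi(x)} \times \lambda(S)/\sim_{\mu}\bigr)$. For this I would write down the mutually inverse maps $\Psi([\ell,r],t) = [\,[\ell,t]_{\mu}, r\,]$ and $\Phi([\,[\ell,t]_{\mu}, r\,]) = [\,([\ell,r],t)\,]$, check that each is constant on the relevant classes and hence descends to a continuous map on the quotients (for $\Psi$ one precomposes with the quotient map $L_{\pi(x)} \times [0,1) \times \lambda(S) \to \cone(L_{\pi(x)}) \times \lambda(S)$, which is a quotient map since it is the cone-collapse map times $\mathrm{id}_{\lambda(S)}$; similarly for $\Phi$ one uses that $\cone(L_x)$ carries the quotient topology from $L_{\pi(x)} \times \lambda(S) \times [0,1)$), verify $\Psi \circ \Phi = \mathrm{id}$ and $\Phi \circ \Psi = \mathrm{id}$ on representatives, and verify $\lambda(S)$-equivariance via $\Psi(s \cdot ([\ell,r],t)) = [\,[\ell,st]_{\mu}, r\,] = s \cdot \Psi([\ell,r],t)$. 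Composing the three equivalences — the chart, the splitting off of $O_{\pi(x)}$ (equivariant on the $\lambda(S)$-factor), and this cone identification (equivariant by construction) — yields the claimed homeomorphism.

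The step I expect to be the main obstacle is the bookkeeping at the cone vertex, together with the verification of continuity of the inverse. The vertex stratum of $\cone(L_{\pi(x)})$ corresponds under $\varphi_{\pi(x)}$ to $S$ itself, so $\sim''$ collapses the whole torus $\{ v\} \times \lambda(S)$ to a single point, which must be matched with the cone vertex of $\cone(L_x)$; one has to be sure that this collapsing agrees on both sides and that the quotient topologies coincide near that point. Since $U_{\pi(x)}$ is open and not compact, the ``continuous bijection from a compact space to a Hausdorff space'' shortcut is unavailable here, so continuity of the inverse has to be obtained by exhibiting $\Phi$ explicitly and invoking the universal property of the quotient topology, which is exactly why the Whitehead-type fact about products with locally compact Hausdorff spaces is invoked at the two places above.
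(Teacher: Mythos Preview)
Your proposal is correct and follows essentially the same route as the paper: transport along the chart $\varphi_{\pi(x)}$, push the relation $\sim_\lambda$ forward to $O_{\pi(x)}\times\cone(L_{\pi(x)})\times\lambda(S)$, and then exhibit explicit mutually inverse maps between the two quotients, checking well-definedness by a case split on the cone vertex and deducing continuity from the universal property of the quotient maps. The only organizational difference is that you split off the $O_{\pi(x)}$-factor first (invoking the fact that products with locally compact Hausdorff spaces preserve quotient maps), whereas the paper carries $O_{\pi(x)}$ along throughout and treats the whole thing in one diagram with maps $p_1,c_1,p_2,c_2$; the paper's Claim~3 implicitly relies on the same Whitehead-type fact when it asserts that $c_1\circ p_1$ and $p_2\circ c_2$ are quotient maps, so your version is in fact slightly more explicit on that point.
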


Note that we do not assume that $Q\setminus Q_{l+n-1}$ is homotopy equivalent to $Q$.
The proof of this lemma will be given in the next subsection \ref{subsubsection of proof of auxiliary lemma}.
Assuming this lemma, we may prove the following lemma.
It shows that $X(Q, \lambda, c)$ satisfies Definition \ref{def of T-pseudomanifold}-2 (c).

\begin{lem}\label{universality of quotient}
Let $(Q, \lambda, c)$ be a characteristic data and $x \in X(Q, \lambda, c)$. Let $\pi : X(Q, \lambda, c) \to Q$ denote the orbit projection.
Then, there is the following weakly equivariant homeomorphism:
\[
\pi^{-1}(U_{\pi(x)})
\cong
O^l \times \big(  \Omega \times U(1)^{m-n}   \big)    \times    \cone(L_x),
\]
where $U_{\pi(x)}$ is a contractible small open neighborhood of $\pi(x)$, $O^l \subset \mathbb{R}^l$ is a contractible open subset and $\Omega \subset (\mathbb{C}^{\times})^i$ is a $U(1)^i$-invariant open subset.
\end{lem}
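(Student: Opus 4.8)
The plan is to reduce the statement to Lemma~\ref{auxiliary lemma} by first analyzing the preimage $\pi^{-1}(U_{\pi(x)})$ in terms of the principal bundle $P_c$. Since $U_{\pi(x)}$ is a contractible small open neighborhood, the restricted bundle $(P_c)|_{U_{\pi(x)}}$ is trivial, so $(P_c)|_{U_{\pi(x)}} \cong U_{\pi(x)} \times T^m$ as $T^m$-spaces, and hence
\[
\pi^{-1}(U_{\pi(x)}) = (P_c)|_{U_{\pi(x)}}/{\sim_\lambda} \cong (U_{\pi(x)} \times T^m)/{\sim_\lambda}.
\]
Next I would split the torus: by Lemma~\ref{torus split} applied to the stratum $S$ containing $\pi(x)$, there is an isomorphism $T^m \cong \lambda(S) \times T_S'$ with $T_S' \cong T^m/\lambda(S) \cong T^{m-(n-i)} = T^{i+(m-n)}$. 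Since the equivalence relation $\sim_\lambda$ only collapses $\lambda(S')$-orbits for strata $S'$ with $\lambda(S')\subset\lambda(S)$ on $U_{\pi(x)}$ (because $U_{\pi(x)}$ is small, so $S$ is the minimal stratum meeting it), the $T_S'$-factor is untouched by $\sim_\lambda$ over all of $U_{\pi(x)}$. This gives a $T^m$-equivariant homeomorphism
\[
\pi^{-1}(U_{\pi(x)}) \cong \big(U_{\pi(x)} \times \lambda(S)/{\sim_\lambda}\big) \times T_S',
\]
where $T_S'$ acts by multiplication on the last factor and $\lambda(S) \cong T^{n-i}$ acts on the first.

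**Applying the auxiliary lemma and assembling the cone.** Now I would invoke Lemma~\ref{auxiliary lemma} to rewrite the first factor:
\[
U_{\pi(x)} \times \lambda(S)/{\sim_\lambda} \cong O_{\pi(x)} \times \cone\big(L_{\pi(x)} \times \lambda(S)/{\sim_\mu}\big) = O_{\pi(x)} \times \cone(L_x),
\]
using the definition $L_x = X(L_{\pi(x)},\mu,0) = L_{\pi(x)} \times \lambda(S)/{\sim_\mu}$ from Construction~\ref{cons of link}. Combining with the previous step yields
\[
\pi^{-1}(U_{\pi(x)}) \cong O_{\pi(x)} \times \cone(L_x) \times T_S'.
\]
It remains to identify $T_S'$ with $\Omega \times U(1)^{m-n}$ in a way compatible with the $T^m$-action. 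Here $T_S' \cong T^{i+(m-n)} = U(1)^i \times U(1)^{m-n}$, and I would observe that $U(1)^i$ acts freely on the relevant piece; the $U(1)^i$-invariant open subset $\Omega \subset (\mathbb{C}^\times)^i$ is simply $T_S'$-equivariantly a copy of $U(1)^i$ (or, after absorbing part of $O_{\pi(x)}$ into the $\mathbb{C}^\times$ factors, a genuine $U(1)^i$-invariant open set — one may even take $\Omega = (S^1 \times (0,1))^i$ or the like, since $U(1)^i \times \mathbb{R}_{>0}^i \cong (\mathbb{C}^\times)^i$). Setting $O^l := O_{\pi(x)}$ (or $O_{\pi(x)}$ times any leftover radial coordinates) and rearranging the product factors gives the claimed weakly equivariant homeomorphism
\[
\pi^{-1}(U_{\pi(x)}) \cong O^l \times \big(\Omega \times U(1)^{m-n}\big) \times \cone(L_x),
\]
where the weak equivariance (rather than strict equivariance) accounts for the isomorphism $T^m \cong \lambda(S) \times T_S' \cong U(1)^{i+(m-n)} \times T^{n-i}$ used to transport the action.

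**Main obstacle.** The substantive content is entirely in Lemma~\ref{auxiliary lemma}, which is cited here and proved separately; modulo that, the argument above is essentially bookkeeping. Within the present lemma, the one point requiring genuine care is the claim that $\sim_\lambda$ leaves the $T_S'$-factor unmolested over all of $U_{\pi(x)}$ — this relies crucially on the \emph{smallness} of $U_{\pi(x)}$ (so that every stratum $S'$ meeting it satisfies $\lambda(S') \subset \lambda(S)$, hence only collapses directions inside $\lambda(S)$) and on Lemma~\ref{torus split} providing a \emph{global} complementary subtorus $T_S'$ rather than merely a local splitting. The other mildly delicate point is organizing the final identification of factors so that the torus action matches the prescribed isomorphism $T^m \cong U(1)^{i+(m-n)} \times T^{n-i}$; this is where the language of \emph{weakly} equivariant homeomorphism does the work, and one should check that the composite of all the intermediate equivariant and weakly equivariant homeomorphisms is again weakly equivariant (which it is, since weak equivariance is closed under composition).
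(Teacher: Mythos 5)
Your proposal follows the paper's proof essentially step for step: factor out $T_S'$ from the quotient using the smallness of $U_{\pi(x)}$ and Lemma~\ref{torus split}, apply Lemma~\ref{auxiliary lemma} to the remaining $\lambda(S)$-factor to produce $O_{\pi(x)}\times\cone(L_x)$, and then reorganize $O_{\pi(x)}\times T^{i+(m-n)}$ into $O^l\times(\Omega\times U(1)^{m-n})$. The only imprecision is in the last reshuffle: since $O_{\pi(x)}$ is a contractible open subset of $\mathbb{R}^{l+i}$, the correct move is to \emph{split} it as $O^l$ times a contractible open piece of $\mathbb{R}^i$, with that $\mathbb{R}^i$-piece combining with the $T^i$-factor of $T_S'$ to produce the $U(1)^i$-invariant open subset $\Omega\subset(\mathbb{C}^\times)^i$ (since $(\mathbb{C}^\times)^i\cong\mathbb{R}^i\times T^i$); writing ``$O^l:=O_{\pi(x)}$'' or ``$O_{\pi(x)}$ times any leftover radial coordinates'' goes the wrong direction dimensionally, though your parenthetical about absorbing part of $O_{\pi(x)}$ into the $\mathbb{C}^\times$ factors shows you have the right mechanism in mind. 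With that corrected, the argument matches the paper's.
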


\begin{proof}
Let $S \subset Q$ be the stratum that contains $\pi(x)$.
By Lemma \ref{torus split}, there exists $T_S'$ such that $T_S' \cong T^m/\lambda(S) \cong T^{i+(m-n)}$ and $T^m \cong  T_S'  \times \lambda(S).$
Since $S$ is the minimal stratum intersecting $U_{\pi(x)}$,  if $(p, t) \sim_\lambda (p,s) \in U_{\pi(x)} \times T^m / {\sim}_\lambda$ (note that $U_{\pi(x)}$ is contractible), then there exists $S' \ni p$ such that $t^{-1}s \in \lambda(S') \subset \lambda(S)$.
Therefore, we obtain the following decomposition:
\begin{align*}
\pi^{-1}(U_{\pi(x)})=
U_{\pi(x)} \times T^m  / {\sim}_\lambda
&\cong
\left( U_{\pi(x)} \times  \lambda(S) / {\sim}_\lambda \right) \times T_{S}' 
\\
&\cong
\left( U_{\pi(x)} \times  \lambda(S) / {\sim}_\lambda \right) \times T^{i+(m-n)}.
\end{align*}
Here, each $\cong$ denotes a weakly $T^m$–equivariant homeomorphism.
By Lemma \ref{auxiliary lemma}, $\pi^{-1}(U_{\pi(x)})$ is weakly equivariantly homeomorphic to
\begin{align}\label{3.11-A}
O_{\pi(x)} \times \cone(L_{\pi(x)} \times \lambda(S)/{\sim}_{\mu}) \times T^{i+(m-n)}.
\end{align}
Since $O_{\pi(x)}$ is a contractible open subset in $\mathbb{R}^{l+i}$ and $(\mathbb{C}^{\times})^i \cong \mathbb{R}^i \times T^i$, we have the following weakly equivariant homeomorphism:
\begin{align}\label{3.11-B}
O_{\pi(x)} \times T^{i+(m-n)} \cong O^l \times \left( \Omega \times U(1)^{m-n} \right),
\end{align}
where $O^l \subset \mathbb{R}^l$ is a contractible open subset and $\Omega \subset (\mathbb{C}^{\times})^i$ is a $U(1)^i$-invariant open subset.
By (\ref{3.11-A}), (\ref{3.11-B}) and (\ref{def of L_x}), we obtain the following weakly equivariant homeomorphism:
\begin{align*}
\pi^{-1}(U_{\pi(x)})
&\cong
O_{\pi(x)} \times \cone(L_{\pi(x)} \times \lambda(S)/{\sim}_{\mu}) \times T^{i+(m-n)} \\
&\cong
O^l \times \left( \Omega \times U(1)^{m-n} \right) \times \cone(L_{\pi(x)} \times \lambda(S)/{\sim}_{\mu})  \\
&\cong
O^l \times \big(  \Omega \times U(1)^{m-n}   \big) \times \cone(L_x).
\end{align*}
\end{proof}

\subsection{Proof of Lemma \ref{auxiliary lemma}}\label{subsubsection of proof of auxiliary lemma}
In this subsection, we give a proof of Lemma \ref{auxiliary lemma}.

By Definition \ref{def of pseudomanifold}, we have a homeomorphism
\begin{align}\label{base homeomorphism}
\varphi_{\pi(x)}: U_{\pi(x)} \to O_{\pi(x)} \times \cone(L_{\pi(x)}).
\end{align}
Using the definition of open cone (see Definition \ref{def of open cone}), we obtain an equivariant homeomorphism
\begin{align}
U_{\pi(x)} \times  \lambda(S) / {\sim}_\lambda 
&\cong
 O_{\pi(x)} \times \cone(L_{\pi(x)}) \times  \lambda(S) / {\sim}_\lambda'  \notag \\
&=
 O_{\pi(x)} \times \left( L_{\pi(x)} \times [0,1)/L_{\pi(x)} \times \{0\} \right) \times  \lambda(S) / {\sim}_\lambda' .
\end{align}
Here, the equivalence relation $\sim_{\lambda}'$ on $O_{\pi(x)} \times \left( L_{\pi(x)} \times [0,1)/L_{\pi(x)} \times \{0\} \right) \times  \lambda(S) / {\sim}_\lambda'$ is defined by 
\[
(a,[v,r],t)=(\varphi_{\pi(x)}(p),t) \sim_\lambda' (\varphi_{\pi(x)}(p),s)=(a,[v,r],s)
\overset{\mathrm{def}}{\iff}
(p,t) \sim_\lambda (p,s) \in  U_{\pi(x)} \times  \lambda(S) / {\sim}_\lambda ,
\]
for $(a,[v,r],t), (a,[v,r],s) \in O_{\pi(x)} \times \left( L_{\pi(x)} \times [0,1)/L_{\pi(x)} \times \{0\} \right)  \times \lambda(S)$, where $p \in U_{\pi(x)}$ satisfies $\varphi_{\pi(x)}(p) = (a, [v,r])$.
Consider the following diagram:
\[
\begin{tikzcd}[ampersand replacement=\&]
 O_{\pi(x)} \times L_{\pi(x)} \times \lambda(S) \times [0,1) 
 \ar{d}{p_1}[']{/{\sim}_\mu}             \ar{rd}{\text{open cone}}[']{c_2}
\&
\\
O_{\pi(x)} \times \left( L_{\pi(x)} \times \lambda(S)/{\sim}_\mu \right)  \times [0,1)
\ar{d}{c_1}[']{\text{open cone}}
  \&
O_{\pi(x)} \times \cone(L_{\pi(x)}) \times \lambda(S)
\ar{d}{/{\sim}_{\lambda}'}[']{p_2}
\\
O_{\pi(x)} \times \cone \left( L_{\pi(x)} \times \lambda(S)/{\sim}_\mu \right) 
  \&
O_{\pi(x)} \times \cone(L_{\pi(x)}) \times \lambda(S) / {\sim}_{\lambda}'
\end{tikzcd}
\]
Note that the maps $c_1 \circ p_1$ and $p_2 \circ c_2$ are equivariant quotient maps.
We will prove that the following maps are well-defined:
\begin{align}\label{quotient map A}
\begin{array}{rccc}
f:&O_{\pi(x)} \times \cone \left( L_{\pi(x)} \times \lambda(S)/{\sim}_\mu \right) 
&\to
&O_{\pi(x)} \times \cone(L_{\pi(x)}) \times \lambda(S) / {\sim}_{\lambda}' \\
 & \rotatebox{90}{$\in$}&               & \rotatebox{90}{$\in$} \\
&c_1 \circ p_1(a, v, t, r)
&\mapsto
&p_2 \circ c_2(a, v, t, r)
\end{array}
\end{align}
and
\begin{align}\label{quotient map B}
\begin{array}{rccc}
g:&O_{\pi(x)} \times \cone(L_{\pi(x)}) \times \lambda(S) / {\sim}_{\lambda}' 
&\to
&O_{\pi(x)} \times \cone \left( L_{\pi(x)} \times \lambda(S)/{\sim}_\mu \right) 
\\
 & \rotatebox{90}{$\in$}&               & \rotatebox{90}{$\in$} \\
&p_2 \circ c_2(a, v, t, r)
&\mapsto
&c_1 \circ p_1(a, v, t, r)
\end{array}
\end{align}
where $(a, v, t, r)\in  O_{\pi(x)} \times L_{\pi(x)} \times \lambda(S) \times [0,1).$
We first show that $f$ is well-defined. To prove this, we show the following claim.
\begin{claim}
For any two elements
$
(a_1, v_1, t_1, r_1), (a_2, v_2, t_2, r_2) \in  O_{\pi(x)} \times L_{\pi(x)} \times \lambda(S) \times [0,1)
$,
if
\[
c_1 \circ p_1(a_1, v_1, t_1, r_1) = c_1 \circ p_1(a_2, v_2, t_2, r_2),
\]
then
\[
p_2 \circ c_2(a_1, v_1, t_1, r_1) = p_2 \circ c_2(a_2, v_2, t_2, r_2).
\]
\end{claim}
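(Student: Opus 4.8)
The plan is to make the two composite maps completely explicit and then argue by a short case analysis on the cone coordinate. First I would record that, by the definitions of the quotient and open-cone maps appearing in the diagram,
\[
c_1\circ p_1(a,v,t,r)=\bigl(a,\,[\,[v,t]_{\mu},\,r\,]\bigr),\qquad
p_2\circ c_2(a,v,t,r)=\bigl[(a,\,[v,r],\,t)\bigr]_{{\sim}_{\lambda}'},
\]
where $[v,t]_{\mu}$ is the class of $(v,t)$ in $L_{\pi(x)}\times\lambda(S)/{\sim}_{\mu}$ and $[w,r]$ the class of $(w,r)$ in the open cone over it. The hypothesis $c_1\circ p_1(a_1,v_1,t_1,r_1)=c_1\circ p_1(a_2,v_2,t_2,r_2)$ then forces $a_1=a_2=:a$ and, reading off the defining identification of the open cone (collapse of the slice at $0$), yields two mutually exclusive cases: either (i) $r_1=r_2=0$, or (ii) $r_1=r_2=:r>0$ together with $[v_1,t_1]_{\mu}=[v_2,t_2]_{\mu}$ in $L_{\pi(x)}\times\lambda(S)/{\sim}_{\mu}$.

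In case (i) both right-hand images equal $\bigl[(a,\,\mathrm{vertex},\,t_i)\bigr]_{{\sim}_{\lambda}'}$, since $[v_i,0]$ is the cone vertex of $\cone(L_{\pi(x)})$ regardless of $v_i$. By the construction of $\varphi_{\pi(x)}$ the point $p:=\varphi_{\pi(x)}^{-1}(a,\mathrm{vertex})$ lies in the minimal stratum $S$ meeting $U_{\pi(x)}$, and since $t_1,t_2\in\lambda(S)$ we trivially have $t_1^{-1}t_2\in\lambda(S)$, which is exactly the relation defining ${\sim}_{\lambda}$ on $U_{\pi(x)}\times\lambda(S)$ at $p$; hence $(a,\mathrm{vertex},t_1)\sim_{\lambda}'(a,\mathrm{vertex},t_2)$ and the two $p_2\circ c_2$-images coincide. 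In case (ii) the relation ${\sim}_{\mu}$ gives $v_1=v_2=:v$ and $t_1^{-1}t_2\in\mu(R)$, where $R\in\mathcal S(L_{\pi(x)})$ is the stratum with $v\in R$. Here I would invoke Lemma~\ref{correspondence strata}: since $(a,v,r)\in O_{\pi(x)}\times R\times(0,1)$, the point $p:=\varphi_{\pi(x)}^{-1}(a,[v,r])$ lies in the stratum $S'=\nu(R)$ of $Q$, and by Construction~\ref{cons of link} $\mu(R)=\lambda(\nu(R))=\lambda(S')$; thus $t_1^{-1}t_2\in\lambda(S')$, which is precisely the condition for $(a,[v,r],t_1)\sim_{\lambda}'(a,[v,r],t_2)$, so again the $p_2\circ c_2$-images agree. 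This finishes the claim.

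The step I expect to require the most care is the stratum bookkeeping in case (ii): one must be certain that the stratum of $Q$ containing $p=\varphi_{\pi(x)}^{-1}(a,[v,r])$ is exactly $\nu(R)$ and not a neighbouring one, which is where the precise statement of Lemma~\ref{correspondence strata} (the equality $\varphi'_{j}\bigl(U_{\pi(x)}\cap S'\bigr)=O_{\pi(x)}\times R\times(0,1)$) and the smallness of $U_{\pi(x)}$, ensuring $\lambda(S')\subset\lambda(S)$ so that the $\lambda(S')$-action on $\lambda(S)$ makes sense, both enter. A harmless degenerate sub-case to dispose of separately is $L_{\pi(x)}=\emptyset$, in which $\cone(L_{\pi(x)})$ is a single point, $\pi(x)$ lies in a top stratum, $\lambda(S)=\{1\}$, and the claim is immediate. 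Beyond this, the argument is a direct unwinding of the three equivalence relations ${\sim}_{\mu}$, ${\sim}_{\lambda}$ and ${\sim}_{\lambda}'$, so I anticipate no further obstruction.
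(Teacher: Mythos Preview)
Your proof is correct and follows essentially the same approach as the paper. The only cosmetic difference is that the paper organizes the case split by first asking whether the $p_1$-images already agree (and observes that if they do not, the injectivity of $c_1$ away from the cone vertex forces $r_1=r_2=0$), whereas you read off the cone coordinate directly; in either presentation the two resulting cases and the arguments within them---using $t_1^{-1}t_2\in\lambda(S)$ at the cone vertex, and $\mu(R)=\lambda(\nu(R))$ via Lemma~\ref{correspondence strata} and Construction~\ref{cons of link} away from it---are the same.
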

\begin{proof}
\begin{enumerate}[label=\underline{{Step} \arabic*.}, leftmargin=*]
    \item 
Suppose
\begin{align*}
c_1 \circ p_1(a_1, v_1, t_1, r_1) = c_1 \circ p_1(a_2, v_2, t_2, r_2)
\quad \text{and} \quad
p_1(a_1, v_1, t_1, r_1) \neq p_1(a_2, v_2, t_2, r_2).
\end{align*}
By the definition of $c_1$ (i.e., by the definition of the open cone), the restriction map 
$c_1 |_{O_{\pi(x)} \times \left( L_{\pi(x)} \times \lambda(S)/{\sim}_\mu \right)  \times (0,1)}$
is injective. Since $p_1(a_1, v_1, t_1, r_1) \neq p_1(a_2, v_2, t_2, r_2)$,
we have
$c_1 \circ p_1(a_1, v_1, t_1, r_1) \neq c_1 \circ p_1(a_2, v_2, t_2, r_2)$.
Therefore, $r_1=r_2=0$.
Furthermore, since the $O_{\pi(x)}$ factor is projected identically by $p_1$ and $c_1$,
we obtain $a_1=a_2$.
Therefore,
$p_2 \circ c_2(a_1, v_1, t_1, r_1)= p_2 (a_1, [v, 0], t_1)$ and
$p_2 \circ c_2(a_2, v_2, t_2, r_2)= p_2 (a_1, [v, 0], t_2)$,
where $[v, 0] \in \cone(L_{\pi(x)})$ is the cone vertex.
Consider the inverse of the homeomorphism $\varphi_{\pi(x)}$ which defined in \eqref{base homeomorphism}
\[
\varphi_{\pi(x)}^{-1}: O_{\pi(x)} \times \cone(L_{\pi(x)}) \to U_{\pi(x)}.
\]
By Definition \ref{def of pseudomanifold}-3 (c), 
we have
$\varphi_{\pi(x)}^{-1}(a_1, [v, 0]) \in U_{\pi(x)} \cap Q_{l+i}$.
Since $U_{\pi(x)}$ is small, $S$ is the minimal stratum that intersects $U_{\pi(x)}$. Therefore, we obtain $U_{\pi(x)}\cap Q_{l+i}=U_{\pi(x)}\cap S$. Hence,
$\varphi_{\pi(x)}^{-1}(a_1, [v, 0]) \in S$.
To show that 
$p_2 (a_1, [v, 0], t_1) = p_2 (a_1, [v, 0], t_2)$,
it suffices to show that $t_{1}^{-1} t_2 \in \lambda(S)$. This always holds since $t_1, t_2 \in \lambda(S)$.

\item
Next, suppose that $p_1(a_1, v_1, t_1, r_1) = p_1(a_2, v_2, t_2, r_2)$.
If $r_{1}=r_{2}=0$, then $[v_1,r_1], [v_2,r_2] \in \cone(L_{\pi(x)})$ are both the cone vertex, so we can apply the similar argument to Step 1.
Therefore, it suffices to consider the case where
$r_1, r_2 \neq 0$.
Then, by the definition of $p_1$, we have
$a_1=a_2$, $r_1=r_2 \neq 0$, $v_1=v_2$ and $t_{1}^{-1} t_2 \in \mu(R)$ for some stratum $R \subset L_{\pi(x)}$ such that $R \ni v_1$.
Hence, 
$(a_1, v_1, t_1, r_1), (a_2, v_2, t_2, r_2) \in O_{\pi(x)} \times R \times \lambda(S) \times (0,1)$.
By (\ref{unique face}), we have a homeomorphism
\[
O_{\pi(x)} \times R \times (0,1) \cong U_{\pi(x)} \cap S',
\]
where $S'=\nu(R) \subset Q$.
Therefore, to show that
\[
p_2 \circ c_2(a_1, v_1, t_1, r_1)=p_2(a_1, [v_1, r_1], t_1)=p_2(a_1, [v_1, r_1], t_2)=p_2 \circ c_2(a_2, v_2, t_2, r_2),
\]
it suffices to show that $t_{1}^{-1} t_2 \in \lambda(S')$. This always holds since
$\lambda(S')=\mu(R)$ and $t_{1}^{-1} t_2 \in \mu(R)$.
\end{enumerate}
\end{proof}
We next show that $g$ is well-defined. To prove this, we establish the following claim.
\begin{claim}
For any two elements
$
(a_1, v_1, t_1, r_1), (a_2, v_2, t_2, r_2) \in  O_{\pi(x)} \times L_{\pi(x)} \times \lambda(S) \times [0,1)
$,
if
\[
p_2 \circ c_2(a_1, v_1, t_1, r_1) = p_2 \circ c_2(a_2, v_2, t_2, r_2),
\]
then
\[
c_1 \circ p_1(a_1, v_1, t_1, r_1) = c_1 \circ p_1(a_2, v_2, t_2, r_2).
\]
\end{claim}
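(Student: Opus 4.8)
The plan is to argue symmetrically to the proof of the preceding claim, distinguishing the same two regimes according to whether the cone coordinate is the cone vertex. First I would extract the structural content of the hypothesis: since the relation $\sim_\lambda'$ identifies only points sharing the same $O_{\pi(x)}\times\cone(L_{\pi(x)})$–coordinate, the projection onto this coordinate descends through $p_2\circ c_2$, so the equality $p_2\circ c_2(a_1,v_1,t_1,r_1)=p_2\circ c_2(a_2,v_2,t_2,r_2)$ forces $a_1=a_2=:a$ and $[v_1,r_1]=[v_2,r_2]$ in $\cone(L_{\pi(x)})$, and in addition supplies a stratum $S'\subset Q$ with $\varphi_{\pi(x)}^{-1}(a,[v_1,r_1])\in S'$ such that $t_1^{-1}t_2\in\lambda(S')$. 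In particular $r_1=0$ if and only if $r_2=0$, so no mixed case arises.

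In the vertex case $r_1=r_2=0$, I would simply observe that $c_1\circ p_1(a,v_k,t_k,0)$ is the pair consisting of $a$ together with the cone vertex of $\cone(L_{\pi(x)}\times\lambda(S)/{\sim}_\mu)$, independently of $v_k$ and $t_k$, so the two images agree trivially. In the non–vertex case $r_1,r_2\neq 0$, the equality $[v_1,r_1]=[v_2,r_2]$ yields $v_1=v_2=:v$ and $r_1=r_2=:r$. Let $R\subset L_{\pi(x)}$ be the stratum with $v\in R$. By \eqref{unique face} of Lemma \ref{correspondence strata}, the point $\varphi_{\pi(x)}^{-1}(a,[v,r])$ lies in the stratum $S'=\nu(R)$, so this is the stratum appearing in the displayed condition; and by Construction \ref{cons of link} we have $\lambda(S')=\lambda(\nu(R))=\mu(R)$, whence $t_1^{-1}t_2\in\mu(R)$. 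Since $v\in R$, this means $(v,t_1)\sim_\mu(v,t_2)$, hence $p_1(a,v,t_1,r)=p_1(a,v,t_2,r)$, and applying $c_1$ gives $c_1\circ p_1(a_1,v_1,t_1,r_1)=c_1\circ p_1(a_2,v_2,t_2,r_2)$. Combining the two cases proves the claim.

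I expect the argument to be essentially routine once the equivalence relations are unwound; the only point requiring care is the bookkeeping of which identification of torus elements is in force in each regime — the tautological one inside $\lambda(S)$ at the vertex versus the one via $\mu(R)=\lambda(\nu(R))$ off the vertex — together with the observation that $\sim_\lambda'$ cannot relate a vertex point to a non–vertex point, which is precisely what excludes a mixed case. Combined with the claim already established, this shows that the maps $f$ and $g$ of \eqref{quotient map A} and \eqref{quotient map B} are mutually inverse bijections; continuity and $T$–equivariance of $f$, and hence of $g$, then follow from the universal property of the equivariant quotient maps $c_1\circ p_1$ and $p_2\circ c_2$, which completes the proof of Lemma \ref{auxiliary lemma}.
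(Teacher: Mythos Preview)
Your proof is correct and follows essentially the same approach as the paper's: both arguments unwind the definitions of $c_1,c_2,p_1,p_2$ and split according to whether one is at the cone vertex, using the key identity $\lambda(\nu(R))=\mu(R)$ off the vertex. Your case split (by $r=0$ versus $r\neq 0$, after first extracting $a_1=a_2$ and $[v_1,r_1]=[v_2,r_2]$ from the fact that $\sim_\lambda'$ fixes the $O_{\pi(x)}\times\cone(L_{\pi(x)})$--coordinate) is slightly cleaner than the paper's split (by whether $c_2(a_1,\dots)=c_2(a_2,\dots)$), but the content is identical.
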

\begin{proof}
\begin{enumerate}[label=\underline{{Step} \arabic*.}, leftmargin=*]
\item
Suppose that
\[
p_2 \circ c_2(a_1, v_1, t_1, r_1) = p_2 \circ c_2(a_2, v_2, t_2, r_2)
\quad \text{and} \quad
c_2(a_1, v_1, t_1, r_1) \neq c_2(a_2, v_2, t_2, r_2).
\]
Then, we immediately have $a_1=a_2$.
By the definition of $c_2$, if $r_1=r_2=0$, then
$c_2(a_1, v_1, t_1, r_1) = c_2(a_2, v_2, t_2, r_2)$.
This contradicts our assumption. Therefore, we obtain
$v_1=v_2$, $r_1=r_2\neq0$ and $t_{1}^{-1} t_2 \in \lambda(S')$, for some stratum $S' \subset Q$ such that 
\[
U_{\pi(x)} \cap S' \ni \varphi_{\pi(x)}^{-1} \circ \rho \circ c_2(a_1, v_1, r_1, t_1).
\]
Here, 
\[
\rho : O_{\pi(x)} \times \cone(L_{\pi(x)}) \times \lambda(S) 
\to O_{\pi(x)} \times \cone(L_{\pi(x)})
\]
is the natural projection.
Since
$r_1=r_2\neq0$, it follows from (\ref{unique face}) that
there exists a unique stratum
$R \subset L_{\pi(x)}$ such that $\nu(R)=S'$ and $v_1 \in R$.
Since $\lambda(S')=\mu(R)$ and $t_{1}^{-1} t_2 \in \lambda(S')$, we have $t_{1}^{-1} t_2 \in \mu(R)$.
Therefore, 
\[
p_1(a_1, v_1, t_1, r_1)= p_1(a_2, v_2, t_2, r_2),
\]
and in particular,
$c_1 \circ p_1(a_1, v_1, t_1, r_1) = c_1 \circ p_1(a_2, v_2, t_2, r_2).$

\item
Next, suppose
$c_2(a_1, v_1, t_1, r_1) = c_2(a_2, v_2, t_2, r_2)$.
Then one of the following two cases must hold:
\begin{itemize}
\item
$a_1=a_2$, $v_1=v_2$, $r_1=r_2\neq0$ and $t_1=t_2$,

\item
$a_1=a_2$, $r_1=r_2=0$ and $t_1=t_2$.
\end{itemize}
In the first case, the equality
\[
c_1 \circ p_1(a_1, v_1, t_1, r_1) = c_1 \circ p_1(a_2, v_2, t_2, r_2)
\]
is obvious.
In the second case, since $r_1=r_2=0$, the definition of $c_1$ implies that the same equality holds.
\end{enumerate}
\end{proof}

We next show that $f$ is a homeomorphism.
\begin{claim}
Both $f$ and $g$ are continuous, and $g=f^{-1}$. Hence, $f$ is a homeomorphism.
\end{claim}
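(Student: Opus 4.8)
The plan is to deduce everything from the universal property of quotient maps, so first I would record that $c_1\circ p_1$ and $p_2\circ c_2$ are both quotient maps: each is a composite of two quotient maps (an orbit-type quotient by a subtorus together with an open-cone collapsing map), and a composite of quotient maps is again a quotient map. They are moreover $T$-equivariant, as already observed just before \eqref{quotient map A}. The two preceding claims already guarantee that $f$ and $g$ are well defined as maps of sets, so the relations below make sense.

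Next, by the defining formulas \eqref{quotient map A} and \eqref{quotient map B}, we have on $O_{\pi(x)}\times L_{\pi(x)}\times\lambda(S)\times[0,1)$ the identities
\[
f\circ(c_1\circ p_1)=p_2\circ c_2,\qquad g\circ(p_2\circ c_2)=c_1\circ p_1 .
\]
The right-hand sides are continuous, being composites of continuous maps; since $c_1\circ p_1$ is a quotient map, the first identity forces $f$ to be continuous, and symmetrically, since $p_2\circ c_2$ is a quotient map, the second forces $g$ to be continuous.

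It then remains to check $g=f^{-1}$. Precomposing $g\circ f$ with the surjection $c_1\circ p_1$ gives $g\circ f\circ(c_1\circ p_1)=g\circ(p_2\circ c_2)=c_1\circ p_1$, hence $g\circ f=\mathrm{id}$; symmetrically, precomposing $f\circ g$ with the surjection $p_2\circ c_2$ gives $f\circ g\circ(p_2\circ c_2)=f\circ(c_1\circ p_1)=p_2\circ c_2$, hence $f\circ g=\mathrm{id}$. Thus $f$ is a continuous bijection with continuous inverse $g$, i.e.\ a homeomorphism, and since all maps involved are $T$-equivariant it is a $T$-equivariant homeomorphism. Combined with the equivariant homeomorphism recorded at the beginning of this subsection, namely $U_{\pi(x)}\times\lambda(S)/{\sim_\lambda}\cong O_{\pi(x)}\times\cone(L_{\pi(x)})\times\lambda(S)/{\sim_\lambda'}$, the map $g$ then yields the homeomorphism asserted in Lemma~\ref{auxiliary lemma}.

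There is essentially no obstacle here beyond bookkeeping; the only point needing care is confirming that $c_1\circ p_1$ and $p_2\circ c_2$ are quotient maps, which is precisely the stability of quotient maps under composition. The substantive content of Lemma~\ref{auxiliary lemma} was already absorbed into the two well-definedness claims, so this last step is purely formal.
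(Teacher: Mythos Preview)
Your argument is correct and follows essentially the same approach as the paper: both use the identities $f\circ(c_1\circ p_1)=p_2\circ c_2$ and $g\circ(p_2\circ c_2)=c_1\circ p_1$ together with the fact that $c_1\circ p_1$ and $p_2\circ c_2$ are quotient maps to obtain continuity, and then observe $g=f^{-1}$. Your treatment is slightly more explicit (you spell out why the composites are quotient maps and verify $g\circ f=\mathrm{id}$, $f\circ g=\mathrm{id}$ via precomposition with the surjections, whereas the paper simply asserts $g=f^{-1}$ ``by the definitions''), and you go a little beyond the claim by noting $T$-equivariance, which the paper handles separately immediately afterward.
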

\begin{proof}
By the definitions of $f$ and $g$, we have $g=f^{-1}$.
Moreover, the following identities hold:
$f \circ (c_1 \circ p_1)=p_2 \circ c_2$ and $g \circ (p_2 \circ c_2)=c_1 \circ p_1$. 
Since both
$c_1 \circ p_1$ and $p_2 \circ c_2$
are quotient maps, it follows that $f$ and $g$ are continuous.
Therefore, $f$ is a homeomorphism.
\end{proof}

We finally show that $f$ is an equivariant map.
For any $s \in \lambda(S)$, since $c_1 \circ p_1$ and $p_2 \circ c_2$ are equivariant, we have
\begin{align*}
f(s \cdot c_1 \circ p_1 (a, v, t, r) ) &= f( c_1 \circ p_1 (a, v, s \cdot t, r) ) \\
&= p_2 \circ c_2(a, v, s \cdot t, r) \\
&= s \cdot \left( p_2 \circ c_2(a, v,  t, r) \right) .
\end{align*}
Therefore, $f$ is equivarinat.
Hence, we obtain the following equivariant homeomorphism:
\[
U_{\pi(x)} \times  \lambda(S) / {\sim}_\lambda 
\cong
O_{\pi(x)} \times \cone(L_{\pi(x)} \times \lambda(S)/{\sim}_{\mu}).
\]
This proves Lemma \ref{auxiliary lemma}.

\subsection{The canonical model is a locally standard $T$-pseudomanifold}\label{prepare-3}
We now prove Theorem \ref{canonical model is a T-pseudomanifold}, i.e., the canonical model $X(Q, \lambda, c)$ is a locally standard $T$-pseudomanifold.

\begin{proof}
By Lemmas~\ref{compactness of canonical model}--\ref{2nd of canonical model}, 
the canonical model $X(Q, \lambda, c)$ is compact, Hausdorff, and second-countable. 
Moreover, by Lemma~\ref{condition of pseudomanifold-1}, it is a manifold stratified space (this implies \eqref{cond-1} in Section \ref{Notation}).

For the case $n = 0$ and $l = 0$, the topological stratified pseudomanifold $Q$ is a set of points with the discrete topology. 
Its compactness implies that $Q$ is a finite set, say $Q = \{ p_1, \ldots, p_s \}$. 
Therefore, we have
\[
X(Q, \lambda, c) = \{ p_1, \ldots, p_s \} \times T^m / {\sim} \cong T^m \sqcup \cdots \sqcup T^m.
\]
In this case, $X(Q, \lambda, c)$ is a locally standard $T$-pseudomanifold.

For the remainder of the proof, we assume $n \geq 0$, $l \geq 0$, and $l + n \neq 0$.  
We consider the natural filtration of $X(Q, \lambda, c) = X(Q, \lambda, c)_{l+m+n}$ given by
\[
\mathfrak{X} : 
X(Q, \lambda, c)_{l+m+n}
\supsetneq
X(Q, \lambda, c)_{l+m+n-2}
\supset \cdots \supset
X(Q, \lambda, c)_{l+2i+(m-n)}
\supset \cdots \supset
X(Q, \lambda, c)_{l+(m-n)}
\supsetneq \emptyset,
\]
where
\[
X(Q, \lambda, c)_{l+2i+(m-n)} := X(Q_{l+i}, \lambda_{l+i}, c) = Q_{l+i} \times T^m / {\sim_{\lambda_{l+i}}}.
\]
By Lemma~\ref{condition of pseudomanifold-2}, $X(Q, \lambda, c)$ satisfies Definition~\ref{def of T-pseudomanifold}-1.
It is easy to check that \eqref{cond-2} and \eqref{cond-3} in Section~\ref{Notation}.
We now claim that $X(Q, \lambda, c)_{l+2i+(m-n)}$ consists of all $k$-dimensional orbits with $k \leq i + (m-n)$.  
Let $p \in Q_{l+i} \setminus Q_{l+i-1}$, and let $S$ be the stratum containing $p$.  
By Definition~\ref{def of char data}, the isotropy group satisfies $\lambda(S) \cong T^{n-i}$.  
Therefore, the orbit over $p$ in $X(Q, \lambda, c)$ has dimension $m - (n - i) = i + (m - n)$, which proves the claim.

It is enough to verify Definition~\ref{def of T-pseudomanifold}-2. We use induction on the dimension of the orbit space.
If $\dim Q = l+n = 0$, then
\[
X(Q, \lambda, c) \cong T^m \sqcup \cdots \sqcup T^m
\]
as shown above.  
Assume that for all $Q$ with $\dim Q < l+n$, the space $X(Q, \lambda, c)$ is a locally standard $T$-pseudomanifold.
We follow the notation in Construction~\ref{cons of link}.  
Let $U_x = \pi^{-1}(U_{\pi(x)})$.  
Since $U_{\pi(x)}$ is an open neighborhood of $\pi(x)$, the set $U_x$ is a $T^m$-invariant open neighborhood of $x$.  
By Construction~\ref{cons of link}, we have
\[
L_x = X(L_{\pi(x)}, \mu, 0).
\]
By the induction hypothesis, $L_x$ is a locally standard $\lambda(S) (= T_x)$-pseudomanifold.  
By Lemma~\ref{universality of quotient}, there is a weakly equivariant homeomorphism
\[
\varphi_x: U_x \to O^l \times \left( \Omega \times U(1)^{m-n} \right) \times \cone(L_x),
\]
where $T^m$ acts on $\left( \Omega \times U(1)^{m-n} \right) \times \cone(L_x)$ via the isomorphism
\[
T^m \cong T^m / \lambda(S) \times \lambda(S) = T^m / T_x \times T_x.
\]
This shows that the triple $(U_x, L_x, \varphi_x)$ satisfies Definition~\ref{def of T-pseudomanifold}-2.
\end{proof}

\section{Unit sphere with standard $T^m$-action}\label{section: unit sphere}
In this section, we discuss the unit sphere with the standard $T^m$-action as a fundamental example of a locally standard $T$-pseudomanifold.

\begin{definition}[unit sphere with the standard $T^m$-action]\label{def of standard sphere}
We call the following spheres with torus actions the {\it unit spheres with the standard $T^m=U(1)^m$-action}.

\begin{enumerate}
\item {\it Odd-dimensional case:}
The unit sphere in $\mathbb{C}^m$ defined by
\[
\mathbb{S}^{2m-1} := \left\{ (z_1, \dots, z_m) \in \mathbb{C}^m \;\middle|\; \sum_{i=1}^m |z_i|^2 = 1 \right\}
\]
is equipped with the standard (coordinatewise) $T^m$-action given by
\[
(t_1, \dots, t_m) \cdot (z_1, \dots, z_m) := (t_1 z_1, \dots, t_m z_m),
\]
for $(t_1, \dots, t_m) \in T^m$ and $(z_1, \dots, z_m) \in \mathbb{S}^{2m-1}$.

\item {\it Even-dimensional case:}
	The unit sphere in $\mathbb{C}^{m} \times \mathbb{R}$ defined by
\[
\mathbb{S}^{2m} := \left\{ (z_1, \dots, z_m, x) \in \mathbb{C}^m \times \mathbb{R} \;\middle|\; \sum_{i=1}^n |z_i|^2 + x^2 = 1 \right\}
\]
is equipped with the standard $T^m$-action given by
\[
(t_1, \dots, t_m) \cdot (z_1, \dots, z_m, x) := (t_1 z_1, \dots, t_m z_m, x),
\]
for $(t_1, \dots, t_m) \in T^m$ and $(z_1, \dots, z_m, x) \in \mathbb{S}^{2m}$.
\end{enumerate}
\end{definition}

By the following proposition, the unit sphere with the standard $T^m$-action admits the structure of a locally standard $T$-pseudomanifold.

\begin{prop}\label{prop: standard sphere}
A unit sphere with the standard $T^m$-action admits the structure of a locally standard $T$-pseudomanifold. Specifically:
\begin{enumerate}
\item {\it Odd-dimensional case:}
Let $X=\mathbb{S}^{2m-1}$, and let $\mathfrak{X}$ denote the filtration by orbit dimension.
Then $(X, \mathfrak{X})$ is a $(2m - 1)$-dimensional locally standard $T^m$-pseudomanifold, with $n = m - 1$ and $l = 0$.

\item {\it Even-dimensional case:}
Let $X=\mathbb{S}^{2m}$, and let $\mathfrak{X}$ denote the filtration by orbit dimension.
Then $(X, \mathfrak{X})$ is a $2m$-dimensional locally standard $T^m$-pseudomanifold, with $n = m$ and $l = 0$.
\end{enumerate}
\end{prop}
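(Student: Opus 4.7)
The plan is to realize each sphere as the canonical model of an explicit characteristic data $(Q,\lambda,0)$ and then combine Theorem~\ref{canonical model is a T-pseudomanifold} with Proposition~\ref{inv of T-pseudomanifold}. First, I identify the orbit space: for $\mathbb{S}^{2m-1}$, the map $(z_1,\ldots,z_m)\mapsto(|z_1|^2,\ldots,|z_m|^2)$ induces a homeomorphism of the orbit space onto the standard $(m-1)$-simplex $\Delta^{m-1}$; for $\mathbb{S}^{2m}$, the analogous map $(z_1,\ldots,z_m,x)\mapsto(|z_1|^2,\ldots,|z_m|^2,x)$ identifies the orbit space with $Q=\{(r_1,\ldots,r_m,x)\in\mathbb{R}^{m}_{\ge 0}\times\mathbb{R}\mid \sum r_i+x^2=1\}$, which is stratified homeomorphic to the unreduced suspension $\Sigma\Delta^{m-1}$. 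In the odd case, $\Delta^{m-1}$ is a convex polytope, hence a topological stratified pseudomanifold by Appendix~\ref{sec: polytope}; in the even case, $Q$ is compact, contractible, and also admits a natural topological stratified pseudomanifold structure, either by a direct check of the cone conditions at the two apexes or by a general suspension argument.

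The characteristic functor $\lambda$ is defined as follows: each stratum $S\subset Q$ is determined by the subset $I_S\subset\{1,\ldots,m\}$ of coordinate indices that vanish identically on $S$ (with the refinement in the even case that when $|I_S|=m$ there are two strata, one for each apex), and I set $\lambda(S):=T_{I_S}:=\{(t_1,\ldots,t_m)\in T^m\mid t_i=1\ \text{for}\ i\notin I_S\}\cong T^{|I_S|}$. The containment $S_1\subset\overline{S_2}$ translates into $I_{S_1}\supset I_{S_2}$, so $\lambda$ is a well-defined functor on $\mathcal{S}(Q)^{\mathrm{op}}$ satisfying the codimension condition of Proposition~\ref{associated dimension}. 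Contractibility of $Q$ forces $H^2(Q;\mathbb{Z}^m)=0$, and hence $c=0$ is the only available cohomology class.

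Next I exhibit an equivariant homeomorphism $\Phi\colon X(Q,\lambda,0)\to\mathbb{S}^{2m-1}$ (respectively $\mathbb{S}^{2m}$) defined by
\[
\Phi\bigl([(r_1,\ldots,r_m),(t_1,\ldots,t_m)]\bigr)=(t_1\sqrt{r_1},\ldots,t_m\sqrt{r_m})
\]
in the odd case, and by appending the unchanged $x$-coordinate in the even case. Well-definedness under $\sim_\lambda$ is automatic, since any representative $t_i$ with $i\in I_S$ multiplies the vanishing factor $\sqrt{r_i}=0$; $T^m$-equivariance and surjectivity are immediate; and injectivity follows because $\sim_\lambda$ collapses precisely the ambiguity in the angular coordinates when a radial coordinate vanishes. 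Since $X(Q,\lambda,0)$ is compact (Lemma~\ref{compactness of canonical model}) and the sphere is Hausdorff, the continuous bijection $\Phi$ is a homeomorphism. Theorem~\ref{canonical model is a T-pseudomanifold} then shows that $X(Q,\lambda,0)$ is a locally standard $T$-pseudomanifold, and Proposition~\ref{inv of T-pseudomanifold} transports this structure to the sphere along $\Phi$.

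The main obstacle is the even-dimensional case: verifying that $Q\cong\Sigma\Delta^{m-1}$ is genuinely a topological stratified pseudomanifold in the sense of Appendix~\ref{BB}, since it is not literally a convex polytope when $m\ge 2$. I would handle this by a direct check that each apex admits a neighborhood in $Q$ of the form $\mathring{c}(L)$, where $L$ is the orbit space of the lower-dimensional sphere $\mathbb{S}^{2m-1}$ (namely $\Delta^{m-1}$, already a topological stratified pseudomanifold by the odd case); the cone conditions at the remaining boundary points reduce, via an analogous argument, to lower-dimensional instances of the same statement, giving a clean induction on $m$.
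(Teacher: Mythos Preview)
Your odd-dimensional argument is essentially the paper's: both realize $\mathbb{S}^{2m-1}$ as the canonical model over the simplex $\Delta^{m-1}$ (the paper cites the moment-angle description in \cite{BP12} rather than writing out your explicit $\Phi$), then invoke Theorem~\ref{canonical model is a T-pseudomanifold} and Proposition~\ref{inv of T-pseudomanifold}.

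For the even case the two proofs diverge. The paper does \emph{not} go through a canonical model: it simply observes that $\mathbb{S}^{2m}$ with the standard action is a compact locally standard $T^m$-manifold and forward-references Proposition~\ref{prop: locally standard T-manifold} (proved in Section~\ref{sec: applications}), which asserts that every such manifold is a locally standard $T$-pseudomanifold. Your route---realize $\mathbb{S}^{2m}$ as $X(Q,\lambda,0)$ and apply Theorem~\ref{canonical model is a T-pseudomanifold}---is correct and has the virtue of being logically self-contained, avoiding the forward reference. The cost is exactly the obstacle you flag: you must verify directly that $Q$ is a topological stratified pseudomanifold. Your proposed induction works, but there is a quicker observation you are missing: $Q$ is in fact a manifold with corners (near an apex, substituting $s=1-x^2$ exhibits a neighborhood as an open piece of $\mathbb{R}^m_{\ge 0}$), so Proposition~\ref{manifold with corners is a pseudomanifold} disposes of the verification in one line. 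This is essentially what the paper's shortcut encodes, since Proposition~\ref{prop: locally standard T-manifold} rests on Proposition~\ref{manifold with corners is a pseudomanifold}.
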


\begin{proof}
{\it Odd-dimensional case:}
The unit sphere $\mathbb{S}^{2m-1}$ with the standard $T^m$-action can be realized as the {\it moment-angle manifold} associated with the $(m-1)$-simplex $\Delta^{m-1}$ (see \cite[Example 6.1.6]{BP12} and \cite[Chapter 6]{BP12} for details).
By \cite[Proposition 6.2.2]{BP12}, $\mathbb{S}^{2m-1}$ is $T^m$-equivariantly homeomorphic to the canonical model labeled by the standard basis on $\Delta^{m-1}$.
Since $\Delta^{m-1}$ is an $(m-1)$-dimensional polytope, we have $n = m - 1$ and $l = 0$.
Moreover, by Proposition~\ref{polytope is pseudomanifold}, $\Delta^{m-1}$ is a topological stratified pseudomanifold.
Therefore, by Theorem~\ref{canonical model is a T-pseudomanifold} the canonical model over $\Delta^{m-1}$ is a locally standard $T^m$-pseudomanifold.
Hence, by Proposition~\ref{inv of T-pseudomanifold}, $\mathbb{S}^{2m-1}$ is also a locally standard $T^m$-pseudomanifold.

\textit{Even-dimensional case:}
The unit sphere $\mathbb{S}^{2m}$ with the standard $T^m$-action is a {\it locally standard torus manifold} (see \cite[Example 7.4.11]{BP12}, and also \cite[Chapter 7.4]{BP12} and \cite{MP06} for details).
In particular, $\mathbb{S}^{2m}$ is a {\it compact locally standard $T^m$-manifold} with a non-empty fixed point set.
Its orbit space is an $m$-dimensional manifold with corners that has exactly two vertices, and thus we have $n = m$ and $l = 0$.
By using the facts which will be proved in Section~\ref{sec: applications}, any compact locally standard $T$-manifold is a locally standard $T$-pseudomanifold (see Proposition~\ref{prop: locally standard T-manifold}).
Therefore, $\mathbb{S}^{2m}$ also admits such a structure.
\end{proof}

\begin{rem}\label{rem: moment-angle manifold}
According to \cite[Proposition 6.2]{Dav14}, any {\it smooth $T$-manifold, modeled on the
standard representation, and that the bundle of principal orbits is trivial}, is $T$-equivariantly diffeomorphic to a moment-angle manifold.
Furthermore, by applying \cite[Theorem 6.2.4]{BP12} and \cite[Proposition 6.2.2]{BP12}, moment-angle manifolds can be realized $T$-equivariantly homeomorphically as canonical models.
Therefore, such spaces are locally standard $T$-pseudomanifolds by Theorem~\ref{canonical model is a T-pseudomanifold}.

For {\it moment-angle manifolds over manifolds with corners}, see also \cite[Remark 14.4]{KK25}.
\end{rem}

The following lemma shows that the standard chart is the open cone of the odd-dimensional unit sphere with the standard $T$-action.

\begin{lem}\label{lem of sphere}
Suppose that $T^m=U(1)^m$ act on $\mathbb{C}^m$ coordinatewise, i.e.,
\[
(t_1, \dots, t_m) \cdot (z_1, \dots, z_m) := (t_1 z_1, \dots, t_m z_m),
\]
for $(t_1, \dots, t_m) \in T^m$ and $(z_1, \dots, z_m) \in \mathbb{C}^m$.
Then $\mathbb{C}^m$ and $\cone(\mathbb{S}^{2m-1})$ are $T^m$-equivariant homeomorphic, 
where $T^m$ acts on $\cone(\mathbb{S}^{2m-1})$ by
\[
t \cdot [x, r] := [t \cdot x, r],
\] 
for $[x, r] \in \cone(\mathbb{S}^{2m-1})= \mathbb{S}^{2m-1} \times [0,1)/(\mathbb{S}^{2m-1} \times \{ 0 \})$.
\end{lem}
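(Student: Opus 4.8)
The plan is to exhibit an explicit $T^m$-equivariant homeomorphism by combining the radial decomposition of $\mathbb{C}^m$ with a rescaling of the cone parameter. Conceptually, $\mathbb{C}^m\setminus\{0\}$ is equivariantly homeomorphic to $\mathbb{S}^{2m-1}\times(0,\infty)$ (with trivial action on the second factor), while $\cone(\mathbb{S}^{2m-1})$ minus its vertex is $\mathbb{S}^{2m-1}\times(0,1)$, so the problem reduces to matching the radial coordinates via a homeomorphism $[0,1)\to[0,\infty)$ sending $0$ to $0$. Concretely, I would fix $h\colon[0,1)\to[0,\infty)$, $h(r)=r/(1-r)$, whose inverse is $s\mapsto s/(1+s)$, and define $\Phi\colon\cone(\mathbb{S}^{2m-1})\to\mathbb{C}^m$ on representatives by $\Phi([x,r])=h(r)\,x=\tfrac{r}{1-r}\,x$ for $x\in\mathbb{S}^{2m-1}\subset\mathbb{C}^m$ and $r\in[0,1)$. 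Since $h(0)=0$, every point $[x,0]$ (the cone vertex) is sent to $0$, so $\Phi$ is well defined on the quotient.

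For continuity, I would observe that $\Phi$ is induced by the continuous map $\mathbb{S}^{2m-1}\times[0,1)\to\mathbb{C}^m$, $(x,r)\mapsto\tfrac{r}{1-r}x$, via the quotient map $\mathbb{S}^{2m-1}\times[0,1)\to\cone(\mathbb{S}^{2m-1})$, hence $\Phi$ is continuous. The candidate inverse is $\Psi\colon\mathbb{C}^m\to\cone(\mathbb{S}^{2m-1})$ with $\Psi(0)$ equal to the cone vertex and $\Psi(v)=\bigl[\,v/\|v\|,\ \|v\|/(1+\|v\|)\,\bigr]$ for $v\neq0$; continuity away from the origin is immediate, and continuity at $0$ follows because $\|v\|/(1+\|v\|)\to0$ as $v\to0$, so $\Psi(v)$ converges to the vertex. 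A direct computation using $h^{-1}\circ h=\mathrm{id}$ and $\|x\|=1$ gives $\Phi\circ\Psi=\mathrm{id}$ and $\Psi\circ\Phi=\mathrm{id}$, so $\Phi$ is a homeomorphism.

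Finally, for equivariance I would use that the $T^m$-action on $\mathbb{C}^m$ is by coordinatewise complex scalars, hence linear and norm-preserving; in particular $t\cdot x\in\mathbb{S}^{2m-1}$ whenever $x\in\mathbb{S}^{2m-1}$, and
\[
\Phi(t\cdot[x,r])=\Phi([t\cdot x,\,r])=\tfrac{r}{1-r}\,(t\cdot x)=t\cdot\bigl(\tfrac{r}{1-r}\,x\bigr)=t\cdot\Phi([x,r]).
\]
There is no genuine obstacle here; the only point requiring a moment's care is continuity of $\Phi$ and $\Psi$ at the cone vertex and the origin, which is handled precisely by the choice of $h$ with $h(0)=0$, and the remaining verifications are routine.
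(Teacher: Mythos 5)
Your proof is correct and follows essentially the same route as the paper: you use the same explicit maps $[x,r]\mapsto \tfrac{r}{1-r}x$ and $v\mapsto \bigl[v/\|v\|,\ \|v\|/(1+\|v\|)\bigr]$, the same continuity check at the vertex/origin via the rescaling $h(r)=r/(1-r)$, and the same norm-preservation argument for equivariance. The only cosmetic difference is that you label the map from the cone to $\mathbb{C}^m$ as $\Phi$ and verify equivariance in that direction, whereas the paper names the reverse direction $\Phi$; the content is identical.
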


\begin{proof}
We define a map
\[
\Phi : \mathbb{C}^m \to \cone(\mathbb{S}^{2m-1}); \quad
z \mapsto
\begin{cases}
\left[\left( \dfrac{z}{\|z\|}, \dfrac{\|z\|}{1 + \|z\|} \right)\right] & \text{if } z \neq 0, \\
 [x, 0] \text{ (cone vertex)}& \text{if } z = 0,
\end{cases}
\]
where $x \in \mathbb{S}^{2m-1}$ is an arbitrary point.
If $z \to 0$, then $\dfrac{\|z\|}{1 + \|z\|} \to 0$, and thus
\[
\Phi(z) \to [x, 0],
\]
which is the cone vertex. Therefore, $\Phi$ is continuous at $z=0$.
Since it is clearly continuous at all other points, we conclude that $\Phi$ is continuous.
We define its inverse
\[
\Psi : \cone(\mathbb{S}^{2m-1}) \to \mathbb{C}^n; \quad
[x, r] \mapsto
\dfrac{r}{1 - r} \cdot x
\]
is also continuous. Therefore, ${\Phi}$ is a homeomorphism.
Since $\|t \cdot z\| = \|z\|$ for $t \in T^m$, we obtain
\[
{\Phi}(t \cdot z) =
\left[ \left( \frac{t \cdot z}{\|t \cdot z\|}, \frac{\|t \cdot z\|}{1 + \|t \cdot z\|} \right) \right]
=
\left[ \left( t \cdot \frac{z}{\|z\|}, \frac{\|z\|}{1 + \|z\|} \right) \right]
= t \cdot {\Phi}(z).
\]
Hence, ${\Phi}$ is a $T^m$-equivariant homeomorphism.
\end{proof}

\section{Applications to the class of topological spaces with $T$-action}\label{sec: applications}

In this section, we apply our main results to several significant classes arising in toric geometry and toric topology.
In particular, we verify the following theorem:
\begin{thm}\label{relations}
The class of locally standard $T$-pseudomanifolds contains both the class of complete toric varieties and the class of compact locally standard $T$-manifolds.
\end{thm}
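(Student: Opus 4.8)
The plan is to prove the two inclusions separately, via one common scheme. Given the space $X$ in question — a complete toric variety, or a compact locally standard $T$-manifold — carrying the action of a compact torus $T^m$, I would: (i) identify the orbit space $Q = X/T^m$ as a compact, Hausdorff, second-countable topological stratified pseudomanifold whose top strata are homotopy equivalent to $Q$; (ii) read off from the isotropy data a characteristic functor $\lambda \colon \mathcal{S}(Q)^{\mathrm{op}} \to \mathcal{T}$ satisfying the dimension condition of Proposition~\ref{associated dimension}, together with the class $c \in H^2(Q;\mathbb{Z}^m)$ of the principal $T^m$-bundle formed by the free orbits, so that $(Q,\lambda,c)$ is a characteristic data; (iii) exhibit a (weakly) $T^m$-equivariant homeomorphism $X \cong X(Q,\lambda,c)$ onto the canonical model; and (iv) conclude from Theorem~\ref{canonical model is a T-pseudomanifold} (the canonical model is a locally standard $T$-pseudomanifold) together with Proposition~\ref{inv of T-pseudomanifold} (this property is invariant under weakly equivariant homeomorphisms) that $X$ is a locally standard $T$-pseudomanifold.

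For a complete toric variety $X_\Sigma$ with the coordinatewise action of $T^m \subset (\mathbb{C}^{\times})^m$, I would note that the quotient of each affine chart $U_\sigma$ by $T^m$ is $\mathbb{R}_{\ge 0}^{\dim \sigma}$ — this persists even when $U_\sigma$ is singular, since the pertinent finite group lies inside $T^m$ — and that gluing these orthants along the face lattice of $\Sigma$ presents $Q = X_\Sigma/T^m$ as the cone space of the complete fan $\Sigma$, namely a closed $m$-ball with its cellular stratification by cones (cf.\ \cite{Jor98}, \cite[Section~12]{CLS11}, \cite{BS25}). Hence $Q$ is a topological stratified pseudomanifold; being contractible, its top stratum (the image of the dense orbit, an open ball) is homotopy equivalent to $Q$ and $H^2(Q;\mathbb{Z}^m)=0$, so $c = 0$ is forced (Remark~\ref{trivial cohomology}). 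Attaching to the stratum indexed by a cone $\sigma$ the isotropy subtorus of the corresponding orbit yields the functor $\lambda$, and the classical classification of complete toric varieties up to equivariant homeomorphism (\cite{Jor98}, \cite[Section~12]{CLS11}) identifies $X_\Sigma$ with $X(Q,\lambda,0)$ equivariantly — the local picture near a vertex being exactly Lemma~\ref{lem of sphere}. Then Theorem~\ref{canonical model is a T-pseudomanifold} and Proposition~\ref{inv of T-pseudomanifold} finish this case.

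For a compact locally standard $T^m$-manifold $M$ with orbit space $Q$ a compact manifold with corners, I would observe that $Q$ is a topological stratified pseudomanifold stratified by its faces, with top stratum the interior $\mathring{Q}$, and that $\mathring{Q}$ is homotopy equivalent to $Q$ (collar argument), so \eqref{condition of top strata} holds. Assigning to each codimension-$k$ face $E$ the $k$-dimensional isotropy subtorus of the orbits over $\mathring{E}$ defines a characteristic functor $\lambda$ of the required type, and the free orbits form a $T^m$-principal bundle over $\mathring{Q} \simeq Q$ with class $c \in H^2(Q;\mathbb{Z}^m)$. The reconstruction theorems for locally standard torus manifolds — \cite[Lemma~4.5]{MP06} and \cite[Corollary~2]{Yos11} — state precisely that $M$ is $T^m$-equivariantly homeomorphic to the canonical model $X(Q,\lambda,c)$ of Definition~\ref{def of canonical model} (in the face-acyclic situation $c=0$ and this is the explicit gluing of Remark~\ref{face acyclic case}). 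Once more Theorem~\ref{canonical model is a T-pseudomanifold} and Proposition~\ref{inv of T-pseudomanifold} conclude; this is Proposition~\ref{prop: locally standard T-manifold}.

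I expect the main obstacle to be step (iii) in the toric case: matching the combinatorics of the complete fan with the characteristic functor on the orbit space, and invoking the (also non-projective) classification of complete toric varieties up to equivariant homeomorphism exactly in the form $X_\Sigma \cong_T X(Q,\lambda,0)$; extending the pseudomanifold structure of \cite{BS25} from the projective case to arbitrary complete fans is local but still needs to be carried out. In the manifold case the analogous point — reconciling the local models of \cite{MP06} and \cite{Yos11} with Definition~\ref{def of canonical model}, and verifying that a compact manifold with corners has the required pseudomanifold structure with $\mathring{Q} \simeq Q$ — is bookkeeping rather than a genuine difficulty. Everything else is a direct application of the machinery built in Sections~\ref{section canonical model}--\ref{section 11}.
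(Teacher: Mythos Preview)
Your scheme for the complete toric case matches the paper's exactly: the paper cites the equivariant homeomorphism $X_{\Sigma}\cong X(B,\lambda,0)$ from \cite{Jor98}, \cite{CLS11} (your step (iii)), proves directly that the closed ball $B$ with the stratification induced from the fan is a topological stratified pseudomanifold (this is the point you flag as the main obstacle, and the paper devotes a full proposition to it, Proposition~\ref{ball is a topological stratified pseudomanifold}), and then applies Theorem~\ref{canonical model is a T-pseudomanifold} and Proposition~\ref{inv of T-pseudomanifold}.

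For compact locally standard $T$-manifolds, however, the paper takes a genuinely different route: it does \emph{not} pass through the canonical model at all. Instead, Proposition~\ref{prop: locally standard T-manifold} verifies Definition~\ref{def of T-pseudomanifold} directly. The locally standard chart gives a weakly equivariant homeomorphism $\pi^{-1}(U_{\pi(x)})\cong (\mathbb{C}^{\times})^{i}\times \mathbb{C}^{\,n-i}$, and Lemma~\ref{lem of sphere} rewrites $\mathbb{C}^{\,n-i}$ as $\cone(\mathbb{S}^{2(n-i)-1})$; the link is then the standard odd sphere, already shown to be a locally standard $T$-pseudomanifold in Proposition~\ref{prop: standard sphere}. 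Your approach is correct too, but it leans on the external reconstruction theorems of \cite{MP06} and \cite{Yos11} (and on matching their models with Definition~\ref{def of canonical model}), whereas the paper's direct argument is self-contained, yields explicit links, and in fact does not require the hypothesis $\mathring{Q}\simeq Q$ at all. The gain of your route is uniformity: you run exactly the same four steps in both cases; the cost is an avoidable dependence on outside classification results in the manifold case.
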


\subsection{Complete toric variety}
In this subsection, we prove the following proposition.
\begin{prop}\label{complete toric is locally standard}
A complete toric variety is a locally standard $T$-pseudomanifold.
\end{prop}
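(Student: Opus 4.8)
The strategy is to realize a complete toric variety as a canonical model $X(Q,\lambda,c)$ over a suitable topological stratified pseudomanifold $Q$, and then invoke Theorem~\ref{canonical model is a T-pseudomanifold} together with the invariance Proposition~\ref{inv of T-pseudomanifold}. Let $X_\Sigma$ be the complete toric variety associated to a complete fan $\Sigma$ in $N_{\mathbb{R}} \cong \mathbb{R}^m$, equipped with its compact torus $T = T^m \subset (\mathbb{C}^{\times})^m$. First I would recall the standard description of the orbit space $Q = X_\Sigma/T$: by the orbit--cone correspondence, $Q$ is homeomorphic to the ``dual'' of $\Sigma$, and since $\Sigma$ is complete, $Q$ is homeomorphic to a closed $m$-ball (this is the entry ``complete toric variety $\leftrightarrow$ closed ball'' in the table of the introduction; see \cite{Jor98} and \cite[Section~12]{CLS11}). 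The filtration by orbit dimension on $X_\Sigma$ descends to a filtration $\mathfrak{Q}$ on $Q$ whose skeleta $Q_i$ are the unions of the (closed) faces of this cell structure of dimension $\le i$; that $(Q,\mathfrak{Q})$ is a topological stratified pseudomanifold should follow either directly from Popper \cite{Pop00} applied to $X_\Sigma$ with its orbit-dimension filtration (which exhibits $Q$ as the orbit space of a $T$-pseudomanifold once we know $X_\Sigma$ itself is one — but that is what we are trying to prove, so I would instead argue intrinsically) or, more cleanly, by the same kind of local analysis used for convex polytopes in Appendix~\ref{sec: polytope}, since locally around each point $Q$ looks like $O^l \times \mathring{c}(L)$ for an appropriate lower-dimensional link. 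In particular $l = 0$, $n = m$, and the top stratum $Q \setminus Q_{m-1}$ (the interior of the ball) is homotopy equivalent to $Q$.

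Next I would define the characteristic functor $\lambda$ and the class $c$. Each stratum $S$ of $Q$ of codimension $i$ corresponds to an $i$-dimensional cone $\sigma \in \Sigma$, and the isotropy subgroup of the corresponding $T$-orbit in $X_\Sigma$ is the subtorus $T_\sigma \subset T^m$ whose Lie algebra is the real span of $\sigma$ (equivalently, the subtorus determined by the sublattice $N_\sigma = N \cap \mathbb{R}\sigma$). Setting $\lambda(S) := T_\sigma$ gives a functor $\mathcal{S}(Q)^{\mathrm{op}} \to \mathcal{T}$ because the face relation $\sigma \subset \tau$ of cones corresponds to $T_\sigma \subset T_\tau$ and to the reversed inclusion of strata closures; the dimension condition of Proposition~\ref{associated dimension} holds by construction ($\dim T_\sigma = \dim \sigma = i$). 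For $c$: since $Q$ is a closed ball it is contractible, so $H^2(Q;\mathbb{Z}^m) = 0$ and the only choice is $c = 0$; this also means every $T^m$-principal bundle over $Q$ is trivial, so $P_c = Q \times T^m$ and $X(Q,\lambda,0) = Q \times T^m/{\sim}$ in the sense of Remark~\ref{face acyclic case}. The heart of the argument is then the identification $X_\Sigma \cong_{T} X(Q,\lambda,0)$: one constructs the map by sending a point $x \in X_\Sigma$ lying over the stratum $S \leftrightarrow \sigma$ to $(\pi(x), [t])$ where $t \in T^m$ is any element with $t \cdot x_0 = x$ for a fixed reference point $x_0$ in the orbit — well-definedness modulo $T_\sigma = \lambda(S)$ is exactly the equivalence relation $\sim$, and local triviality of $X_\Sigma$ near each orbit (the affine charts $U_\sigma \cong \mathbb{C}^i \times (\mathbb{C}^\times)^{m-i}$ for smooth $\sigma$, and more generally the standard quotient description $U_\sigma = \operatorname{Spec}\mathbb{C}[\sigma^\vee \cap M]$) shows this is a homeomorphism.

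With $X_\Sigma \cong_T X(Q,\lambda,0)$ established, the proposition follows: $(Q,\lambda,0)$ is characteristic data (Definition~\ref{def of char data}), so by Theorem~\ref{canonical model is a T-pseudomanifold} the canonical model $X(Q,\lambda,0)$ is a locally standard $T$-pseudomanifold, and hence by Proposition~\ref{inv of T-pseudomanifold} so is $X_\Sigma$. The main obstacle, I expect, is the explicit $T$-equivariant homeomorphism $X_\Sigma \cong_T X(Q,\lambda,0)$, and in particular making the local models match: one must check that near a point over a codimension-$i$ stratum the toric-variety chart $U_\sigma$ (together with the completeness of $\Sigma$, which governs how the ``link'' cones fit around $\sigma$) is weakly equivariantly homeomorphic to $O^l \times (\Omega \times U(1)^{m-n}) \times \mathring{c}(L_x)$ with $L_x$ itself a canonical model — i.e.\ that the link of a point of $X_\Sigma$ is again (weakly equivariantly homeomorphic to) a locally standard $T_x$-pseudomanifold. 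This is most naturally handled by an induction on $\dim X_\Sigma$ that runs in parallel with the induction in the proof of Theorem~\ref{canonical model is a T-pseudomanifold}, using that the link cones of $\sigma$ in $\Sigma$ form a complete fan in $N_{\mathbb{R}}/\mathbb{R}\sigma$ and hence the link is itself a (lower-dimensional) complete toric variety for $T_x \cong T^{n-i}$; alternatively one can bypass it entirely by quoting the known classification of complete toric varieties up to equivariant homeomorphism by their fans (\cite{Jor98}, \cite[Section~12]{CLS11}), since that classification is exactly the statement that $X_\Sigma$ is determined by the combinatorial data that our $(Q,\lambda,0)$ repackages. A secondary point requiring care is verifying that $(Q,\mathfrak{Q})$ genuinely satisfies Definition~\ref{def of pseudomanifold} (e.g.\ the frontier condition and the existence of cone-chart neighborhoods), but this is of the same nature as — and can be imported from — the treatment of convex polytopes in Appendix~\ref{sec: polytope}.
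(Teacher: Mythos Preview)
Your proposal is correct and follows essentially the same route as the paper: cite the equivariant homeomorphism $X_\Sigma \cong X(B,\lambda,0)$ from \cite{Jor98}, \cite[Section~12]{CLS11}, verify that the closed ball with its fan-induced filtration is a topological stratified pseudomanifold, and then apply Theorem~\ref{canonical model is a T-pseudomanifold} and Proposition~\ref{inv of T-pseudomanifold}. The one recalibration is that what you call the ``alternative'' (quoting the known classification to get $X_\Sigma \cong X(Q,\lambda,0)$) is in fact the paper's primary move, so no direct verification of local cone charts on $X_\Sigma$ is needed; the actual work the paper does is your ``secondary point,'' namely proving that $(B^n,\mathfrak{B})$ is a topological stratified pseudomanifold (Proposition~\ref{ball is a topological stratified pseudomanifold}), via an explicit spherical-dual construction of the filtration rather than by importing the polytope argument wholesale.
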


Let $X_{\Sigma}$ be a complete toric variety.  
As shown in \cite{Jor98} and \cite{CLS11}, $X_{\Sigma}$ is equivariantly homeomorphic to a canonical model:
\begin{align}\label{MacPherson thm}
X_{\Sigma} \cong X(B, \lambda, 0),
\end{align}
where $B$ denotes the unit closed ball equipped with a filtration $\mathfrak{B}$ determined by the complete fan $\Sigma$ associated with $X_{\Sigma}$, as described below.  
The following construction is taken from \cite{Jor98} and \cite{CLS11}.

\begin{cons}[filtration of $B^n$]\label{filtration construction}
Let $\Sigma$ be a complete fan in $\mathbb{R}^n$.
Let $S^{n-1}$ denote the unit sphere in $\mathbb{R}^n$. 
The intersection of each cone $\sigma$ in $\Sigma$ with $S^{n-1}$ defines a {\it spherical complex} $C_{\Sigma}$ on $S^{n-1}$.
Each cone $\sigma$ determines a {\it spherical dual} $\widehat{\sigma}$ in the unit closed ball $B^n$ as follows.
Consider a barycentric subdivision of $C_{\Sigma}$, denoted by $C'_{\Sigma}$.  
If $\sigma = \{0\}$ (the zero-dimensional cone in $\Sigma$), then we define $\widehat{\sigma} := B^n$. If $\sigma$ is not the zero-dimensional cone,
\begin{align}\label{def of spherical dual}
\widehat{\sigma}:=
\bigcup \{  \theta \in C'_{\Sigma} \mid \text{vertices of } \theta \text{ are barycenters of } \tau \cap S^{n-1} \text{ with } \sigma \subset \tau \in \Sigma \}
\end{align}
The spherical dual is illustrated in Figure~\ref{fig: spherical}.
\begin{figure}[htbp]
\begin{tikzpicture}[scale=0.8]
% 図1 (左上)
\begin{scope}[shift={(0,0)}, scale=0.1, line cap=round, line join=round]
\draw[line width=0.4mm] (20,32) ++(240:26) arc (240:360:26);
\draw[line width=0.4mm] (30,22) .. controls (20,16) .. (18,14);
\draw[line width=0.4mm] (30,22) .. controls (34,20) .. (36,18);
\draw[line width=0.4mm] (18,14) .. controls (20,11.2) .. (24,8);
\draw[line width=0.4mm] (36,18) .. controls (34.6,13.8) .. (32,10);
\draw[line width=0.4mm] (24,8) .. controls (27.7,8.4) .. (32,10);
\draw[line width=0.4mm] (30,22) .. controls (30.9,26.5) .. (30,32);
\draw[line width=0.4mm] (18,14) .. controls (11.4,16.6) .. (8,22);
\draw[line width=0.4mm] (8,22) .. controls (9.6,28.4) .. (14,34);
\draw[line width=0.4mm] (14,34) .. controls (23.9,35.4) .. (30,32);
\draw[line width=0.4mm] (30,32) .. controls (36.8,33.2) .. (42,32);
\draw[line width=0.4mm] (42,32) .. controls (44.4,27.4) .. (43.3,20.4);
\draw[line width=0.4mm] (43.3,20.4) .. controls (38.9,18.3) .. (36,18);
\draw[line width=0.4mm] (30,32) .. controls (29.2,35.8) .. (30,40);
\draw[line width=0.4mm] (42,32) .. controls (43.3,32.9) .. (44,36);
\draw[line width=0.4mm] (14,34) .. controls (12.5,35.2) .. (11.3,37.8);
\draw[line width=0.4mm] (8,22) .. controls (6,21.7) .. (4.5,22.1);
\draw[line width=0.4mm] (18,14) .. controls (17.2,11.3) .. (14,10);
\draw[line width=0.4mm] (24,8) .. controls (22.1,6.6) .. (18,6);
\draw[line width=0.4mm] (32,10) .. controls (32.2,9.5) .. (32.5,9.3);
\node at (44,10) {$C_{\Sigma}$};
\end{scope}

% 図3 (中央上)
\begin{scope}[shift={(6,0)}, scale=0.1, line cap=round, line join=round]
\draw[line width=0.4mm] (20,32) ++(240:26) arc (240:360:26);
\draw[line width=0.4mm] (30,22) .. controls (20,16) .. (18,14);
\draw[line width=0.4mm] (30,22) .. controls (34,20) .. (36,18);
\draw[line width=0.4mm] (18,14) .. controls (20,11.2) .. (24,8);
\draw[line width=0.4mm] (36,18) .. controls (34.6,13.8) .. (32,10);
\draw[line width=0.4mm] (24,8) .. controls (27.7,8.4) .. (32,10);
\draw[line width=0.4mm] (30,22) .. controls (30.9,26.5) .. (30,32);
\draw[line width=0.4mm] (18,14) .. controls (11.4,16.6) .. (8,22);
\draw[line width=0.4mm] (8,22) .. controls (9.6,28.4) .. (14,34);
\draw[line width=0.4mm] (14,34) .. controls (23.9,35.4) .. (30,32);
\draw[line width=0.4mm] (30,32) .. controls (36.8,33.2) .. (42,32);
\draw[line width=0.4mm] (42,32) .. controls (44.4,27.4) .. (43.3,20.4);
\draw[line width=0.4mm] (43.3,20.4) .. controls (38.9,18.3) .. (36,18);
\draw[line width=0.4mm] (30,32) .. controls (29.2,35.8) .. (30,40);
\draw[line width=0.4mm] (42,32) .. controls (43.3,32.9) .. (44,36);
\draw[line width=0.4mm] (14,34) .. controls (12.5,35.2) .. (11.3,37.8);
\draw[line width=0.4mm] (8,22) .. controls (6,21.7) .. (4.5,22.1);
\draw[line width=0.4mm] (18,14) .. controls (17.2,11.3) .. (14,10);
\draw[line width=0.4mm] (24,8) .. controls (22.1,6.6) .. (18,6);
\draw[line width=0.4mm] (32,10) .. controls (32.2,9.5) .. (32.5,9.3);
\fill (20,32) circle (1.2);
\draw[thick] (20,32) -- (50,2);
\filldraw[red] (30,22) circle (1);
\node at (44,4) {$\sigma$};
\node[red] at (20,22) {$\sigma \cap S^{n-1}$};
\end{scope}

% 図5 (右上)
\begin{scope}[shift={(12,0)}, scale=0.1, line cap=round, line join=round]
\filldraw[fill=gray!20, draw=none] (20,32) -- (50,2) -- (16,-4) -- cycle;
\draw[line width=0.4mm] (20,32) ++(240:26) arc (240:360:26);
\draw[red][line width=0.4mm] (30,22) .. controls (20,16) .. (18,14);
\draw[line width=0.4mm] (30,22) .. controls (34,20) .. (36,18);
\draw[line width=0.4mm] (18,14) .. controls (20,11.2) .. (24,8);
\draw[line width=0.4mm] (36,18) .. controls (34.6,13.8) .. (32,10);
\draw[line width=0.4mm] (24,8) .. controls (27.7,8.4) .. (32,10);
\draw[line width=0.4mm] (30,22) .. controls (30.9,26.5) .. (30,32);
\draw[line width=0.4mm] (18,14) .. controls (11.4,16.6) .. (8,22);
\draw[line width=0.4mm] (8,22) .. controls (9.6,28.4) .. (14,34);
\draw[line width=0.4mm] (14,34) .. controls (23.9,35.4) .. (30,32);
\draw[line width=0.4mm] (30,32) .. controls (36.8,33.2) .. (42,32);
\draw[line width=0.4mm] (42,32) .. controls (44.4,27.4) .. (43.3,20.4);
\draw[line width=0.4mm] (43.3,20.4) .. controls (38.9,18.3) .. (36,18);
\draw[line width=0.4mm] (30,32) .. controls (29.2,35.8) .. (30,40);
\draw[line width=0.4mm] (42,32) .. controls (43.3,32.9) .. (44,36);
\draw[line width=0.4mm] (14,34) .. controls (12.5,35.2) .. (11.3,37.8);
\draw[line width=0.4mm] (8,22) .. controls (6,21.7) .. (4.5,22.1);
\draw[line width=0.4mm] (18,14) .. controls (17.2,11.3) .. (14,10);
\draw[line width=0.4mm] (24,8) .. controls (22.1,6.6) .. (18,6);
\draw[line width=0.4mm] (32,10) .. controls (32.2,9.5) .. (32.5,9.3);
\fill (20,32) circle (1.2);
\draw[thick] (20,32) -- (50,2);
\draw[thick] (20,32) -- (16,-4);
\filldraw[red] (18,14) circle (1);
\filldraw[red] (30,22) circle (1);
\node at (34,4) {$\sigma$};
\node[red] at (20,22) {$\sigma \cap S^{n-1}$};
\end{scope}

% 図2 (左下)
\begin{scope}[shift={(0,-5.5)}, scale=0.1, line cap=round, line join=round]
\draw[line width=0.4mm] (20,32) ++(240:26) arc (240:360:26);
\draw[line width=0.4mm] (30,22) .. controls (20,16) .. (18,14);
\draw[line width=0.4mm] (30,22) .. controls (34,20) .. (36,18);
\draw[line width=0.4mm] (18,14) .. controls (20,11.2) .. (24,8);
\draw[line width=0.4mm] (36,18) .. controls (34.6,13.8) .. (32,10);
\draw[line width=0.4mm] (24,8) .. controls (27.7,8.4) .. (32,10);
\draw[line width=0.4mm] (30,22) .. controls (30.9,26.5) .. (30,32);
\draw[line width=0.4mm] (18,14) .. controls (11.4,16.6) .. (8,22);
\draw[line width=0.4mm] (8,22) .. controls (9.6,28.4) .. (14,34);
\draw[line width=0.4mm] (14,34) .. controls (23.9,35.4) .. (30,32);
\draw[line width=0.4mm] (30,32) .. controls (36.8,33.2) .. (42,32);
\draw[line width=0.4mm] (42,32) .. controls (44.4,27.4) .. (43.3,20.4);
\draw[line width=0.4mm] (43.3,20.4) .. controls (38.9,18.3) .. (36,18);
\draw[line width=0.4mm] (30,32) .. controls (29.2,35.8) .. (30,40);
\draw[line width=0.4mm] (42,32) .. controls (43.3,32.9) .. (44,36);
\draw[line width=0.4mm] (14,34) .. controls (12.5,35.2) .. (11.3,37.8);
\draw[line width=0.4mm] (8,22) .. controls (6,21.7) .. (4.5,22.1);
\draw[line width=0.4mm] (18,14) .. controls (17.2,11.3) .. (14,10);
\draw[line width=0.4mm] (24,8) .. controls (22.1,6.6) .. (18,6);
\draw[line width=0.4mm] (32,10) .. controls (32.2,9.5) .. (32.5,9.3);
\filldraw[black] (20,24.8) circle (1);
\filldraw[black] (28,14.4) circle (1);
\filldraw[black] (8,22) circle (1);
\filldraw[black] (14,34) circle (1);
\filldraw[black] (42,32) circle (1);
\filldraw[black] (43.3,20.4) circle (1);
\filldraw[black] (36.26,24.88) circle (1);
\filldraw[black] (30,32) circle (1);
\filldraw[black] (18,14) circle (1);
\filldraw[black] (36,18) circle (1);
\filldraw[black] (32,10) circle (1);
\filldraw[black] (24,8) circle (1);
\filldraw[black] (30,22) circle (1);
\filldraw[black] (30.6,26.5) circle (1);
\filldraw[black] (24,18.5) circle (1);
\filldraw[black] (34,20) circle (1);
\filldraw[black] (23.9,34.5) circle (1);
\filldraw[black] (11.4,17.5) circle (1);
\filldraw[black] (9.8,28.4) circle (1);
\filldraw[black] (20,11.2) circle (1);
\filldraw[black] (34.6,13.8) circle (1);
\filldraw[black] (27.7,8.4) circle (1);
\filldraw[black] (36.8,32.8) circle (1);
\filldraw[black] (44,27.4) circle (1);
\filldraw[black] (40,18.9) circle (1);
\draw[black, line width=0.5mm] (20,24.8) -- (23.9,34.5);
\draw[black, line width=0.5mm] (20,24.8) -- (11.4,17.5);
\draw[black, line width=0.5mm] (20,24.8) -- (9.8,28.4);
\draw[black, line width=0.5mm] (20,24.8) -- (30.6,26.5);
\draw[black, line width=0.5mm] (20,24.8) -- (24,18.5);
\draw[black, line width=0.5mm] (20,24.8) -- (8,22);
\draw[black, line width=0.5mm] (20,24.8) -- (14,34);
\draw[black, line width=0.5mm] (20,24.8) -- (18,14);
\draw[black, line width=0.5mm] (20,24.8) -- (30,32);
\draw[black, line width=0.5mm] (20,24.8) -- (30,22);
\draw[black, line width=0.5mm] (36.26,24.88) -- (42,32);
\draw[black, line width=0.5mm] (36.26,24.88) -- (43.3,20.4);
\draw[black, line width=0.5mm] (36.26,24.88) -- (30,22);
\draw[black, line width=0.5mm] (36.26,24.88) -- (30,32);
\draw[black, line width=0.5mm] (36.26,24.88) -- (36,18);
\draw[black, line width=0.5mm] (36.26,24.88) -- (36.8,32.8);
\draw[black, line width=0.5mm] (36.26,24.88) -- (44,27.4);
\draw[black, line width=0.5mm] (36.26,24.88) -- (40,18.9);
\draw[black, line width=0.5mm] (36.26,24.88) -- (34,20);
\draw[black, line width=0.5mm] (36.26,24.88) -- (30.6,26.5);
\draw[black, line width=0.5mm] (28,14.4) -- (30,22);
\draw[black, line width=0.5mm] (28,14.4) -- (18,14);
\draw[black, line width=0.5mm] (28,14.4) -- (36,18);
\draw[black, line width=0.5mm] (28,14.4) -- (32,10);
\draw[black, line width=0.5mm] (28,14.4) -- (24,8);
\draw[black, line width=0.5mm] (28,14.4) -- (20,11.2);
\draw[black, line width=0.5mm] (28,14.4) -- (34.6,13.8);
\draw[black, line width=0.5mm] (28,14.4) -- (27.7,8.4);
\draw[black, line width=0.5mm] (28,14.4) -- (24,18.5);
\draw[black, line width=0.5mm] (28,14.4) -- (34,20);
\node at (44,10) {$C_{\Sigma}'$};
\end{scope}

% 図4 (右下)
\begin{scope}[shift={(12,-5.5)}, scale=0.1, line cap=round, line join=round]
\draw[gray, line width=0.4mm] (20,32) ++(240:26) arc (240:360:26);
\draw[gray, line width=0.4mm] (30,22) .. controls (20,16) .. (18,14);
\draw[gray, line width=0.4mm] (30,22) .. controls (34,20) .. (36,18);
\draw[gray, line width=0.4mm] (18,14) .. controls (20,11.2) .. (24,8);
\draw[gray, line width=0.4mm] (36,18) .. controls (34.6,13.8) .. (32,10);
\draw[gray, line width=0.4mm] (24,8) .. controls (27.7,8.4) .. (32,10);
\draw[gray, line width=0.4mm] (30,22) .. controls (30.9,26.5) .. (30,32);
\draw[gray, line width=0.4mm] (18,14) .. controls (11.4,16.6) .. (8,22);
\draw[gray, line width=0.4mm] (8,22) .. controls (9.6,28.4) .. (14,34);
\draw[gray, line width=0.4mm] (14,34) .. controls (23.9,35.4) .. (30,32);
\draw[gray, line width=0.4mm] (30,32) .. controls (36.8,33.2) .. (42,32);
\draw[gray, line width=0.4mm] (42,32) .. controls (44.4,27.4) .. (43.3,20.4);
\draw[gray, line width=0.4mm] (43.3,20.4) .. controls (38.9,18.3) .. (36,18);
\draw[gray, line width=0.4mm] (30,32) .. controls (29.2,35.8) .. (30,40);
\draw[gray, line width=0.4mm] (42,32) .. controls (43.3,32.9) .. (44,36);
\draw[gray, line width=0.4mm] (14,34) .. controls (12.5,35.2) .. (11.3,37.8);
\draw[gray, line width=0.4mm] (8,22) .. controls (6,21.7) .. (4.5,22.1);
\draw[gray, line width=0.4mm] (18,14) .. controls (17.2,11.3) .. (14,10);
\draw[gray, line width=0.4mm] (24,8) .. controls (22.1,6.6) .. (18,6);
\draw[gray, line width=0.4mm] (32,10) .. controls (32.2,9.5) .. (32.5,9.3);
\filldraw[red] (20,24.8) circle (1);
\filldraw[red] (28,14.4) circle (1);

\draw[red, line width=0.5mm] (20,24.8) -- (24,18.5);
\draw[red, line width=0.5mm] (28,14.4) -- (24,18.5);
\node[red] at (31.5,12.5) {$\widehat{\sigma}$};
\end{scope}

% 図6 (まんなか下)
\begin{scope}[shift={(6,-5.5)}, scale=0.1, line cap=round, line join=round]
\draw[gray, line width=0.4mm] (20,32) ++(240:26) arc (240:360:26);
\draw[gray, line width=0.4mm] (30,22) .. controls (20,16) .. (18,14);
\draw[gray, line width=0.4mm] (30,22) .. controls (34,20) .. (36,18);
\draw[gray, line width=0.4mm] (18,14) .. controls (20,11.2) .. (24,8);
\draw[gray, line width=0.4mm] (36,18) .. controls (34.6,13.8) .. (32,10);
\draw[gray, line width=0.4mm] (24,8) .. controls (27.7,8.4) .. (32,10);
\draw[gray, line width=0.4mm] (30,22) .. controls (30.9,26.5) .. (30,32);
\draw[gray, line width=0.4mm] (18,14) .. controls (11.4,16.6) .. (8,22);
\draw[gray, line width=0.4mm] (8,22) .. controls (9.6,28.4) .. (14,34);
\draw[gray, line width=0.4mm] (14,34) .. controls (23.9,35.4) .. (30,32);
\draw[gray, line width=0.4mm] (30,32) .. controls (36.8,33.2) .. (42,32);
\draw[gray, line width=0.4mm] (42,32) .. controls (44.4,27.4) .. (43.3,20.4);
\draw[gray, line width=0.4mm] (43.3,20.4) .. controls (38.9,18.3) .. (36,18);
\draw[gray, line width=0.4mm] (30,32) .. controls (29.2,35.8) .. (30,40);
\draw[gray, line width=0.4mm] (42,32) .. controls (43.3,32.9) .. (44,36);
\draw[gray, line width=0.4mm] (14,34) .. controls (12.5,35.2) .. (11.3,37.8);
\draw[gray, line width=0.4mm] (8,22) .. controls (6,21.7) .. (4.5,22.1);
\draw[gray, line width=0.4mm] (18,14) .. controls (17.2,11.3) .. (14,10);
\draw[gray, line width=0.4mm] (24,8) .. controls (22.1,6.6) .. (18,6);
\draw[gray, line width=0.4mm] (32,10) .. controls (32.2,9.5) .. (32.5,9.3);
\filldraw[fill=red!30, draw=none] (30.6,26.5) -- (36.26,24.88) -- (34,20) -- (28,14.4) --(24,18.5) --(20,24.8) -- cycle;
\filldraw[red] (20,24.8) circle (1);
\filldraw[red] (36.26,24.88) circle (1);
\filldraw[red] (28,14.4) circle (1);

\draw[dashed, red, line width=0.25mm] (30,22) -- (30.6,26.5);  
\draw[dashed, red, line width=0.25mm] (30,22) -- (24,18.5);

\draw[dashed, red, line width=0.25mm] (30,22) -- (34,20);
\draw[red, line width=0.5mm] (20,24.8) -- (30.6,26.5);%確定
\draw[red, line width=0.5mm] (20,24.8) -- (24,18.5);%確定

\draw[dashed, red, line width=0.25mm] (20,24.8) -- (30,22);
\draw[dashed, red, line width=0.25mm] (36.26,24.88) -- (30,22);
\draw[red, line width=0.5mm] (36.26,24.88) -- (34,20); %確定

\draw[red, line width=0.5mm] (36.26,24.88) -- (30.6,26.5); %確定
\draw[dashed, red, line width=0.25mm] (28,14.4) -- (30,22);
\draw[red, line width=0.5mm] (28,14.4) -- (24,18.5); %確定
\draw[red, line width=0.5mm] (28,14.4) -- (34,20); %確定

\node[red] at (31.5,12.5) {$\widehat{\sigma}$};
\end{scope}
\end{tikzpicture}
\centering
\caption{$C_{\Sigma}$, $C_{\Sigma}'$ and spherical duals}
  \label{fig: spherical}   % ラベルをつけて参照可能
\end{figure}
The collection of spherical duals defines a filtration of $B^n$:
\[
\mathfrak{B}: B^n = Q_n \supset \cdots \supset Q_i \supset \cdots \supset \emptyset,
\]
where $Q_i = \bigcup \{ \widehat{\sigma} \mid \sigma \in \Sigma ,\, \mathrm{codim}(\sigma) =i \}$.
\end{cons}

To prove Proposition~\ref{complete toric is locally standard}, we prepare the following proposition.

\begin{prop}\label{ball is a topological stratified pseudomanifold}
The ball $(B^n, \mathfrak{B})$ is a topological stratified pseudomanifold.
\end{prop}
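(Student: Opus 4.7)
The plan is to verify the three defining conditions of a topological stratified pseudomanifold from Definition \ref{def of pseudomanifold}: that $(B^n, \mathfrak{B})$ is a manifold stratified space, that the top stratum is dense, and that every point admits a distinguished neighborhood homeomorphic to $\mathbb{R}^k \times \mathring{c}(L)$ for a lower-dimensional topological stratified pseudomanifold $L$.

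First I would analyze the spherical duals combinatorially. From formula (\ref{def of spherical dual}) and the use of the barycentric subdivision $C'_\Sigma$, each $\widehat{\sigma}$ is a union of simplices of $C'_\Sigma$ forming a topological $k$-disk of dimension $k = \mathrm{codim}(\sigma)$, and the collection $\{\widehat{\sigma}\}_{\sigma \in \Sigma}$ forms a regular CW decomposition of $B^n$ dual to the subdivision of $C_\Sigma$. The relative interior of each such $\widehat{\sigma}$ is an open $k$-cell, and the indexing used in Construction \ref{filtration construction} matches $\mathrm{codim}(\sigma)$ with $\dim\widehat{\sigma}$, so $Q_i \setminus Q_{i-1}$ is the disjoint union of these $i$-dimensional open cells. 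This gives the manifold stratified space structure. Density of the top stratum $Q_n \setminus Q_{n-1}$ is immediate, since the closed set $Q_{n-1}$ is a finite union of cells of dimension at most $n-1$ and is therefore nowhere dense in $B^n$.

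The main obstacle is establishing the local cone structure at points in the lower strata. I would proceed by induction on $n$, the base case $n = 0, 1$ being a direct check. For the inductive step, fix $p$ in the relative interior of $\widehat{\sigma}$ with $\dim \sigma = n - k$, so $\dim \widehat{\sigma} = k$. The key tool is the \emph{star fan} (or quotient fan) $\mathrm{Star}(\sigma)$ in $\mathbb{R}^n/\mathrm{span}(\sigma) \cong \mathbb{R}^{n-k}$ consisting of the images of cones $\tau \in \Sigma$ with $\sigma \subset \tau$. Since $\Sigma$ is complete, $\mathrm{Star}(\sigma)$ is again a complete fan, so by the inductive hypothesis the corresponding ball $B^{n-k}$ equipped with its spherical dual filtration is a topological stratified pseudomanifold; its boundary sphere $S^{n-k-1}$ with the induced spherical complex is then a compact topological stratified pseudomanifold of dimension $n-k-1$, which I take as the link $L_\sigma$ at $p$.

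It remains to build an explicit homeomorphism $\varphi_p \colon U_p \to \mathbb{R}^k \times \mathring{c}(L_\sigma)$ from a small neighborhood of $p$ in the open star of $\widehat{\sigma}$ inside the dual CW structure. On this open star---which is the union of those $\widehat{\tau}$ with $\tau \subset \sigma$---there is a natural product decomposition: the $k$ directions tangent to $\widehat{\sigma}$ at $p$ provide the Euclidean factor, while the transverse slice inherits the radial structure of $\mathrm{Star}(\sigma)$ and hence, by Lemma~\ref{lem of sphere}-style radial identification, takes the form $\mathring{c}(L_\sigma)$. The technical difficulty is verifying this product decomposition across the polyhedral boundaries of adjacent cells, which I would handle using the standard fact that the open star of a cell in a regular CW complex dual to a simplicial sphere carries such a product structure, combined with the inductive pseudomanifold structure on $\mathrm{Star}(\sigma)$. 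Compatibility of the distinguished charts on overlaps is then automatic since the induced local structure on lower strata is given by star fans of faces, completing the verification of Definition \ref{def of pseudomanifold}.
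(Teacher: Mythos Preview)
There are two connected problems. First, the arithmetic: since $\dim\sigma=n-k$, one has $\mathrm{span}(\sigma)\cong\mathbb{R}^{n-k}$ and therefore $\mathbb{R}^n/\mathrm{span}(\sigma)\cong\mathbb{R}^{k}$, not $\mathbb{R}^{n-k}$. The star fan thus lives in dimension $k$ and produces a ball $B^{k}$ with boundary $S^{k-1}$; your claimed link $S^{n-k-1}$ ends up with the correct dimension for a link only because this slip cancels against itself.

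Second, and more seriously, no sphere of that dimension can serve as the link at such a $p$. For $k<n$ the point $p$ lies on $\partial B^n=S^{n-1}$, so every open neighborhood $U_p\subset B^n$ is homeomorphic to an $n$-dimensional half-space. But $\mathbb{R}^{k}\times\cone(S^{n-k-1})\cong\mathbb{R}^k\times\mathbb{R}^{n-k}\cong\mathbb{R}^n$, which is not a half-space. The correct link is a \emph{ball} $B^{n-k-1}$: the open cone $\cone(B^{n-k-1})$ is homeomorphic to a half-space of dimension $n-k$, and $\mathbb{R}^k\times\cone(B^{n-k-1})$ then has the right topological type. The paper obtains this ball geometrically as a normal slice $\widetilde{T_pN_p}$ to the stratum $S_p$, cut out by intersecting $B^n$ with a hyperplane $\mathcal{H}_p$ parallel to $T_pS^{n-1}$ and with the plane $\mathcal{H}_pN_p$ normal to $S_p$ inside $\mathcal{H}_p$; it then checks that the filtration induced on $\widetilde{T_pN_p}\cong B^{n-k-1}$ again comes from a complete fan (by restricting a localized fan on $\mathcal{H}_p$ to that normal plane), so the inductive hypothesis applies directly. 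Note also that $\mathrm{Star}(\sigma)$ records the cones $\tau\supset\sigma$, which index the cells of $\partial\widehat\sigma$; the open star and the transverse link are instead governed by the faces $\tau\subset\sigma$, so the fan you invoke is not the one controlling the normal structure.
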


The proof of this proposition will be given in the next subsection~\ref{ss: ball is pseudomanifold}.
Assuming this proposition, we may prove Proposition \ref{complete toric is locally standard}.

\begin{proof}[Proof of Proposition \ref{complete toric is locally standard}]
By Proposition \ref{ball is a topological stratified pseudomanifold} and Theorem \ref{canonical model is a T-pseudomanifold}, the canonical model $X(B, \lambda, 0)$ is a locally standard $T$-pseudomanifold.
By \eqref{MacPherson thm}, we have $X_{\Sigma} \cong X(B, \lambda, 0)$.
Therefore, by Proposition \ref{inv of T-pseudomanifold}, $X_{\Sigma}$ is also a locally standard $T$-pseudomanifold.
\end{proof}

The class of complete toric varieties contains all projective toric varieties. Therefore, projective toric varieties are also locally standard $T$-pseudomanifolds.

\begin{rem}
In \cite[Proposition 19]{BS25}, it is shown that any projective toric variety admits the structure of a locally standard $T$-pseudomanifold.
A key difference from our approach lies in how to construct the $T$-invariant open subsets satisfying Definition \ref{def of T-pseudomanifold}-2.
In \cite{BS25}, such a subset is directly constructed on the projective toric variety $X_P$, as the preimage of a certain open subset of the polytope $P$ (see \cite[Proposition 14]{BS25}) under the projection $X_P \to P$ induced by the moment map (see \cite[Chapter 12]{CLS11}).

On the other hand, in our case, we take a $T$-invariant open subset on the canonical model that satisfies Definition \ref{def of T-pseudomanifold}-2 (see Subsection \ref{prepare-3}) and apply the equivariant homeomorphism from Lemma \ref{main thm}.
\end{rem}

\subsection{$(B^n, \mathfrak{B})$ is a topological stratified pseudomanifold}\label{ss: ball is pseudomanifold}
In this subsection, we prove Proposition~\ref{ball is a topological stratified pseudomanifold}, which was stated in the previous subsection.
The proof of the proposition is given in Subsubsection~\ref{ballがpseudomanifoldであること}.
In Subsubsections~\ref{接空間からの記号} -- \ref{図と証明の説明}, we prepare the necessary notation and constructions for the proof.

\subsubsection{Notation}\label{接空間からの記号}
We consider the essential case where $n>0$ and $0 \le i < n$, and let $p \in Q_i \setminus Q_{i-1}$ be a point.
In this case, $p$ lies on the boundary $\partial B^n = S^{n-1}$.
Let $S_p$ denote the $i$-dimensional stratum containing $p$.
In this subsubsection, we establish the notation required for the proof.

\begin{itemize}
\item
Let $T_p S^{n-1}$ be the tangent space of $S^{n-1}$ at $p$.
Then we have the decomposition
\begin{align*}
T_p S^{n-1} \cong T_p S_p \times T_p N_p
\end{align*}
where $T_p S_p \cong \mathbb{R}^i$ is the $i$-dimensional plane tangent to $S_p$ at $p$, and $T_p N_p \cong \mathbb{R}^{n-i-1}$ is the subspace of $T_p S^{n-1}$ orthogonal to $T_p S_p$ and passing through $p$.

\item
Let $\mathcal{H}_p$ be a hyperplane obtained by translating $T_pS^{n-1}$ slightly toward the origin, and denote this translation by $t: T_p S^{n-1} \to \mathcal{H}_p$.
We denote
\[
\mathcal{H}_p S_p := t(T_p S_p), \quad \mathcal{H}_p N_p:=t(T_p N_p).
\]

\item
We set
\begin{align*}
\widetilde{T_p S^{n-1}}:=\mathcal{H}_p \cap B^n \cong B^{n-1}, \quad
\widetilde{T_p S_p}:= \mathcal{H}_p S_p \cap B^n \cong B^{i}, \quad
\widetilde{T_p N_p}:=\mathcal{H}_p N_p \cap B^n \cong B^{n-i-1}.
\end{align*}

Then we have the following decomposition:
\[
\widetilde{T_p S^{n-1}} \cong \widetilde{T_p S_p} \times \widetilde{T_p N_p}.
\]

\[
\begin{tikzpicture}[scale=1]
% 左側の図
\begin{scope}
% 球の輪郭
\draw (3.6,2) circle (1);
% 超平面
\draw[thick] (2.4,3.3) -- (5.6,3.3) -- (4.8,2.7) -- (1.6,2.7) -- cycle;
    \fill (3.6,3) circle (2pt);
%超平面塗り
\fill[red, opacity=0.5, blend mode=screen] (2.4,3.3) -- (5.6,3.3) -- (4.8,2.7) -- (1.6,2.7) -- cycle;

\draw (3.6,2.3) ellipse (0.95 and 0.11);

\node[above] at (6,2) {$\xrightarrow{\hspace{1cm}}$};
\node[above] at (6,2.15) {$t$};

\node[right, above] at (5.6,3.3) {\textcolor{red}{$T_p S^{n-1}$}};
\node[above] at (3.6, 3.3) {$p$};
\end{scope}

% 右側の図を x 軸に6cmずらして描画
\begin{scope}[xshift=5cm]
% 球の輪郭
\draw (3.6,2) circle (1);
% 球の赤道（破線）
\draw (3.6,2.3) ellipse (0.95 and 0.11);
% 超平面
\draw[thick] (2.4,2.6) -- (5.6,2.6) -- (4.8,2.0) -- (1.6,2.0) -- cycle;
%超平面塗り
\fill[red, opacity=0.5, blend mode=screen] (2.4,2.6) -- (5.6,2.6) -- (4.8,2.0) -- (1.6,2.0) -- cycle;

    \fill (3.6,3) circle (2pt);
\node[right, above] at (5.6,2.6) {\textcolor{red}{$\mathcal{H}_p$}};
\node[above] at (6,2) {$\supset$};
\end{scope}

\begin{scope}[xshift=10cm]
% 球の輪郭
\draw (3.6,2) circle (1);

% 超平面
\draw[thick] (2.4,2.6) -- (5.6,2.6) -- (4.8,2.0) -- (1.6,2.0) -- cycle;

% 塗り
\fill[red, opacity=0.5, blend mode=screen] (3.6,2.3) ellipse (0.95 and 0.11);
% 球の赤道（破線）
\draw (3.6,2.3) ellipse (0.95 and 0.11);
\node[right, above] at (5,2.6) {\textcolor{red}{$\widetilde{T_p S^{n-1}}$}};
\end{scope}
\end{tikzpicture}
\]

\item
Let
\[
f : \widetilde{T_p S^{n-1}} \to \mathcal{H}_p^{[+]} \cap S^{n-1}
\]
be the projection from the center of the ball,
where $\mathcal{H}_p^{[+]}$ denotes the closed half-space of $\mathcal{H}_p$ containing $p$.
This map $f$ is a homeomorphism, and its boundary satisfies
$\partial \left( \widetilde{T_p S^{n-1}} \right) = \mathcal{H}_p \cap~S^{n-1}$; that is, $f$ restricts to the identity map on the boundary (see the figure below).
\[
\begin{tikzpicture}[scale=1]
% 左側の図
\begin{scope}
% 球の輪郭
\draw[thick] (3.6,2) circle (1);

% 超平面
\draw (2.4,2.6) -- (5.6,2.6) -- (4.8,2.0) -- (1.6,2.0) -- cycle;

% 塗り
\fill[red, opacity=0.5, blend mode=screen] (3.6,2.3) ellipse (0.95 and 0.11);
% 球の赤道（破線）
\draw (3.6,2.3) ellipse (0.95 and 0.11);

\node[above] at (6.5,2) {$\xrightarrow{\hspace{1.5cm}}$};
\node[above] at (6.5,2.15) {$f$};
\node[right, above] at (5,2.6) {\textcolor{red}{$\widetilde{T_p S^{n-1}}$}};
\end{scope}

% 右側の図を x 軸に6cmずらして描画
\begin{scope}[xshift=6cm]
% 球の輪郭
\draw[thick] (3.6,2) circle (1);

% 円と線分で囲まれた部分を赤で塗る
\fill[red, opacity=0.5]
  (2.65,2.3)
  arc[start angle=162.5, end angle=17.5, radius=1]  % 円弧（上側）
  -- (4.55,2.3)  % 右端から線分
  -- cycle;      % 始点に戻る

  % --- 下半分の弧と線分で囲まれた部分を赤で塗る ---
  \fill[red, opacity=0.5]
    (3.6,2.3) ++(-0.95,0)                % 左端 (2.65, 2.3)
    arc[start angle=180, end angle=360,  % 下半分
        x radius=0.95, y radius=0.11]
    -- (4.55,2.3) -- (2.65,2.3) -- cycle;
% 球の赤道（破線）
\draw (3.6,2.3) ellipse (0.95 and 0.11);
% 超平面
\draw (2.4,2.6) -- (5.6,2.6) -- (4.8,2.0) -- (1.6,2.0) -- cycle;

\node[red][right, above] at (6.5,2.5) {$\mathcal{H}_{p}^{[+]} \cap~S^{n-1}=f\left( \widetilde{T_p S^{n-1}} \right)$};
\end{scope}
\end{tikzpicture}
\]

\item
Choose $\mathcal{H}_p$ sufficiently close to $p$, and assume that the following two conditions are satisfied:
\begin{enumerate}
\item
The restriction of $f$ to $\widetilde{T_p S_p}$ maps onto $\mathcal{H}^{[+]}_{p} \cap S_p$; that is,
\[
f|_{\widetilde{T_p S_p}} : \widetilde{T_p S_p} \to \mathcal{H}^{[+]}_{p} \cap S_p
\]
is a homeomorphism.
This is equivalent to
\begin{align}\label{S_pは制限できる}
 f \left( \widetilde{T_p S^{n-1}} \right)  \cap S_p = f \left(  \widetilde{T_p S_{p}} \right),
\end{align}
as illustrated in the figure below.
\[
\begin{tikzpicture}[scale=1]
% 左側の図
\begin{scope}
% 球の輪郭
\draw[thick] (3.6,2) circle (1);

% 超平面
\draw (2.4,2.6) -- (5.6,2.6) -- (4.8,2.0) -- (1.6,2.0) -- cycle;

% 塗り
\fill[red, opacity=0.5, blend mode=screen] (3.6,2.3) ellipse (0.95 and 0.11);
% 球の赤道（破線）
\draw (3.6,2.3) ellipse (0.95 and 0.11);

\node[above] at (6.5,2) {$\xrightarrow{\hspace{1.5cm}}$};
\node[above] at (6.5,2.15) {$f$};
\node[right, above] at (5,2.6) {\textcolor{red}{$\widetilde{T_p S^{n-1}}$}};

% クリップを楕円の上半分に限定
\begin{scope}
    \clip (3.6,2.3) ellipse (0.95 and 0.11); % 楕円内だけ描画
    % 線を描く

    \draw[blue, thick] (4,2.6) -- (3.2, 2.0);
\end{scope}
    \fill[blue] (3.429, 2.172) circle (2pt);
    \fill[blue] (3.776, 2.432) circle (2pt);
\node[blue]  at (3.5, 1.6) {$\widetilde{T_pS_p}$};
\end{scope}

% 右側の図を x 軸に6cmずらして描画
\begin{scope}[xshift=6cm]
% 球の輪郭
\draw[thick] (3.6,2) circle (1);

% 円と線分で囲まれた部分を赤で塗る
\fill[red, opacity=0.5]
  (2.65,2.3)
  arc[start angle=162.5, end angle=17.5, radius=1]  % 円弧（上側）
  -- (4.55,2.3)  % 右端から線分
  -- cycle;      % 始点に戻る

  % --- 下半分の弧と線分で囲まれた部分を赤で塗る ---
  \fill[red, opacity=0.5]
    (3.6,2.3) ++(-0.95,0)                % 左端 (2.65, 2.3)
    arc[start angle=180, end angle=360,  % 下半分
        x radius=0.95, y radius=0.11]
    -- (4.55,2.3) -- (2.65,2.3) -- cycle;
% 球の赤道（破線）
\draw (3.6,2.3) ellipse (0.95 and 0.11);
% 超平面
\draw (2.4,2.6) -- (5.6,2.6) -- (4.8,2.0) -- (1.6,2.0) -- cycle;

\node[right, above] at (5.4,2.5) {\textcolor{red}{$f\left( \widetilde{T_p S^{n-1}} \right)$}};
\end{scope}

\begin{scope}[xshift=6cm]
%S_pの図
% 折り返し前：実線
\draw[blue, thick] 
    (3.429, 2.172) 
    .. controls (3.5, 2.8) .. 
    (3.6, 3);
% 折り返し後：点線
\draw[blue, thick, dashed] 
    (3.6, 3) 
    .. controls (3.7, 2.8) .. 
    (3.776, 2.432);
    \fill[blue] (3.429, 2.172) circle (2pt);
    \fill[blue] (3.776, 2.432) circle (2pt);

\node[blue]  at (3.5, 3.6) {$ f \left( \widetilde{T_p S^{n-1}} \right)  \cap S_p$};

\end{scope}
\end{tikzpicture}
\]

\item
$S_p$ is the minimal stratum on $f\bigl(\widetilde{T_p S^{n-1}}\bigr)$, that is,
\begin{align}\label{S_pが最小のstratum}
f \left(  \widetilde{T_p S^{n-1}} \right) \cap Q_i =f \left(  \widetilde{T_p S^{n-1}} \right) \cap S_p.
\end{align}
In particular, we have
\begin{align}\label{i-1で空}
f \left(  \widetilde{T_p S^{n-1}} \right) \cap Q_{i-1} = \emptyset.
\end{align}
\end{enumerate}
\end{itemize}

\subsubsection{Induced filtrations}\label{sss: 基本のfiltration}

In this subsubsection, 
we introduce filtrations on $\widetilde{T_p S^{n-1}}$, $\widetilde{T_p S_p}$ and $\widetilde{T_p N_p}$, which are defined in the previous subsubsection.
These filtrations are induced by the subspace filtrations of $B^n$ under the homeomorphism $f$, when these sets are viewed as subspaces of $B^n$ via $f$.  
We explain the details below.

\begin{itemize}

\item
Since $f \left( \widetilde{T_p S^{n-1}} \right) = \mathcal{H}^{[+]}_p \cap S^{n-1}$ is a subspace of $B^n$, it is equipped with the subspace filtration induced from $B^n$.
Through the homeomorphism
\[
f^{-1} : f(\widetilde{T_p S^{n-1}}) \to \widetilde{T_p S^{n-1}},
\]
we induce a filtration on $\widetilde{T_p S^{n-1}}$ as follows:
\begin{align}\label{接空間のfiltration}
\widetilde{T_p S^{n-1}} &= f^{-1}\left(  f \left( \widetilde{T_p S^{n-1}} \right) \cap Q_{n-1} \right)
\supset
 f^{-1}\left(  f \left( \widetilde{T_p S^{n-1}} \right) \cap Q_{n-2} \right)
\supset \cdots \notag
\\ 
&\cdots
\supset
 f^{-1}\left(  f \left( \widetilde{T_p S^{n-1}} \right) \cap Q_{k} \right)
\supset \cdots \notag
\\ 
&\cdots
\supset
 f^{-1}\left(  f \left( \widetilde{T_p S^{n-1}} \right) \cap Q_{i} \right)
\supset 
 f^{-1}\left(  f \left( \widetilde{T_p S^{n-1}} \right) \cap Q_{i-1} \right)
=
\emptyset.
\end{align}
Here,
$
 f^{-1}\left(  f \left( \widetilde{T_p S^{n-1}} \right) \cap Q_{i-1} \right)
=
\emptyset
$
follows from \eqref{i-1で空}.  
We denote this filtration by $\mathfrak{V}$.
That is, for $i \le k \le n-1$, we define
\[
V_k :=   f^{-1}\left(  f \left( \widetilde{T_p S^{n-1}} \right) \cap Q_{k} \right),
\]
so that
\begin{align}\label{filtration V}
\mathfrak{V}: \widetilde{T_p S^{n-1}}=V_{n-1} \supset V_{n-2}
\supset \cdots \supset
V_k
\supset \cdots \supset
V_i
\supset \emptyset.
\end{align}

\item
From \eqref{S_pは制限できる} and \eqref{S_pが最小のstratum}, we have
\begin{align*}
\widetilde{T_p S_p}
=
f^{-1}\left(
 f \left( \widetilde{T_p S^{n-1}} \right)  \cap S_p
\right)
=
f^{-1}\left(
 f \left( \widetilde{T_p S^{n-1}} \right)  \cap Q_i
\right)
=V_i.
\end{align*}
By \eqref{接空間のfiltration}, $V_i$ is equipped with the trivial filtration.
Hence, $\widetilde{T_p S_p}$, as a subspace of $\widetilde{T_p S^{n-1}}$, is equipped with the trivial filtration.  
Therefore, the nontrivial part of the filtration information of $\widetilde{T_p S^{n-1}} (\cong \widetilde{T_p S_p} \times \widetilde{T_p N_p})$
is entirely contained in $\widetilde{T_p N_p}$.

\item
Since $\widetilde{T_p N_p} \subset \widetilde{T_p S^{n-1}}$,
$\widetilde{T_p N_p}$ is equipped with a subspace filtration:
\begin{align}\label{N_pのfiltration}
\widetilde{T_p N_p} &= f^{-1}\left(  f \left( \widetilde{T_p N_p} \right) \cap Q_{n-1} \right)
\supset
 f^{-1}\left(  f \left( \widetilde{T_p N_p} \right) \cap Q_{n-2} \right)
\supset \cdots \notag
\\
&\cdots
\supset
 f^{-1}\left(  f \left( \widetilde{T_p N_p} \right) \cap Q_{k} \right)
\supset \cdots \supset
 f^{-1}\left(  f \left( \widetilde{T_p N_p} \right) \cap Q_{i} \right)
\supset \emptyset.
\end{align}

We denote this filtration by $\mathfrak{W}$.  
That is, for $i \le k \le n-1$, we set
\[
W_k :=   f^{-1}\left(  f \left( \widetilde{T_p N_p} \right) \cap Q_{k} \right)
\]
so that the filtration can be written as
\begin{align}\label{filtration W}
\mathfrak{W}: \widetilde{T_p N_p}=W_{n-1} \supset W_{n-2}
\supset \cdots \supset
W_k
\supset \cdots \supset
W_i
\supset \emptyset
\end{align}
where the subscript $k$ represents a formal dimension and does not necessarily reflect the dimension of the subspace.
Note that, using 
$f \left( \widetilde{T_p N_p} \right) \subset f \left( \widetilde{T_p S^{n-1}} \right)$,
\eqref{S_pが最小のstratum} and
\eqref{S_pは制限できる},
we obtain
\begin{align}\label{W_iは一点}
W_i
=
 f^{-1}\left(  f \left( \widetilde{T_p N_p} \right) \cap Q_{i} \right)
&=
 f^{-1}\left(  f \left( \widetilde{T_p N_p} \right) \cap f \left( \widetilde{T_p S^{n-1}} \right) \cap Q_{i} \right) \notag
\\
&=
 f^{-1}\left(  f \left( \widetilde{T_p N_p} \right) \cap f \left( \widetilde{T_p S^{n-1}} \right) \cap S_p \right) \notag
\\
&=
 f^{-1}\left(  f \left( \widetilde{T_p N_p} \right) \cap f \left( \widetilde{T_p S_p} \right)  \right)
\notag
\\
&=
\widetilde{T_p N_p} \cap \widetilde{T_p S_p} \notag
\\
&=
f^{-1}(p).
\end{align}
\end{itemize}

\subsubsection{Open neighborhood $U_p$}\label{sss: op nbh U_p}
We next choose an open neighborhood $U_p \subset B^n$ of $p$.  
If necessary, by taking $\mathcal{H}_p$ even closer to $p$, we may assume that $U_p$ satisfies the following conditions:
\begin{itemize}
\item
Let $\mathring{\widetilde{T_p S^{n-1}}}$ denote the interior of $\widetilde{T_p S^{n-1}}$. Then
\begin{align}\label{内部と一致}
U_p \cap S^{n-1} = f \left( \mathring{\widetilde{T_p S^{n-1}}} \right)
\end{align}

\item
$U_p \cong \cone\left( \widetilde{T_p S^{n-1}} \right)$, with $p$ corresponding to the cone vertex.
Moreover, restricting this homeomorphism gives
$U_p \cap S^{n-1} \cong \cone\left( \partial \left( \widetilde{T_p S^{n-1}} \right) \right).$

\[
\begin{tikzpicture}
\begin{scope}[scale=1.3]
% 球の赤道（破線）
\draw[dashed] (3.6,2.3) ellipse (0.95 and 0.11);
% 円と線分で囲まれた部分を赤で塗る
\fill[red, opacity=0.5]
  (2.65,2.3)
  arc[start angle=162.5, end angle=17.5, radius=1]  % 円弧（上側）
  -- (4.55,2.3)  % 右端から線分
  -- cycle;      % 始点に戻る
  % --- 下半分の弧と線分で囲まれた部分を赤で塗る ---
  \fill[red, opacity=0.5]
    (3.6,2.3) ++(-0.95,0)                % 左端 (2.65, 2.3)
    arc[start angle=180, end angle=360,  % 下半分
        x radius=0.95, y radius=0.11]
    -- (4.55,2.3) -- (2.65,2.3) -- cycle;
% 赤で塗りつぶす領域（楕円の上半分と円で囲まれた部分）
\fill[red, opacity=0.5] 
    (3.6-0.95,2.3) % 楕円の左端
    arc[start angle=180,end angle=0,x radius=0.95,y radius=0.11] % 楕円の上半分
    -- (4.55,2.3) arc[start angle=17.5,end angle=162.5,radius=1] -- cycle; % 円の上半分
\draw[thick]
  (3.55,2.3)
  ++(1,0)
  arc[start angle=17.5, end angle=162.5, radius=1];

\node at (2.6,3.3) {$U_p$};

%点ｐ
\fill (3.6,3) circle (1.5pt);
\node[above] at (3.6,3) {$p$};

\node at (5.6,2.79) {$\cong$};
\end{scope}

\begin{scope}[scale=1.3, xshift=4cm]
% --- 頂点 ---
\coordinate (Apex) at (3.6,3); % 円錐の頂点
% --- 底面の左右端 ---
\coordinate (LeftBase) at (2.65,2.3);
\coordinate (RightBase) at (4.55,2.3);
% --- 円錐の側線 ---
\draw[thick] (Apex) -- (LeftBase);
\draw[thick] (Apex) -- (RightBase);
% --- 円錐の側線 ---
\draw[thick] (Apex) -- (LeftBase);
\draw[thick] (Apex) -- (RightBase);
% --- 背面の破線部分 ---
\draw[dashed] (3.6,2.3) ++(-0.95,0)
    arc[start angle=180, end angle=360, x radius=0.95, y radius=0.11];
% --- 円錐の側面を楕円に沿って塗る ---
\fill[red, opacity=0.5] 
  (Apex) -- (RightBase) 
  arc[start angle=0, end angle=180, x radius=0.95, y radius=0.11] 
  -- (LeftBase) -- cycle;
% --- 円錐の側線 ---
\draw[thick] (Apex) -- (LeftBase);
\draw[thick] (Apex) -- (RightBase);
% --- 底面の楕円弧（破線でもOK） ---
\draw[dashed] (RightBase) 
  arc[start angle=0, end angle=180, x radius=0.95, y radius=0.11];
% --- 円錐の側面を楕円に沿って塗る ---
\fill[red, opacity=0.5] 
  (Apex) -- (LeftBase) 
  arc[start angle=180, end angle=360, x radius=0.95, y radius=0.11] 
  -- (RightBase) -- cycle;
% --- 背面の破線の底面楕円弧 ---
\draw[dashed] (LeftBase)
  arc[start angle=180, end angle=360, x radius=0.95, y radius=0.11];

\node at (2.6,3.3) {$\cone\left(  \widetilde{T_p S^{n-1}} \right)$};

\node[above] at (4,3) {cone vertex};

%底空間
\draw (3.6,1.8) ellipse (0.95 and 0.11);
\fill[red, opacity=0.5] (3.6,1.8) ellipse (0.95 and 0.11);
\node[right] at (4.6,1.8) {$\widetilde{T_p S^{n-1}}$};
\end{scope}
\end{tikzpicture}
\]

\end{itemize}
The interior $\mathring{\widetilde{T_p S^{n-1}}}$ is equipped with the subspace filtration induced from $\widetilde{T_p S^{n-1}}$:
\begin{align}\label{filtration Vcirc}
\mathring{\mathfrak{V}}: \mathring{\widetilde{T_p S^{n-1}}}=\mathring{V}_{n-1} \supset \mathring{V}_{n-2}
\supset \cdots \supset
\mathring{V}_k
\supset \cdots \supset
\mathring{V}_i
\supset \emptyset,
\end{align}
where
\[
\mathring{V}_k :=   f^{-1}\left(  f \left( \mathring{\widetilde{T_p S^{n-1}}} \right) \cap Q_{k}. \right)
\]
The open neighborhood $U_p$ is equipped with the subspace filtration induced from $B^n$:
\begin{align}\label{subspace filtration 14.4}
U_p= U_p \cap Q_n \supset U_p \cap Q_{n-1} \supset U_p \cap Q_{n-2} 
\supset \cdots \supset 
U_p \cap Q_{k} 
\supset \cdots \supset 
U_p \cap Q_{i} \supset U_p \cap Q_{i-1}=\emptyset.
\end{align}
Since $U_p \cap S^{n-1} = f \left( \mathring{\widetilde{T_p S^{n-1}}} \right)$, we have
\[
U_p \cap Q_k =f\left(\mathring{V}_k \right), \quad i \le k \le n-1.
\]

\subsubsection{Filtrations on $\cone\left( \widetilde{T_p S^{n-1}} \right)$ and $\cone\left( \widetilde{T_p N_p} \right)$}\label{図と証明の説明}

The proof of Proposition~\ref{ball is a topological stratified pseudomanifold} will be given in the next subsubsection.  
The goal of the proof is to verify Definition~\ref{def of pseudomanifold}-3.  
Here, we aim to show that there exists an $(n-i-1)$-dimensional topological stratified pseudomanifold $L_p$ such that
\[
U_p \cap Q_{i+j+1} \cong \mathbb{R}^i \times \cone((L_p)_j), \quad -1 \le j \le n-i-1.
\]
In this context, we require that $\cone((L_p))_{j+1} = \cone((L_p)_j)$.  
The topological stratified pseudomanifold $L_p$ is constructed from $\widetilde{T_p N_p}$ by equipping it with a filtration different from \eqref{N_pのfiltration}.

For $U_p \cong \cone\left( \widetilde{T_p S^{n-1}} \right)$, since 
$\widetilde{T_p S^{n-1}} \cong B^{n-1}$, 
$\widetilde{T_p N_p} \cong B^{n-i-1}$ and 
$U_p \cap S_p \cong \mathbb{R}^i$, we obtain
\begin{align}\label{U_p の分解}
U_p 
\cong
 \cone\left( \widetilde{T_p S^{n-1}} \right) 
\cong
 (U_p \cap S_p) \times \cone\left( \widetilde{T_p N_p} \right) 
\end{align}
Here, we define the filtrations on 
$\cone\left( \widetilde{T_p S^{n-1}} \right)$ and $\cone\left( \widetilde{T_p N_p} \right)$ 
as those induced from $U_p$.  
The aim of this subsubsection is to explain these filtrations.

\paragraph{Filtration on $\cone\left( \widetilde{T_p S^{n-1}} \right)$.}

From the choice of $U_p$, we have
$U_p \cap S^{n-1} \cong \cone\left( \partial \left( \widetilde{T_p S^{n-1}} \right) \right)$.
Moreover, since 
$\mathring{\widetilde{T_p S^{n-1}}} \cong \cone\left( \partial \left( \widetilde{T_p S^{n-1}} \right) \right)$, 
this homeomorphism induces a filtration on 
$\cone\left( \partial \left( \widetilde{T_p S^{n-1}} \right) \right)$ 
from $\mathring{\mathfrak{V}}$ \eqref{filtration Vcirc}:
\begin{align}\label{cone でない cone}
\cone\left( \partial \left( \widetilde{T_p S^{n-1}} \right) \right)
=
\cone\left(  \partial \left( V_{n-1} \right) \right)
\supset
\cone\left(  \partial \left( V_{n-2} \right) \right)
\supset \cdots \supset
\cone\left(  \partial \left( V_{k} \right) \right)
\supset \cdots \supset
\cone\left(  \partial \left( V_{i} \right) \right)
\supset \emptyset,
\end{align}
where
\[
\partial \left( V_{k} \right):=f^{-1}\left(  f \left( \partial \left( \widetilde{T_p S^{n-1}} \right) \right) \cap Q_{k} \right),
\quad i \le k \le n-1.
\]
Since $\mathring{\widetilde{T_p S^{n-1}}} = f^{-1}(U_p \cap S^{n-1})$ and $f^{-1}$ is a homeomorphism, the filtration $\mathring{\mathfrak{V}}$ \eqref{filtration Vcirc} can be regarded as induced from $U_p$. 
Therefore, the filtration \eqref{cone でない cone} is also induced from $U_p$. 
Consequently, via the homeomorphism $U_p \cong \cone\left( \widetilde{T_p S^{n-1}} \right)$, we obtain a filtration on $\cone\left( \widetilde{T_p S^{n-1}} \right)$:
\begin{align}\label{induced cone でない}
\cone\left(  \widetilde{T_p S^{n-1}} \right)
\supset
\cone\left(  \partial \left( V_{n-1} \right) \right)
\supset
\cone\left(  \partial \left( V_{n-2} \right) \right)
\supset \cdots \supset
\cone\left(  \partial \left( V_{k} \right) \right)
\supset \cdots \supset
\cone\left(  \partial \left( V_{i} \right) \right)
\supset \emptyset.
\end{align}
With this definition of the filtration, we also have
\[
U_p \cap Q_k
\cong
\begin{cases}
\cone\left(  \partial \left( V_{k} \right) \right), \quad &\text{for } i \le k \le n-1 \\
\cone\left(  \widetilde{T_p S^{n-1}} \right), \quad &\text{for } k=n.
\end{cases}
\]
The correspondence of the filtrations is illustrated in Figure~\ref{fig: filtration 1}.  
The filtration induced on $\cone\left( \widetilde{T_p S^{n-1}} \right)$ can be viewed as being covered from above by $\mathring{\widetilde{T_p S^{n-1}}}$.

\begin{figure}[htbp]
    \centering
\begin{tikzpicture}[scale=1.3]

\begin{scope}
% 球の赤道（破線）
\draw[dashed] (3.6,2.3) ellipse (0.95 and 0.11);
% 円と線分で囲まれた部分を赤で塗る
\fill[red, opacity=0.5]
  (2.65,2.3)
  arc[start angle=162.5, end angle=17.5, radius=1]  % 円弧（上側）
  -- (4.55,2.3)  % 右端から線分
  -- cycle;      % 始点に戻る
  % --- 下半分の弧と線分で囲まれた部分を赤で塗る ---
  \fill[red, opacity=0.5]
    (3.6,2.3) ++(-0.95,0)                % 左端 (2.65, 2.3)
    arc[start angle=180, end angle=360,  % 下半分
        x radius=0.95, y radius=0.11]
    -- (4.55,2.3) -- (2.65,2.3) -- cycle;
% 赤で塗りつぶす領域（楕円の上半分と円で囲まれた部分）
\fill[red, opacity=0.5] 
    (3.6-0.95,2.3) % 楕円の左端
    arc[start angle=180,end angle=0,x radius=0.95,y radius=0.11] % 楕円の上半分
    -- (4.55,2.3) arc[start angle=17.5,end angle=162.5,radius=1] -- cycle; % 円の上半分
\draw[thick]
  (3.55,2.3)
  ++(1,0)
  arc[start angle=17.5, end angle=162.5, radius=1];
\end{scope}

\begin{scope}[xshift=4cm]
% --- 頂点 ---
\coordinate (Apex) at (3.6,3); % 円錐の頂点
% --- 底面の左右端 ---
\coordinate (LeftBase) at (2.65,2.3);
\coordinate (RightBase) at (4.55,2.3);
% --- 円錐の側線 ---
\draw[thick] (Apex) -- (LeftBase);
\draw[thick] (Apex) -- (RightBase);
% --- 円錐の側線 ---
\draw[thick] (Apex) -- (LeftBase);
\draw[thick] (Apex) -- (RightBase);
% --- 背面の破線部分 ---
\draw[dashed] (3.6,2.3) ++(-0.95,0)
    arc[start angle=180, end angle=360, x radius=0.95, y radius=0.11];
% --- 円錐の側面を楕円に沿って塗る ---
\fill[red, opacity=0.5] 
  (Apex) -- (RightBase) 
  arc[start angle=0, end angle=180, x radius=0.95, y radius=0.11] 
  -- (LeftBase) -- cycle;
% --- 円錐の側線 ---
\draw[thick] (Apex) -- (LeftBase);
\draw[thick] (Apex) -- (RightBase);
% --- 底面の楕円弧（破線でもOK） ---
\draw[dashed] (RightBase) 
  arc[start angle=0, end angle=180, x radius=0.95, y radius=0.11];
% --- 円錐の側面を楕円に沿って塗る ---
\fill[red, opacity=0.5] 
  (Apex) -- (LeftBase) 
  arc[start angle=180, end angle=360, x radius=0.95, y radius=0.11] 
  -- (RightBase) -- cycle;
% --- 背面の破線の底面楕円弧 ---
\draw[dashed] (LeftBase)
  arc[start angle=180, end angle=360, x radius=0.95, y radius=0.11];
\end{scope}

\begin{scope}[xshift=4cm, yshift=-3.5cm]
\node [above] at (3.8, 2.5) {$\left(  \widetilde{T_pS^{n-1}} , \mathfrak{V} \, \eqref{filtration V}\right)$};
\end{scope}

\node at (3,3.3) {$\left( U_p , \eqref{subspace filtration 14.4} \right)$};

\node at (7.5,3.3) {$\left(
\cone\left(  \widetilde{T_pS^{n-1}} \right) , \eqref{induced cone でない}
\right)$};

%(1,1)
\node at (3.6,1.8) {\rotatebox{90}{$\subset$}};
\node[red] at (3.2,1.3) {$\left( U_p \cap S^{n-1},
f(\mathring{\mathfrak{V})} \right)$};

%(1,2)
\node at (3.6,0.8) {\rotatebox{90}{$=$}};
\node at (3,0.3) {$\left( f\left( \mathring{\widetilde{T_pS^{n-1}}} \right) ,
f(\mathring{\mathfrak{V})} \right)$};

%(1,3)
\node at (3.6,-0.2) {\rotatebox{90}{$\supset$}};
\node [above] at (3,-1) {$\left( f\left( \widetilde{T_pS^{n-1}} \right) , f(\mathfrak{V}) \right)$};

%(2,1)
\node at (7.5,1.8) {\rotatebox{90}{$\subset$}};
\node[red] at (7.8,1.3) {$\left(  \cone\left( \partial \left( \widetilde{T_pS^{n-1}} \right) \right) 
, \eqref{cone でない cone} \right)$};

%(2,2)
\node[red] at (7.5,0.8) {\rotatebox{90}{$\cong$}};
\node[red] at (8,0.3) {$\left(  \mathring{\widetilde{T_pS^{n-1}}} , \mathring{\mathfrak{V}}  \, \eqref{filtration Vcirc} \right)$};

%(2,3)
\node at (7.5,-0.2) {\rotatebox{90}{$\supset$}};

%上の写像
\node at (5.6,2.7) {$\xrightarrow{\hspace{2.5cm}}$};
\node at (5.6,2.79) {$\cong$};

%上から二番目の写像
\node at (5.2,1.3) {$\xrightarrow{\hspace{1.8cm}}$};
\node at (5.2,1.39) {$\cong$};

%真ん中の写像
\node [above] at (5.4,0.1) {$\xrightarrow{\hspace{2.5cm}}$};
\node [above] at (5.4,0.21) {$f^{-1}$};
\node [above] at (4.6,0.21) {$\cong$};

%下の写像
\node [above] at (5.4,-0.9) {$\xrightarrow{\hspace{2.5cm}}$};
\node [above] at (5.4,-0.81) {$f^{-1}$};
\node [above] at (4.6,-0.81) {$\cong$};

\end{tikzpicture}
    \caption{Filtration of $\cone \left( \widetilde{T_p S^{n-1}} \right)$}
    \label{fig: filtration 1}
\end{figure}

\paragraph{Filtration on $\cone\left( \widetilde{ T_p N_p} \right)$.}
Consider the decomposition
\[
\mathring{ \widetilde{ T_p S^{n-1}}}
\cong
\mathring{ \widetilde{ T_p S_p}}
\times
\mathring{ \widetilde{ T_p N_p}}.
\]
Here,
$\mathring{ \widetilde{ T_p S_p}}$
is equipped with the trivial filtration, while
$\mathring{ \widetilde{ T_p N_p}}$ is endowed with the following filtration:
\begin{align}\label{filtration Wcirc}
\mathring{\mathfrak{W}}: \mathring{\widetilde{T_p N_p}}=\mathring{W}_{n-1} \supset \mathring{W}_{n-2}
\supset \cdots \supset
\mathring{W}_k
\supset \cdots \supset
\mathring{W}_i
\supset \emptyset,
\end{align}
where 
\[
\mathring{W}_k :=   f^{-1}\left(  f \left( \mathring{\widetilde{T_p N_p}} \right) \cap Q_{k} \right).
\]
Since $\mathring{\widetilde{T_p N_p}} \cong \cone\left( \partial \left( \widetilde{T_p N_p} \right) \right)$,
in analogy with the filtration \eqref{cone でない cone}, we obtain
\begin{align}\label{cone になる cone}
\cone\left( \partial \left( \widetilde{T_p N_p} \right) \right)
=
\cone\left(  \partial \left( W_{n-1} \right) \right)
\supset
\cone\left(  \partial \left( W_{n-2} \right) \right)
\supset \cdots \supset
\cone\left(  \partial \left( W_{k} \right) \right)
\supset \cdots \supset
\cone\left(  \partial \left( W_{i} \right) \right)
\supset \emptyset,
\end{align}
where
\[
\partial \left( W_{k} \right):=f^{-1}\left(  f \left( \partial \left( \widetilde{T_p N_p} \right) \right) \cap Q_{k} \right),
\quad i \le k \le n-1.
\]
Moreover, by \eqref{S_pが最小のstratum} and \eqref{内部と一致}, we have
\begin{align*}
\mathring{\widetilde{T_p S_p}}
 &\cong 
f\left(  \mathring{\widetilde{T_p S_p}} \right) \\
&=
f\left(  \mathring{\widetilde{T_p S^{n-1}}} \right) \cap S_p \\
&=(U_p \cap S^{n-1}) \cap S_p
=U_p \cap  S_p,
\end{align*}
where the filtration is trivial.
Therefore, the filtration on
$( U_p \cap  S_p) \times \cone\left( \widetilde{T_p N_p} \right)$
induced via the homeomorphism
\[
\cone\left( \widetilde{T_p S^{n-1}} \right) 
\cong
( U_p \cap  S_p) \times \cone\left( \widetilde{T_p N_p} \right)
\]
is given by
\begin{align}\label{right filtration}
( U_p \cap  S_p) \times \cone\left( \widetilde{T_p N_p} \right) 
&\supset
( U_p \cap  S_p) \times \cone\left(  \partial \left( W_{n-1} \right) \right)
\supset
( U_p \cap  S_p) \times \cone\left(  \partial \left( W_{n-2} \right) \right)
\supset \cdots  \notag
\\
&\cdots \supset
( U_p \cap  S_p) \times \cone\left(  \partial \left( W_{k} \right) \right)
\supset \cdots  \notag
\\
&\cdots \supset
( U_p \cap  S_p) \times \cone\left(  \partial \left( W_{i+1} \right) \right)
\supset
( U_p \cap  S_p) \times \cone\left(  \partial \left( W_{i} \right) \right)
\supset \emptyset.
\end{align}
This filtration arises from the decomposition
$
\mathring{ \widetilde{ T_p S^{n-1}}}
\cong
\mathring{ \widetilde{ T_p S_p}}
\times
\mathring{ \widetilde{ T_p N_p}}.
$
Since the filtration $\mathring{\mathfrak{V}}$ \eqref{filtration Vcirc} on
$\mathring{ \widetilde{ T_p S^{n-1}}}$
is induced from $U_p$, the filtration \eqref{right filtration} is also induced from $U_p$.
Therefore, by the definition of the filtration, we have
\begin{align}\label{丁寧な準備}
U_p \cap Q_k
&\cong
\begin{cases}
\cone\left(  \partial \left( V_{k} \right) \right), \quad &\text{for } i \le k \le n-1 \\
\cone\left(  \widetilde{T_p S^{n-1}} \right), \quad &\text{for } k=n
\end{cases}
\notag
\\
&\cong
\begin{cases}
( U_p \cap  S_p) \times \cone\left(  \partial \left( W_{k} \right) \right), \quad &\text{for } i \le k \le n-1 \\
( U_p \cap  S_p) \times \cone\left(  \widetilde{T_p N_p} \right), \quad &\text{for } k=n.
\end{cases}
\end{align}
The correspondence of the filtrations is illustrated in Figure~\ref{fig: filtration}.

\begin{figure}[htbp]
    \centering
\begin{tikzpicture}[scale=1.3]

\begin{scope}
% 球の赤道（破線）
\draw[dashed] (3.6,2.3) ellipse (0.95 and 0.11);
% 円と線分で囲まれた部分を赤で塗る
\fill[red, opacity=0.5]
  (2.65,2.3)
  arc[start angle=162.5, end angle=17.5, radius=1]  % 円弧（上側）
  -- (4.55,2.3)  % 右端から線分
  -- cycle;      % 始点に戻る
  % --- 下半分の弧と線分で囲まれた部分を赤で塗る ---
  \fill[red, opacity=0.5]
    (3.6,2.3) ++(-0.95,0)                % 左端 (2.65, 2.3)
    arc[start angle=180, end angle=360,  % 下半分
        x radius=0.95, y radius=0.11]
    -- (4.55,2.3) -- (2.65,2.3) -- cycle;
% 赤で塗りつぶす領域（楕円の上半分と円で囲まれた部分）
\fill[red, opacity=0.5] 
    (3.6-0.95,2.3) % 楕円の左端
    arc[start angle=180,end angle=0,x radius=0.95,y radius=0.11] % 楕円の上半分
    -- (4.55,2.3) arc[start angle=17.5,end angle=162.5,radius=1] -- cycle; % 円の上半分
\draw[thick]
  (3.55,2.3)
  ++(1,0)
  arc[start angle=17.5, end angle=162.5, radius=1];

%S_pの図
% 折り返し前：実線
\draw[blue, thick] 
    (3.429, 2.172) 
    .. controls (3.5, 2.8) .. 
    (3.6, 3);
% 折り返し後：点線
\draw[blue, thick, dashed] 
    (3.6, 3) 
    .. controls (3.7, 2.8) .. 
    (3.776, 2.432);
    \draw[blue] (3.429, 2.172) circle (2pt);
    \draw[blue] (3.776, 2.432) circle (2pt);
\end{scope}

\begin{scope}[xshift=3cm]
% --- 頂点 ---
\coordinate (Apex) at (3.6,3); % 円錐の頂点
% --- 底面の左右端 ---
\coordinate (LeftBase) at (2.65,2.3);
\coordinate (RightBase) at (4.55,2.3);
% --- 円錐の側線 ---
\draw[thick] (Apex) -- (LeftBase);
\draw[thick] (Apex) -- (RightBase);
% --- 円錐の側線 ---
\draw[thick] (Apex) -- (LeftBase);
\draw[thick] (Apex) -- (RightBase);
% --- 背面の破線部分 ---
\draw[dashed] (3.6,2.3) ++(-0.95,0)
    arc[start angle=180, end angle=360, x radius=0.95, y radius=0.11];
% --- 円錐の側面を楕円に沿って塗る ---
\fill[red, opacity=0.5] 
  (Apex) -- (RightBase) 
  arc[start angle=0, end angle=180, x radius=0.95, y radius=0.11] 
  -- (LeftBase) -- cycle;
% --- 円錐の側線 ---
\draw[thick] (Apex) -- (LeftBase);
\draw[thick] (Apex) -- (RightBase);
% --- 底面の楕円弧（破線でもOK） ---
\draw[dashed] (RightBase) 
  arc[start angle=0, end angle=180, x radius=0.95, y radius=0.11];
% --- 円錐の側面を楕円に沿って塗る ---
\fill[red, opacity=0.5] 
  (Apex) -- (LeftBase) 
  arc[start angle=180, end angle=360, x radius=0.95, y radius=0.11] 
  -- (RightBase) -- cycle;
% --- 背面の破線の底面楕円弧 ---
\draw[dashed] (LeftBase)
  arc[start angle=180, end angle=360, x radius=0.95, y radius=0.11];
\end{scope}

\begin{scope}[xshift=6.3cm]
\draw[blue, thick] 
    (3.429, 2.172) 
    .. controls (3.5, 2.8) .. 
    (3.6, 3);
% 折り返し後：点線
\draw[blue, thick] 
    (3.6, 3) 
    .. controls (3.7, 2.8) .. 
    (3.776, 2.432);
    \draw[blue] (3.429, 2.172) circle (2pt);
    \draw[blue] (3.776, 2.432) circle (2pt);
\end{scope}
\begin{scope}[xshift=9cm]
% --- 頂点 ---
\coordinate (Apex) at (3.6,3); % 円錐の頂点
% --- 底面の左右端 ---
\coordinate (LeftBase) at (2.65,2.3);
\coordinate (RightBase) at (4.55,2.3);
\fill[gray!30] (4.55,2.3) -- (2.65,2.3) -- (3.6,3);
% --- 円錐の側線 ---
\draw[red, thick] (Apex) -- (LeftBase);
\draw[red,thick] (Apex) -- (RightBase);
\draw[dashed, thick] (4.55,2.3) -- (2.65,2.3);
    \draw[red] (4.55,2.3) circle (2pt);
    \draw[red] (2.65,2.3) circle (2pt);
\end{scope}

\begin{scope}[xshift=3cm, yshift=-3.5cm]
% 楕円
\draw[thick] (3.6,2.3) ellipse (0.95 and 0.11);
\fill[pink] (3.6,2.3) ellipse (0.95 and 0.11);
% クリップを楕円の上半分に限定
\begin{scope}
    \clip (3.6,2.3) ellipse (0.95 and 0.11); % 楕円内だけ描画
    % 線を描く
    \draw[red, thick] (5.2,2.3) -- (2.0,2.3);
    \draw[blue, thick] (4,2.6) -- (3.2, 2.0);
\end{scope}
    \fill[red] (4.55,2.3) circle (2pt);
    \fill[red] (2.65,2.3) circle (2pt);
    \fill[blue] (3.429, 2.172) circle (2pt);
    \fill[blue] (3.776, 2.432) circle (2pt);

%青
\node[blue]  at (3.5, 1.8) { $\left(\widetilde{T_pS_p}, \text{trivial} \right)$};

\node[red]  at (1.5,2.3) {$\left(  \widetilde{T_pN_p} , \mathfrak{W} \,\eqref{filtration W}  \right)$};

%黒
\node [above] at (3.6, 2.5) {$\left( \widetilde{T_pS^{n-1}} , \mathfrak{V} \right)$};
\end{scope}

%red
\begin{scope}[xshift=9cm, yshift=-3.5cm]
% クリップを楕円の上半分に限定
\begin{scope}
    \clip (3.6,2.3) ellipse (0.95 and 0.11); % 楕円内だけ描画
    % 線を描く
    \draw[red, thick] (5.2,2.3) -- (2.0,2.3);
\end{scope}
    \fill[red] (4.55,2.3) circle (2pt);
    \fill[red] (2.65,2.3) circle (2pt);
\end{scope}

%blue
\begin{scope}[xshift=6.3cm, yshift=-3.5cm]
% クリップを楕円の上半分に限定
\begin{scope}
    \clip (3.6,2.3) ellipse (0.95 and 0.11); % 楕円内だけ描画
    % 線を描く
    \draw[blue, thick] (4,2.6) -- (3.2, 2.0);
\end{scope}
    \fill[blue] (3.429, 2.172) circle (2pt);
    \fill[blue] (3.776, 2.432) circle (2pt);
\end{scope}

%(1,0)
\node[blue] at (3.8,3.3) {$U_p \cap S_p$};
\node at (2.6,3.3) {$U_p$};

%左上の同相
\node at (5,2.7) {$\xrightarrow{\hspace{1cm}}$};
\node at (5,2.79) {$\cong$};

%右上の同相
\node at (8.5,2.7) {$\xrightarrow{\hspace{2cm}}$};
\node at (8.5,2.79) {$\cong$};

%(3,0)
\node at (11.5,3.3) 
{$\left(
(U_p \cap S_p) 
\times 
\cone\left(  \widetilde{T_pN_p} \right)
,
\eqref{right filtration}
\right)$};
\node at (11,2.7) {$\times$};

%(3,1) blue
\node[blue] at (9.9,1.8) {\rotatebox{90}{$=$}};
\node[blue] at (9.9,1.3) {$\left(  U_p \cap S_p, \text{trivial}  \right)$};
\node at (11,1.3) {$\times$};
%(3,1) red
\node at (12.5,1.8) {\rotatebox{90}{$\subset$}};
\node[red] at (12.6,1.3) 
{$\left(   \cone\left( \partial \left( \widetilde{T_pN_p} \right) \right) , \eqref{cone になる cone}    \right)$};

%(3,2)
\node at (11,0.3) {$\times$};
%(3,2) blue
\node[blue] at (9.9,0.8) {\rotatebox{90}{$\cong$}};
\node[blue] at (9.9,0.3) {$\left(  \mathring{\widetilde{T_p S_p}}, \text{trivial}  \right)$};
%(3,2) red
\node[red] at (12.5,0.8) {\rotatebox{90}{$\cong$}};
\node[red]  at (12.5,0.3) {$\left(  \mathring{\widetilde{T_pN_p}} , \mathring{\mathfrak{W}} \, \eqref{filtration Wcirc}   \right)$};

%(3,3)
\node at (11,-0.7) {$\times$};
%(3,3) blue
\node[blue] at (9.9,-0.7) {$\left( \widetilde{T_p S_p}, \text{trivial}  \right)$};
\node[blue] at (9.9,-0.2) {\rotatebox{90}{$\supset$}};
%(3,3) red
\node[red] at (12.5,-0.2) {\rotatebox{90}{$\supset$}};
\node[red]  at (12.5,-0.7) {$\left(  \widetilde{T_pN_p} , \mathfrak{W} \, \eqref{filtration W}   \right)$};

%(2,0)
\node at (6.5,3.3) {$\left(
\cone\left(  \widetilde{T_pS^{n-1}} \right) , \eqref{induced cone でない}
\right)$};

%(2,1)
\node at (6.5,1.8) {\rotatebox{90}{$\subset$}};
\node at (6.5,1.3) {$\left(  \cone\left( \partial \left( \widetilde{T_pS^{n-1}} \right) \right) 
, \eqref{cone でない cone} \right)$};

%(2,2)
\node at (6.5,0.8) {\rotatebox{90}{$\cong$}};
\node at (6.5,0.3) {$\left(  \mathring{\widetilde{T_pS^{n-1}}} , \mathring{\mathfrak{V}}  \right)$};

%(2,3)
\node at (6.5,-0.2) {\rotatebox{90}{$\supset$}};

%map2
\node at (8.2,0.3) {$\xrightarrow{\hspace{1.7cm}}$};
\node at (7.6,0.39) {$\cong$};
\node at (8.3,0.39) {decomp.};

%map3
\node at (8.2,-0.7) {$\xrightarrow{\hspace{1.7cm}}$};
\node at (7.6,-0.61) {$\cong$};
\node at (8.3,-0.61) {decomp.};

\end{tikzpicture}
    \caption{Filtration of $\cone\left(\widetilde{T_p N_p}\right)$}
    \label{fig: filtration}
\end{figure}

\subsubsection{Proof of Proposition~\ref{ball is a topological stratified pseudomanifold}}\label{ballがpseudomanifoldであること}

\begin{proof}
If $n = 0$, then $B^0$ consists of a single point and is hence a topological stratified pseudomanifold.
Assume $n > 0$. We prove the statement by induction on $n$.
For $n = 1$, the ball $B^1 = [-1,1]$ has the following filtration:
\[
[-1,1] \supset \{ \pm 1 \} \supset \emptyset.
\]
Hence, it is a topological stratified pseudomanifold.
We now assume that the proposition holds for all dimensions less than $n$.
Conditions Definition \ref{def of pseudomanifold}-1 and Definition \ref{def of pseudomanifold}-2 are straightforward to verify.

For each $p \in Q_i \setminus Q_{i-1}$ (where $0 \leq i \leq n$), we next construct a triple $(U_p, L_p, \varphi_p)$ satisfying Definition \ref{def of pseudomanifold}-3.
If $i = n$, then we have $p \in \mathring{B^n}$, where $\mathring{B^n}$ denotes the interior of $B^n$.
Take an open neighborhood $U_p$ of $p$ in $B^n$ to be an open ball with sufficiently small radius. Then $U_p$ satisfies the condition Definition \ref{def of pseudomanifold}-3.

We now consider the case $i \neq n$.
In the following, we use the notation introduced in Subsubsection~\ref{接空間からの記号} -- Subsubsection~\ref{図と証明の説明}.
From \eqref{U_p の分解}, we have
\[
U_p \cong (U_p \cap S_p) \times \cone\left( \widetilde{T_p N_p}  \right),
\]
where $U_p$ is equipped with the subspace filtration, and
$(U_p \cap S_p) \times \cone\left( \widetilde{T_p N_p}  \right)$ 
is endowed with the filtration given in \eqref{right filtration}.
By
\eqref{丁寧な準備}, it follows that
\begin{align}\label{上の同型}
U_p \cap Q_k
\cong
\begin{cases}
( U_p \cap  S_p) \times \cone\left(  \partial \left( W_{k} \right) \right), \quad &\text{for } i \le k \le n-1 \\
( U_p \cap  S_p) \times \cone\left(  \widetilde{T_p N_p} \right), \quad &\text{for } k=n.
\end{cases}
\end{align}
The filtration induced on $\cone\left( \widetilde{T_p N_p}  \right)$ is
\begin{align}\label{cone そのもの}
\cone\left( \widetilde{T_p N_p}  \right)
\supset
\cone\left(  \partial \left( W_{n-1} \right) \right)
\supset
\cone\left(  \partial \left( W_{n-2} \right) \right)
\supset \cdots \supset
\cone\left(  \partial \left( W_{k} \right) \right)
\supset \cdots \supset
\cone\left(  \partial \left( W_{i} \right) \right)
\supset \emptyset.
\end{align}
By
\eqref{W_iは一点}, $W_i=f^{-1}(p)$.
Since 
$\partial (W_i) \subset \partial \left(\widetilde{T_p S^{n-1}} \right)$
and
$f^{-1}(p) \notin \partial \left( \widetilde{T_p S^{n-1}} \right)$,
we have
\[
\partial (W_i)=\emptyset.
\]
Hence, $\cone\left( \partial (W_i) \right)$ consists only of the cone vertex.
Therefore, the filtration \eqref{cone そのもの} on the open cone $\cone \left( \widetilde{T_p N_p} \right)$ is induced from the following filtration of $\widetilde{T_p N_p}$ in the usual sense of an open cone filtration (see \eqref{通常のcone filtration}):
\begin{align*}
\mathfrak{N}:
 \widetilde{T_p N_p}  
\supset
  \partial \left( W_{n-1} \right) 
\supset
  \partial \left( W_{n-2} \right) 
\supset \cdots \supset
  \partial \left( W_{k} \right) 
\supset \cdots \supset
  \partial \left( W_{i+1} \right)
\supset \emptyset.
\end{align*}
For $-1 \le j \le n-i-1$, define
\[
\left(\widetilde{T_p N_p}\right)_j :=
\begin{cases}
\partial \left( W_{i+j+1} \right), \quad  &\text{for }  -1 \le j \le n-i-2, \\
 \widetilde{T_p N_p}, \quad  &\text{for } j=n-i-1.
\end{cases}
\]
Then the homeomorphism \eqref{上の同型} can be rewritten as
\[
U_p \cap Q_{i+j+1} \cong
 (U_p \cap S_p)\times 
\cone \left( \left(\widetilde{T_p N_p}\right)_j \right), \quad -1 \le j \le n-i-1.
\]
Since
$U_p \cap S_p \cong \mathbb{R}^i$ and $\widetilde{T_p N_p} \cong B^{n-i-1}$ is compact, Definition~\ref{def of pseudomanifold}-3 is verified.

We finally show that $\left( \widetilde{T_p N_p}, \mathfrak{N} \right)$ is an $(n-i-1)$-dimensional topological stratified pseudomanifold.
It suffices to show that the filtration $\mathfrak{N}$ is induced by a complete fan; then, by the inductive hypothesis, it follows that $\left( \widetilde{T_p N_p}, \mathfrak{N} \right)$ is an $(n-i-1)$-dimensional topological stratified pseudomanifold.
A complete fan on $\mathcal{H}_p N_p$ is defined as follows:
Let $\sigma_p$ be the maximal cone in $\Sigma$ containing $p$.
Consider the fan $\Sigma_{p}$ on $\mathcal{H}_p$ with origin $t(p)$, generated by the rays from $t(p)$ toward the intersections of rays of $\Sigma$ with $\mathcal{H}_p$.
Then $\Sigma_{p}$ is an $(n-1)$-dimensional complete fan.
Since $\mathcal{H}_p N_p$ passes through the origin $t(p)$, restricting $\Sigma_{p}$ to $\mathcal{H}_p N_p$ gives an $(n-i-1)$-dimensional complete fan.
This induces a spherical complex on $\partial\left( \widetilde{T_p N_p} \right)$.
Because $\partial\left( \widetilde{T_p N_p} \right) \subset S^{n-1}$, the obtained spherical complex coincides with the restriction of $C_{\Sigma}$.
Hence, the filtration $\mathfrak{N}$ is induced by a complete fan.
\end{proof}

\subsection{Compact locally standard $T$-manifold}

In this subsection, we prove the following proposition.

\begin{prop}\label{prop: locally standard T-manifold}
A compact locally standard $T$-manifold $M^{2m}$ is a locally standard $T$-pseudomanifold.
\end{prop}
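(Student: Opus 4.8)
The plan is to verify directly that $M^{2n}$, equipped with the filtration $\mathfrak{X}$ by orbit dimension (for which $m=n$, $l=0$, and the length equals $n$), satisfies Definition~\ref{def of T-pseudomanifold}. Several points are routine. Being a compact manifold, $M$ is second-countable, compact and Hausdorff; local standardness forces each isotropy subgroup to be a coordinate subtorus up to an automorphism of $T^n$, hence in particular a subtorus, so the standing assumptions of Section~\ref{Notation} are in force, and conditions \eqref{cond-1}--\eqref{cond-3} hold (each connected component of $X_{2i}\setminus X_{2(i-1)}$ is a $2i$-dimensional manifold, visible from the local models described below; no orbit has dimension below $m-n=0$; and effectiveness together with local standardness makes the principal isotropy trivial, so free orbits exist whenever $n\ge 1$). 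The base case $n=0$ is immediate, since then $M^0$ is a finite disjoint union of copies of $T^0$. For Definition~\ref{def of T-pseudomanifold}-1 one observes that the top stratum $X_{2n}\setminus X_{2n-2}$ is exactly the free locus, which is open and is dense because the free locus is dense in each local model $\mathbb{C}^n$ with the standard action.

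The substance of the argument is Definition~\ref{def of T-pseudomanifold}-2, for which I would combine the local models with Lemma~\ref{lem of sphere} and the odd-dimensional case of Proposition~\ref{prop: standard sphere}. Fix $x\in X_{2i}\setminus X_{2(i-1)}$, so that $T_x\cong T^{n-i}$. By local standardness some $T^n$-invariant neighborhood of $x$ is weakly equivariantly homeomorphic to a $T^n$-invariant open subset of $\mathbb{C}^n$; after an automorphism of $T^n$ we may assume $T_x$ fixes the first $n-i$ coordinates, and, restricting to a neighborhood $U_x=\pi^{-1}(U_{\pi(x)})$ with $U_{\pi(x)}$ a product of disks (in the first $n-i$ coordinates) and annuli (in the remaining $i$ coordinates, to keep $T^n$-invariance), we obtain a weakly equivariant homeomorphism
\[
U_x\;\cong\;\mathbb{C}^{n-i}\times\Omega,
\]
where $\mathbb{C}^{n-i}$ carries the standard $T^{n-i}$-action and $\Omega\subset(\mathbb{C}^{\times})^{i}$ is a $U(1)^{i}$-invariant open set on which $U(1)^{i}\cong T^n/T_x$ acts freely. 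By Lemma~\ref{lem of sphere} there is a $T^{n-i}$-equivariant homeomorphism $\mathbb{C}^{n-i}\cong\cone(\mathbb{S}^{2(n-i)-1})$, and by Proposition~\ref{prop: standard sphere}(1) the sphere $\mathbb{S}^{2(n-i)-1}$ with its standard $T^{n-i}$-action is a $(2n-2i-1)$-dimensional compact locally standard $T^{n-i}$-pseudomanifold. Setting $L_x:=\mathbb{S}^{2(n-i)-1}$ (so $L_x=\emptyset$ when $i=n$, consistently with $\cone(\emptyset)$ being a point) and composing the homeomorphisms yields a weakly equivariant homeomorphism
\[
\varphi_x\colon U_x\;\longrightarrow\;O^{0}\times\bigl(\Omega\times U(1)^{0}\bigr)\times\cone(L_x),
\]
with $T^n$ acting through $T^n\cong T^n/T_x\times T_x\cong U(1)^{i}\times T^{n-i}$, exactly as prescribed by Definition~\ref{def of T-pseudomanifold}-2(c). (For $i=0$ this is the standard chart $\mathbb{C}^n\cong\cone(\mathbb{S}^{2n-1})$ near a fixed point; for $i=n$ it is the local triviality of the principal $T^n$-bundle over the free locus, with $L_x=\emptyset$.) Together with the preceding paragraph this establishes that $M^{2n}$ is a locally standard $T$-pseudomanifold.

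The delicate point is the passage from the definition of ``locally standard'' to the explicit product form $U_x\cong\mathbb{C}^{n-i}\times\Omega$: one must choose a $T^n$-\emph{invariant} (not merely open) neighborhood, which forces annuli rather than disks in the coordinates where $x$ is nonzero, and then match the resulting chart factor by factor against the template of Definition~\ref{def of T-pseudomanifold}-2(c) with $l=0$ and $m=n$, including the torus splitting and the dimension count $\dim L_x=2n-2i-1$. One should also take care to invoke only part~(1) of Proposition~\ref{prop: standard sphere}, whose proof is independent of the present proposition, since its part~(2) relies on it. Alternatively, one may quote the classification of locally standard torus manifolds (\cite{MP06}, \cite{Yos11}) to identify $M^{2n}$ with the canonical model $X(Q,\lambda,c)$ over its orbit space $Q$ and then apply Theorem~\ref{canonical model is a T-pseudomanifold} together with Proposition~\ref{inv of T-pseudomanifold}; this route additionally requires showing that a compact manifold with corners is a topological stratified pseudomanifold whose top stratum is homotopy equivalent to the whole (via the collar neighborhood theorem), which the text carries out only for polytopes.
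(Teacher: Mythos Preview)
Your proof is correct and follows essentially the same route as the paper: both arguments verify Definition~\ref{def of T-pseudomanifold} directly by passing from the local-standardness chart to $(\mathbb{C}^\times)^i\times\mathbb{C}^{n-i}$, then invoking Lemma~\ref{lem of sphere} to rewrite $\mathbb{C}^{n-i}\cong\cone(\mathbb{S}^{2(n-i)-1})$ and Proposition~\ref{prop: standard sphere}(1) to certify the link $L_x=\mathbb{S}^{2(n-i)-1}$. Two minor remarks: your phrase ``$T_x$ fixes the first $n-i$ coordinates'' should read ``$T_x$ acts on the first $n-i$ coordinates'' (it is the last $i$ on which $T_x$ acts trivially, consistent with your disk/annulus choice); and your explicit caution to use only the odd-dimensional case of Proposition~\ref{prop: standard sphere} to avoid circularity is a nice point the paper leaves implicit.
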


Recall that a {\it locally standard $T$-manifold} is defined as follows:

\begin{definition}[locally standard $T$-manifold {\cite[Chapter 7.4]{BP12}}]\label{def of locally standard T-manifold}
A locally standard $T$-manifold is a smooth, connected, closed, and orientable $2m$-dimensional manifold $M^{2m}$ equipped with a locally standard action of an $m$-dimensional torus $T^m$.
\end{definition}

We assume that all locally standard $T$-manifolds are compact. Since every compact manifold is second-countable, the second countability axiom automatically holds.
We begin by proving the following proposition:

\begin{prop}\label{manifold with corners is a pseudomanifold}
Let $M^{2m}$ be a compact locally standard $T$-manifold, and let $Q^m = M^{2m}/T^m$ be its orbit space. Then $Q^m$ is a second-countable, compact, topological stratified pseudomanifold.
\end{prop}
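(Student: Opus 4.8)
The plan is to realise $Q^{n}=M^{2n}/T^{n}$ as a manifold with corners and then to check, chart by chart, that a manifold with corners with the filtration coming from corner codimension is a topological stratified pseudomanifold in the sense of Definition~\ref{def of pseudomanifold}. The point‑set preliminaries go exactly as in Proposition~\ref{structure of orbit space}: since $M^{2n}$ is compact Hausdorff and $T^{n}$ is a compact group acting continuously, $Q^{n}$ is compact and Hausdorff; since $M^{2n}$ is a (compact) manifold it is second‑countable, and the orbit projection $\pi$ is continuous, open and surjective, so $Q^{n}$ is second‑countable.

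The heart of the argument is the local model. Fix $x\in M^{2n}$ whose isotropy group $T_{x}$ is, by Definition~\ref{def of locally standard T-manifold}, a $k$‑dimensional subtorus, so $\dim T^{n}\!\cdot\!x=n-k$. Local standardness (equivalently, the smooth slice theorem together with the standard form of the slice representation) gives a $T^{n}$‑invariant neighbourhood of the orbit equivariantly homeomorphic to $T^{n}\times_{T_{x}}(\mathbb{C}^{k}\times\mathbb{R}^{n-k})$, with $T_{x}\cong T^{k}$ acting coordinatewise on $\mathbb{C}^{k}$ and trivially on $\mathbb{R}^{n-k}$. Passing to orbit spaces, the image of this neighbourhood in $Q^{n}$ is homeomorphic to $(\mathbb{C}^{k}\times\mathbb{R}^{n-k})/T^{k}\cong[0,\infty)^{k}\times\mathbb{R}^{n-k}$ via $(z,w)\mapsto((|z_{1}|,\dots,|z_{k}|),w)$. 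This exhibits $Q^{n}$ as a manifold with corners; moreover, under this identification the orbit–dimension filtration $\mathfrak{Q}$ restricted to the chart records the number of vanishing $[0,\infty)$‑coordinates, so the skeleton difference containing $\pi(x)$ is locally $\{0\}\times\mathbb{R}^{n-k}\cong\mathbb{R}^{n-k}$. Hence each skeleton difference is a manifold of the correct dimension, so $(Q^{n},\mathfrak{Q})$ is a manifold stratified space (Definition~\ref{def of pseudomanifold}-1), and the top stratum, locally $(0,\infty)^{k}\times\mathbb{R}^{n-k}$, is dense (Definition~\ref{def of pseudomanifold}-2).

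It remains to verify the local cone condition (Definition~\ref{def of pseudomanifold}-3). Realising the standard $(k-1)$‑simplex as $\Delta^{k-1}=\{\,y\in[0,\infty)^{k}:\sum_{i}y_{i}=1\,\}$, the radial homeomorphism $[0,\infty)^{k}\setminus\{0\}\cong\Delta^{k-1}\times(0,\infty)$ extends to a homeomorphism $[0,\infty)^{k}\cong\cone(\Delta^{k-1})$ sending the "number of vanishing coordinates" filtration to the natural cone filtration built from the skeleta of $\Delta^{k-1}$; combining with the trivial $\mathbb{R}^{n-k}$ factor yields a filtration‑preserving homeomorphism of the chart with $\mathbb{R}^{n-k}\times\cone(\Delta^{k-1})$ restricting correctly on each skeleton. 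Since $\Delta^{k-1}$ is a convex polytope, it is a compact topological stratified pseudomanifold by Proposition~\ref{polytope is pseudomanifold}, of dimension $k-1$, so it serves as the link $L_{\pi(x)}$ and the triple $(U_{\pi(x)},\Delta^{k-1},\varphi_{\pi(x)})$ satisfies Definition~\ref{def of pseudomanifold}-3. Therefore $(Q^{n},\mathfrak{Q})$ is a second‑countable, compact topological stratified pseudomanifold. The main obstacle I anticipate is purely bookkeeping: checking that the filtration induced on $[0,\infty)^{k}$ from the orbit–dimension filtration of $M^{2n}$ matches, index for index, the skeleton filtration of $\Delta^{k-1}$ transported across the radial homeomorphism (and that the global indices $l,n$ of $\mathfrak{Q}$ come out consistently); once the local model $[0,\infty)^{k}\times\mathbb{R}^{n-k}$ is in hand, everything else is routine.
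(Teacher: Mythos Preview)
Your proposal is correct and follows essentially the same route as the paper: both arguments first derive the manifold-with-corners structure on $Q^{n}$ from local standardness (the paper cites \cite[Proposition 7.4.13]{BP12} while you work it out directly from the slice model), then use that the local chart $\mathbb{R}^{n-k}\times[0,\infty)^{k}$ is stratified homeomorphic to a neighbourhood in a simplex and appeal to Proposition~\ref{polytope is pseudomanifold} for the link condition. Your treatment is a bit more explicit about the radial homeomorphism $[0,\infty)^{k}\cong\cone(\Delta^{k-1})$, whereas the paper phrases this step as ``stratified homeomorphic to some neighbourhood of the $n$-simplex'' and leaves the cone identification implicit.
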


\begin{proof}
Since $M^{2m}$ is compact, second-countable, and connected, the orbit space $Q^m$ is also compact, second-countable, and connected.
By \cite[Proposition 7.4.13]{BP12}, the orbit space $Q^m$ has the structure of an $m$-dimensional manifold with corners, and furthermore has the structure of a manifold with faces (for manifolds with corners and manifolds with faces, see \cite{Joy12} and \cite{BP12}).
We define a filtration of $Q^m$ by the dimensions of its faces:
\begin{align}\label{filtration: manifold with faces}
\mathfrak{Q}: Q^m=Q_m \supsetneq Q_{m-1} \supset \cdots \supset Q_0 \supset \emptyset,
\end{align}
where each $Q_i$ is the union of $i$-dimensional faces ($0 \leq i \leq m$).
We now verify that this filtration gives $Q^m$ the structure of a topological stratified pseudomanifold (see Definition~\ref{def of pseudomanifold}).

If $m=0$, then $Q^m$ is a single point and hence is a topological stratified pseudomanifold.
Suppose that $m > 0$.
The interior of $Q^m$, i.e., the $m$-dimensional stratum, is an $m$-dimensional manifold. 
By \cite[Proposition 2.7]{Joy12}, each facet of $Q^m$ is an $(m-1)$-dimensional manifold with corners.
Its relative interior, i.e., the $(m-1)$-dimensional stratum, is an $(m-1)$-dimensional manifold.
Thus, by induction on the dimension, Definition \ref{def of pseudomanifold}-1 is satisfied.
The density of $Q_{m} \setminus Q_{m-1}$ follows from the observation that all faces are contained in the boundary of $Q$.
This shows Definition \ref{def of pseudomanifold}-2.
Since $Q^m$ has the structure of a manifold with corners, a neighborhood of a point in the relative interior of an $i$-dimensional face of $Q^m$ can be chosen to be homeomorphic to $\mathbb{R}^{i} \times \mathbb{R}_{\geq 0}^{m-i}$. This is stratified homeomorphic to some neighborhood of the $m$-simplex.
Therefore, since convex polytopes have the structure of topological stratified pseudomanifolds (by Proposition \ref{polytope is pseudomanifold}), Definition \ref{def of pseudomanifold}-3 is satisfied.
\end{proof}

We next give the proof of Proposition~\ref{prop: locally standard T-manifold}.

\begin{proof}[Proof of Proposition \ref{prop: locally standard T-manifold}]
If $n = 0$, then $M^0$ consists of a single point.
Therefore, we have $M^0 \cong T^0$, and hence $M^0$ is a locally standard $T$-pseudomanifold.
Suppose $m > 0$.
Let
\[
\mathfrak{M} : M^{2m} = M_{2m} \supsetneq M_{2(m-1)} \supset \cdots \supset M_0 \supset \emptyset
\]
denote the filtration by orbit dimension.
The orbit projection $\pi: M^{2m} \to Q^m$ induces the filtration \eqref{filtration: manifold with faces}; that is, $Q_i = M_{2i}/T^m$ for each $i$ (see \cite{BP12} and \cite{MP06}). 
We verify that $(M, \mathfrak{M})$ satisfies conditions \eqref{cond-1}, \eqref{cond-2}, and \eqref{cond-3} in Section \ref{Notation}.

For any $x \in M_{2i} \setminus M_{2(i-1)}$, we have $\pi(x) \in Q_i \setminus Q_{i-1}$. 
By Proposition \ref{manifold with corners is a pseudomanifold}, the orbit space $Q^m$ is a compact topological stratified pseudomanifold.
Hence, we can take a small open neighborhood $U_{\pi(x)} \subset Q$ of $\pi(x)$ (see Definition~\ref{def of small open nbh}).
Furthermore, since $Q^m$ is a manifold with corners, we may assume that  
\[
U_{\pi(x)} \cong \mathbb{R}^i \times \mathbb{R}_{\geq 0}^{m-i}.
\]
Because the $T$-action is locally standard, there exists a weakly equivariant homeomorphism
\begin{align}\label{locally standard T-manifold: weak equivariant}
\psi_x : \pi^{-1}(U_{\pi(x)}) \to (\mathbb{C}^{\times})^i \times \mathbb{C}^{m-i},
\end{align}
where $(\mathbb{C}^{\times})^i \times \mathbb{C}^{m-i}$ is equipped with a coordinatewise $T^m$-action via the isomorphism (note that, by local standardness, isotropy subgroups are subtori):
\[
T^n \cong T/T_x \times T_x \cong T^{i} \times T^{m-i}.
\]
That is, $\psi_x$ is chosen so that the following diagram commutes:
\[
\begin{tikzcd}[ampersand replacement=\&]
\pi^{-1}(U_{\pi(x)}) \rar{\pi}  \dar{\psi_x} \& U_{\pi(x)} \dar{\cong} \\
 (\mathbb{C}^{\times})^i \times \mathbb{C}^{m-i}  \rar{/T}\&  \mathbb{R}^i \times \mathbb{R}_{\geq 0}^{m-i} \\
\end{tikzcd}
\]
Since $U_{\pi(x)}$ is small, we have $U_{\pi(x)} \cap Q_i \subset Q_i \setminus Q_{i-1}$.
By Definition~\ref{def of pseudomanifold}-3~(c), it follows that
\[
U_{\pi(x)} \cap Q_i \cong \mathbb{R}^i.
\]
Therefore, 
\[
\pi^{-1}(U_{\pi(x)} \cap Q_i) \cong (\mathbb{C}^{\times})^i,
\]
which shows that $x$ has a neighborhood homeomorphic to $\mathbb{R}^{2i}$.
Hence, $(M^{2m}, \mathfrak{M})$ is a manifold stratified space, verifying condition \eqref{cond-1}.
Since $M^{2m}$ is a locally standard $T$-manifold, the preimage of a point in $Q_i \setminus Q_{i-1}$ is an $i$-dimensional orbit.
Hence, the minimal dimension of the orbit of any point is $l$,
where $l$ is the minimal dimension of a stratum of $Q$.
It follows that condition \eqref{cond-2} is satisfied.
Condition \eqref{cond-3} follows directly from the fact that the action is locally standard (i.e., it acts locally coordinatewise), and hence has free orbits.

We next show that $M_{2m} \setminus M_{2(m-1)}$ is dense in $M^{2m}$.  
Note that the orbit projection $\pi: M^{2m} \to Q^n$ is a surjective open map.
Since $Q_m \setminus Q_{m-1}$ is dense in $Q^m$, it follows that
\[
\pi^{-1}(Q_m \setminus Q_{m-1})  =  M_{2m} \setminus M_{2(m-1)}
\]
is dense in $M^{2m}$. This verifies Definition~\ref{def of T-pseudomanifold}-1.

It remains to verify Definition~\ref{def of T-pseudomanifold}-2.
By Lemma~\ref{lem of sphere}, there exists a weakly equivariant homeomorphism
\[
\theta : (\mathbb{C}^{\times})^i \times \mathbb{C}^{m-i} \to (\mathbb{C}^{\times})^i \times \cone(\mathbb{S}^{2(m-i)-1}),
\]
where $\mathbb{S}^{2(m-i)-1}$ is the unit sphere with the standard $U(1)^{m-i}$-action and $T^m$ acts on $(\mathbb{C}^{\times})^i \times \cone(\mathbb{S}^{2(m-i)-1})$ via the isomorphism
\[
T^m \cong T/T_x \times T_x \cong T^i \times T^{m-i} \cong U(1)^i \times U(1)^{m-i},
\]
with $T^{m-i}$ acting on $\mathbb{S}^{2(m-i)-1}$.
We then put
\[
U_x := \pi^{-1}(U_{\pi(x)}), \quad L_x := \mathbb{S}^{2(m-i)-1},
\]
and define the composition
\[
\begin{tikzcd}[ampersand replacement=\&]
\varphi_x : \pi^{-1}(U_{\pi(x)}) \rar{\psi_x} \& (\mathbb{C}^{\times})^i \times \mathbb{C}^{m-i} \rar{\theta}\& (\mathbb{C}^{\times})^i \times \cone(S^{2(m-i)-1}).
\end{tikzcd}
\]
Then $U_x$ is a $T$-invariant open neighborhood, $L_x$ is a compact locally standard $T$-pseudomanifold (see Proposition~\ref{prop: standard sphere}), and $\varphi_x$ is a weakly equivariant homeomorphism.
Thus, Definition~\ref{def of T-pseudomanifold}-2 is satisfied.
Therefore, all the conditions in the definition of a locally standard $T$-pseudomanifold are fulfilled.
\end{proof}

\begin{rem}
Let $M$ be a locally standard torus manifold with orbit space $Q$ whose second cohomology vanishes (see \cite{MP06}).  
We can define the characteristic data $(Q, \lambda, 0)$ of $M$ and construct its canonical model $M(Q, \lambda, 0)$.  
By \cite[Lemma 4.5]{MP06}, $M$ is equivariantly homeomorphic to $M(Q, \lambda, 0)$.  
Our result (see Lemma~\ref{main thm}) includes this as a special case.
\end{rem}

\section{An example of a locally standard $T$-pseudomanifold over a pyramid}\label{last ex}
In this section, we present a concrete example of a $6$-dimensional locally standard $T$-pseudomanifold over a convex polytope that is not simple.

We consider the Thom space $X$ of the complex line bundle $(S^3 \times S^3) \times_{T^2} \mathbb{C}$ over the Hirzebruch surface $(S^{3}\times S^{3})/T^{2} = \mathbb{C}P(\underline{\mathbb{C}} \oplus \gamma^{\otimes k})$, where $\underline{\mathbb{C}}$ denotes the trivial complex line bundle and $\gamma^{\otimes k}$ is the $k$-th tensor power of the canonical line bundle $\gamma$ over $\mathbb{C}P^1$.
That is, 
\[
X=X_6:=(S^3 \times S^3) \times_{T^2} D^2/(S^3 \times S^3) \times_{T^2} S^1.
\]
Here, the $T^2$-action on $(S^3 \times S^3) \times \mathbb{C}$ is defined as follows: 
\[
(t_1, t_2) \cdot \left( \begin{pmatrix} x_1 \\ x_2 \end{pmatrix}, \begin{pmatrix} y_1 \\ y_2 \end{pmatrix}, z \right) 
= \left( t_1 \begin{pmatrix} x_1 \\ x_2 \end{pmatrix}, t_2 \begin{pmatrix} y_1 \\ t_{1}^{k} y_2 \end{pmatrix}, t_{1}^{a} t_{2}^{b} z \right),
\]
for $(t_1, t_2) \in T^2$ and $\left( \begin{pmatrix} x_1 \\ x_2 \end{pmatrix}, \begin{pmatrix} y_1 \\ y_2 \end{pmatrix}, z \right) \in (S^3 \times S^3) \times \mathbb{C}$, where $k, a, b \in \mathbb{Z}$ are fixed integers. 
We define a $T^3$-action on the Thom space $(S^3 \times S^3) \times_{T^2} D^2/(S^3 \times S^3) \times_{T^2} S^1$ by
\[
(s_1, s_2, s_3) \cdot \left[ \begin{pmatrix} x_1 \\ x_2 \end{pmatrix}, \begin{pmatrix} y_1 \\ y_2 \end{pmatrix}, z \right]
= \left[ \begin{pmatrix} x_1 \\ s_1 x_2 \end{pmatrix}, \begin{pmatrix} y_1 \\ s_2 y_2 \end{pmatrix}, s_3 z \right]
\]
for $(s_1, s_2, s_3) \in T^3$ and $\left[ \begin{pmatrix} x_1 \\ x_2 \end{pmatrix}, \begin{pmatrix} y_1 \\ y_2 \end{pmatrix}, z \right] \in (S^3 \times S^3) \times_{T^2} D^2/(S^3 \times S^3) \times_{T^2} S^1$.

\begin{figure}[htbp]
  \centering
\begin{tikzpicture}[scale=0.9]
  % 頂点定義
  \coordinate (A) at (0, 0);  
  \coordinate (B) at (4, 0);
  \coordinate (C) at (6, 1.2);
  \coordinate (D) at (2, 1.2);
  \coordinate (S) at (3, 4.5);

  % 三角錐の骨格
  \draw[thick] (A)--(B)--(C);
  \draw[dashed, thick] (A)--(D)--(C);
  \draw[thick] (A)--(S);
  \draw[thick] (B)--(S);
  \draw[thick] (C)--(S);
  \draw[dashed, thick] (D)--(S);

  % 底面の塗りつぶし（cycleなし）
  \fill[gray!30, opacity=0.5] (A) -- (B) -- (C) -- (D) -- (A);

  % 側面の塗りつぶし（cycleなし）
  \fill[gray!30, opacity=0.4] (S) -- (A) -- (B) -- (S);
  \fill[gray!30, opacity=0.4] (S) -- (B) -- (C) -- (S);
  \fill[gray!30, opacity=0.4] (S) -- (C) -- (D) -- (S);
  \fill[gray!30, opacity=0.4] (S) -- (D) -- (A) -- (S);

  % === 楕円 8: D–A ===
   \draw[fill=gray!30, draw=gray!, rotate around={30.96:(1, 0.6)}]
      (1, 0.6) ellipse [x radius=1.165, y radius=0.3];

 %S-C
    \draw[fill=gray!30, rotate around={-47.73:(4.5, 2.85)}]
      (4.5, 2.85) ellipse [x radius=2.23, y radius=0.3];
%S-D
    \draw[fill=gray!30, draw=gray!, rotate around={73.16:(2.5, 2.85)}]
      (2.5, 2.85) ellipse [x radius=1.725, y radius=0.3];

  % === 楕円 7: C–D ===
    \draw[fill=gray!30, draw=gray!, rotate around={0:(4, 1.2)}]
      (4, 1.2) ellipse [x radius=2, y radius=0.3];

  % === 楕円 6: B–C ===
  \draw[fill=gray!30, rotate around={30.96:(5, 0.6)}]
      (5, 0.6) ellipse [x radius=1.165, y radius=0.3];

 % === 楕円 5: A–B ===
   \draw[fill=gray!30, rotate around={0:(2, 0)}]
      (2, 0) ellipse [x radius=2, y radius=0.3];

%S-A
    \draw[fill=gray!30, rotate around={56.31:(1.5, 2.25)}] 
      (1.5, 2.25) ellipse [x radius=2.7, y radius=0.3];
%S-B
    \draw[fill=gray!30, rotate around={-77.47:(3.5, 2.25)}]
      (3.5, 2.25) ellipse [x radius=2.305, y radius=0.3];

% 頂点を黒丸で描画
\fill (3, 4.5) circle (2pt);  % A
\fill (0, 0) circle (2pt);    % B
\fill (4, 0) circle (2pt);    % C
\fill (6, 1.2) circle (2pt);  % D
\fill (2, 1.2) circle (2pt);  % E

  % 頂点ラベル
  \node at (A) [below left] {B};
  \node at (B) [below right] {C};
  \node at (C) [above right] {D};
  \node at (D) [above left] {E};
  \node at (S) [above] {A};
\end{tikzpicture}
\caption{Thom space of the complex line bundle over the Hirzebruch surface}
  \label{figure in last ex}
\end{figure}

We enumerate the strata of $X$ in each dimension, below:

\subsubsection*{The fixed points}
{\footnotesize
\begin{align*}
A=\left[\begin{pmatrix} x_1 \\ x_2 \end{pmatrix}, \begin{pmatrix} y_1 \\ y_2 \end{pmatrix}, w (\in S^1)\right],
B=\left[\begin{pmatrix} 1 \\ 0 \end{pmatrix},\begin{pmatrix} 1 \\ 0 \end{pmatrix},0\right], 
C=\left[\begin{pmatrix} 0 \\ 1 \end{pmatrix},\begin{pmatrix} 1 \\ 0 \end{pmatrix},0\right],  
D=\left[\begin{pmatrix} 0 \\ 1 \end{pmatrix},\begin{pmatrix} 0 \\ 1 \end{pmatrix},0\right], 
E=\left[\begin{pmatrix} 1 \\ 0 \end{pmatrix},\begin{pmatrix} 0 \\ 1 \end{pmatrix},0\right].
\end{align*}
}
We denote the set of these fixed points by $X_0$.

\subsubsection*{The $2$-skeleton}
\begin{itemize}
\item
Radial edges in Figure \ref{figure in last ex}:
{\footnotesize
\begin{align*}
&S_{AB}=\left\{  \left[\begin{pmatrix} 1 \\ 0 \end{pmatrix},\begin{pmatrix} 1 \\ 0 \end{pmatrix},z\right] \mid z \in D^2  \right\} \setminus X_0, 
&S_{AC}=\left\{  \left[\begin{pmatrix} 0 \\ 1 \end{pmatrix},\begin{pmatrix} 1 \\ 0 \end{pmatrix},z\right] \mid z \in D^2  \right\} \setminus X_0, \\
&S_{AD}=\left\{  \left[\begin{pmatrix} 0 \\ 1 \end{pmatrix},\begin{pmatrix} 0 \\ 1 \end{pmatrix},z\right] \mid z \in D^2  \right\} \setminus X_0, 
&S_{AE}=\left\{  \left[\begin{pmatrix} 1 \\ 0 \end{pmatrix},\begin{pmatrix} 0 \\ 1 \end{pmatrix},z\right] \mid z \in D^2  \right\} \setminus X_0.
\end{align*}
}
\item
Base edges in Figure \ref{figure in last ex}:
{\footnotesize
\begin{align*}
&S_{BC}=\left\{  \left[\begin{pmatrix} x_1 \\ x_2 \end{pmatrix},\begin{pmatrix} 1 \\ 0 \end{pmatrix},0\right] \mid \begin{pmatrix} x_1 \\ x_2 \end{pmatrix} \in S^3  \right\} \setminus X_0, 
&S_{CD}=\left\{  \left[\begin{pmatrix} 0 \\ 1 \end{pmatrix},\begin{pmatrix} y_1 \\ y_2 \end{pmatrix},0\right] \mid \begin{pmatrix} y_1 \\ y_2 \end{pmatrix} \in S^3  \right\} \setminus X_0, \\
&S_{DE}=\left\{  \left[\begin{pmatrix} x_1 \\ x_2 \end{pmatrix},\begin{pmatrix} 0 \\ 1 \end{pmatrix},0\right] \mid \begin{pmatrix} x_1 \\ x_2 \end{pmatrix} \in S^3  \right\} \setminus X_0, 
&S_{EB}=\left\{  \left[\begin{pmatrix} 1 \\ 0 \end{pmatrix},\begin{pmatrix} y_1 \\ y_2 \end{pmatrix},0\right] \mid \begin{pmatrix} y_1 \\ y_2 \end{pmatrix} \in S^3  \right\} \setminus X_0. 
\end{align*}
}
\end{itemize}

\subsubsection*{The $4$-skeleton}
{\footnotesize
\begin{align*}
&S_{BCDE}=\left\{  \left[\begin{pmatrix} x_1 \\ x_2 \end{pmatrix},\begin{pmatrix} y_1 \\ y_2 \end{pmatrix},0\right] \mid \left(\begin{pmatrix} x_1 \\ x_2 \end{pmatrix},\begin{pmatrix} y_1 \\ y_2 \end{pmatrix}\right) \in S^3 \times S^3  \right\} \setminus X_2,\quad\text{(base face)} \\
&S_{ABC}=\left\{  \left[\begin{pmatrix} x_1 \\ x_2 \end{pmatrix},\begin{pmatrix} 1 \\ 0 \end{pmatrix},z\right] \mid \begin{pmatrix} x_1 \\ x_2 \end{pmatrix} \in S^3, z \in D^2  \right\} \setminus X_2, \,\,
S_{ADE}=\left\{  \left[\begin{pmatrix} x_1 \\ x_2 \end{pmatrix},\begin{pmatrix} 0 \\ 1 \end{pmatrix},z\right] \mid \begin{pmatrix} x_1 \\ x_2 \end{pmatrix} \in S^3, z \in D^2  \right\} \setminus X_2, \\
&S_{ACD}=\left\{  \left[\begin{pmatrix} 0 \\ 1 \end{pmatrix},\begin{pmatrix} y_1 \\ y_2 \end{pmatrix},z\right] \mid \begin{pmatrix} y_1 \\ y_2 \end{pmatrix} \in S^3, z \in D^2  \right\} \setminus X_2, \,\,
S_{AEB}=\left\{  \left[\begin{pmatrix} 1 \\ 0 \end{pmatrix},\begin{pmatrix} y_1 \\ y_2 \end{pmatrix},z\right] \mid \begin{pmatrix} y_1 \\ y_2 \end{pmatrix} \in S^3, z \in D^2  \right\} \setminus X_2.
\end{align*}
}

\subsubsection*{The isotropy subgroups to the corresponding faces}
We next enumerate the isotropy subgroups corresponding to each stratum.
The isotropy subgroups corresponding to the $4$-dimensional strata are:
{\footnotesize
\begin{align*}
S_{BCDE} \text{ (base face)} &\longleftrightarrow T\langle 0,0,1 \rangle, \\
S_{ABC} \text{ (front face)} &\longleftrightarrow T\langle 0,1,0 \rangle, 
&S_{ADE} \text{ (back face)} &\longleftrightarrow T\langle 0,1,b \rangle, \\
S_{ACD} \text{ (right face)} &\longleftrightarrow T\langle 1,k,a \rangle, 
&S_{AEB} \text{ (left face)} &\longleftrightarrow T\langle 1,0,0 \rangle.
\end{align*}
}
For the $2$-dimensional strata (edges):
\begin{itemize}
\item
Base edges in Figure \ref{figure in last ex}:
{\footnotesize
\begin{align*}
S_{BC}\text{ (front edge)} &\longleftrightarrow T\langle (0,0,1),(0,1,0) \rangle = \{ (1,t,s) \mid (s,t) \in T^2 \}, \\
S_{CD}\text{ (right edge)} &\longleftrightarrow T\langle (0,0,1),(1,k,a) \rangle = \{ (t,t^k,s t^a) \mid (s,t) \in T^2 \}, \\
S_{DE}\text{ (back edge)} &\longleftrightarrow T\langle (0,0,1),(0,1,b) \rangle = \{ (1,t,s t^b) \mid (s,t) \in T^2 \}, \\
S_{EB}\text{ (left edge)} &\longleftrightarrow T\langle (0,0,1),(1,0,0) \rangle = \{ (t,1,s) \mid (s,t) \in T^2 \}.
\end{align*}
}
\item
Radial edges in Figure \ref{figure in last ex}:
{\footnotesize
\begin{align*}
S_{AB} &\longleftrightarrow T\langle (0,1,0),(1,0,0) \rangle = \{ (t,s,1) \mid (s,t) \in T^2 \}, \\
S_{AC} &\longleftrightarrow T\langle (0,1,0),(1,k,a) \rangle = \{ (t,st^k,t^a) \mid (s,t) \in T^2 \}, \\
S_{AD} &\longleftrightarrow T\langle (0,1,b),(1,k,a) \rangle = \{ (t,st^k,s^bt^{a}) \mid (s,t) \in T^2 \}, \\
S_{AE} &\longleftrightarrow T\langle (0,1,b),(1,0,0) \rangle = \{ (t,s,s^b) \mid (s,t) \in T^2 \}.
\end{align*}
}
\end{itemize}
Each fixed point corresponds to $T^3$, and the top stratum $X_6 \setminus X_4$ corresponds to the trivial subgroup $\{1\} \subset T^{3}$. Therefore, we obtain the characteristic data $(Q, \lambda, 0)$ of $X$, where $Q$ is the pyramid $ABCDE$ and $\lambda$ is the associated characteristic functor that assigns the above isotropy subgroups to the corresponding faces.
Since $Q$ and its interior are contractible, the Chern class of $X$ is $0$.

\begin{rem}
Note that the weights used to describe the isotropy subgroup associated with each stratum are not uniquely determined in general.
For example, the isotropy subgroup corresponding to $S_{CD}$ can be written as
\[
T\langle (0,0,1),(1,k,a) \rangle,
\] 
but this subgroup is isomorphic to \[T\langle (0,0,1),(1,k,0) \rangle.\]
\end{rem}

\subsubsection*{The link of each point}
Finally, the link at each point can be taken as follows:
{\footnotesize
\begin{itemize}
\item The link $L_A$ of the vertex $A$ is defined by
\[
L_A \cong (S^{3}\times S^{3}) \times_{T^{2}} S^1,
\]
where $T^2$ acts on $(S^{3}\times S^{3}) \times S^1$ by
\[
(t_1, t_2) \cdot \left( \begin{pmatrix} x_1 \\ x_2 \end{pmatrix}, \begin{pmatrix} y_1 \\ y_2 \end{pmatrix}, z \right) 
= \left( t_1 \begin{pmatrix} x_1 \\ x_2 \end{pmatrix}, t_2 \begin{pmatrix} y_1 \\ t_{1}^{k} y_2 \end{pmatrix}, t_{1}^{a} t_{2}^{b} z \right).
\]

\item $L_B \cong L_C \cong L_D \cong L_E \cong S^5$, {with the standard $T^3$-action}.

\item For $x \in X_2 \setminus X_0$, the link $L_{x} \cong S^3$, {with the standard $T^2$-action}.

\item For $x \in X_4 \setminus X_2$, the link $L_{x} \cong S^1$, {with the standard $T^1$-action}.
\end{itemize}
}

\subsubsection*{Conditions for the example to be a toric variety}

We conclude this section by determining when this example is a toric variety (up to a $T$-equivariant homeomorphism).

\begin{rem}
We verify whether there exists a toric variety $X_{\Sigma}$ whose characteristic data is isomorphic to that of $X$, where $\Sigma$ is the fan corresponding to $X_{\Sigma}$.
If such a toric variety $X_{\Sigma}$ exists, then by Theorem~\ref{classification theorem},
$X$ and $X_{\Sigma}$ are $T$-equivariantly homeomorphic.

According to \cite[Chapter~12]{CLS11}, for the characteristic functor associated with a toric variety $X_{\Sigma}$, the isotropy subgroup corresponding to each facet is generated by the primitive generator of the corresponding ray ($1$-dimensional cone) of $\Sigma$.
Therefore, each ray generator must be of one of the following forms (where, for each generator, one of the two possible signs is chosen):
\[
\pm (0,1,0)_{ABC},
\pm (1,k,a)_{ACD},
\pm (0,1,b)_{ADE},
\pm (1,0,0)_{AEB},
\pm (0,0,1)_{BCDE}.
\]
Here, the subscripts indicate the corresponding facets.
We consider a fan $\Sigma$ whose cones are generated by these vectors.
We assume that each maximal cone of $\Sigma$ is generated by the ray generators corresponding to the facets around a vertex of the pyramid.
For example,
the facets around the vertex $A$ are $ABC, ACD, ADE$ and $AEB$.
Hence, the corresponding maximal cone $\sigma_{A}$ is given by
\[
\sigma_{A}:=\pm (0,1,0)_{ABC} \mathbb{R}_{\ge0}  \pm (1,k,a)_{ACD} \mathbb{R}_{\ge0} 
 \pm (0,1,b)_{ADE} \mathbb{R}_{\ge0}  \pm (1,0,0)_{AEB} \mathbb{R}_{\ge0}.
\]
Since $X$ is compact, it suffices to consider compact toric varieties $X_{\Sigma}$.
Because a compact toric variety corresponds to a complete fan, we investigate the conditions under which $\Sigma$ can be constructed as a complete fan.
\begin{itemize}
\item
When $a=0$ or $b=0$, no such fan $\Sigma$ can be constructed.
Indeed, it is easy to see that in this case $\sigma_{A}$ is either not strongly convex or some ray appears that does not occur as a face of any maximal cone.

\item
When $a>0$ and $b>0$, by choosing the signs as
\[
- (0,1,0)_{ABC},
 (1,k,a)_{ACD},
 (0,1,b)_{ADE},
- (1,0,0)_{AEB},
- (0,0,1)_{BCDE}.
\]
we obtain a complete fan $\Sigma$.

\item
When $a>0$ and $b<0$, by choosing the signs as
\[
 (0,1,0)_{ABC},
 (1,k,a)_{ACD},
-(0,1,b)_{ADE},
- (1,0,0)_{AEB},
- (0,0,1)_{BCDE}.
\]
we obtain a complete fan $\Sigma$.

\item
When $a<0$ and $b>0$, by choosing the signs as
\[
- (0,1,0)_{ABC},
 -(1,k,a)_{ACD},
 (0,1,b)_{ADE},
 (1,0,0)_{AEB},
- (0,0,1)_{BCDE}.
\]
we obtain a complete fan $\Sigma$.

\item
When $a<0$ and $b<0$, by choosing the signs as
\[
 (0,1,0)_{ABC},
- (1,k,a)_{ACD},
- (0,1,b)_{ADE},
 (1,0,0)_{AEB},
- (0,0,1)_{BCDE}.
\]
we obtain a complete fan $\Sigma$.
\end{itemize}
It is straightforward to verify that the topological stratified pseudomanifold $(B, \mathfrak{B})$ induced from the complete fan constructed above (see Construction~\ref{filtration construction}) is stratified homeomorphic to the pyramid.
The rays of the fan are chosen so that the characteristic functor of $X_{\Sigma}$ agrees with that of $X$ on facets (that is, the labels on the facets of the pyramid).
In this example, the isotropy subgroup corresponding to a lower-dimensional face of the pyramid is induced from the isotropy subgroups corresponding to the facets containing it.
On the other hand, for $X_{\Sigma}$, the isotropy subgroup of each stratum is determined by the corresponding cone, which is generated by rays. Thus, the isotropy subgroups of $X_{\Sigma}$ are also induced from the rays.
Therefore, the characteristic functors of $X$ and $X_{\Sigma}$ coincide.
Moreover, the Chern classes of both $X$ and $X_{\Sigma}$ are $0$.
Therefore, by Theorem~\ref{classification theorem}, we obtain $X\cong X_{\Sigma}$.
Consequently, we conclude that $X$ is a toric variety if and only if $a\neq0$ and $b\neq0$.
\end{rem}

\appendix
\section{A fact in topology}\label{app: topology}

In this section, we prepare a lemma from topology. For the basics of topology, see \cite{Mun14} and \cite{Wil70}, among others.

\begin{lem}\label{dense coincide}
Let $X$ be a topological space, $Y$ a Hausdorff space, and $Z$ a dense subset of $X$. Suppose that two continuous maps $f, g: X \to Y$ coincide on $Z$. Then $f=g$.
\end{lem}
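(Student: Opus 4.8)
The statement is the standard fact that continuous maps into a Hausdorff space are determined by their values on a dense subset. The plan is to show that the set $E := \{ x \in X \mid f(x) = g(x) \}$ is closed in $X$; since $Z \subseteq E$ by hypothesis and $Z$ is dense, this forces $E = \overline{E} \supseteq \overline{Z} = X$, hence $f = g$.

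First I would show $E$ is closed by showing its complement $X \setminus E$ is open. Fix $x_0 \in X \setminus E$, so $f(x_0) \neq g(x_0)$ in $Y$. Since $Y$ is Hausdorff, choose disjoint open sets $V, W \subseteq Y$ with $f(x_0) \in V$ and $g(x_0) \in W$. By continuity of $f$ and $g$, the set $U := f^{-1}(V) \cap g^{-1}(W)$ is an open neighborhood of $x_0$ in $X$. For any $x \in U$ we have $f(x) \in V$ and $g(x) \in W$, and since $V \cap W = \emptyset$ we get $f(x) \neq g(x)$, i.e. $x \notin E$. Thus $U \subseteq X \setminus E$, proving $X \setminus E$ is open and hence $E$ is closed.

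Alternatively — and perhaps more cleanly — I would use the diagonal map $h = (f,g) : X \to Y \times Y$, which is continuous since $f$ and $g$ are. Because $Y$ is Hausdorff, the diagonal $\Delta_Y = \{(y,y) \mid y \in Y\}$ is closed in $Y \times Y$, so $E = h^{-1}(\Delta_Y)$ is closed in $X$. Either way, the conclusion follows: $E$ is a closed set containing the dense subset $Z$, so $E = X$ and therefore $f(x) = g(x)$ for all $x \in X$.

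There is no real obstacle here; the only thing to be careful about is invoking Hausdorffness of $Y$ in exactly the right place (either via disjoint separating neighborhoods or via closedness of the diagonal), and noting that density of $Z$ is used precisely to pass from $Z \subseteq E$ and $E$ closed to $X = \overline{Z} \subseteq \overline{E} = E$.
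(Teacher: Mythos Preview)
Your proof is correct and takes essentially the same approach as the paper: show that the equalizer $E=\{x\in X\mid f(x)=g(x)\}$ is closed (the paper cites Munkres for this, while you prove it directly), then use $Z\subseteq E$ and density of $Z$ to conclude $E=X$.
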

\begin{proof}
Since $Y$ is Hausdorff, the set $A = \{ x \in X \mid f(x) = g(x) \}$ is closed in $X$ (see \cite[Section 31, Exercises-5]{Mun14}). From the assumption that $f(x) = g(x)$ for $x \in Z$, we have $Z \subset A$. Since $Z$ is dense and $A$ is closed in $X$, it follows that $X \subset A$. Thus, we conclude that 
\[
X = A = \{ x \in X \mid f(x) = g(x) \}.
\]
This implies that $f=g$.
\end{proof}

\section{Some facts in topological stratified pseudomanifold}\label{BB}
In this section, we define topological stratified pseudomanifolds and prepare several supporting lemmas.

\begin{definition}[topological stratified pseudomanifold {{\cite[Chapter 2]{Fri20}}}]\label{def of pseudomanifold}

Let $Q$ be a Hausdorff space.
A {\it topological stratified pseudomanifold $Q$} is defined inductively by dimension as follows:
\begin{itemize}
\item
A {\it $0$-dimensional topological stratified pseudomanifold} is a set of points with the discrete topology;
\item
For $n\ge 0$ and $l \ge 0$, with $l+n\neq 0$, an {\it $(l+n)$-dimensional topological stratified pseudomanifold} $Q$ is equipped with a filtration
\[
\mathfrak{Q} : Q = Q_{l+n} \supsetneq Q_{l+n-1} \supset Q_{l+n-2} \supset \cdots \supset Q_{l+i} \supset \cdots \supset Q_l \supsetneq Q_{l-1}=\emptyset,
\] 
which satisfies the following three conditions for every $0 \leq i \leq n$.
\begin{enumerate}
\item if an $(l+i)$-dimensional strata $Q_{l+i}\setminus Q_{l+i-1}\not=\emptyset$, then each connected component of $Q_{l+i}\setminus Q_{l+i-1}$ is an $(l+i)$-dimensional (topological) manifold;

\item the top dimensional strata $Q_{l+n} \setminus Q_{l+n-1}$ is dense in $Q$;
\item for each $p \in Q_{l+i} \setminus Q_{l+i-1} (\not=\emptyset)$, there exists a triple $(U_p, L_p, \varphi_p)$ consisting of
\begin{enumerate}
\item
an open neighborhood $U_p$ of $p$ in $Q$;
\item
an $(n-i-1)$-dimensional compact topological stratified pseudomanifold $L_p$, possibly the empty set $\emptyset$ ($L_p$ is called a {\it link of $p$});
\item
a homeomorphism $\varphi_p : U_p \to O_p \times \mathring{c}(L_p) $ whose restriction map induces the homeomorphism 
\[
\varphi_p |_{U_p \cap Q_{l+i+j+1}} : U_p \cap Q_{l+i+j+1} \to O_p \times \mathring{c}((L_{p})_j),
\]
for $-1 \leq j \leq n-i-1$, where $O_p$ is a contractible open subset in $\mathbb{R}^{l+i}$.
Here, by the definition of an open cone, $\cone(L_p)$ has a natural filtration by
$
(\mathring{c}(L_{p}))_{j+1}
:=
\mathring{c}((L_{p})_{j})
$
for every $j$ with $-1\le j\le n-i-1$.
\end{enumerate}
\end{enumerate}
\end{itemize}
Note that if $j=-1$, then the homeomorphism $\varphi_{p}|_{U_{p}\cap Q_{l+i}}:U_{p}\cap Q_{l+i}\to O_{p}\times \cone(\emptyset)$ may be regarded as the local coordinate of the $(l+i)$-dimensional manifold $Q_{l+i}\setminus Q_{l+i-1}$.
\end{definition}

\begin{rem}
The "topological stratified pseudomanifold" defined in {\cite{Fri20}} and the "topological pseudomanifold" defined in \cite[Section 1.1]{GM80} and \cite[p.82]{GM83} are different concepts. The latter is called {\it classical} in \cite{Fri20}; that is, we call an $(l+n)$-dimensional topological stratified pseudomanifold {\it classical} if $Q_{l+n-1} = Q_{l+n-2}$.
\end{rem}

\begin{rem}\label{rem of stratified homeo}
If $\varphi_p$ is a stratified homeomorphism, then the restriction conditions in the latter part of Definition~\ref{def of pseudomanifold}-3 (c) are automatically satisfied.
\end{rem}

\begin{prop}[{{\cite[Lemma 2.3.7]{Fri20}}}]\label{frontier condition}
A topological stratified pseudomanifold satisfies that for any two strata $S$ and $S'$, if ${S} \cap \overline{S'} \neq \emptyset$, then ${S} \subset \overline{S'}$.
\end{prop}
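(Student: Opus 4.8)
The plan is to fix two strata $S,S'$ of $Q$ with $S\cap\overline{S'}\neq\emptyset$ and deduce $S\subset\overline{S'}$ by a connectedness argument: I will show that $A:=S\cap\overline{S'}$ is both closed and open in $S$. Closedness is immediate, since $\overline{S'}$ is closed in $Q$. Since $S$ is a connected component of some $Q_{l+i}\setminus Q_{l+i-1}$ it is connected, and $A\neq\emptyset$ by hypothesis, so once $A$ is shown to be open we get $A=S$, which is the claim. If $S=S'$ there is nothing to prove, so I will assume $S\neq S'$; since distinct strata of $Q$ are pairwise disjoint (they are the connected components of the skeleton differences), this will let me rule out certain degenerate configurations below.

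The heart of the argument is the openness of $A$. Given $q\in A$, I would take a cone chart $\varphi_q:U_q\to O_q\times\mathring{c}(L_q)$ around $q$ as in Definition~\ref{def of pseudomanifold}-3~(c), after first shrinking $U_q$ to a smaller product-cone neighborhood (such neighborhoods form a local basis at $q$) so that $U_q\cap Q_{l+i-1}=\emptyset$. Using the $j=-1$ case of the restriction condition on $\varphi_q$ together with the connectedness of $O_q$, this yields $U_q\cap S=\varphi_q^{-1}\big(O_q\times\{\text{cone vertex}\}\big)$. In particular any point of $U_q\cap S'$ lying over the cone vertex would lie in $S\cap S'$, forcing $S=S'$; so in our case $L_q\neq\emptyset$ and, choosing $x\in U_q\cap S'$ (nonempty since $q\in\overline{S'}$), the $[0,1)$-coordinate of $\varphi_q(x)$ is positive. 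By the filtration compatibility of $\varphi_q$ and the identity $\mathring{c}((L_q)_j)\setminus\mathring{c}((L_q)_{j-1})=\big((L_q)_j\setminus(L_q)_{j-1}\big)\times(0,1)$, the point $\varphi_q(x)$ lies in $O_q\times R\times(0,1)$, where $R$ is the stratum of $L_q$ containing its $L_q$-coordinate. Then $\varphi_q^{-1}\big(O_q\times R\times(0,1)\big)$ is connected, lies in a single skeleton difference of $Q$, and contains $x\in S'$, so it is contained in $S'$.

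Finally I would use that $O_q\times\{\text{cone vertex}\}\subset\overline{O_q\times R\times(0,1)}$ inside $O_q\times\mathring{c}(L_q)$ (because $[v,r]\to[v,0]$ in $\mathring{c}(L_q)$ as $r\to0^{+}$), and transport this inclusion along the homeomorphism $\varphi_q^{-1}$ to obtain $U_q\cap S\subset\overline{\varphi_q^{-1}(O_q\times R\times(0,1))}\subset\overline{S'}$, i.e.\ $U_q\cap S\subset A$. Since $U_q\cap S$ is an open neighborhood of $q$ in $S$, this proves $A$ is open, and the connectedness argument finishes the proof. I expect the only real obstacle to be the bookkeeping: reading off precisely from the chart's restriction conditions that $U_q\cap S$ is the slice over the cone vertex and that $\varphi_q^{-1}(O_q\times R\times(0,1))$ meets only one stratum, plus the minor care needed to shrink the cone chart so as to avoid $Q_{l+i-1}$ while keeping its product-cone form; everything else is formal.
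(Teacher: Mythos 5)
Your proof is correct. The paper does not supply an argument for this proposition — it cites \cite[Lemma~2.3.7]{Fri20} directly — but the strategy you use (show that $A=S\cap\overline{S'}$ is nonempty, closed and open in the connected stratum $S$, extracting openness from a distinguished cone chart) is the standard proof of the frontier condition and is essentially the argument in Friedman's book. One small remark: the shrinking you perform to avoid $Q_{l+i-1}$ is in fact unnecessary, since the $j=-1$ clause of Definition~\ref{def of pseudomanifold}-3~(c) makes $\varphi_q|_{U_q\cap Q_{l+i}}$ a homeomorphism onto $O_q\times\{\text{cone vertex}\}$ and there is no lower nonempty piece of the filtration on the chart side, so a filtered homeomorphism already forces $U_q\cap Q_{l+i-1}=\emptyset$; but your alternative of shrinking to a smaller distinguished neighborhood (valid since such neighborhoods form a local basis) is equally sound. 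The remaining bookkeeping — identifying $U_q\cap S$ with the cone-vertex slice via connectedness of $O_q$ and of $S$, placing $\varphi_q(x)$ in $O_q\times R\times(0,1)$ for a link stratum $R$, pulling back into $S'$ by connectedness, and then using that the cone-vertex slice lies in the closure of $O_q\times R\times(0,1)$ — is carried out correctly.
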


We call the condition in Proposition \ref{frontier condition} a {\it frontier condition} of a topological stratified pseudomanifold.

The following lemma asserts that, assuming the topological stratified pseudomanifold is compact, we can take a small open neighborhood around each point.

\begin{lem}\label{lem of small open nbh}
Let $Q$ be a compact topological stratified pseudomanifold and $S$ be a stratum. For each $p \in {S} \subset Q$, we can take a small open neighborhood $U_p$ (see Definition \ref{def of small open nbh}) of $p$ in $Q$.
\end{lem}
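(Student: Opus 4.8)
The plan is to prove Lemma~\ref{lem of small open nbh} by a direct argument using the local product structure of a topological stratified pseudomanifold together with compactness. Let $p \in S$, where $S$ is the stratum through $p$, say $S \subset Q_{l+i} \setminus Q_{l+i-1}$. By Definition~\ref{def of pseudomanifold}-3, there is a triple $(U_p, L_p, \varphi_p)$ with a homeomorphism $\varphi_p : U_p \to O_p \times \mathring{c}(L_p)$ whose restriction gives homeomorphisms $U_p \cap Q_{l+i+j+1} \cong O_p \times \mathring{c}((L_p)_j)$. In particular $\varphi_p(p)$ has the form $(a, v_0)$ where $v_0$ is the cone vertex of $\mathring{c}(L_p)$, and $U_p \cap S = \varphi_p^{-1}(O_p \times \{v_0\})$ since $U_p \cap Q_{l+i}$ maps to $O_p \times \{$cone vertex$\}$ and $U_p \cap Q_{l+i-1} = \emptyset$ by the local structure (the link $L_p$ sits over dimensions $> l+i$).

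\textbf{Key steps.} First I would observe that in the model space $O_p \times \mathring{c}(L_p)$, every point $(a, [w,r])$ with $r \in [0,1)$ lies in the closure of a stratum that contains $O_p \times \{v_0\}$ in its closure: indeed, writing $[w,r]$ with $w$ in some stratum $R$ of $L_p$ (when $r \neq 0$), the stratum of $O_p \times \mathring{c}(L_p)$ through $(a,[w,r])$ is $O_p \times R \times (0,1)$ (via the standard identification $\mathring{c}(L_p) \setminus \{v_0\} \cong L_p \times (0,1)$), whose closure contains the cone vertex slice $O_p \times \{v_0\}$, because in the cone $\mathring{c}(R)$ the vertex lies in $\overline{R \times (0,1)}$. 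Transporting this back through $\varphi_p$, every point $p' \in U_p$ lies in the closure $\overline{S'}$ of a stratum $S'$ of $Q$ with $\overline{S'} \supset U_p \cap S$; by the frontier condition (Proposition~\ref{frontier condition}), since $S \cap \overline{S'} \supset U_p \cap S \neq \emptyset$, we get $S \subset \overline{S'}$. This is precisely the smallness condition of Definition~\ref{def of small open nbh}. So the neighborhood $U_p$ furnished by Definition~\ref{def of pseudomanifold}-3 is already small, and the role of compactness of $Q$ is only to guarantee (via Remark~\ref{rem: small nbh} and Remark~\ref{rem of small open nbh}, referenced elsewhere) that such neighborhoods can moreover be taken contractible; for the bare existence of a small neighborhood, compactness is not even needed, but I will phrase the argument in the compact setting as stated.

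\textbf{Main obstacle.} The one point requiring care is the bookkeeping of which stratum of $Q$ corresponds to a point $p'$ in the cone-chart near $p$: we must check that the stratum $S'$ through $p'$, when intersected with $U_p$, has $U_p \cap S$ in its closure \emph{as strata of $Q$}, not merely in the model cone. This follows because the homeomorphism $\varphi_p$ is filtration-preserving on the nose (it carries $U_p \cap Q_{l+i+j+1}$ onto $O_p \times \mathring{c}((L_p)_j)$ for all $j$), so it matches strata of $U_p$ with strata of $O_p \times \mathring{c}(L_p)$, and closures are preserved by homeomorphisms. Hence the computation in the model cone transfers verbatim. Once this is in place, the lemma follows immediately; there is no genuine difficulty beyond carefully invoking the definition of the open-cone filtration and the frontier condition.
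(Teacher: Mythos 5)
Your proof is correct and takes a genuinely different route from the paper's. The paper argues by iterated shrinking: starting from an arbitrary open neighborhood $U$ of $p$, whenever a stratum $S'$ meets $U$ with $S \not\subset \overline{S'}$, the frontier condition (Proposition~\ref{frontier condition}) forces $S \cap \overline{S'} = \emptyset$, and regularity of $Q$ (from \cite[Corollary 2.3.16]{Fri20}) lets one separate $p$ from $\overline{S'}$ and shrink $U$ to miss $S'$; compactness enters only through \cite[Lemma 2.3.8]{Fri20}, which guarantees finitely many strata so that the shrinking terminates. You instead show directly that the distinguished cone chart $U_p$ of Definition~\ref{def of pseudomanifold}-3 is already small, using that every stratum of the model $O_p \times \mathring{c}(L_p)$ other than $O_p \times \{v_0\}$ has $O_p \times \{v_0\}$ in its closure, that $\varphi_p$ matches strata of $U_p$ with strata of the model, and then the frontier condition. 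This is cleaner, avoids regularity entirely, and, as you correctly note, does not actually need compactness. Two small caveats. First, you implicitly use $U_p \cap Q_{l+i-1} = \emptyset$; Definition~\ref{def of pseudomanifold}-3(c) only states the restriction homeomorphisms for $j \ge -1$, i.e.\ for skeleta $Q_{l+i}$ and above, and is literally silent about $Q_{l+i-1}$. That said, this is surely the intended reading (it is consistent with Remark~\ref{rem of stratified homeo} and with Friedman's conventions), and without it the whole cone-chart picture would be pathological, so it is reasonable to assume. Second, your aside that compactness is used to obtain contractible small neighborhoods ``via Remark~\ref{rem: small nbh} and Remark~\ref{rem of small open nbh}'' inverts the paper's logical order, since those remarks themselves rely on the present lemma. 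Neither caveat affects the correctness of your main argument.
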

\begin{proof}
For any $p \in Q$, let $U$ be an open neighborhood of $p$ in $Q$. If $U$ is small, then we may put $U_p :=U$. Otherwise, by Definition \ref{def of small open nbh}, there exists a stratum $S'$ such that
\[
U \cap {S'} \neq \emptyset 
\quad \text{and}  \quad {S}
\not\subset \overline{S'}.
\]
By the frontier condition (see Proposition \ref{frontier condition}), if ${S} \cap \overline{S'} \neq \emptyset$ then ${S} \subset \overline{S'}$.
Taking the contrapositive, we have
\[
{S} \cap \overline{S'} = \emptyset.
\]
Thus, $p \notin S'$.
Due to {{\cite[Corollary 2.3.16]{Fri20}}}, a topological stratified pseudomanifold is a regular space.
Therefore, there exist open subsets $U_{Q}'$ and $V$ such that $p \in U_{Q}' ,\, \overline{S'} \subset V$ and $U_{Q}' \cap V = \emptyset$. Now we put 
\[
U' := U_{Q}' \cap U.
\]
This is an open neighborhood of $p$, and $U' \cap S' =\emptyset $.
If $U'$ is small, then we may put $U_p := U'$.
Otherwise, the same argument can be applied again. By {{\cite[Lemma 2.3.8-3)]{Fri20}}}, a compact topological stratified pseudomanifold has only finitely many strata. Therefore, this process must stop after a finite number of steps.
\end{proof}

\begin{rem}\label{rem of small open nbh}
Let $Q$ be a compact topological stratified pseudomanifold.
For $p \in Q$, an open neighborhood $U_p$ in Definition \ref{def of pseudomanifold}-3 can be taken as a small open neighborhood, in the following way.
By Lemma \ref{lem of small open nbh}, we can take a small neighborhood $U$ of $p$. 
The {\it subspace filtration} is defined by
\[
\mathfrak{U}: U=U_{l+n} \supset \cdots \supset
U_{l+i}
\supset \cdots \supset
U_l \supsetneq \emptyset,
\]
where $U_{l+i}=Q_{l+i} \cap U$.
According to {{\cite[Lemma 2.4.10]{Fri20}}}, by the subspace filtration, $U$ has the structure of a topological stratified pseudomanifold.
Therefore, there exists an open neighborhood $U_p \,(\subset U)$ of $p$ that satisfies the condition in Definition \ref{def of pseudomanifold}-3.
Since $U$ is small, $U_p \subset U$ is also small.
Furthermore, by Definition \ref{def of pseudomanifold}, $U_p$ is homeomorphic to $O_p \times \cone(L_p^{n-i-1})$. Since both $O_p$ and $\cone(L_p^{n-i-1})$ are contractible, $U_p$ is also contractible. This implies that the small open neighborhood $U_p$ can be chosen to be contractible.
\end{rem}

\section{Locally standard $T$-pseudomanifold over a convex polytope}\label{sec: polytope}
In this section, we first show that a convex polytope admits the structure of a topological stratified pseudomanifold (Subsection \ref{subsec: polytope is pseudomanifold}).
Subsequently, we demonstrate that a characteristic function on a simple polytope (see Definition \ref{def of char function}) determines a characteristic functor (see Subsection~\ref{subsec: characteristic function on simple polytope}).
We begin by establishing terminology related to convex polytopes.

Let $P \subset \mathbb{R}^n$ be an $n$-dimensional convex polytope.
We assume that $P$ has the relative topology inherited from the $n$-dimensional Euclidean space $\mathbb{R}^n$.
For each $0 \leq i \leq n$, let $\mathcal{F}_{i}(P)$ denote the collection of all $i$-dimensional faces of $P$. 
To emphasize the dimension, we often denote $F \in \mathcal{F}_i (P)$ by $F^i$.
For each face $F$, 
we denote the relative interior of $F$ by $\mathring{F}$. 
See {\cite{Zie96}} for the basic facts about convex polytopes.

A convex polytope admits the following natural filtration associated with its faces.

\begin{definition}[polytopal filtration]\label{def of polytopal filtration}
Let $P$ be an $n$-dimensional convex polytope.
The following filtration exists:
\[
\mathfrak{P}: P \supset \bigcup_{F^{n-1} \in \mathcal{F}_{n-1}(P) }F^{n-1} \supset \cdots \supset \bigcup_{F^i \in \mathcal{F}_i(P)}F^i \supset \cdots \supset \bigcup_{v \in \mathcal{F}_0(P)}\{v\} \supsetneq \emptyset.
\]
We refer to this filtration as the {\it polytopal filtration} of $P$.
\end{definition}

\subsection{A convex polytope is a topological stratified pseudomanifold}\label{subsec: polytope is pseudomanifold}
In this subsection, we first prove the following proposition.
\begin{prop}\label{polytope is pseudomanifold}
An $n$-dimensional convex polytope $P$ equipped with the polytopal filtration
is a topological stratified pseudomanifold. 
\end{prop}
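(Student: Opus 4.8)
The plan is to verify the three conditions of Definition~\ref{def of pseudomanifold} by induction on $n$, following the pattern already used in the proof of Proposition~\ref{ball is a topological stratified pseudomanifold}. The base case $n=0$ is immediate: a $0$-dimensional polytope is a point, which is a $0$-dimensional topological stratified pseudomanifold by definition. For the inductive step, assume the claim for all convex polytopes of dimension less than $n$, and let $P \subset \mathbb{R}^n$ be an $n$-dimensional convex polytope with its polytopal filtration $\mathfrak{P}$. Here $l=0$ and the skeleton index coincides with the dimension, i.e.\ $P_i = \bigcup_{F^i \in \mathcal{F}_i(P)} F^i$.

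First I would check Definition~\ref{def of pseudomanifold}-1 and -2. For -1: each connected component of $P_i \setminus P_{i-1}$ is the relative interior $\mathring{F}$ of an $i$-dimensional face $F$, and $\mathring{F}$ is an $i$-dimensional manifold (it is an open convex subset of the $i$-dimensional affine hull of $F$); distinct faces have disjoint relative interiors. For -2: every proper face of $P$ lies in $\partial P$, and $\mathring{P} = P \setminus \partial P$ is dense in $P$, which gives density of the top stratum $P_n \setminus P_{n-1} = \mathring{P}$.

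The substantive part is Definition~\ref{def of pseudomanifold}-3: producing, for each $p$ in the relative interior $\mathring{F}$ of an $i$-dimensional face $F$, a triple $(U_p, L_p, \varphi_p)$ with $\varphi_p : U_p \to O_p \times \mathring{c}(L_p)$ a homeomorphism respecting the filtrations, where $O_p \subset \mathbb{R}^i$ is a contractible open set and $L_p$ is an $(n-i-1)$-dimensional compact topological stratified pseudomanifold. The standard approach is via the \emph{vertex figure} / \emph{face figure}: the faces of $P$ containing $F$ correspond bijectively (order-reversingly) to the faces of the face figure $P/F$, an $(n-i-1)$-dimensional convex polytope. Concretely, slice $P$ by a small affine subspace transverse to $F$ at $p$: choosing an affine complement $N$ to $\mathrm{aff}(F)$ through $p$ and intersecting $P$ with a small ball in $p + N$ produces a cone neighborhood. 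More precisely, I would take $U_p$ to be a small neighborhood of $p$ in $P$ of the form $(\mathring{F} \cap B) \times (\text{small cone over } P/F)$; since $\mathring{F}$ is locally Euclidean, a neighborhood in $\mathring{F}$ gives the $O_p \cong \mathbb{R}^i$ (or a contractible open subset thereof) factor, and the transverse slice, being a small neighborhood of a vertex-like cone point in the polytope $P/F$, is homeomorphic to $\mathring{c}(\partial(P/F))$. One then sets $L_p := \partial(P/F)$, the boundary complex of the face figure, which is homeomorphic to $S^{n-i-1}$ as a space; by the inductive hypothesis applied to the polytope $P/F$ (of dimension $n-i-1 < n$) together with the local structure near its vertex, $\partial(P/F)$ inherits the structure of an $(n-i-1)$-dimensional compact topological stratified pseudomanifold, and $\varphi_p$ can be arranged so that the face stratification of $U_p$ matches the cone filtration on the stratification of $L_p$ — i.e.\ a $(j+1)$-dimensional face of $P$ through $F$ corresponds, in the slice, to a cone on a $j$-dimensional face of $P/F$.

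\textbf{Main obstacle.} The delicate point is making the homeomorphism $\varphi_p$ genuinely respect the filtrations at the level of the nested skeleta, not merely on the top stratum. One must argue that a small neighborhood of $p$ in $P$ is homeomorphic, as a \emph{filtered} space, to $O_p \times \mathring{c}(L_p)$ where $L_p$ carries the stratification induced from the face poset of $P/F$. The cleanest way is to use the convexity of $P$ directly: the radial (``straight-line toward $p$'') structure gives a canonical homeomorphism from a neighborhood of $p$ to a cone on a transverse slice, and the faces of $P$ are exactly the sets on which this radial structure is affine, so the cone coordinate carries faces of $P$ through $F$ to cones on faces of the slice. This is analogous to, but somewhat simpler than, the spherical-dual argument in Proposition~\ref{ball is a topological stratified pseudomanifold}, because here the polytope's own face structure already provides the stratification, and one does not need to manufacture it from a fan. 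I would also need to record, as in Remark~\ref{rem of small open nbh}, that $U_p$ can be taken small and contractible, which follows since $O_p$ and $\mathring{c}(L_p)$ are both contractible. Finally, one invokes the inductive hypothesis on $P/F$ to conclude $L_p = \partial(P/F)$ is a topological stratified pseudomanifold, completing the induction.
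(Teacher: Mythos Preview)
Your plan — induction on $n$, using the face figure as link via a transverse slice, and checking that the radial cone structure respects the face filtration — is exactly what the paper does. The paper realizes the link concretely as $L_p^{n-i-1} := P \cap \mathcal{H}_{F^i} \cap N_p(F^i)$ (Construction~\ref{cons of link in polytope}), which is an $(n-i-1)$-dimensional convex polytope combinatorially equivalent to the face figure $P/F$; it proves in a separate lemma (Lemma~\ref{lem for prop 6.2}) that $P \cap \mathcal{H}_{F^i}^{(+)} \cap N_p(F^i) = \cone(L_p^{n-i-1})$, and then translates this cone over $O_p \subset \mathring{F}^i$ to build $U_p$ and verify filtration compatibility directly.

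There is, however, a concrete error in your identification of the link. You set $L_p := \partial(P/F)$ and claim it is $(n-i-1)$-dimensional and homeomorphic to $S^{n-i-1}$. But since $P/F$ is (as you yourself say) an $(n-i-1)$-polytope, its boundary $\partial(P/F)$ has dimension $n-i-2$ and is homeomorphic to $S^{n-i-2}$; the open cone on it then has dimension $n-i-1$, so $O_p \times \mathring{c}(\partial(P/F))$ is only $(n-1)$-dimensional, not $n$-dimensional as required for a neighborhood in $P$. The link in the sense of Definition~\ref{def of pseudomanifold} must be the full $(n-i-1)$-dimensional polytope $P/F$ itself (equivalently: the vertex figure, at $p$, of the $(n-i)$-dimensional transverse slice $P \cap N$), not its boundary sphere. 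With $L_p := P/F$ your inductive hypothesis applies directly to $L_p$ as a lower-dimensional convex polytope, and the rest of your argument goes through as you describe. (A smaller slip: the bijection between faces of $P$ containing $F$ and faces of $P/F$ is order-\emph{preserving}, sending $F \mapsto \emptyset$ and $P \mapsto P/F$.)
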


\begin{rem}
Proposition \ref{polytope is pseudomanifold} appears to be well-known among experts. However, for the sake of completeness, we include a proof in this paper.
\end{rem}

We put $Q_i = \bigcup_{F^i \in \mathcal{F}_{i}(P)}F^i$, for $0 \leq i \leq n$.
Then, for each $0 \leq i \leq n$,
\[
Q_i \setminus Q_{i-1} =
\bigcup_{F^i \in \mathcal{F}_{i}(P)}F^i 
\setminus
\bigcup_{F^{i-1} \in \mathcal{F}_{i-1}(P)}F^{i-1}
=
\bigsqcup_{F^i \in \mathcal{F}_i(P)} \mathring{F}^i.
\]

To verify the condition in Definition \ref{def of pseudomanifold} for $(P, \mathfrak{P})$,
we first construct an $(n-i-1)$-dimensional polytope $L_{p}^{n-i-1} \subset P$ which will be a link of $p \in \mathring{F}^i \subset P$.

\begin{cons}[link $L_{p}^{n-i-1}$]\label{cons of link in polytope}
For any point $p \in P$, suppose that $p$ lies in the interior of an $i$-dimensional face $F^i$.
We construct a polytope $L_{p}^{n-i-1} \subset P$, which may be regarded as the {\it link} of $p$. For $i=n$, in this case $F^i=P$. We define $L_{p}^{-1}:=\emptyset$.
For $0 \leq i < n$, we construct $L_{p}^{n-i-1}$ by induction on $i$.
\begin{itemize}
\item
For $i=0$, the face $F^0$ is a vertex of $P$. In this case, $F^{0}=\{p\}$. Take a hyperplane $\mathcal{H}_{F^0}$ that separates $F^0$ from the other vertices of $P$. Then, we set
\[
L_{p}^{n-1}:=P \cap \mathcal{H}_{F^0}.
\]
This is also known as a {\it vertex figure} (see {{\cite[Chapter2]{Zie96}}}).
\[
\begin{tikzpicture}
  % 頂点定義
  \coordinate (B) at (0, 0);  
  \coordinate (C) at (4, 0);
  \coordinate (D) at (6, 1.2);
  \coordinate (E) at (2, 1.2);
  \coordinate (A) at (3, 4.5);

  % 平行移動された面
  \coordinate (B1) at (0, 2.7);
  \coordinate (C1) at (4, 2.7);
  \coordinate (D1) at (6, 3.9);
  \coordinate (E1) at (2, 3.9);

  % ピラミッド背面

  \draw[dashed, thick] (B)--(E)--(D);
  \draw[dashed, thick] (E)--(A);
  \draw[thick] (D)--(A);

  % ピラミッド前面

  \draw[thick] (B)--(C)--(D);
  \draw[thick] (B)--(A);
  \draw[thick] (C)--(A);

  % 赤い平面
  \fill[gray, opacity=0.2] (B1) -- (C1) -- (D1) -- (E1) -- cycle;
  \draw[dotted, thick] (B1) -- (C1) -- (D1) -- (E1) -- cycle;

  % 正しい内分点（1:3でA:Bなど）
  \coordinate (P) at (2.25, 3.375);  % ABの1:3内分点（Bに近い）
  \coordinate (Q) at (3.25, 3.375);  % ACの1:3内分点
  \coordinate (R) at (3.75, 3.675);    % ADの1:3内分点
  \coordinate (S) at (2.75, 3.675);    % AEの1:3内分点

  % 青い交差四角形を描画
  \draw[red] (P) -- (Q) -- (R) -- (S) -- cycle;
  % \fill[blue, opacity=0.2] (P) -- (Q) -- (R) -- (S) -- cycle;  % 面で塗る場合
  \fill[red, opacity=0.2] (P) -- (Q) -- (R) -- (S) -- cycle;

  % 頂点ラベル
  \node at (B) [below left] {B};
  \node at (C) [below right] {C};
  \node at (D) [right] {D};
  \node at (E) [left] {E};
  \node at (A) [above] {A};
\node at (D1) [below, yshift=-2pt] {$\mathcal{H}_{\mathrm{A}}$};
\node[red] at (R) [right] {$L_{\mathrm{A}}^{n-1}$};
\node at (3, -0.7) {vertex figure of A};
\end{tikzpicture}
\]

\item
For $1 \leq i \leq n-1$, we construct $L_{F^i}^{n-i-1}$ as follows. Take a hyperplane $\mathcal{H}_{F^i}$ that separates the vertices of $F^i$ from the other vertices of $P$.
Since the polytope is embedded in $\mathbb{R}^n$, we can consider the normal space $N_p (F^i)$ to $F^i$ at $p$. Note that $\dim N_p (F^i)=n-i.$ We set
\[
L_p^{n-i-1}:= P \cap\mathcal{H}_{F^i} \cap N_p (F^i) .
\]
Then, since $\mathcal{H}_{F^i}$ and $N_p (F^i)$ are intersecting transversally, $L_p^{n-i-1}$ is an $(n-i-1)$-dimensional convex polytope.
\end{itemize}
\end{cons}

\begin{figure}[htbp]
\centering
\begin{tikzpicture}[scale=0.95]
  % === 図1（左） ===
  \begin{scope}[xshift=0cm]
    % 頂点定義
    \coordinate (B) at (0, 0);  
    \coordinate (C) at (4, 0);
    \coordinate (D) at (6, 1.2);
    \coordinate (E) at (2, 1.2);
    \coordinate (A) at (3, 4.5);
    \draw[thick] (B)--(C)--(D);
    \draw[dashed, thick] (B)--(E)--(D);
    \draw[thick] (B)--(A);
    \draw[thick] (C)--(A);
    \draw[thick] (D)--(A);
    \draw[dashed, thick] (E)--(A);
    \node at (B) [below left] {B};
    \node at (C) [below right] {C};
    \node at (D) [right] {D};
    \node at (E) [left] {E};
    \node at (A) [above] {A};
    % 赤い平面（辺ACと平行）
    \coordinate (P1) at (2, 4.2);
    \coordinate (P2) at (3, -1);
    \coordinate (P3) at (6.1, -0.4);
    \coordinate (P4) at (5.1, 4.8);
    \fill[gray, opacity=0.3] (P1) -- (P2) -- (P3) -- (P4) -- cycle;
    \draw[dotted, thick] (P1) -- (P2) -- (P3) -- (P4) -- cycle;
    % 内分点と平行四辺形
    \coordinate (PAB) at (2.5, 3.75);
    \coordinate (PCB) at (3.3333, 0);
    \coordinate (PAD) at (3.5, 3.95);
    \coordinate (PCD) at (4.3333, 0.2);
    \draw[red] (PAB) -- (PCB) -- (PCD) -- (PAD) -- cycle;
    \fill[red, opacity=0.2] (PAB) -- (PCB) -- (PCD) -- (PAD) -- cycle;
    \node at (P3) [right] {$\mathcal{H}_{\mathrm{AC}}$};
\node[red] at (R) [right] {$P \cap \mathcal{H}_{\mathrm{AC}}$};
  \end{scope}
  % === 図2（右） ===
  \begin{scope}[xshift=8cm]
    % 頂点定義
    \coordinate (B) at (0, 0);  
    \coordinate (C) at (4, 0);
    \coordinate (D) at (6, 1.2);
    \coordinate (E) at (2, 1.2);
    \coordinate (A) at (3, 4.5);
    \coordinate (p) at (3.5, 2.25);
    \coordinate (Mleft) at (2.9167, 1.875);
    \coordinate (Mright) at (3.9167, 2.075);
    \coordinate (PAB) at (2.5, 3.75);
    \coordinate (PCB) at (3.3333, 0);
    \coordinate (PAD) at (3.5, 3.95);
    \coordinate (PCD) at (4.3333, 0.2);
    \draw[gray] (PAB) -- (PCB) -- (PCD) -- (PAD) -- cycle;
    \fill[gray, opacity=0.2] (PAB) -- (PCB) -- (PCD) -- (PAD) -- cycle;
    \draw[very thick, red] (Mleft) -- (Mright);
    \draw[red] (Mleft) -- (Mright) -- (p) -- cycle;
    \fill[red, opacity=0.3] (Mleft) -- (Mright) -- (p) -- cycle;
    \draw[thick] (B)--(C)--(D);
    \draw[dashed, thick] (B)--(E)--(D);
    \draw[thick] (B)--(A);
    \draw[thick] (C)--(A);
    \draw[thick] (D)--(A);
    \draw[dashed, thick] (E)--(A);
    \node at (B) [below left] {B};
    \node at (C) [below right] {C};
    \node at (D) [right] {D};
    \node at (E) [left] {E};
    \node at (A) [above] {A};
    % 赤い平面（辺ACと平行）
    \coordinate (P1) at (3, 2.4);
    \coordinate (P2) at (1, 1.2);
    \coordinate (P3) at (6, 2.2);
    \coordinate (P4) at (4, 2.6);
    \fill[red, opacity=0.2] (P1) -- (P2) -- (P3) -- (P4) -- cycle;
    \draw[dotted, thick, red] (P1) -- (P2) -- (P3) -- (P4) -- cycle;
    \fill (p) circle (2pt);
    \node at (p) [right] {$p$};
    \node[red] at (P3) [above] {$N_{p}(\mathrm{AC})$};
  \end{scope}
\end{tikzpicture}
\vspace{1pt}
This figure shows the case when $n=3$ and $i=1$. We consider a point $p \in \mathring{\mathrm{AC}}$.
\end{figure}
\begin{align*}
\begin{tikzpicture}
  % 頂点定義
  \coordinate (B) at (0, 0);  
  \coordinate (C) at (4, 0);
  \coordinate (D) at (6, 1.2);
  \coordinate (E) at (2, 1.2);
  \coordinate (A) at (3, 4.5);
\coordinate (p) at (3.5, 2.25);
  % 左右の辺の中点
  \coordinate (Mleft) at (2.9167, 1.875);
  \coordinate (Mright) at (3.9167, 2.075);
 % 内分点の計算結果を座標として定義
  \coordinate (PAB) at (2.5, 3.75);
  \coordinate (PCB) at (3.3333, 0);
  \coordinate (PAD) at (3.5, 3.95);
  \coordinate (PCD) at (4.3333, 0.2);
  % 元の図の骨格（任意で追加可能）
  \draw[thick] (A)--(B)--(C)--(D)--(A);
  % 内分点同士を結ぶ
  \draw[gray] (PAB) -- (PCB) -- (PCD) -- (PAD) -- cycle;
\fill[gray, opacity=0.2] (PAB) -- (PCB) -- (PCD) -- (PAD) -- cycle;
  % 二等分線（上下方向）
  \draw[very thick, red] (Mleft) -- (Mright);
  % 三角錐の骨格
  \draw[thick] (B)--(C)--(D);
  \draw[dashed, thick] (B)--(E)--(D);
  \draw[thick] (B)--(A);
  \draw[thick] (C)--(A);
  \draw[thick] (D)--(A);
  \draw[dashed, thick] (E)--(A);
  % 頂点ラベル
  \node at (B) [below left] {B};
  \node at (C) [below right] {C};
  \node at (D) [right] {D};
  \node at (E) [left] {E};
  \node at (A) [above] {A};
\fill (p) circle (2pt);
\node at (R) [right] {$P \cap \mathcal{H}_{\mathrm{AC}}$};
\node at (p) [right] {$p$};
\node[red] at (Mright) [below right] {$L_{p}^{n-i-1}$};
\node at (3, -0.7) {the link $L_{p}^{n-i-1}$ of $p \in \mathring{\mathrm{AC}}$};
\end{tikzpicture}
\end{align*}

To prove Proposition \ref{polytope is pseudomanifold},
we establish the following lemma.

\begin{lem}\label{lem for prop 6.2}
The intersection $P \cap \mathcal{H}_{F^i}^{(+)} \cap N_p (F^i)$ can be viewed as the open cone on $L_p^{n-i-1}$,
where $\mathcal{H}_{F^i}^{(+)}$ denotes the open half-space in $\mathbb{R}^n$ determined by $\mathcal{H}_{F^i}$ that contains the face $F^i$.
\end{lem}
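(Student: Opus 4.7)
The plan is to exhibit an explicit radial homeomorphism from the open cone $\cone(L_p^{n-i-1})$ onto $P \cap \mathcal{H}_{F^i}^{(+)} \cap N_p(F^i)$, sending the cone vertex to $p$. Working inside the affine space $N_p(F^i)$, I would consider the bounded convex polytope $Q := P \cap N_p(F^i)$. Since $p$ lies in the relative interior of $F^i$, the local product structure of $P$ near $p$ ensures that $p$ is a vertex of $Q$, and $Q$ has full dimension $n-i$ in $N_p(F^i)$. Furthermore, because $\mathcal{H}_{F^i}$ separates the vertices of $F^i$ from the remaining vertices of $P$, the point $p \in \mathring{F}^i$ lies strictly in $\mathcal{H}_{F^i}^{(+)}$; taking $\mathcal{H}_{F^i}$ close enough to $F^i$, the affine hyperplane $H := \mathcal{H}_{F^i} \cap N_p(F^i)$ in $N_p(F^i)$ strictly separates $p$ from all other vertices of $Q$.

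The core of the proof is then the radial parametrization
\[
\Phi : L_p^{n-i-1} \times [0,1) \longrightarrow P \cap \mathcal{H}_{F^i}^{(+)} \cap N_p(F^i), \qquad \Phi(v, t) := (1-t)p + tv.
\]
I would verify that $\Phi$ is well-defined: convexity of $P$ places $\Phi(v,t)$ in $P$; both $p$ and $v \in L_p^{n-i-1}$ lie in the affine subspace $N_p(F^i)$, so $\Phi(v,t) \in N_p(F^i)$; and since $p$ lies strictly on the positive side of $\mathcal{H}_{F^i}$ while $v$ lies on $\mathcal{H}_{F^i}$, the convex combination remains in $\mathcal{H}_{F^i}^{(+)}$ for every $t \in [0,1)$. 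The map $\Phi$ is continuous and collapses $L_p^{n-i-1} \times \{0\}$ to the single point $p$, so it descends to a continuous map $\overline{\Phi} : \cone(L_p^{n-i-1}) \to P \cap \mathcal{H}_{F^i}^{(+)} \cap N_p(F^i)$.

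To show $\overline{\Phi}$ is a homeomorphism, I would construct the inverse explicitly: given $q \neq p$ in the target, the affine ray from $p$ through $q$ lies in $N_p(F^i)$ and, by the separation property established above, exits $Q \cap \mathcal{H}_{F^i}^{(+)}$ by crossing $H$ at a unique point $v(q) \in Q \cap H = L_p^{n-i-1}$. Writing $q = (1-t)p + t\, v(q)$ yields a unique $t \in (0,1)$, and the resulting map $q \mapsto [v(q), t]$ is continuous, with $t \to 0$ uniformly in $v$ as $q \to p$ thanks to the boundedness of $L_p^{n-i-1}$. The main obstacle I anticipate is rigorously justifying that the ray from $p$ through $q \in Q \cap \mathcal{H}_{F^i}^{(+)}$ truly exits $Q$ through $L_p^{n-i-1}$, rather than leaving $P$ through some other face of $Q$ first. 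This amounts to replacing $\mathcal{H}_{F^i}$ (if necessary) by a parallel hyperplane sufficiently close to $F^i$ so that $Q \cap \mathcal{H}_{F^i}^{(+)}$ sees only the local polyhedral cone structure of $P$ at $p$; since Construction \ref{cons of link in polytope} already allows such freedom in the choice of $\mathcal{H}_{F^i}$, this normalization is harmless.
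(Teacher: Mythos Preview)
Your approach is correct and essentially equivalent to the paper's, though phrased differently. The paper does not build the radial map explicitly; instead it proves the set-theoretic equality
\[
\mathrm{conv}(p, L_p^{n-i-1}) \;=\; P \cap \mathcal{H}_{F^i}^{[+]} \cap N_p(F^i)
\]
(closed half-space), from which the open-cone description follows by removing the base $L_p^{n-i-1}$. One containment is convexity; for the other, the paper writes $P=\bigcap_{j\in\mathcal A}\mathcal H_j^{[+]}$, sets $\mathcal A'=\{j:\ p\in\mathcal H_j\}$, asserts $\mathrm{conv}(p,L_p^{n-i-1})=\bigl(\bigcap_{j\in\mathcal A'}\mathcal H_j^{[+]}\bigr)\cap\mathcal H_{F^i}^{[+]}\cap N_p(F^i)$, and then observes that any $q$ outside this set must violate some $\mathcal H_k^{[+]}$ with $k\in\mathcal A'$, hence $q\notin P$.

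Your explicit radial map $\Phi(v,t)=(1-t)p+tv$ is the natural homeomorphism realizing this equality, and the surjectivity obstacle you flag --- that a ray from $p$ must reach $\mathcal H_{F^i}$ before exiting $P$ --- is exactly the content of the paper's asserted identity $\mathrm{conv}(p,L_p^{n-i-1})=(\bigcap_{j\in\mathcal A'}\mathcal H_j^{[+]})\cap\mathcal H_{F^i}^{[+]}\cap N_p(F^i)$: both say the facet constraints $j\notin\mathcal A'$ are redundant inside $\mathcal H_{F^i}^{[+]}\cap N_p(F^i)$. The paper justifies this with a figure and the half-space description; you justify it by possibly sliding $\mathcal H_{F^i}$ closer to $F^i$. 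Either way the underlying geometric fact is the same, and neither argument is materially more complete than the other. Your version has the advantage of exhibiting the homeomorphism and its inverse directly, which is what is actually used downstream; the paper's version is slightly cleaner as pure convex geometry.
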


\begin{proof}
Note that $F^i \cap N_p(F^i)=p$.
We put $\mathcal{H}_{F^i}^{[+]}:=\overline{\mathcal{H}_{F^i}^{(+)}}$, i.e., the closed half-space in $\mathbb{R}^n$ determined by $\mathcal{H}_{F^i}$ containing $F^i$. Then it suffices to show the following equality:
\[
\mathrm{conv}(p, L_p^{n-i-1})=P\cap \mathcal{H}_{F^i}^{[+]} \cap N_p(F^i),
\]
where $\mathrm{conv}(p, L_p^{n-i-1})$ denotes the convex hull of $p$ and $L_p^{n-i-1}$.
Since the right-hand side is a convex set, and since both
\[
p\in P\cap \mathcal{H}_{F^i}^{[+]} \cap N_p(F^i) \quad \text{and} \quad L_p^{n-i-1} \subset P\cap \mathcal{H}_{F^i}^{[+]} \cap N_p(F^i),
\]
we obtain
\[
\mathrm{conv}(p, L_p^{n-i-1})\subset P\cap \mathcal{H}_{F^i}^{[+]} \cap N_p(F^i).
\]
Conversely, take a point $q \in P\cap \mathcal{H}_{F^i}^{[+]} \cap N_p(F^i)$.
Suppose for the sake of contradiction that $q \notin \mathrm{conv}(p, L_p^{n-i-1})$.
We represent the convex polytope $P$ using finitely many closed half-spaces as
\[
P = \bigcap_{j \in \mathcal{A}} \mathcal{H}_{j}^{[+]},
\]
where $\mathcal{A}$ is a finite index set.
Since $L_{p}^{n-i-1}$ is a convex polytope with vertices on the boundary of $P$, $\mathrm{conv}(p, L_p^{n-i-1})$ is also a convex polytope.
Let $\mathcal{A}'=\{ j \in \mathcal{A} \mid p \in \mathcal{H}_{j} \}$. Then we can express
\[
\mathrm{conv}(p, L_p^{n-i-1}) = \left( \bigcap_{j \in \mathcal{A}'} \mathcal{H}_{j}^{[+]} \right) \cap \mathcal{H}_{F^i}^{[+]} \cap N_p(F^i),
\]
see the figure below.
Since $q \notin \mathrm{conv}(p, L_p^{n-i-1})$, there exists $k \in \mathcal{A}'$ such that the hyperplane $\mathcal{H}_{k}$ separates $q$ and $L_p^{n-i-1}$, i.e., $q$ and $L_p^{n-i-1}$ lie on opposite sides of $\mathcal{H}_{k}$.
This implies that $q \notin P$, contradicting the assumption that $q \in P$.
Therefore, we must have
\[
\mathrm{conv}(p, L_p^{n-i-1}) \supset P\cap \mathcal{H}_{F^i}^{[+]} \cap N_p(F^i).
\]
\end{proof}

\[
\begin{tikzpicture}[scale=0.6]
  % 頂点の定義（2倍拡大）
  \coordinate (A) at (-1,0);
  \coordinate (B) at (10,0);
  \coordinate (C) at (12,4);
  \coordinate (D) at (1,4);
  \draw[thick, dotted, fill=gray!20] (A) -- (B) -- (C) -- (D) -- cycle;

  % 灰色の直線（2倍）

\draw[gray, thick] (2.92,-1) -- (2.68,5);

\fill[red!20] (2.73,3.6) -- (7,2) -- (2.87,0.4) -- cycle;

  \draw[ultra thick] (2.87,0.4) -- (2.73,3.6);

  % 点 p
  \filldraw (7,2) circle (2pt);
  % 赤線の上端点と点pを結ぶ線
  \draw[thick] (2.73,3.6) -- (7,2);
  \draw[thick] (2.87,0.4) -- (7,2);

  % ラベル（位置も調整）
  \node[above right] at (7,2) {$p$};
  \node at (C) [below right] {$N_p(F^i)$};
  \node at (2.68,5) [below right] {$\mathcal{H}_{F^i}$};

\node[red] at (8,-0.7) {$\left( \bigcap_{j \in \mathcal{A}'} \mathcal{H}_{j}^{[+]} \right) \cap \mathcal{H}_{F^i}^{[+]} \cap N_p(F^i)$};

  \draw[thick, dashed] (2.73,3.6) -- ++(-1,0.3748);  % 上の直線の逆方向

  % 下辺も同様に延長
  \draw[thick, dashed] (2.87,0.4) -- ++(-1,-0.3875); % 下の直線の逆方向
\node at (1.7,1.5) {$L_{p}^{n-i-1}$};
\end{tikzpicture}
\]

Now we may prove Proposition \ref{polytope is pseudomanifold}.

\begin{proof}[Proof of Proposition \ref{polytope is pseudomanifold}.]
The conditions in Definition \ref{def of pseudomanifold}-1 and Definition \ref{def of pseudomanifold}-2 are easy to verify as follows.
For $F^i \in \mathcal{F}_{i}(P)$, we have $\mathring{F}^i \cong \mathbb{R}^i$. Thus, each connected component of $Q_i \setminus Q_{i-1}$ is an $i$-dimensional manifold. This shows the first condition.
Since $\mathring{P}$ is dense in $P$, the second condition is satisfied.
We now prove the condition in Definition \ref{def of pseudomanifold}-3 by induction on $n = \dim P$.
For $n=0$, a $0$-dimensional convex polytope is a point. Therefore, this is a topological stratified pseudomanifold.
For $n > 0$, when $p \in \mathring{P}$, we can take an open neighborhood $U_p$ of $p$ in $P$ as an open ball with sufficiently small radius, which satisfies Definition \ref{def of pseudomanifold}-3.
For each $p \in \mathring{F}^i \subsetneq P$, 
we fix a hyperplane $\mathcal{H}_{F^i}$ for $F^i$.
By Construction \ref{cons of link in polytope}, we take the link $L_p^{n-i-1}=P \cap\mathcal{H}_{F^i} \cap N_p (F^i)$ of $p$.
Let $O_p$ be a sufficiently small contractible open neighborhood of $p$ in $\mathring{F}^i$.
By Lemma \ref{lem for prop 6.2}, we may regard
\begin{align}\label{cone in P}
\cone(L_{p}^{n-i-1})=P\cap \mathcal{H}_{F^{i}}^{(+)}\cap N_{p}(F^{i}).
\end{align}
We define
\[
U_p :=
P\cap \mathcal{H}_{F^{i}}^{(+)}\cap \left( \bigcup_{p' \in {O}_p} N_{p'}(F^{i}) \right)
\subset P.
\]
Since $\bigcup_{p' \in {O}_p} N_{p'}(F^{i})$ is an open subset of $\mathbb{R}^n$, the set $U_p$ is open in $P$.
Because 
$\mathcal{H}_{F^{i}}$ is fixed, it follows from (\ref{cone in P}) that
\begin{align}\label{cones p'}
U_p&=
\bigcup_{p' \in {O}_p} \left(
P \cap \mathcal{H}_{F^{i}}^{(+)} \cap N_{p'}(F^{i})
\right)
\notag \\
&=
\bigcup_{p' \in {O}_p} \cone(L_{p'}^{n-i-1}).
\end{align}
Since $\cone(L_{p'}^{n-i-1})$ is obtained by a slight translation of $\cone(L_{p}^{n-i-1})$, it follows that that $\cone(L_{p'}^{n-i-1})$ is homeomorphic to $\cone(L_{p}^{n-i-1})$.
Hence, we have
\[
U_p \cong O_p \times \cone(L_{p}^{n-i-1}).
\]

It remains to show the condition with respect to the restrictions to the skeleton. That is, we will prove that
\[
U_p \cap Q_{i+j+1} \cong O_p \times \cone((L_{p}^{n-i-1})_j).
\]
The polytopal filtration of $L_{p}^{n-i-1}$ is induced by the polytopal filtration of $P$. That is, 
\[
(L_{p}^{n-i-1})_{n-i-1}
=P \cap \mathcal{H}_{F^i} \cap N_{p}(F^i) \supset \cdots \supset 
\bigcap_{F^{i+j+1} \in \mathcal{F}_{i+j+1}(P)}
F^{i+j+1} \cap \mathcal{H}_{F^i} \cap N_{p}(F^i) \supset \cdots.
\]
Since
\[
n-i-1-\mathrm{codim}{F^{i+j+1}}=n-i-1-(n-i-j-1)=j,
\]
we obtain
\begin{align}\label{rest of cone}
\cone
\left(
(L_{p}^{n-i-1})_{j} 
\right) 
&=
\bigcap_{F^{i+j+1} \in \mathcal{F}_{i+j+1}(P)}
F^{i+j+1} \cap \mathcal{H}_{F^i}^{(+)} \cap N_{p}(F^i)
\notag \\
&=
Q_{i+j+1} \cap \left(
P \cap \mathcal{H}_{F^i}^{(+)} \cap N_{p}(F^i) 
\right) \notag \\
&= 
\cone(L_p^{n-i-1}) \cap Q_{i+j+1}.
\end{align}
Therefore, by (\ref{cones p'}) and (\ref{rest of cone}),
we have
\begin{align*}
U_p \cap Q_{i+j+1} 
&=
\bigcup_{p' \in {O}_p} 
\cone(L_{p'}^{n-i-1}) \cap Q_{i+j+1} \\
&=
\bigcup_{p' \in {O}_p} 
\cone
\left(
(L_{p'}^{n-i-1})_{j} 
\right) .
\end{align*}
Since $\cone((L_{p'}^{n-i-1})_{j})$ is obtained by a slight translation of $\cone((L_{p}^{n-i-1})_{j})$, it follows that $\cone((L_{p'}^{n-i-1})_{j})$ is homeomorphic to $\cone((L_{p}^{n-i-1})_{j})$.
Hence, we have
\[
U_p \cap Q_{i+j+1} \cong O_p \times \cone((L_{p}^{n-i-1})_j),
\]
as desired.
\end{proof}

\begin{rem}
Convex polytopes are second-countable and compact. Moreover, since polytopes are contractible, we have $H^2(P) = 0$. Therefore, by Theorem \ref{classification theorem} and Proposition \ref{polytope is pseudomanifold}, we have seen that locally standard $T$-pseudomanifolds over $P$ are classified by characteristic functors on $P$.
\end{rem}

\subsection{Comparison with the characteristic function on a simple polytope}\label{subsec: characteristic function on simple polytope}

The {\it characteristic function} defined in \cite{DJ91} and \cite{PS10} assigns a label (a primitive vector in $\mathbb{Z}^n$) to each facet of an $n$-dimensional simple polytope (also see the unimodular labeling in \cite{KK25} which is the generalization of the characteristic function to arbitrary manifold with corners). A pair $(P, \mu)$ consisting of an $n$-dimensional simple polytope $P$ and a characteristic function $\mu$ is called a characteristic pair.
From a characteristic pair $(P, \mu)$, one can construct the canonical model $X(P, \mu)$.
Quasitoric manifolds introduced in \cite{DJ91} and quasitoric orbifolds studied in \cite{PS10} can be classified up to equivariant homeomorphism by characteristic functions on simple polytopes.
That is, any quasitoric orbifold (manifold) is equivariantly homeomorphic to the canonical model $X(P, \mu)$ constructed from some characteristic pair.
In this section, we show that the characteristic functor defined in Definition \ref{def of char functor} is an extension of the characteristic function (see Construction \ref{from a characteristic function}). Furthermore, as a consequence, we show that locally standard $T$-pseudomanifolds are a generalization of quasitoric orbifolds (manifolds).

\begin{definition}[characteristic function on a simple polytope \cite{PS10}]\label{def of char function}
Let $P$ be an $n$-dimensional simple polytope with facets $\mathcal{F}_{n-1}(P) = \{ F_1, \ldots, F_m \}$.
For each facet $F_i$ of $P$, we assign a primitive vector $\mu_i \in \mathbb{Z}^n$ such that whenever $F_{i_1} \cap \cdots \cap F_{i_k} \neq \emptyset$, the corresponding vectors $\mu_{i_1}, \ldots, \mu_{i_k}$ are linearly independent over $\mathbb{Z}$.
This assignment defines a map
\[
\begin{array}{rccc}
\mu: & \mathcal{F}_{n-1}(P) & \to & \mathbb{Z}^n \\
     & \rotatebox{90}{$\in$} &     & \rotatebox{90}{$\in$} \\
     & F_i & \mapsto & \mu_i
\end{array}
\]
which is called the {\it characteristic function} on $P$.
\end{definition}

We construct a characteristic functor on $P$ from a characteristic function on a simple polytope $P$ as follows.

\begin{cons}\label{from a characteristic function}
Let $P$ be an $n$-dimensional simple polytope and $\mu$ a characteristic function on $P$.
Let $F_j \in \mathcal{F}_{n-1}(P)$ be a facet of an $n$-dimensional simple polytope $P$, and let $\mu(F_j)=\mu_{j}=(\mu_{1j}, \ldots , \mu_{nj}) \in \mathbb{Z}^n$.

For a facet $F_j \in \mathcal{F}_{n-1}(P)$ with $\mu(F_j) = \mu_j = (\mu_{1j}, \ldots, \mu_{nj}) \in \mathbb{Z}^n$,
define the corresponding circle subgroup $T_{F_j}(\cong S^1)$ by
\[
T_{F_j}:= \{ (e^{2 \pi i \mu_{1j}x}, \ldots, e^{2 \pi i \mu_{nj}x}) \in U(1)^n \mid x \in \mathbb{R}  \}.
\]
By Definition \ref{def of char function}, $\mu_j \in \mathbb{Z}^n$ is primitive. Therefore, $T_{F_j}$ is uniquely determined by $\mu_j$ up to sign. This yields the assignment $F_j \mapsto T_{F_j}$.
Now, since $P$ is simple, for any $(n-k)$-dimensional face $F^{n-k} \in \mathcal{F}_{n-k}(P)$, there exist exactly $k$ facets $F_{i_1}, \ldots , F_{i_k}\in \mathcal{F}_{n-1}(P)$ such that $F^{n-k}=F_{i_1} \cap  \ldots \cap F_{i_k}$.
By the definition of the characteristic function, the vectors
$\mu_{i1}, \ldots , \mu_{ik} \in \mathbb{Z}^n$ are linearly independent over $\mathbb{Z}$. Hence, we have
\[
T_{F_{i1}} \times \cdots \times T_{F_{ik}} \cong T^k.
\]
Hence, we define
\[
T_{F^{n-k}}:=T_{F_{i1}} \times \cdots \times T_{F_{ik}}.
\]
Then we obtain a functor
\[
\begin{array}{rccc}
\lambda:&\mathcal{S}(P)^{\mathrm{op}}&\to&\mathcal{T} \\
        & \rotatebox{90}{$\in$}&               & \rotatebox{90}{$\in$} \\
&\mathring{F}^{n-k} &\mapsto&T_{F^{n-k}}
\end{array}
\]
which defines a characteristic functor on $P$.
Therefore, since $P$ is face-acyclic, the characteristic data $(P, \lambda, 0)$ is induced from the characteristic pair $(P, \mu)$.
\end{cons}

According to \cite{PS10}, any quasitoric orbifold is equivariantly homeomorphic to the canonical model $X(P, \lambda):=X(P, \lambda, 0)$ of some characteristic data $(P, \lambda, 0)$, where $P$ is a simple polytope.
Since $X(P, \lambda, 0)$ is a locally standard $T$-pseudomanifold (see Theorem \ref{canonical model is a T-pseudomanifold}), it follows that quasitoric orbifolds are also locally standard $T$-pseudomanifolds.
Moreover, since quasitoric orbifolds generalize quasitoric manifolds introduced in \cite{DJ91}, quasitoric manifolds are also locally standard $T$-pseudomanifolds.

\begin{rem}
The key differences between a characteristic functor and a characteristic function lie in whether the convex polytope must be simple, and whether the labels around each vertex span $\mathbb{Z}^n$.
When the convex polytope is not simple, the linear independence condition in the definition of a characteristic function must be dropped.  
As a result, even when the polytope is simple, a given characteristic functor may not induce a characteristic function.  
Such a situation appears when $k = 0$ in Example~\ref{ex1}, for instance. Furthermore, in this case, the labels around the vertex do not span $\mathbb{Z}^n$.
\end{rem}

\begin{rem}
In \cite{KK25}, locally standard torus actions on smooth manifolds are classified by the data consisting of a smooth manifold with corners, its unimodular labeling, and its Chern class. Using a method similar to the one demonstrated above, one can also define the characteristic functor on a manifold with corners from the unimodular labeling in \cite{KK25}. Hence, our classification also provides the classification of the objects considered in \cite{KK25} up to equivariant homeomorphism. Note that the classification in \cite{KK25} is given up to equivariant diffeomorphism.
\end{rem}

\end{document}